\newtheorem{thm}{Theorem}[section]
\newtheorem{cor}[thm]{Corollary}
\newtheorem{conj}[thm]{Conjecture}
\newtheorem{lem}[thm]{Lemma}
\newtheorem{prop}[thm]{Proposition}
\theoremstyle{remark}
\newtheorem*{rem}{Remark}
\newcounter{remarkscounter}
\newenvironment{remarks}
{\medskip\noindent{\it
Remarks.}\begin{list}{{\rm(\arabic{remarkscounter})}
}{\usecounter{remarkscounter}

\setlength{\labelsep}{\fill} \setlength{\leftmargin}{0pt}
\setlength{\itemindent}{\fill}
\setlength{\labelwidth}{\fill}\setlength{\topsep}{0pt}
\setlength{\listparindent}{0pt}}} {\end{list}}
\numberwithin{equation}{section}
\newcommand{\A}{\mathbb{A}}
\newcommand{\GL}{\mathrm{GL}}
\newcommand{\SL}{\mathrm{SL}}
\newcommand{\ZZ}{\mathbb{Z}}
\newcommand{\Gal}{\mathrm{Gal}}
\newcommand{\QQ}{\mathbb{Q}}
\newcommand{\lto}{\longrightarrow}
\newcommand{\OO}{\mathcal{O}}
\newcommand{\CC}{\mathbb{C}}
\newcommand{\RR}{\mathbb{R}}
\newcommand{\mm}{\mathfrak{m}}
\newcommand{\nn}{\mathfrak{n}}
\newcommand{\N}{\mathrm{N}}
\newcommand{\quash}[1]{}
\theoremstyle{definition}
\newtheorem{defn}[thm]{Definition}
\renewcommand{\bar}{\overline}
\numberwithin{equation}{subsection}
\renewcommand{\hat}{\widehat}
\newcommand{\p}{{\sf p}}
\newcommand{\Spec}{\mathrm{Spec}}
\begin{document}
\title{An approach to nonsolvable base change and descent}
\author{Jayce R.~ Getz}
\address{Department of Mathematics and Statistics\\
McGill University\\
Montreal, QC, H3A 2K6}
\email{jgetz@math.mcgill.ca}
\subjclass[2000]{Primary 11F70}

\begin{abstract}
We present a collection of conjectural trace identities and explain why they are equivalent to base change and descent of automorphic representations of $\GL_n(\A_F)$ along nonsolvable extensions (under some simplifying hypotheses).  The case $n=2$ is treated in more detail and applications towards the Artin conjecture for icosahedral Galois representations are given.
\end{abstract}

\maketitle

\tableofcontents

\section{Introduction}

Let $F$ be a number field and let $v$ be a nonarchimedian place of $F$.
 By the local Langlands correspondence, now a theorem due to Harris and Taylor building on work of Henniart, there is a bijection
\begin{align} \label{loc-Langl}
\left(\varphi_v:W_{F_v}' \to \GL_n(\CC)\right) \longmapsto \pi(\varphi_v)
\end{align}
between equivalence classes of Frobenius semisimple representations $\varphi_v$ of the local Weil-Deligne group $W_{F_v}'$ and isomorphism classes of irreducible admissible representations of $\GL_n(F_v)$.  The bijection is characterized uniquely by certain compatibilities involving $\varepsilon$-factors and $L$-functions which are stated precisely in \cite{HT} (see also \cite{PreuveHenn}).  The corresponding statement for $v$ archimedian was proven some time ago by Langlands \cite{LanglandsArch}.
We write $\varphi_v(\pi_v)$ for any representation attached to $\pi_v$ and call it the \textbf{Langlands parameter} or \textbf{$L$-parameter} of $\pi_v$; it is unique up to equivalence of representations.

Now let $E/F$ be an extension of number fields, let $v$ be a place of $F$ and let $w|v$ be a place of $E$.  We say that an admissible irreducible representation $\Pi_w$ of $\GL_n(E_w)$ is a \textbf{base change} of $\pi_v$ and write $\pi_{vEw}:=\Pi_w$ if
$$
\varphi(\pi_v)|_{W_{E_w}'}\cong \varphi(\Pi_w).
$$
In this case we also say that $\Pi_w$ \textbf{descends} to $\pi_v$.
We say that an isobaric\footnote{For generalities on isobaric automorphic representations see  \cite{LanglEinM} and \cite{JSII}.} automorphic representation $\Pi$ of $\GL_n(\A_E)$ is a \textbf{base change} (resp.~\textbf{weak base change}) of an isobaric automorphic representation $\pi$ of $\GL_n(\A_F)$ if $\Pi_w=\pi_{vE}$ for all (resp.~almost all) places $v$ of $F$ and all places $w|v$ of $E$.  If $\Pi$ is a (weak) base change of $\pi$, then we also say $\Pi$ descends (weakly) to $\pi$.  We write $\pi_E$ for a weak base change of $\pi$, if it exists; it is uniquely determined up to isomorphism by the strong multiplicity one theorem \cite[Theorem 4.4]{JSII}.  If $\Pi$ is a weak base change of $\pi$, we say that the base change is compatible at a place $v$ of $F$ if $\Pi_w$ is a base change of $\pi_v$ for all $w|v$.

If $E/F$ is a prime degree cyclic extension, then the work of Langlands \cite{Langlands} for $n=2$ and Arthur-Clozel \cite{AC} for $n$ arbitrary implies that a base change always exists.  The fibers and image of the base change are also described in these works.  Given that any finite degree Galois extension $E/F$ contains a family of subextensions $E=E_0 \geq E_1 \geq \cdots \geq E_n=F$ where $E_i/E_{i+1}$ is Galois with simple Galois group, to complete the theory of base change it is necessary to understand base change and descent with respect to Galois extensions with simple nonabelian Galois group.  In this paper we introduce a family of conjectural trace identities that are essentially equivalent to proving base change and descent in this setting.  The (conjectural) trace identity is based on combining two fundamental paradigms pioneered by Langlands, the second still in its infancy:
\begin{itemize}
\item Comparison of trace formulae, and
\item Beyond endoscopy.
\end{itemize}
The point of this paper is to provide
some evidence that proving the conjectural trace identities unconditionally is a viable strategy for proving nonsolvable base change.

\subsection{Test functions}
Fix an integer $n \geq 1$, a Galois extension of number fields $E/F$, an automorphism $\tau \in \Gal(E/F)$, and a set of places $S_0$ of $E$ containing the infinite places and the places where $E/F$ is ramified.
Let $w \not \in S_0$ and let $v$ be the place of $F$ below $w$.
Let
$$
A=\begin{pmatrix} t_{1w} & & \\ & \ddots & \\ & & t_{nw}\end{pmatrix}\quad \textrm{ and } \quad A^{\tau}=\begin{pmatrix} t_{1w^{\tau}} & & \\ & \ddots & \\& & t_{nw^{\tau}}\end{pmatrix}.
$$
We view these as matrices in $\prod_{w|v} \GL_n(\CC[t_{1w}^{\pm 1},\cdots,t_{nw}^{\pm 1}])$
For $j \in \ZZ_{> 0}$ let $\mathrm{Sym}^j:\GL_n \to \GL_{\binom{n+j-1}{j}}$ be the $j$th symmetric power representation, where $\binom{m}{j}$ is the $m$-choose-$j$ binomial coefficient.

For a prime power ideal $\varpi_w^j$ of $\OO_{E}$ define a test function
\begin{align}
f(\varpi_w^j):=\mathcal{S}^{-1}(\mathrm{tr}(\mathrm{Sym}^j(A \otimes (A^{\tau})^{-1}))) \in C_c^{\infty}(\GL_n(E \otimes_F F_v)//\GL_n(\OO_{E} \otimes_{\OO_F}\OO_{F_v}))
\end{align}
where $\mathcal{S}$ is the Satake isomorphism (see \S \ref{ssec-uha}).  We denote by $f(\OO_{E_w})$ the characteristic function of $\GL_n(\OO_E \otimes_{\OO_F} \OO_{F_v})$ and regard the $f(\varpi_w^j)$ as elements of $C_c^{\infty}(\GL_n(\A_E^{\infty})//\GL_n(\widehat{\OO}_E))$.  Define $f(\nn) \in C_c^{\infty}(\GL_{n}(\A_{E}^{\infty})//\GL_{n}(\widehat{\OO}_E))$ in general by declaring that $f$ is multiplicative, that is, if $\nn+\mm=\OO_E$ we set
$$
f(\nn\mm):=f(\nn)*f(\mm)
$$
where the asterisk denotes convolution in the Hecke algebra.  If $\nn$ is coprime to $S_0$, we often view $f(\nn)$ as an element of $C_c^{\infty}(\GL_n(\A_E^{S_0})//\GL_n(\widehat{\OO}_E^{S_0}))$.

Assume that $\Pi$ is an isobaric automorphic representation of $\GL_n(\A_E)$ unramified outside of $S_0$.  Define $\Pi^{\tau}$ by $\Pi^{\tau}(g):=\Pi(g^{\tau})$.  The purpose of defining the operators $f(\mm)$ is the following equality:
$$
\sum_{\mm \subset \OO_{E}^{S_0}} \frac{\mathrm{tr}(\Pi^{S_0})(f(\mm))}{|\N_{F/\QQ}(\mm)|^{s}}=L^{S_0}(s,\Pi \times \Pi^{\tau})
$$
This follows from \eqref{RS-descr} and the fact that, in the notation of loc.~cit., $A(\Pi^{\tau}_w)=A(\Pi_{w^{\tau}})$.

Let $\phi \in C_c^{\infty}(0,\infty)$ be nonnegative.  Thus $\widetilde{\phi}(1) >0$, where
$$
\widetilde{\phi}(s):=\int_{0}^{\infty}\phi(s)x^{s-1}dx
$$
is the Mellin transform of $\phi$.
We introduce the following test function, a modification of that considered by Sarnak in \cite{Sarnak}:
\begin{align} \label{Sig-func}
\Sigma^{S_0}(X):=\Sigma_{\phi}^{S_0}(X):=\sum_{\mm \subset \OO_E^{S_0}} \phi(X/|\N_{E/\QQ}(\mm)|)f(\mm).
\end{align}

\subsection{Conjectural trace identities}
Assume that $E/F$ is a Galois extension.
For convenience, let
\begin{align}
\Pi_n(F):&=\{\textrm{Isom.~classes of isobaric automorphic representations of }\GL_n(\A_F)\}\\
\Pi_n^0(F):&=\{\textrm{Isom.~classes of cuspidal automorphic representations of }\GL_n(\A_F)\} \nonumber
\\ \Pi_n^{\mathrm{prim}}(E/F):&=\{ \textrm{Isom.~classes of $E$-primitive automorphic representations of }\GL_n(\A_F)\}.
\nonumber
\end{align}
The formal definition of an $E$-primitive automorphic representation is postponed until \S \ref{ssec-primitive}.  If we knew Langlands functoriality we could characterize them easily
as those representations that are cuspidal and not automorphically induced from an automorphic representation of a subfield $E \geq K > F$.
We note that there is a natural action of $\Gal(E/F)$ on  $\Pi_n(E)$ that preserves $\Pi_n^0(E)$; we write $\Pi_n(E)^{\Gal(E/F)}$ for those representations that are isomorphic to their $\Gal(E/F)$-conjugates and $\Pi_n^0(E)^{\Gal(E/F)}=\Pi_n^0(E) \cap \Pi_n(E)^{\Gal(E/F)}$.

Let $S$ be a finite set of places of $F$ including all infinite places and let $S'$, $S_0$ be the set of places of $F'$, $E$ lying above $S$.  Assume that $h \in C_c^{\infty}(\GL_n(\A_{F'}))$, $\Phi \in C_c^{\infty}(\GL_n(\A_F))$ are transfers of each other in the sense of \S \ref{ssec-transfers} below and that they are unramified outside of $S'$ and $S$, that is, invariant under right and left multiplication by $\GL_n(\widehat{\OO}_{F'}^{S'})$ and $\GL_n(\widehat{\OO}_F^S)$, respectively.
For the purposes of the following theorems, if $G$ is a finite group let $G^{\mathrm{ab}}$ be the maximal abelian quotient of $G$.

Assume for the remainder of this subsection that $\Gal(E/F)$ is the universal perfect central extension of a finite simple nonabelian group.  Let $E \geq F' \geq F$ be a subfield such that $\Gal(E/F')$ is solvable and $H^2(\Gal(E/F'),\CC^{\times})=0$.  Moreover let $\tau \in \Gal(E/F)$ be an element such that
$$
\Gal(E/F)=\langle \tau,\Gal(E/F')\rangle.
$$

\begin{rem}
In \S \ref{ssec-upce} we discuss these assumptions, the upshot being that they are no real loss of generality.
\end{rem}

Our first main theorem is the following:
\begin{thm}  \label{main-thm-1}
Consider the following hypotheses:
\begin{itemize}
\item $\Gal(E/F')$ is solvable, $H^2(\Gal(E/F'),\CC^{\times})=0$ and $[E:F']$ is coprime to $n$.
  \item For all divisors $m|n$ there is no irreducible nontrivial representation
  $$
  \Gal(E/F) \lto \GL_m(\CC),
  $$
 \item The case of Langlands functoriality explicated in Conjectures \ref{conj-1} below is true for $E/F$, and
 \item The case of Langlands functoriality explicated in Conjecture \ref{conj-solv} is true for $E/F'$.
\end{itemize}
If these hypotheses are valid and $h$ and $\Phi$ are transfers of each other then the limits
\begin{align} \label{11}
\lim_{X \to \infty}|\Gal(E/F')^{\mathrm{ab}}|^{-1}X^{-1}
\sum_{\pi' \textrm{ $E$-primitive}} \mathrm{tr}(\pi')(h^1b_{E/F'}(\Sigma_{\phi}^{S_0}(X)))
\end{align}
and
\begin{align} \label{12}
\lim_{X \to \infty} X^{-1}\sum_{\pi} \mathrm{tr}(\pi)(
\Phi^1b_{E/F}(\Sigma_{\phi}^{S_0}(X)))
\end{align}
converge absolutely and are equal.  Here the first sum is over a set of representatives for the equivalence classes of $E$-primitive cuspidal automorphic representations of $A_{\GL_{nF'}} \backslash \GL_n(\A_{F'})$ and the second sum is over a set of representatives for the equivalence classes of cuspidal automorphic representations of $A_{\GL_{nF}} \backslash \GL_n(\A_{F})$.
\end{thm}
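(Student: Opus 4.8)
\emph{Strategy.} I would evaluate the two limits separately, in each case transporting the spectral sum to the field $E$ and extracting the main term from $\Sigma_\phi^{S_0}(X)$ by a Mellin‑transform (Tauberian) argument, and then check that the two answers are literally the same sum over $\Gal(E/F)$‑invariant cuspidal automorphic representations of $\GL_n(\A_E)$. The local engine is this: for an isobaric automorphic representation $\Pi$ of $\GL_n(\A_E)$ unramified outside $S_0$, feed the identity $\sum_{\mm}\mathrm{tr}(\Pi^{S_0})(f(\mm))/\abs{\N_{F/\QQ}(\mm)}^{s}=L^{S_0}(s,\Pi\times\Pi^{\tau})$ recorded above into Mellin inversion of $\phi$; this writes $\mathrm{tr}(\Pi^{S_0})(\Sigma_\phi^{S_0}(X))$ as a vertical contour integral in which the only singularity crossed is the pole of the Rankin–Selberg $L$‑function at $s=1$, so that
$$\mathrm{tr}(\Pi^{S_0})\bigl(\Sigma_\phi^{S_0}(X)\bigr)=\widetilde\phi(1)\,X\cdot\mathrm{Res}_{s=1}L^{S_0}(s,\Pi\times\Pi^{\tau})+o(X),$$
the error being controlled by convexity‑type bounds for $L$‑functions in vertical strips. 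By Rankin–Selberg theory this residue is nonzero exactly when $\Pi\cong\Pi^{\tau}$ (representations with a higher‑order pole must not occur; see below), and because $\Gal(E/F)=\langle\tau,\Gal(E/F')\rangle$, for a $\Gal(E/F')$‑invariant $\Pi$ this is the same as being $\Gal(E/F)$‑invariant.

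\emph{The $F'$‑side.} By Conjecture~\ref{conj-solv} (equivalently Arthur–Clozel, since $\Gal(E/F')$ is solvable with $H^2(\Gal(E/F'),\CC^{\times})=0$), every $E$‑primitive cuspidal $\pi'$ of $\GL_n(\A_{F'})$ has a base change $\pi'_E$ with $\mathrm{tr}(\pi')\bigl(b_{E/F'}(f)\bigr)=\mathrm{tr}(\pi'_E)(f)$, and $\pi'_E$ is again cuspidal: $E$‑primitivity forbids automorphic induction from an intermediate field of $E/F'$, while the hypothesis that $\Gal(E/F)$ has no nontrivial irreducible representation of degree dividing $n$ (together with $\gcd(n,[E:F'])=1$, which already gives $[E:F]\nmid n$) forbids automorphic induction into $\GL_n(\A_{F'})$ altogether. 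The map $\pi'\mapsto\pi'_E$ has fibres $\{\pi'\otimes\eta:\eta\in\widehat{\Gal(E/F')^{\mathrm{ab}}}\}$; these are pairwise non‑isomorphic ($\pi'\cong\pi'\otimes\eta$ would exhibit $\pi'$ as induced) and all again $E$‑primitive, so each fibre has exactly $|\Gal(E/F')^{\mathrm{ab}}|$ elements. Splitting the trace into its $S'$‑part and its part away from $S_0$, inserting the Mellin computation, and discarding the $\Pi$ that are not $\tau$‑invariant, the limit \eqref{11} becomes
$$\widetilde\phi(1)\,|\Gal(E/F')^{\mathrm{ab}}|^{-1}\sum_{\Pi}\Bigl(\sum_{\eta}\mathrm{tr}\bigl((\pi'(\Pi)\otimes\eta)_{S'}\bigr)(h^1)\Bigr)\mathrm{Res}_{s=1}L^{S_0}(s,\Pi\times\Pi^{\tau}),$$
the outer sum over $\Gal(E/F)$‑invariant cuspidal automorphic representations $\Pi$ of $\GL_n(\A_E)$, with $\pi'(\Pi)$ a chosen $E$‑primitive preimage.

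\emph{The $F$‑side and the comparison.} The map $b_{E/F}$ is defined on unramified Hecke algebras by the Satake formula, so applying the displayed identity to $\Pi=\pi_E$ --- automorphic and isobaric by Conjecture~\ref{conj-1} --- and the defining property of $b_{E/F}$ gives $\sum_{\mm}\mathrm{tr}(\pi)\bigl(b_{E/F}(f(\mm))\bigr)/\abs{\N_{F/\QQ}(\mm)}^{s}=L^{S_0}(s,\pi_E\times\pi_E^{\tau})$ for cuspidal $\pi$ of $\GL_n(\A_F)$. As before $\pi_E$ is cuspidal (no automorphic induction from a subextension, by the no‑small‑degree hypothesis) and $\Gal(E/F)$‑invariant, and $\pi\mapsto\pi_E$ is a bijection onto the $\Gal(E/F)$‑invariant cuspidal representations of $\GL_n(\A_E)$ --- injective because $\Gal(E/F)^{\mathrm{ab}}=1$, surjective by Conjecture~\ref{conj-1} and $H^2(\Gal(E/F),\CC^{\times})=0$ (both part of $\Gal(E/F)$ being a universal perfect central extension). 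Running the Mellin computation, \eqref{12} equals $\widetilde\phi(1)\sum_{\Pi}\mathrm{tr}\bigl(\pi(\Pi)_{S}\bigr)(\Phi^1)\,\mathrm{Res}_{s=1}L^{S_0}(s,\Pi\times\Pi^{\tau})$ over the same index set. Thus the equality of \eqref{11} and \eqref{12} reduces to
$$|\Gal(E/F')^{\mathrm{ab}}|^{-1}\sum_{\eta}\mathrm{tr}\bigl((\pi(\Pi)_{F'}\otimes\eta)_{S'}\bigr)(h^1)=\mathrm{tr}\bigl(\pi(\Pi)_{S}\bigr)(\Phi^1)$$
for every $\Gal(E/F)$‑invariant cuspidal $\Pi$, where $\pi(\Pi)_{F'}$ is the solvable base change of $\pi(\Pi)$ from $F$ to $F'$ --- precisely the local content of the assumption that $h$ and $\Phi$ are transfers of one another, propagated through solvable base change along $E/F'$. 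The terms of the resulting common series being nonnegative by the construction of $h^1$ and $\Phi^1$, its absolute convergence (hence the convergence of both limits) follows once one bounds $\sum_{\pi'}\mathrm{tr}(\pi')\bigl(h^1b_{E/F'}(\Sigma_\phi^{S_0}(X))\bigr)$ by $O(X)$ uniformly in $X$ through the geometric side of the trace formula.

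\emph{Main obstacle.} Given the assumed functoriality the skeleton above is routine; the real work is twofold. First, the analytic uniformity: one must show that after summing over the infinite cuspidal spectrum the contour‑shift error is still $o(X)$ and that the resulting series converges --- an estimate in the style of \cite{Sarnak} that, to tame the worst Euler‑product tails, ultimately calls on bounds toward the Ramanujan conjecture. Second, the combinatorial input: one must check that the stated hypotheses, principally that $[E:F']$ is prime to $n$ and that $\Gal(E/F)$ has no nontrivial irreducible representation of degree dividing $n$, genuinely preclude every non‑cuspidal base change on both sides, so that no higher‑order poles (and no accompanying powers of $\log X$) intervene and the two residue sums coincide term by term rather than merely up to lower‑order error. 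I expect the first of these to be the principal difficulty.
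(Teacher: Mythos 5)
Your proposal is correct and follows essentially the same route as the paper's proof: extract the residue of $L(s,(\pi'_E\times\pi'^{\tau\vee}_E)^{S_0})$ at $s=1$ via the Mellin/Rankin--Selberg computation (Proposition \ref{Perron-prop} and Corollary \ref{cor-aut-trace}), observe that only $\Gal(E/F)$-invariant cuspidal base changes survive, descend these to $F$ by Conjecture \ref{conj-1}, count the fibre over each $\pi'_E$ as $|\Gal(E/F')^{\mathrm{ab}}|$ by Conjecture \ref{conj-solv}, and match the remaining local factors using the definition of transfer together with Lemma \ref{lem-unr-transf}. The only cosmetic difference is that the paper obtains absolute convergence of the spectral sums from the Weyl law combined with decay of archimedean traces in the analytic conductor (Lemmas \ref{lem-cond} and \ref{lem-Casimir}) rather than from the geometric side of the trace formula.
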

Here
\begin{align*}
h^1(g):&=\int_{A_{\GL_{nF'}}}h(ag)da'\\
\Phi^1:&=\int_{A_{\GL_{nF}}}\Phi(ag)da
\end{align*}
where the $da'$ and $da$ are the Haar measures on $A_{\GL_{nF'}}$ and $A_{\GL_{nF}}$, respectively, used in the definition of the transfer.  For the definition of $A_{\GL_{nF'}}$ and $A_{\GL_{nF}}$ we refer to \S \ref{HC-subgroup} and for the definition of $b_{E/F}$ and $b_{E/F'}$ we refer to \eqref{bEF}.

\begin{remarks}
\item If we fix a positive integer $n$, then for all but finitely many finite simple groups $G$ with universal perfect central extensions $\widetilde{G}$ any representation $\widetilde{G} \to \GL_{n}$ will be trivial.  This follows from \cite[Theorem 1.1]{LS}, for example (the author does not known if it was known earlier).
Thus the second hypothesis in Theorem \ref{main-thm-1} holds for almost all groups (if we fix $n$).  In particular, if $n=2$, then the only finite simple nonabelian group admitting a projective representation of degree $2$ is $A_5$ by a well-known theorem of Klein.  Thus when $n=2$ and $\Gal(E/F)$ is the universal perfect central extension of a finite simple group other than $A_5$ the first hypothesis of Theorem \ref{main-thm-1} holds.

\item Conjecture \ref{conj-1} and its analogues conjectures \ref{conj-2}, \ref{conj-32} and \ref{conj-33} below each amount to a statement that certain (conjectural) functorial transfers of automorphic representations exist and have certain properties.  To motivate these conjectures, we state and prove the properties of $L$-parameters to which they correspond in propositions \ref{prop-bij-EF'}, \ref{prop-A5-EF} and lemmas \ref{lem-A5-EF}, \ref{lem-A5-EF3} respectively.  The facts about $L$-parameters we use are not terribly difficult to prove given basic facts from finite group theory, but they are neither obvious nor well-known, and one of the motivations for this paper is to record them.

\item Conjecture \ref{conj-solv} is a conjecture characterizing the image and fibers of solvable base change.  Rajan \cite{Rajan3} has explained how it can be proved (in principle) using a method of Lapid and Rogawski \cite{LR} together with the work of Arthur and Clozel \cite{AC}.  It is a theorem when $n=2$ \cite[Theorem 1]{Rajan3} or when $\Gal(E/F)$ is cyclic of prime degree \cite[Chapter 3, Theorems 4.2 and 5.1]{AC}.
\end{remarks}

The following weak converse of Theorem \ref{main-thm-1} is true:

\begin{thm} \label{main-thm-1-conv}
Assume Conjecture \ref{conj-transf} on transfers of test functions and assume that $F$ is totally complex.  If \eqref{11} and \eqref{12} converge absolutely and are equal for all $h$ unramified outside of $S'$ with transfer $\Phi$ unramified outside of $S$, then every
cuspidal automorphic representation $\Pi$ of $\GL_n(\A_E)$ satisfying $\Pi^{\sigma} \cong \Pi$ for all $ \sigma \in \Gal(E/F)$ admits a unique weak descent to $\GL_n(\A_F)$.  Conversely, if $\pi$ is a cuspidal automorphic representation of $\GL_n(\A_F)$ such that
\begin{align} \label{non-zero-Sig}
\lim_{X \to \infty}X^{-1}\mathrm{tr}(\pi)(b_{E/F}(\Sigma_{\phi}^{S_0}(X))) \neq 0
\end{align}
then $\pi$ admits a weak base change to $\GL_n(\A_E)$.  The weak base change of a given cuspidal automorphic representation $\pi$ of $\GL_n(\A_F)$ is unique.  The base change is compatible for the infinite places of $F$ and the finite places $v$ of $F$ where $E/F$ and $\pi_v$ are unramified or where $\pi_v$ is a twist of the Steinberg representation by a quasi-character.
\end{thm}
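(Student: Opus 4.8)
Here is how I would approach the proof.

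The plan is to run the proof of Theorem~\ref{main-thm-1} in reverse: to treat the identity \eqref{11}$=$\eqref{12}, valid for all transfer pairs, as a black box from which to extract Langlands functoriality for $E/F$, and then to combine this with the base change and descent for $E/F'$ obtained by iterating the cyclic case of Arthur--Clozel \cite{AC} along a composition series of $\Gal(E/F')$---available because $\Gal(E/F')$ is solvable, the hypothesis $H^{2}(\Gal(E/F'),\CC^{\times})=0$ removing the obstruction to descent. The first ingredient is a Tauberian reduction. For a cuspidal automorphic representation $\sigma$ of $A_{\GL_{nK}}\backslash\GL_n(\A_K)$ with $K\in\{F,F'\}$, the definition of the $f(\mm)$ together with the Rankin--Selberg identity recorded just before \eqref{Sig-func} gives
\[
\sum_{\mm}\mathrm{tr}(\sigma)\bigl(b_{E/K}(f(\mm))\bigr)\,|\N_{E/\QQ}(\mm)|^{-s}=L^{S_0}\bigl(s,\sigma_E\times\widetilde{\sigma^{\tau}_E}\bigr),
\]
where $\sigma_E$ is the (a priori only formal) base change of $\sigma$, i.e.\ the isobaric datum carrying the base-changed Satake parameters. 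Since a cuspidal Rankin--Selberg $L$-function on $\GL_n(\A_E)$ has at most a simple pole at $s=1$, with residue of a definite sign when and only when the two factors are mutually contragredient, a Tauberian theorem of the kind used by Sarnak \cite{Sarnak} gives
\[
\lim_{X\to\infty}X^{-1}\,\mathrm{tr}(\sigma)\bigl(b_{E/K}(\Sigma^{S_0}_{\phi}(X))\bigr)=\widetilde{\phi}(1)\cdot\res_{s=1}L^{S_0}\bigl(s,\sigma_E\times\widetilde{\sigma^{\tau}_E}\bigr),
\]
the limit existing because the absolute convergence of \eqref{11} and \eqref{12} is hypothesised. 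This residue is nonzero---indeed of a definite sign---whenever $\sigma$ admits a cuspidal $\tau$-invariant base change $\Sigma$ to $\GL_n(\A_E)$, since then the Euler product equals $L^{S_0}(s,\Sigma\times\widetilde{\Sigma})$. Feeding this into \eqref{11} and \eqref{12}, and using the solvable theory over $F'$ together with the elementary fact that a cuspidal representation of $\GL_n(\A_{F'})$ is $E$-primitive precisely when its base change to $\GL_n(\A_E)$ is cuspidal, presents both limits as honest $\widetilde\phi(1)$-weighted spectral sums in which one can hope to isolate individual terms.

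\textbf{Base change and uniqueness.} Suppose $\pi$ is cuspidal on $A_{\GL_{nF}}\backslash\GL_n(\A_F)$ with \eqref{non-zero-Sig} nonzero; then $\pi$ occurs with a coefficient of definite nonzero sign in \eqref{12}. I would then exploit that \eqref{11}$=$\eqref{12} for \emph{every} transfer pair: varying the archimedean and $S$-components of $\Phi$ and transporting them to $h$ via Conjecture~\ref{conj-transf}, then invoking linear independence of the twisted-trace distributions $\Phi\mapsto\mathrm{tr}(\pi_S)(\Phi^{1})$ and finiteness of the set of automorphic representations of bounded ramification and infinitesimal character, one matches the $\pi$-term on the right against a term on the left. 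This produces an $E$-primitive cuspidal $\pi'$ on $A_{\GL_{nF'}}\backslash\GL_n(\A_{F'})$ with $\mathrm{tr}(\pi)(b_{E/F}(f(\mm)))=\mathrm{tr}(\pi')(b_{E/F'}(f(\mm)))$ for all $\mm$ coprime to $S_0$, i.e.\ $\pi_E$ and $\pi'_E$ share all Satake parameters outside $S_0$. Because $\pi'$ is $E$-primitive, the solvable theory supplies a genuine \emph{cuspidal} automorphic representation $\Pi:=\pi'_E$ of $\GL_n(\A_E)$, and the parameter identity shows $\Pi$ is a weak base change of $\pi$. Uniqueness of the weak base change of a cuspidal $\pi$ is then immediate from strong multiplicity one \cite[Theorem 4.4]{JSII}: two weak base changes agree at almost all places and hence are isomorphic as isobaric representations.

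\textbf{Descent.} Now let $\Pi$ be cuspidal on $\GL_n(\A_E)$ with $\Pi^{\sigma}\cong\Pi$ for all $\sigma\in\Gal(E/F)$. Iterated cyclic descent of Arthur--Clozel along the solvable tower inside $E/F'$---the vanishing of $H^{2}(\Gal(E/F'),\CC^{\times})$ ensuring that the successive $\Gal$-invariant descents can be chosen genuinely invariant---produces an automorphic representation $\pi'$ of $\GL_n(\A_{F'})$ with $\pi'_E\cong\Pi$; it is cuspidal and $E$-primitive, for otherwise $\pi'$ would be automorphically induced from some $E\ge K>F'$, hence fixed by the nontrivial character of $\Gal(K/F')$, forcing $\Pi=\pi'_E$ to be non-cuspidal. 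Since $\Pi$ is $\tau$-invariant and $\Gal(E/F)=\langle\tau,\Gal(E/F')\rangle$, the base change $\pi'_E=\Pi$ is $\tau$-invariant, so $\pi'$ contributes with a coefficient of definite nonzero sign to \eqref{11}. Running the matching argument of the previous paragraph in the opposite direction yields a cuspidal $\pi$ on $\GL_n(\A_F)$ whose Satake parameters outside $S_0$ base change to those of $\Pi$, so $\Pi$ is a weak base change of $\pi$, i.e.\ $\pi$ is a weak descent of $\Pi$. This descent is unique: any weak descent $\pi''$ has $\pi''_E\cong\pi_E$ outside a set of density zero, and since $\Gal(E/F)^{\mathrm{ab}}=1$ there is no Hecke character of $F$ by which $\pi''$ and $\pi$ could differ, so $\pi''\cong\pi$ by strong multiplicity one.

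\textbf{Compatibility, and the main obstacle.} Since $F$, hence $E$, is totally complex, every archimedean completion is $\CC$ and base change acts as the identity on $L$-parameters of $W_{\CC}'=\CC^{\times}$, so compatibility at the infinite places is automatic; total complexity is also what makes the archimedean part of the transfer in Conjecture~\ref{conj-transf} explicit enough to run the separation argument above. At a finite place $v$ where $E/F$ and $\pi_v$ are unramified, the identification of Satake parameters produced above holds at \emph{every} place outside $S_0$---the operators $f(\mm)$, hence $\Sigma^{S_0}_{\phi}$, are assembled from the unramified Hecke algebra at all such places---which pins $\Pi_w$ down as the base change of $\pi_v$ once the residual central ambiguity is eliminated by comparing central characters. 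At a place $v$ where $\pi_v$ is a twist of the Steinberg representation one enlarges $S$ to contain $v$ and takes a pseudo-coefficient of $\mathrm{St}\otimes\chi$ as the $v$-component of $\Phi$; since the base change of $\mathrm{St}\otimes\chi$ is the rigid representation $\mathrm{St}\otimes(\chi\circ\Nm_{E_w/F_v})$, the matching forces the correct parameter at $w\mid v$. I expect the central difficulty to be the inversion step itself---passing from equality of the two infinite sums \eqref{11} and \eqref{12}, for all transfer pairs, to a term-by-term matching of automorphic representations---which will require a quantitative Tauberian estimate uniform in the test function (resting on bounds for Rankin--Selberg $L$-functions and a Weyl-law count of cuspidal representations) together with a form of Conjecture~\ref{conj-transf} robust enough to separate the discrete spectrum at the places of $S$, and it is here that one must handle with care the central-character and root-of-unity ambiguities inherent in reconstructing Satake parameters from traces of the $f(\mm)$.
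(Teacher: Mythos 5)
Your proposal follows essentially the same architecture as the paper's proof in \S\ref{ssec-trace-to-func}: translate the spectral sums into residues of Rankin--Selberg $L$-functions via the Tauberian proposition (Proposition \ref{Perron-prop}/Corollary \ref{cor-aut-trace}), use positivity of the $\Phi_S$ (from Conjecture \ref{conj-transf}) to control absolute convergence, separate by infinity type via archimedean descent and linear independence (the paper's Lemma \ref{lem-archi-transf} and Lemma \ref{lem-lin-ind}), and for descent, first descend $\Pi$ to $F'$ by the solvable theory and then match terms in the trace identity to produce the descent to $F$.

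The one genuine gap is your treatment of the step ``$\Gal(E/F)$-invariant cuspidal $\Pi$ on $\GL_n(\A_E)$ descends to $\GL_n(\A_{F'})$'' as following unconditionally from iterated Arthur--Clozel cyclic descent. That inference is not valid: a composition series for the solvable group $\Gal(E/F')$ has factors that need not be normal in $\Gal(E/F')$, and after a single cyclic descent the resulting representation is only well-defined up to twist by a character of that cyclic quotient, so there is no guarantee one can choose the descent at each stage to remain Galois-invariant for the next. Precisely this is the content of Conjecture \ref{conj-solv}, which the paper invokes explicitly at this point of the argument; as the remark following Theorem \ref{main-thm-1} notes, it is known only for $n=2$ \cite{Rajan3} or when the extension is cyclic of prime degree \cite{AC}, and in general rests on the (unwritten) Lapid--Rogawski method. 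You also appeal to $H^2(\Gal(E/F'),\CC^{\times})=0$ as ``removing the obstruction to descent,'' but that vanishing only eliminates the twist at the \emph{end} (as in the second sentence of Conjecture \ref{conj-solv}); it does not remove the issue of propagating invariance along a tower. Your descent argument should therefore be reformulated to invoke Conjecture \ref{conj-solv} rather than Arthur--Clozel, matching the dependency structure the paper actually uses. Secondary points: the uniqueness-of-descent argument via ``$\Gal(E/F)^{\mathrm{ab}}=1$, hence no twisting character'' is suggestive but not a proof, since two weak descents need not differ by a global twist a priori; and the separation argument is sketched where the paper supplies the precise mechanism (constant-term descent plus Lemma \ref{lem-lin-ind}). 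Those are matters of detail; the solvable-descent step is the substantive issue.
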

Here, as usual, $\Pi^{\sigma}:=\Pi \circ \sigma$ is the representation acting on the space of $\Pi$ via
$$
\Pi^{\sigma}(g):=\Pi(\sigma(g)).
$$

\begin{rem}
Conjecture \ref{conj-transf} is roughly the statement that there are ``enough''  $h$ and  $\Phi$ that are transfers of each other.  If one assumes that $\Pi$ (resp.~$\pi$) and $E/F$ are everywhere unramified, then one can drop the assumption that Conjecture \ref{conj-transf} is valid.
\end{rem}

We conjecture that \eqref{non-zero-Sig} is always nonzero:

\begin{conj} \label{conj-nonzero} Let $\pi$ be a cuspidal automorphic representation of $A_{\GL_{nF}} \backslash \GL_n(\A_F)$; then
\begin{align*}
\lim_{X \to \infty}X^{-1}\mathrm{tr}(\pi)(b_{E/F}(\Sigma_{\phi}^{S_0}(X))) \neq 0
\end{align*}
\end{conj}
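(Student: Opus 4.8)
The plan is to reduce the claim to the location and size of the rightmost pole of a Dirichlet series attached to $\pi$, and then to identify that series with a product of Rankin--Selberg $L$-functions whose behaviour at $s=1$ is forced. Put
\[
c_\pi(\mm):=\mathrm{tr}(\pi)(b_{E/F}(f(\mm))),\qquad D_\pi(s):=\sum_{\mm\subset\OO_E^{S_0}}c_\pi(\mm)\,|\N_{E/\QQ}(\mm)|^{-s}.
\]
First I would record that $c_\pi(\mm)\ge0$, unconditionally: since $f$ is multiplicative, $c_\pi$ is determined by its prime-power values, and for $\varpi_w^j$ — with $v$ the place of $F$ below $w$, of residue degree $f_v$ in $E/F$ — the definition of $f$, the Satake isomorphism, and the base-change homomorphism $b_{E/F}$ combine to give $c_\pi(\varpi_w^j)=\mathrm{tr}\,\mathrm{Sym}^{j}\big(A(\pi_v)^{f_v}\otimes(A(\pi_v)^{f_v})^{-1}\big)$; because $\pi$ is unitary the Satake parameters $\{\alpha_i\}$ of $\pi_v$ satisfy $\{\alpha_i^{-1}\}=\{\overline{\alpha_i}\}$, so the Cauchy identity $\prod_{i,k}(1-z\,x_iy_k)^{-1}=\sum_\lambda z^{|\lambda|}s_\lambda(x)s_\lambda(y)$ gives $c_\pi(\varpi_w^j)=\sum_{|\lambda|=j}\big|s_\lambda(A(\pi_v)^{f_v})\big|^2\ge0$. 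Next, Mellin inversion yields $\int_0^\infty\mathrm{tr}(\pi)(b_{E/F}(\Sigma_{\phi}^{S_0}(X)))\,X^{-s-1}\,dX=\widetilde{\phi}(-s)\,D_\pi(s)$, with $\widetilde{\phi}(-1)=\int_0^\infty\phi(x)x^{-2}\,dx>0$ since $\phi\ge0$ is not identically zero; so if $D_\pi$ continues meromorphically to a neighbourhood of the line $\mathrm{Re}(s)=1$, is holomorphic there apart from a pole at $s=1$, and has moderate vertical growth, then a Wiener--Ikehara argument (for which the nonnegativity of $c_\pi(\mm)$ is convenient) shows that $X^{-1}\mathrm{tr}(\pi)(b_{E/F}(\Sigma_{\phi}^{S_0}(X)))$ converges to a positive multiple of the leading Laurent coefficient of $D_\pi$ at $s=1$ — with an extra $(\log X)^{m-1}$ if the pole has order $m>1$, which only makes the limit larger. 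It therefore suffices to produce a pole of $D_\pi$ at $s=1$ with positive leading coefficient, together with the stated continuation.

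For the identification I would sum the local generating series above over $\mm$, using that $f_v$ is the order of $\mathrm{Frob}_v$ in $G:=\Gal(E/F)$ while an element of order $f_v$ acts on the regular representation $\CC[G]$ with characteristic polynomial $(1-T^{f_v})^{[G:\langle\mathrm{Frob}_v\rangle]}$ (equivalently, apply the $L$-function identity displayed just before \eqref{Sig-func} to a base change of $\pi$); this gives
\[
D_\pi(s)=L^{S_0}\!\big(s,(\pi\times\pi^\vee)\otimes\rho_{E/F}\big),\qquad\rho_{E/F}:=\mathrm{Ind}_{\Gal(\overline{F}/E)}^{\Gal(\overline{F}/F)}\one\ \cong\ \bigoplus_{\sigma\in\mathrm{Irr}(G)}\sigma^{\oplus\dim\sigma},
\]
hence $D_\pi(s)=L^{S_0}(s,\pi\times\pi^\vee)\cdot\prod_{\sigma\neq\one}L^{S_0}\big(s,(\pi\times\pi^\vee)\otimes\sigma\big)^{\dim\sigma}$. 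The $\sigma=\one$ factor is a genuine Rankin--Selberg $L$-function: by the work of Jacquet--Shalika and Shahidi it continues to $\CC$, is holomorphic on $\mathrm{Re}(s)\ge1$ apart from a simple pole at $s=1$ with positive residue, and is nonzero on the rest of $\mathrm{Re}(s)=1$; writing $\pi\times\pi^\vee=\one\boxplus\mathrm{Ad}(\pi)$ as isobaric representations, the remaining factors are $L^{S_0}(s,(\pi\times\pi^\vee)\otimes\sigma)=L^{S_0}(s,\sigma)\,L^{S_0}(s,\mathrm{Ad}(\pi)\otimes\sigma)$. Granting that these continue to a neighbourhood of $\mathrm{Re}(s)=1$, and that $L^{S_0}(s,\sigma)$ (for $\sigma$ irreducible nontrivial) and $L^{S_0}(s,\mathrm{Ad}(\pi)\otimes\sigma)$ are nonzero at $s=1$, each such factor at $s=1$ is either holomorphic nonzero or has a pole, but never a zero; hence $D_\pi$ has a pole at $s=1$ of order $\ge1$ with positive leading coefficient, and the conjecture follows from the reduction of the first paragraph. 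Everything thus comes down to the meromorphic continuation past $\mathrm{Re}(s)=1$, and the nonvanishing at $s=1$, of the twisted factors $L^{S_0}(s,(\pi\times\pi^\vee)\otimes\sigma)$ for the nontrivial $\sigma\in\mathrm{Irr}(G)$.

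This is the crux, and the reason the statement is only a conjecture. Those factors are a priori only Euler products; to control them I would apply Brauer's theorem to write $\rho_{E/F}$ (equivalently the sum over nontrivial $\sigma$) as a $\ZZ$-linear combination of representations induced from characters $\psi$ of subgroups $H\le G$, and then inductivity of $L$-factors to replace $L^{S_0}(s,(\pi\times\pi^\vee)\otimes\mathrm{Ind}_H^G\psi)$ by $L^{S_0}(s,(\pi_{E^H}\times\pi_{E^H}^\vee)\otimes\psi)$ over the fixed field $E^H$ — which presupposes solvable base change of $\pi$ to $E^H$. When $E/F$ is solvable this is exactly Arthur--Clozel, each such factor becomes a product of standard $\GL_n$-$L$-functions over $E^H$ twisted by a Hecke character, whose only pole at $s=1$ comes from the trivial constituent of the isobaric $\pi_{E^H}\times\pi_{E^H}^\vee$ and whose nonvanishing on $\mathrm{Re}(s)=1$ is the Jacquet--Shalika/Shahidi theorem; this proves the conjecture for solvable $E/F$, and for $n=2$ the same scheme goes through using the known automorphy of $\mathrm{Ad}(\pi)$ (Gelbart--Jacquet), of the low-degree Artin representations of $G$, and of the relevant functorial products (Kim--Shahidi). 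For nonsolvable $E/F$ and general $n$, however, the rewriting needs precisely the nonsolvable base change the paper sets out to construct, so the argument becomes circular — that is the genuine obstacle. The one thing that remains unconditional is the positivity of the first paragraph: it already forces the abscissa of convergence of $D_\pi$ to be $\ge1$ (restrict to the places of $F$ that split completely in $E$ and use Chebotarev together with the divergence of $\sum_v|\mathrm{tr}\,A(\pi_v)|^2\N(v)^{-s}$ as $s\to1^+$), hence that $\mathrm{tr}(\pi)(b_{E/F}(\Sigma_{\phi}^{S_0}(X)))$ is not $O(X^{1-\eps})$ for any $\eps>0$ — a weak lower bound consistent with the conjecture, which functoriality would upgrade to the full asymptotic.
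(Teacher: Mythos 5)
This statement is labeled as a \emph{conjecture} in the paper, so there is no proof of it to compare against. The paper's only commentary is the remark immediately following: the conjecture follows by Rankin--Selberg theory (Proposition~\ref{Perron-prop} together with \eqref{ord-pole}) \emph{if} $\pi$ admits a base change to an isobaric automorphic representation of $\GL_n(\A_E)$, but assuming that is circular; the author expresses hope that it can be established independently, by analogy with Chebotarev versus Artin. You have correctly read the situation: you do not claim an unconditional proof, and your write-up is really an analysis of what the conjecture asks for. That analysis is substantively correct and goes further than the paper's remark. The positivity $c_\pi(\mm)\ge 0$, obtained from the Cauchy identity plus unitarity ($\{\alpha_i^{-1}\}=\{\overline{\alpha_i}\}$ as multisets), is unconditional and worth isolating: by Landau it pins the rightmost singularity of $D_\pi(s)$ to the real axis, which is what makes a Tauberian reduction meaningful. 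Your identification $D_\pi(s)=L^{S_0}(s,(\pi\times\pi^\vee)\otimes\rho_{E/F})$ is correct (the eigenvalue bookkeeping with $\mathrm{Frob}_v$ of order $f_v$ acting on the regular representation matches the local factors you wrote), and it recovers the paper's $L(s,\pi_E\times\pi_E^{\tau\vee})$ by inductivity when $\pi_E$ exists, with $\tau$ dropping out as it must. Splitting $\rho_{E/F}=\bigoplus_\sigma\sigma^{\oplus\dim\sigma}$ isolates the Jacquet--Shalika pole of the $\sigma=\one$ factor and reduces the conjecture to meromorphic continuation past $\mathrm{Re}\,s=1$ and nonvanishing at $s=1$ of the nontrivially twisted factors. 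That reduction, and your treatment of the solvable case via Arthur--Clozel, are right, and so is your diagnosis of the circularity for nonsolvable $E/F$ and general $n$.

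The one genuine overreach is the claim that ``for $n=2$ the same scheme goes through using the known automorphy of $\mathrm{Ad}(\pi)$, of the low-degree Artin representations of $G$, and of the relevant functorial products.'' For $G\cong\widetilde A_5$ the degree-two irreducibles $\theta_2,\,\xi\circ\theta_2$ are precisely the icosahedral Artin representations whose automorphy is the Artin conjecture the paper is trying to attack (Theorem~\ref{main-thm-3-conv}, Corollary~\ref{cor-artin-cases}); their automorphy is not known unconditionally, so $L^{S_0}(s,\mathrm{Ad}(\pi)\otimes\sigma)$ is not under control for those $\sigma$. Concretely, your Brauer reduction would require base change of $\pi$ or $\mathrm{Ad}(\pi)$ to subfields such as $E^{\widetilde A_4}$, a non-normal degree-$5$ extension of $F$, which is exactly the nonsolvable base change the paper is after. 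So the $n=2$ nonsolvable case is as circular as the general case. A smaller point: the isobaric decomposition $\pi\times\pi^\vee\cong\one\boxplus\mathrm{Ad}(\pi)$ as automorphic representations is only available for $n=2$ (Gelbart--Jacquet); for $n\ge 3$ it is only a formal factorization of Euler products. Finally, the paper's conjecture as stated presumes the limit exists, whereas your own analysis shows that when $\pi_E$ exists but is not cuspidal (e.g.\ $\pi$ of $\rho$-type with $\rho$ trivial on $W_E'$) the quantity $X^{-1}\mathrm{tr}(\pi)(b_{E/F}(\Sigma_\phi^{S_0}(X)))$ grows like a power of $\log X$; it would be cleaner to read the conjecture as asserting that this quantity does not tend to $0$.
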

This conjecture is true for all $\pi$ that admit a base change to an isobaric automorphic representation of $\GL_n(\A_E)$ by an application of Rankin-Selberg theory (compare Proposition \ref{Perron-prop} and \eqref{ord-pole}), however, assuming this would be somewhat circular for our purposes.  The author is hopeful that Conjecture \ref{conj-nonzero} can be proven independently of the existence of the base change.  Indeed, the Chebatarev density theorem is proven despite the fact that the Artin conjecture is still a conjecture.  The smoothed sum in Conjecture \ref{conj-nonzero} is analogous to some sums that can be evaluated using the Chebatarev density theorem; in some sense the Chebatarev density theorem is the case where $\pi$ is the trivial representation of $\GL_1(\A_F)$.

Isolating primitive representations is not a trivial task.  For example, the main focus of \cite{Venk} is the isolation of cuspidal representations that are not primitive when $n=2$.  Therefore it seems desirable to have a trace identity similar to that of Theorem \ref{main-thm-1} that involves sums over all cuspidal representations.  This is readily accomplished under additional assumptions on $\Gal(E/F)$ using the following lemma:
\begin{lem} \label{lem-prim}Let $L/K$ be a Galois extension of number fields.
Suppose that there is no proper subgroup $H \leq \Gal(L/K)$ such that $[\Gal(L/K):H]|n$.  Then
$$
\Pi_n^{\mathrm{prim}}(L/K)=\Pi_n^0(K).
$$\qed
\end{lem}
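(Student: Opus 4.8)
The plan is to prove the two inclusions $\Pi_n^{\mathrm{prim}}(L/K)\subseteq\Pi_n^0(K)$ and $\Pi_n^0(K)\subseteq\Pi_n^{\mathrm{prim}}(L/K)$ separately, using the (yet-to-be-given, \S\ref{ssec-primitive}) definition of $L$-primitivity together with the basic fact that automorphic induction from an intermediate field $L\geq M>K$ raises the rank by the factor $[M:K]$. The first inclusion should be immediate from the definition: an $L$-primitive representation is in particular cuspidal, so it lies in $\Pi_n^0(K)$, and this direction uses no hypothesis on $\Gal(L/K)$. The content is in the reverse inclusion.

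For the reverse inclusion, let $\pi\in\Pi_n^0(K)$ be cuspidal and suppose, toward a contradiction, that $\pi$ is \emph{not} $L$-primitive. By the definition of primitivity (the heuristic version in the excerpt: not automorphically induced from an intermediate field $L\geq M>K$), there is an intermediate field $L\geq M>K$ and a cuspidal (isobaric) automorphic representation $\sigma$ of $\GL_{n/[M:K]}(\A_M)$ whose automorphic induction to $\GL_n(\A_K)$ is $\pi$. For this to make sense one needs $[M:K]\mid n$. But $M$ corresponds to a subgroup $H=\Gal(L/M)\leq\Gal(L/K)$, which is proper since $M>K$, with index $[\Gal(L/K):H]=[M:K]$. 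The hypothesis says no such proper $H$ has index dividing $n$, so $[M:K]\nmid n$ — contradiction. Hence $\pi$ is $L$-primitive, i.e. $\pi\in\Pi_n^{\mathrm{prim}}(L/K)$.

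The only subtlety — and the step I expect to be the main obstacle, or at least the only place requiring care — is matching the formal definition of $L$-primitivity in \S\ref{ssec-primitive} to the heuristic ``not automorphically induced from an intermediate field'' description, since the excerpt is explicit that the honest definition is phrased differently (presumably in terms that do not presuppose Langlands functoriality, e.g. via $L$-functions, poles of Rankin–Selberg convolutions, or an unconditional surrogate for automorphic induction). Concretely, one must check that the formal definition still forbids precisely those $\pi$ arising from intermediate fields $M$ with $[M:K]\mid n$, and that when no such $M$ exists every cuspidal $\pi$ qualifies. Once the formal definition is unwound, the argument is the purely group-theoretic observation above: the divisibility constraint $[M:K]\mid n$ intrinsic to automorphic induction, combined with the hypothesis on indices of proper subgroups of $\Gal(L/K)$, leaves no room for a nontrivial intermediate $M$, so primitivity is automatic for all cuspidal representations.
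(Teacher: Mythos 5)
Your proposal is correct and is exactly the argument the paper has in mind: the paper gives no written proof beyond declaring the lemma ``immediate from the definition'' in \S\ref{ssec-primitive}, and your unwinding of that definition is the intended content. The one subtlety you flag resolves as you expect --- the formal definition of $K$-automorphic induction is phrased via $\mathrm{Ind}_K^F$ on local $L$-parameters, which multiplies degrees by $[M:K]$, so $[M:K]\mid n$ is indeed forced, and the Galois correspondence then turns the hypothesis on proper subgroups into the nonexistence of any admissible intermediate field.
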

The proof is immediate from the definition of $L$-primitive automorphic representations in \S \ref{ssec-primitive}.

\subsection{Icosahedral extensions}

We now consider the case of the smallest simple nonabelian group $A_5$ in more detail.  We begin by setting notation for specific subsets of $\Pi^0_n(F)$ and $\Pi^0_n(E)$.

Let $E/F$ be a Galois extension, and let
$$
\rho:W_F' \lto {}^L\GL_{nF}
$$
be an $L$-parameter trivial on $W_E'$; thus $\rho$ can essentially be identified with the Galois representation $\rho_0:\Gal(E/F) \to \GL_{n}(\CC)$ obtained by composing $\rho$ with the projection ${}^L \GL_{nF} \to \GL_n(\CC)$.   For every quasi-character $\chi:F^{\times} \backslash \A_F^{\times} \cong (W_{F}')^{\mathrm{ab}} \to \CC^{\times} $ we can then form the $L$-parameter
$$
\rho \otimes \chi:W_{F}' \lto \GL_n(\CC).
$$
We say that a cuspidal automorphic representation $\pi$ of $\GL_n(\A_F)$ is \textbf{associated} to $\rho \otimes \chi$ if $\pi_v$ is the representation attached to the $L$-parameter $(\rho \otimes \chi)_v$:
$$
\pi_v=\pi((\rho \otimes \chi)_v)
$$
for almost all places $v$ of $F$ (see \eqref{loc-Langl} above).  If $\pi_v=\pi((\rho \otimes \chi)_v)$ for all places $v$, then we write $\pi=\pi(\rho \otimes \chi)$.  In this case we also say that $\pi$ and $\rho \otimes \chi$ are \textbf{strongly associated}.
More generally, if $\pi$ is a cuspidal automorphic representation of $\GL_n(\A_F)$ such that $\pi$ is associated to $\rho \otimes \chi$ for some $\chi$ we say that $\pi$ is of \textbf{$\rho$-type}.  If $\pi$ is associated to $\rho \otimes \chi$ for some $\rho$ and $\chi$ we say that $\pi$ is of \textbf{Galois type}.

Assume for the remainder of this section that $\Gal(E/F) \cong \widetilde{A}_5$, the universal perfect central extension of the alternating group $A_5$ on $5$ letters.  One can formulate analogues of theorems \ref{main-thm-1} and \ref{main-thm-1-conv} in this setting.
  For this purpose, fix an embedding $A_4 \hookrightarrow A_5$, and let
$\widetilde{A}_4 \leq \widetilde{A}_5$  be the preimage of $A_4$ under the surjection $ \widetilde{A}_5 \to A_5$.  Thus $\widetilde{A}_4$ is a nonsplit double cover of $A_4$.

\begin{thm}  \label{main-thm-2}
Let $n=2$, let $F' = E^{\widetilde{A}_4}$, and let $\tau \in \Gal(E/F)$ be any element of order $5$.  Let $h \in C_c^{\infty}(\GL_2(\A_{F'}))$ be unramified outside of $S'$ and have transfer $\Phi \in C_c^{\infty}(\GL_2(\A_F))$  unramified outside of $S$.  Assume the case of Langlands functoriality explicated in Conjecture \ref{conj-2} for $E/F$.  Then the limits
\begin{align} \label{A21}
2\lim_{X \to \infty}\left(\frac{d^{3}}{ds^{3}}(\widetilde{\phi}(s)X^s)|_{s=1}\right)^{-1}|\Gal(E/F')^{\mathrm{ab}}|^{-1}
\sum_{\pi'} \mathrm{tr}(\pi')(h^1b_{E/F'}(\Sigma_{\phi}^{S_0}(X)))
\end{align}
and
\begin{align} \label{A22}
 \lim_{X \to \infty} \left(\frac{d^{3}}{ds^{3}}(\widetilde{\phi}(s)X^s)|_{s=1}\right)^{-1} \sum_{\pi } \mathrm{tr}(\pi)(\Phi^1b_{E/F}(\Sigma_{\phi}^{S_0}(X)))
\end{align}
converge absolutely and are equal.
Similarly, again assuming Conjecture \ref{conj-2} below, the limits
\begin{align} \label{B21}
\lim_{X \to \infty}X^{-1}|\Gal(E/F')^{\mathrm{ab}}|^{-1}
\sum_{\substack{\pi' \textrm{ not of $\rho$-type for $\rho$ trivial on $W_E'$}}} \mathrm{tr}(\pi')(h^1b_{E/F'}(\Sigma_{\phi}^{S_0}(X)))
\end{align}
and
\begin{align} \label{B22}
 \lim_{X \to \infty} X^{-1} \sum_{\substack{\pi \textrm{ not of $\rho$-type for $\rho$ trivial on $W_E'$}}} \mathrm{tr}(\pi)(\Phi^1b_{E/F}(\Sigma^{S_0}_{\phi}(X)))
\end{align}
converge absolutely and are equal.
In both cases the first sum is over a set of representatives for the equivalence classes of cuspidal automorphic representations of $A_{\GL_{2F'}} \backslash \GL_2(\A_{F'})$ and the second sum is over a set of representatives for the equivalence classes of cuspidal automorphic representations of $A_{\GL_{2F}} \backslash \GL_2(\A_{F})$.
\end{thm}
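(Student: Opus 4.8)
The plan is to run the argument of Theorem \ref{main-thm-1} almost verbatim; the one genuinely new phenomenon is the pair of irreducible two-dimensional representations of $\widetilde{A}_5$ (and the three of $\widetilde{A}_4$), which forces the relevant Rankin--Selberg $L$-functions to have a pole of order $4$ at $s=1$ and so demands the normalization $\left(\tfrac{d^{3}}{ds^{3}}(\widetilde{\phi}(s)X^{s})|_{s=1}\right)^{-1}$ in \eqref{A21}, \eqref{A22}. The first step is the Rankin--Selberg reduction: for a cuspidal automorphic representation $\pi$ of $\GL_2(\A_F)$ unramified outside $S$, multiplicativity of $f$ together with the identity $\sum_{\mm}\mathrm{tr}(\Pi^{S_0})(f(\mm))|\N_{F/\QQ}(\mm)|^{-s}=L^{S_0}(s,\Pi\times\Pi^{\tau})$ defining the test functions $f(\mm)$ shows that $\mathrm{tr}(\pi)\!\left(\Phi^1 b_{E/F}(\Sigma_{\phi}^{S_0}(X))\right)$ factors as a fixed local trace at the places of $S$ times a $\phi$-smoothed sum over integral ideals $\mm$ of $\OO_{E}^{S_0}$ whose Dirichlet series is the partial Rankin--Selberg $L$-function of the (formal) base change $\pi_{E}$ against the dual of its $\tau$-conjugate. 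I would then apply Proposition \ref{Perron-prop}, move the contour past $\mathrm{Re}(s)=1$, and read off that the $X\to\infty$ asymptotics of the smoothed sum are controlled by the residue of $\widetilde{\phi}(s)X^{s}$ times that $L$-function at $s=1$, the order of the pole deciding under which of the two normalizations the term survives. The same reduction applies on the $F'$ side with $h$, $b_{E/F'}$, and the base change $\pi'_{E}$ along $E/F'$ together with the dual of its $\tau$-conjugate.

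The second step is to pin down the pole orders. On the $F$ side $\pi_E$ is $\Gal(E/F)$-stable, being a base change, so its $\tau$-conjugate agrees with it and the $L$-function in question is $L^{S_0}(s,\pi_E\times\pi_E^{\vee})$; since $\widetilde{A}_5$ is perfect with trivial Schur multiplier (it is the universal perfect central extension of $A_5$), $E/F$ has no quadratic subextension and every $\Gal(E/F)$-invariant character of $W_E'$ extends to $W_F'$, which forces the isobaric decomposition of $\pi_E$ to be either irreducible --- giving a simple pole at $s=1$ --- or of the form $\chi_E\boxplus\chi_E$, equivalently $\pi$ is of $\rho$-type for some $\rho$ trivial on $W_E'$, necessarily a twist of one of the two two-dimensional irreducible representations of $\widetilde{A}_5$, in which case the $L$-function is $\zeta_E^{S_0}(s)^{4}$, with a pole of order $4$ at $s=1$ and leading Laurent coefficient $\left(\Res_{s=1}\zeta_E^{S_0}(s)\right)^{4}$. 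On the $F'$ side $\pi'_E$ is $\Gal(E/F')$-stable but need not be $\tau$-stable, and the corresponding $L$-function has a pole at $s=1$ only when $\pi'_E$ is $\tau$-stable, hence $\Gal(E/F)$-stable (as $\Gal(E/F)=\langle\tau,\Gal(E/F')\rangle$) and so a base change from $\GL_2(\A_F)$; applying the $F$-side dichotomy (which uses the perfectness of $\widetilde{A}_5$, not of $\widetilde{A}_4$) then shows that, apart from the icosahedral case, only a simple pole can occur. Consequently the $X\to\infty$ limit in \eqref{A21} and \eqref{A22} annihilates every term except the order-$4$ ones, reducing each side to the corresponding sum over icosahedral representations weighted by local traces; in \eqref{B21} and \eqref{B22} one deletes exactly those representations, so only simple poles remain and the $X^{-1}$ normalization of Theorem \ref{main-thm-1} is the correct one.

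It remains to match the two sides. For \eqref{B21} $=$ \eqref{B22} this is precisely the matching carried out in the proof of Theorem \ref{main-thm-1}, specialized to $n=2$ and using Conjecture \ref{conj-2}. For \eqref{A21} $=$ \eqref{A22} I would argue as follows: since $\Gal(E/F')\cong\widetilde{A}_4$ is solvable with $H^{2}(\widetilde{A}_4,\CC^{\times})=0$, the image and fibres of base change along $E/F'$ are known by \cite{AC} and \cite{Rajan3}, which parametrizes the surviving $\pi'$ by twists of the three two-dimensional irreducible representations of $\widetilde{A}_4$, the base-change fibres accounting for the factor $|\Gal(E/F')^{\mathrm{ab}}|^{-1}=\tfrac13$; on the $F$ side the analogous parametrization by twists of the two two-dimensional irreducible representations of $\widetilde{A}_5$ is exactly what Conjecture \ref{conj-2} supplies, its $L$-parameter content being Proposition \ref{prop-A5-EF} and Lemma \ref{lem-A5-EF}; restriction of representations from $\widetilde{A}_5$ to $\widetilde{A}_4$, together with the requirement that the intermediate datum over $E$ be $\tau$-stable, identifies the two families and produces the factor $2$, and the hypothesis that $h$ and $\Phi$ are transfers of each other (\S\ref{ssec-transfers}) equates the corresponding local traces; the remaining constant $\tfrac{1}{3!}$, relating the residue of a fourth-order pole to $\tfrac{d^{3}}{ds^{3}}(\widetilde{\phi}(s)X^{s})|_{s=1}$, fixes the normalization. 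The hard part --- and the only point needing analytic input beyond Theorem \ref{main-thm-1} --- is the uniformity in the first step: one must bound the shifted-contour error uniformly in the analytic conductor of $\pi$ well enough that, after summing against the rapidly decreasing local weights and dividing by $X$ or by $\tfrac{d^{3}}{ds^{3}}(\widetilde{\phi}(s)X^{s})|_{s=1}$, the sum over representations converges absolutely and the limit $X\to\infty$ commutes with it; this is exactly what the estimates underlying Proposition \ref{Perron-prop} provide, so that the genuinely new content is the finite-group bookkeeping of the icosahedral representations and the matching of the two families.
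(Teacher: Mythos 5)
Your proposal follows the paper's route: apply Corollary~\ref{cor-aut-trace} / Proposition~\ref{Perron-prop} to rewrite each side as a sum of Rankin--Selberg residues, separate contributions by the order of the pole at $s=1$ (one versus four), and match the surviving terms using Conjecture~\ref{conj-2} for the two descents on the $F$-side, Proposition~\ref{prop-solv} for the $|\Gal(E/F')^{\mathrm{ab}}|$-fold base-change fibre on the $F'$-side, and the transfer hypothesis to equate local traces. This is exactly the structure of the argument in \S\ref{ssec-func-to-trace}.

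There is, however, one step in your pole-order analysis that does not go through as stated. You assert that the dichotomy ``$\pi_E$ is cuspidal, or $\pi_E\cong\chi_E\boxplus\chi_E$'' is forced by the perfectness of $\widetilde{A}_5$, the vanishing of its Schur multiplier, and the extendability of $\Gal(E/F)$-invariant characters. Those facts only yield that if $\pi_E\cong\chi_1\boxplus\chi_2$ then both $\chi_1$ and $\chi_2$ are $\Gal(E/F)$-invariant (via Lemma~\ref{lem-const-term} and the absence of index-two subgroups in $\widetilde{A}_5$); they do not by themselves give $\chi_1=\chi_2$, which is precisely what is needed to push the pole order from $2$ up to $4$ so that the $\left(\frac{d^3}{ds^3}\right)^{-1}$ normalization isolates the icosahedral terms. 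The paper closes this gap in Proposition~\ref{prop-solv} and Corollary~\ref{cor-rho-type}: passing to the solvable tower over $F'$, one writes $\pi'_E\cong\chi_0\boxplus\chi_0^{\sigma_0}$ via prime-degree cyclic base change and then uses the fact that $\widetilde{A}_4$ has no index-two subgroup to force $\chi_0^{\sigma_0}=\chi_0$, hence $\pi'_E\cong\chi_0\boxplus\chi_0$ and $\pi'$ of $\rho$-type. So the decisive group-theoretic input is about $\widetilde{A}_4$ (and the solvability of $E/F'$), not the perfectness of $\widetilde{A}_5$; you should cite Proposition~\ref{prop-solv} and Corollary~\ref{cor-rho-type} rather than an $L$-parameter heuristic for $\pi_E$. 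With that correction your argument coincides with the paper's.
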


Again, a converse statement is true:

\begin{thm} \label{main-thm-2-conv}
Assume Conjecture \ref{conj-transf} on transfers of test functions, assume that $F$ is totally complex, and assume that the limits \eqref{B21} and \eqref{B22} converge absolutely for all test functions  $h$ unramified outside of $S'$  with transfer $\Phi$ unramified outside of $S$.  Under these assumptions every cuspidal automorphic representation $\Pi$ of $\GL_2(\A_E)$ that is isomorphic to its
$\Gal(E/F)$-conjugates is a weak base change of a unique cuspidal automorphic representation of $\GL_2(\A_F)$.
Conversely, if $\pi$ is a cuspidal automorphic representation of $\GL_2(\A_F)$ such that
\begin{align*}
\lim_{X \to \infty}X^{-1}\mathrm{tr}(\pi)(b_{E/F}(\Sigma_{\phi}^{S_0}(X))) \neq 0
\end{align*}
then $\pi$ admits a unique weak base change to $\GL_2(\A_F)$.  If $\pi$ is a cuspidal automorphic representation of $\GL_2(\A_F)$ that is not of $\rho$-type for $\rho$ trivial on $W_{F}'$, then $\pi_E$ is cuspidal.  The base change is compatible at the infinite places of $F$ and the finite places $v$ of $F$ where $E/F$ and $\pi_v$ are unramified or $\pi_v$ is a twist of the Steinberg representation by a quasi-character.
\end{thm}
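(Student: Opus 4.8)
The plan is to deduce this converse statement from Theorem \ref{main-thm-2} and the analytic properties of Rankin--Selberg $L$-functions, following the same mechanism as in the proof of Theorem \ref{main-thm-1-conv} but now using the identity between \eqref{B21} and \eqref{B22}. First I would fix a cuspidal $\Pi$ of $\GL_2(\A_E)$ with $\Pi^{\sigma}\cong \Pi$ for all $\sigma\in\Gal(E/F)$. Using solvable descent (the work of Langlands and Arthur--Clozel together with Conjecture \ref{conj-2}, which is assumed for $E/F$), $\Pi$ descends along the solvable tower $E/F'$ to an automorphic representation $\pi'$ of $\GL_2(\A_{F'})$, and I would arrange by twisting that $\pi'$ is not of $\rho$-type for any $\rho$ trivial on $W_E'$, or else handle that exceptional case separately using the explicit classification of $L$-parameters in Proposition \ref{prop-A5-EF} and Lemma \ref{lem-A5-EF}. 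Now the point is to run the asymptotics in \eqref{B21}: the term indexed by $\pi'$ contributes, via \eqref{RS-descr} and the Mellin/Perron argument of Proposition \ref{Perron-prop}, a positive multiple of $\mathrm{ord}_{s=1}L^{S_0}(s,\Pi\times\Pi^{\tau})$ times $X$; since $\Pi^{\tau}\cong\Pi$ this order of pole is at least $1$, so the $\pi'$-term alone forces the left-hand limit \eqref{B21} to be nonzero. By the asserted equality with \eqref{B22}, some cuspidal $\pi$ of $\GL_2(\A_F)$ with $\mathrm{tr}(\pi)(\Phi^1 b_{E/F}(\Sigma_\phi^{S_0}(X)))$ of exact order $X$ must occur; by varying $\Phi$ (invoking Conjecture \ref{conj-transf} to get enough transfers, unless everything is unramified) and using linear independence of characters on the Hecke algebra, I would pin down a single such $\pi$ and show $\pi_{v E w}$ has the same Satake parameters as $\Pi_w$ at almost all $w$, i.e. $\Pi$ is a weak base change of $\pi$. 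Uniqueness of $\pi$ is strong multiplicity one.

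For the converse direction, suppose $\pi$ is cuspidal on $\GL_2(\A_F)$ with $\lim_{X\to\infty}X^{-1}\mathrm{tr}(\pi)(b_{E/F}(\Sigma_\phi^{S_0}(X)))\neq 0$. Again I would split off the $\rho$-type representations; for $\pi$ not of $\rho$-type, $\pi$ appears in the sum \eqref{B22}, so by the equality with \eqref{B21} it matches (after accounting for the $|\Gal(E/F')^{\mathrm{ab}}|$ normalization and the fibers of solvable base change described in Conjecture \ref{conj-solv}, which for $n=2$ is Rajan's theorem \cite[Theorem 1]{Rajan3}) some $E$-primitive $\pi'$ on $\GL_2(\A_{F'})$. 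The chain of solvable base changes $F\to F'\to\cdots\to E$ (Arthur--Clozel along the solvable tower $E/F'$, plus descent from $F'$ to $F$) produces an isobaric $\Pi$ on $\GL_2(\A_E)$ whose Satake parameters at almost all places are $\varphi(\pi_v)|_{W'_{E_w}}$; this is the required weak base change, and its cuspidality when $\pi$ is not of $\rho$-type follows because a $\Gal(E/F)$-invariant non-cuspidal isobaric sum on $\GL_2(\A_E)$ would force $\pi$ to be automorphically induced, hence of $\rho$-type for $\rho$ trivial on $W'_E$, contrary to hypothesis --- here I would cite the $L$-parameter computations (Lemmas \ref{lem-A5-EF}, \ref{lem-A5-EF3}) to rule out the remaining configurations for $\widetilde A_5$. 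Uniqueness of the weak base change is strong multiplicity one \cite[Theorem 4.4]{JSII}.

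Compatibility at the good places and at Steinberg places is handled place-by-place. At the infinite places and the finite $v$ where $E/F$ and $\pi_v$ are unramified, the base change constructed above is given at $v$ by the unramified local base change (restriction of the Satake parameter), which is compatible by construction; the subtlety is only that the global construction a priori produces a \emph{weak} base change, so I must check that the local component at these $v$ is what it should be, which follows from multiplicity one applied locally together with the explicit description of solvable base change at unramified and Steinberg places in \cite{AC}. At a place $v$ where $\pi_v$ is a twist of Steinberg, I would use that twists of Steinberg are preserved under each step of solvable base change (Arthur--Clozel), and then that the composite restriction of $L$-parameters sends $\mathrm{St}\otimes\chi$ to $\mathrm{St}\otimes(\chi\circ\mathrm{Nm})$ on each $E_w$, matching $\pi(\varphi(\pi_v)|_{W'_{E_w}})$.

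The main obstacle I expect is the \textbf{extraction of a single $\pi$ (resp.\ $\Pi$) from the trace identity}: the identities \eqref{B21}=\eqref{B22} and \eqref{A21}=\eqref{A22} are a priori only equalities of limits of weighted sums, and converting them into a statement about individual representations requires (i) controlling absolute convergence so that one may separate the contribution of a fixed representation by a suitable choice of archimedean and ramified components, and (ii) a linear-independence / spectral-separation argument on the Hecke side, which is exactly where Conjecture \ref{conj-transf} (existence of enough matching transfers $h,\Phi$) is needed and where, in the unramified case, one instead uses density of Satake parameters. A secondary but real difficulty is bookkeeping the normalizing factors $|\Gal(E/F')^{\mathrm{ab}}|$ and the fibers of solvable base change so that the count of $\pi'$ descending to a given $\pi$ comes out exactly right; this is where Conjecture \ref{conj-solv} (Rajan's theorem for $n=2$) enters essentially rather than cosmetically.
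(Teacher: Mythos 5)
Your overall outline tracks the paper's proof in \S\ref{ssec-trace-to-func}: rewrite the left-hand sum via Corollary~\ref{cor-aut-trace} and the residue of the Rankin--Selberg $L$-function, descend $\Pi$ to $\pi'$ on $\GL_2(\A_{F'})$ by solvable means, and then use the trace identity together with transfers and linear independence to extract $\pi$. However there is a circularity problem in your first paragraph: you invoke ``Conjecture~\ref{conj-2}, which is assumed for $E/F$,'' but Conjecture~\ref{conj-2} is \emph{not} among the hypotheses of Theorem~\ref{main-thm-2-conv} --- indeed it is essentially the functoriality statement the theorem is meant to recover from the trace identity. Fortunately you do not actually need it in that step: descent of $\Pi$ from $E$ to $F'$ is along the solvable extension $E/F'$ with $\Gal(E/F')\cong\widetilde A_4$, for which $H^2(\widetilde A_4,\CC^\times)=0$, and for $n=2$ this is covered unconditionally by Rajan's theorem (the $n=2$ case of Conjecture~\ref{conj-solv}); Conjecture~\ref{conj-2} should not appear. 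Relatedly, the claim that ``the $\pi'$-term alone forces the left-hand limit \eqref{B21} to be nonzero'' is not an argument: $\mathrm{tr}(\pi')(h^1)$ has no fixed sign and other terms could cancel it. The paper avoids this by first choosing $\Phi_S$ of positive type at ramified finite places (so positivity of the Dirichlet coefficients of $L(s,\pi_E\times\pi_E^\vee)$ gives absolute convergence on the $F$-side), then refining the identity to one in which the archimedean component is essentially fixed (Lemma~\ref{lem-archi-transf} plus the linear-independence Lemma~\ref{lem-lin-ind}), and only then invoking linear independence of unramified Hecke characters. You flag this as ``the main obstacle,'' which is correct, but the heuristic you offer in its place does not close the gap.

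The more serious gap is in the compatibility at Steinberg places. You argue that twists of Steinberg are preserved under each step of solvable base change, and hence the composite restriction $\mathrm{St}\otimes\chi\mapsto\mathrm{St}\otimes(\chi\circ\Nm)$ gives the answer. But the link from $\pi$ to $\pi'$ (and hence to $\pi_E$) is not a solvable base change --- it is a weak matching extracted from the trace identity, and a priori this only constrains the local components at places where you have inserted a test function in the identity and know that it is a transfer. At an unramified place one uses $b_{E/F}$ and $b_{E/F'}$ of spherical Hecke operators (Lemma~\ref{lem-unr-transf}); at a Steinberg place the paper uses the transfer of truncated Euler--Poincar\'e functions (Lemma~\ref{lem-EP}), whose defining property --- that the only generic unitary representations with nonzero trace against them are the twisted Steinbergs --- is exactly what pins down the local component there. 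Your proposal contains no substitute for this; ``multiplicity one applied locally'' does not by itself identify the local factor of a representation known only through a weak global matching. Without Lemma~\ref{lem-EP} (or an equivalent), the Steinberg compatibility assertion in the theorem is simply not reached.
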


\subsection{On the Artin conjecture for icosahedral representations}

As in the last subsection we assume that $\Gal(E/F) \cong \widetilde{A}_5$.  Fix an embedding $\ZZ/2 \times \ZZ/2 \hookrightarrow A_5$ and let $Q \hookrightarrow \widetilde{A}_5$ be the inverse image of $\ZZ/2 \times \ZZ/2$ under the quotient $\widetilde{A}_5 \to A_5$.  For the purposes of the following theorem, let $S_1$ be a subset of the places of $F$ disjoint from $S$  and let $S'_1$ (resp.~$S_{10}$) be the set of places of $F'$ (resp.~$E$) above $S_1$.  Moreover
 let $h^{S'_1} \in C_c^{\infty}(\GL_2(\A_{F'}^{S'})$ and $\Phi^{S_1} \in C_c^{\infty}(\GL_2(\A_F^{S_1}))$ be transfers of each other unramified outside of $S'$ and $S$, respectively, and let $h_{S_1'} \in C_c^{\infty}(\GL_2(F'_{S'_1})//\GL_2(\OO_{F'_{S'_1}}))$.

\begin{thm}  \label{main-thm-3} Consider the following hypotheses:
\begin{itemize}
\item One has $n=2$ and $F'=E^Q$, and the case of Langlands functoriality explicated in Conjecture \ref{conj-32} is true for $E/F$.
\item One has $n=3$, $F'=E^{\widetilde{A}_4}$, the case of Langlands functoriality explicated in Conjecture \ref{conj-33} is true for $E/F$, and Conjecture \ref{conj-solv} is true for $E/F'$
\end{itemize}
Under these assumptions the limits
\begin{align} \label{31}
2\lim_{X \to \infty}\left(\frac{d^{n^2-1}}{ds^{n^2-1}}(\widetilde{\phi}(s)X^s)\big|_{s=1}\right)^{-1}
\sum_{\pi' } \mathrm{tr}(\pi')((h^{S'_1})^{1}h_{S'_1}b_{E/F'}(\Sigma_{\phi}^{S_0}(X)))
\end{align}
and
\begin{align} \label{32}
 \lim_{X \to \infty} \left(\frac{d^{n^2-1}}{ds^{n^2-1}}(\widetilde{\phi}(s)X^s)\big|_{s=1}\right)^{-1} \sum_{\pi } \mathrm{tr}(\pi)((\Phi^{S_1})^1b_{F'/F}(h_{S'_1})b_{E/F}(\Sigma_{\phi}^{S_0}(X)))
\end{align}
converge absolutely and are equal.
Here the first sum is over equivalence classes of cuspidal automorphic representations of $A_{\GL_{nF'}} \backslash \GL_n(\A_{F'})$ and the second sum is over equivalence classes of cuspidal automorphic representations of $A_{\GL_{nF}} \backslash \GL_n(\A_{F})$.
\end{thm}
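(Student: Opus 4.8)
The proof parallels those of Theorems~\ref{main-thm-1} and \ref{main-thm-2}; I shall indicate the new ingredients, namely the finite group theory separating the cases $n=2$ and $n=3$, and the handling of the auxiliary spherical functions $h_{S_1'}$ and $b_{F'/F}(h_{S_1'})$. The starting point is the identity
$$
\sum_{\mm\subset\OO_E^{S_0}}\frac{\mathrm{tr}(\Pi^{S_0})(f(\mm))}{|\N_{E/\QQ}(\mm)|^{s}}=L^{S_0}(s,\Pi\times\Pi^{\tau}),
$$
valid for isobaric $\Pi$ on $\GL_n(\A_E)$ unramified outside $S_0$, where, by the definition of the $f(\mm)$, the right-hand side is the partial Rankin--Selberg $L$-function of $\Pi$ against $\widetilde{\Pi^{\tau}}$. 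Combining this with Mellin inversion, one finds that if $\pi'$ is a cuspidal automorphic representation of $\GL_n(\A_{F'})$ with base change $\Pi':=\pi'_E$, then by the defining property of $b_{E/F'}$ and factorizability,
$$
\mathrm{tr}(\pi')\bigl((h^{S_1'})^{1}h_{S_1'}\,b_{E/F'}(\Sigma_{\phi}^{S_0}(X))\bigr)=c_{S'}(\pi')\,\mathrm{tr}(\pi'_{S_1'})(h_{S_1'})\,\frac{1}{2\pi i}\int_{(\sigma)}\widetilde{\phi}(s)\,X^{s}\,L^{S_0}(s,\Pi'\times(\Pi')^{\tau})\,ds ,
$$
where $c_{S'}(\pi')$ is a product of local traces at the places of $S'$; and similarly on the $\GL_n(\A_F)$ side, with $b_{E/F}$, $\Phi^{S_1}$ and $b_{F'/F}(h_{S_1'})$ in place of $b_{E/F'}$, $h^{S_1'}$ and $h_{S_1'}$, respectively. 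Since base change is transitive, $\pi_E=(\pi_{F'})_E$, so a matched pair $\pi'=\pi_{F'}$ gives rise to the same Rankin--Selberg $L$-function on the two sides. Absolute convergence of the two normalized sums and the legitimacy of moving the contour past $s=1$ follow exactly as in the proof of Theorem~\ref{main-thm-1}, using the nonnegativity of $\phi$, holomorphy and standard growth estimates of the $L^{S_0}(s,\Pi\times\Pi^{\tau})$ for $\Re s\ge1$ away from $s=1$, and a count of cuspidal automorphic representations of bounded analytic conductor.

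For $\Pi$ with isobaric decomposition $\boxplus_{i}\Pi_{i}$ into cuspidal representations, $L^{S_0}(s,\Pi\times\Pi^{\tau})$ has at $s=1$ a pole of order $\#\{(i,j):\Pi_{i}\cong\Pi_{j}^{\tau}\}$ (see \eqref{ord-pole} and Proposition~\ref{Perron-prop}); this order is at most $n^{2}$, with equality precisely when $\Pi\cong\chi^{\boxplus n}$ for a $\tau$-invariant Hecke character $\chi$ of $\A_E^{\times}$, in which case the leading Laurent coefficient equals $\bigl(\mathrm{res}_{s=1}\zeta_E^{S_0}(s)\bigr)^{n^{2}}$, independently of $\chi$. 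Because $\frac{d^{n^{2}-1}}{ds^{n^{2}-1}}(\widetilde{\phi}(s)X^{s})\big|_{s=1}\sim\widetilde{\phi}(1)X(\log X)^{n^{2}-1}$ with $\widetilde{\phi}(1)>0$, dividing by it annihilates every contribution except those from $\pi'$ (resp.\ $\pi$) whose base change to $E$ is of the form $\chi^{\boxplus n}$. Invoking the assumed case of Langlands functoriality---Conjecture~\ref{conj-32} for $n=2$, and Conjectures~\ref{conj-33} and \ref{conj-solv} for $n=3$---together with the $L$-parameter computations of Lemmas~\ref{lem-A5-EF} and \ref{lem-A5-EF3}, one identifies the surviving representations on $\GL_n(\A_F)$ as exactly the cuspidal $\pi$ strongly associated to $\rho\otimes\chi$ with $\rho$ an $n$-dimensional representation of $\Gal(E/F)\cong\widetilde{A}_5$ and $\chi$ a Hecke character (the twisted icosahedral representations when $n=2$, the twisted $3$-dimensional ones when $n=3$), and the surviving representations on $\GL_n(\A_{F'})$ as the base changes $\pi_{F'}$ of these $\pi$ along $F'/F$. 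Since $F'=E^{Q}$ (resp.\ $F'=E^{\widetilde{A}_4}$) and the restriction $\rho|_{Q}$ (resp.\ $\rho|_{\widetilde{A}_4}$) is irreducible---it is the unique $2$-dimensional representation of the quaternion group $Q\cong Q_8$, resp.\ the inflation of the standard $3$-dimensional representation of $A_4$---each such $\pi_{F'}$ is cuspidal, so the sums really are over cuspidal representations as claimed.

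What remains is to match the two normalized limits, which by the above are finite sums of a common positive constant times products of local traces, indexed by pairs $(\rho,\chi)$. At the places of $S_1$ one has $\mathrm{tr}(\pi_{S_1})(b_{F'/F}(h_{S_1'}))=\mathrm{tr}(\pi'_{S_1'})(h_{S_1'})$ because $\pi'=\pi_{F'}$ and these places are spherical; at the places of $S$ the equality of the remaining local factors is precisely the hypothesis that $h^{S_1'}$ and $\Phi^{S_1}$ are transfers of one another. Finally one compares the index sets: the fibers of $\rho\mapsto\rho|_{\Gal(E/F')}$ on $n$-dimensional representations of $\widetilde{A}_5$, together with their interaction with the twisting characters, account for the factor $2$ in \eqref{31} (and, by comparison with Theorems~\ref{main-thm-1}--\ref{main-thm-2}, subsume the normalization by $|\Gal(E/F')^{\mathrm{ab}}|$). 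The principal obstacle is the analytic one: establishing absolute convergence of the unnormalized sums \eqref{31} and \eqref{32} with enough uniformity to interchange the sum over representations with the shift of contour---the characteristic difficulty of this ``beyond endoscopy'' circle of ideas, handled as in Theorem~\ref{main-thm-1}; the residual group-theoretic bookkeeping (Lemma~\ref{lem-A5-EF3} married to the solvable descent of Conjecture~\ref{conj-solv}) is elementary, but must be done carefully so that no spurious representations of $\GL_n(\A_{F'})$ survive and the constant $2$ emerges correctly.
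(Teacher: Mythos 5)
Your proposal follows the same route as the paper's proof: Corollary \ref{cor-aut-trace} converts each smoothed sum into a sum of residues of Rankin--Selberg $L$-functions, the $(n^2-1)$-st derivative normalization kills every contribution except those $\pi'$ (resp.\ $\pi$) whose base change to $E$ is $\chi_0^{\boxplus n}$ with $\chi_0$ invariant under $\langle\Gal(E/F'),\tau\rangle=\Gal(E/F)$, and the matching of the surviving terms uses the transfer at $S$ together with the spherical base change identity at $S_1$.

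The one step you explicitly defer --- ensuring that ``no spurious representations of $\GL_n(\A_{F'})$ survive and the constant $2$ emerges correctly'' --- is exactly where the paper's genuinely new ingredient enters, namely Propositions \ref{prop-solv} and \ref{prop-solv3} (more precisely the uniqueness statements established in their proofs): for each $\Gal(E/F')$-invariant quasi-character $\chi_0$ there is a \emph{unique} cuspidal automorphic representation $\pi'$ of $\GL_n(\A_{F'})$ with $\pi'_E\cong\chi_0^{\boxplus n}$, proved unconditionally via solvable base change, automorphic induction, and the character tables of $Q$ and $\widetilde{A}_4$. This uniqueness, combined with the ``precisely two descents'' clause of Conjectures \ref{conj-32} and \ref{conj-33}, makes $\pi\mapsto\pi_{F'}$ an exactly $2$-to-$1$ surjection from surviving $\pi$ onto surviving $\pi'$ and pins down the local components of the unique $\pi'$ at $S_1'$; it is precisely what licenses the use of \emph{arbitrary} Hecke functions $h_{S_1'}$ at the auxiliary places rather than only pullbacks $b_{E/F'}(f_{S_{10}})$ from $E$, which the paper identifies as the most significant difference from Theorems \ref{main-thm-1} and \ref{main-thm-2} and which is what makes the theorem usable in the proof of Theorem \ref{main-thm-3-conv}. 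Your heuristic for the factor $2$ via the fibers of $\rho\mapsto\rho|_{\Gal(E/F')}$ is the right one (it is the content of Lemmas \ref{lem-A5-EF} and \ref{lem-A5-EF3}), but to close the argument you must supply, or cite, the automorphic uniqueness statement at the $F'$ level.
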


\begin{remarks}
\item One can always find $\tau \in \Gal(E/F)$ such that $\langle \tau, \Gal(E/F') \rangle=\Gal(E/F)$ (this follows from Theorem \ref{thm-GK}, for example, or by an elementary argument).

\item The fact that this theorem involves more general test functions than those in theorems \ref{main-thm-1} and \ref{main-thm-2} is important for applications to the Artin conjecture (see Theorem \ref{main-thm-3-conv}).
\end{remarks}

Let $\rho_2:W_F' \to {}^L\GL_{2F}$ be an irreducible $L$-parameter trivial on $W_E'$ (i.e. an irreducible Galois representation $\rho_2:\Gal(E/F) \to \GL_2(\CC)$).  Its character takes values in $\QQ(\sqrt{5})$ and if $\langle \xi \rangle =\Gal(\QQ(\sqrt{5})/\QQ)$ then $\xi \circ \rho_2$ is another irreducible $L$-parameter that is not equivalent to the first (see \S \ref{appendix}).
A partial converse of Theorem \ref{main-thm-3} above is the following:

\begin{thm} \label{main-thm-3-conv} Assume Conjecture \ref{conj-transf} and that \eqref{31} and \eqref{32} converge and are equal for all test functions as in Theorem \ref{main-thm-3} for $n \in \{2,3\}$.  Assume moreover that $F$ is totally complex.  Then there is a pair of nonisomorphic cuspidal automorphic representations $\pi_1,\pi_2$ of $\GL_2(\A_F)$ such that
$$
\pi_{1} \boxplus \pi_2 \cong \pi((\rho_{2} \oplus \xi \circ \rho_2)).
$$
\end{thm}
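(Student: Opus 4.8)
The plan is to produce $\pi_1$ and $\pi_2$ as the descents to $\GL_2(\A_F)$, along the hypothesised trace identities, of automorphic representations built over solvable base extensions of $F$ by automorphic induction, and then to identify these descents using functoriality of the symmetric square together with strong multiplicity one. Write $G=\Gal(E/F)\cong\widetilde{A}_5$ and set $\rho_3:=\mathrm{Sym}^2\rho_2$. As $G$ is perfect, $\det\rho_2$ is trivial, so $\rho_2\otimes\rho_2\cong\rho_3\oplus\one$; thus $\rho_3$ is an irreducible, orthogonal, three-dimensional $L$-parameter that is trivial on $W'_E$ and factors through $A_5$, and $\xi\circ\rho_3=\mathrm{Sym}^2(\xi\circ\rho_2)$ is the other three-dimensional irreducible representation of $A_5$. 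The group-theoretic facts about the restrictions of $\rho_2$ and $\rho_3$ used below are those recorded in Proposition~\ref{prop-A5-EF} and Lemmas~\ref{lem-A5-EF}, \ref{lem-A5-EF3}.

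\medskip
\noindent\textbf{Step 1 (solvable seeds).} Over $E^Q$, with $Q$ the quaternion group of order eight appearing in the first hypothesis, $\rho_2|_Q$ is the unique irreducible two-dimensional representation of $Q$; it equals $(\xi\circ\rho_2)|_Q$ and is induced from a faithful character of a cyclic subgroup of index two, so automorphic induction \cite{AC} over the corresponding quadratic extension yields a cuspidal automorphic representation $\pi^{\flat}=\pi(\rho_2|_Q)$ of $\GL_2(\A_{E^Q})$. Over $E^{\widetilde{A}_4}$, the restriction $\rho_3|_{\Gal(E/E^{\widetilde{A}_4})}$ is the standard three-dimensional representation of $A_4$, which is induced from a nontrivial character through a cyclic cubic extension; automorphic induction \cite{AC} then yields a cuspidal automorphic representation $\pi^{\sharp}=\pi(\rho_3|_{\Gal(E/E^{\widetilde{A}_4})})$ of $\GL_3(\A_{E^{\widetilde{A}_4}})$. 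Since $\rho_2$ and $\rho_3$ are trivial on $W'_E$, the base changes of $\pi^{\flat}$ and $\pi^{\sharp}$ to $E$ are the unramified principal series with trivial Satake parameters everywhere, so the pertinent Rankin--Selberg $L$-functions $L^{S_0}(s,\,\cdot\times\cdot^{\tau})$ equal $\zeta_E^{S_0}(s)^{n^2}$ and have a pole of order exactly $n^2$ at $s=1$, for $n=2$ and $n=3$ respectively.

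\medskip
\noindent\textbf{Step 2 (descent, identification, assembly).} Feed $\pi^{\flat}$ into the $n=2$ identity and $\pi^{\sharp}$ into the $n=3$ identity of Theorem~\ref{main-thm-3}. Fixing $\phi$ and choosing test functions on the $E^Q$- and $E^{\widetilde{A}_4}$-sides that isolate the Hecke eigensystems of $\pi^{\flat}$ and $\pi^{\sharp}$, the pole computation of Step~1 together with Mellin inversion shows that these seeds contribute with nonzero coefficient to the normalised sums \eqref{31}, whereas as $X\to\infty$ only representations whose base change to $E$ has a Rankin--Selberg pole of order $n^2$ survive. Hence the hypothesised equality \eqref{31}$=$\eqref{32}---with Conjecture~\ref{conj-transf} and the freedom in $\Phi^{S_1}$ used, as in the hypothesis of the present theorem, to separate contributions---forces a nonzero contribution on the $\GL_n(\A_F)$-side, yielding cuspidal automorphic representations $\pi$ of $\GL_2(\A_F)$ and $\Pi_3$ of $\GL_3(\A_F)$ of that type. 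Clifford theory applied to the base change of $\pi$ to $E$ (a $\GL_2$-parameter whose restriction to $W'_E$ is a sum of two copies of a single $G$-invariant idele class character, necessarily the base change of a character $\chi_0$ of $F$) shows that $\pi$ is associated to $\rho\otimes\chi_0$ for a two-dimensional representation $\rho$ of $G$, and since $G$ is perfect the cuspidal case gives $\rho\in\{\rho_2,\xi\circ\rho_2\}$; replacing $\pi$ by $\pi\otimes\chi_0^{-1}$ we obtain cuspidal $\pi_1$ associated to $\rho_2$ and $\pi_2$ associated to $\xi\circ\rho_2$, which are nonisomorphic because $\rho_2\not\cong\xi\circ\rho_2$ (their characters take distinct values in $\QQ(\sqrt5)\setminus\QQ$ on an element of order five; cf.\ \S\ref{appendix}) and hence have different Satake parameters on a set of places of positive density. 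The $n=3$ descent enters as consistency input: $\Pi_3$ is, after a similar analysis, associated to $\mathrm{Sym}^2\rho_2$ (or its $\xi$-conjugate) up to twist, while $\mathrm{Sym}^2\pi_1$ is automorphic by the theory of the symmetric square lift; matching the two and using that $\rho_3$ is irreducible and orthogonal shows $\mathrm{Sym}^2\pi_1$ is cuspidal and self-dual, hence $\pi_1$ is non-dihedral, and pins the normalisation. Finally, the more general test functions $h_{S'_1}$ permitted in Theorem~\ref{main-thm-3} (whose importance for the Artin conjecture is noted in the remarks there) provide local compatibility of the descent at a large set of places; combined with strong multiplicity one for isobaric representations \cite[Theorem 4.4]{JSII}, this identifies $\pi_1\boxplus\pi_2$ with the isobaric automorphic representation of $\GL_4(\A_F)$ whose local parameters are $\rho_2\oplus\xi\circ\rho_2$, that is, $\pi_1\boxplus\pi_2\cong\pi((\rho_2\oplus\xi\circ\rho_2))$; in particular the latter exists.

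\medskip
\noindent\textbf{Main obstacle.} The crux is Step~2: converting the analytic identity \eqref{31}$=$\eqref{32}, which is only an equality of limits of spectrally weighted sums, into the existence of honest automorphic representations of $\GL_n(\A_F)$ with prescribed local data. Two points demand care. First, one must arrange the test functions so that, after the $X\to\infty$ normalisation, only the finitely many representations with the maximal Rankin--Selberg pole order survive, and so that the seed from Step~1 is among them with nonzero coefficient---this is where the full strength of Conjecture~\ref{conj-transf} and a Rankin--Selberg/Perron analysis in the spirit of Proposition~\ref{Perron-prop} enters. Second, the descent a priori controls the components of the resulting $\pi$ only away from the bad set and at Steinberg places, so upgrading the numerical match to an isomorphism of automorphic representations---and in particular removing the residual twist by $\chi_0$ and pinning down the remaining ramified places---has to be bootstrapped from the symmetric-square identity and local Langlands. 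I expect this second point, rather than Step~1, to be where the real difficulty lies.
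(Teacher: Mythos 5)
The strategy outline (use the trace identities for $n=2,3$ to descend, then identify the descents using symmetric powers and strong multiplicity one) is aligned with the paper's plan, but the proposal has a genuine gap at precisely the place you flag as the main obstacle, and the mechanism the paper actually uses to close that gap is missing from your sketch entirely.

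The problem is in the identification step. After descent you have a cuspidal $\pi$ on $\GL_2(\A_F)$ with the sole local information that $\pi_{vE}$ has trivial Satake parameters (up to a twist by $\chi_0$) for almost all $v$. At a place $v$ unramified in $E/F$ of residue degree $d_v$, this determines the Satake parameters of $\pi_v$ only up to $d_v$-th roots of unity. Your appeal to ``Clifford theory applied to the base change of $\pi$ to $E$'' does not resolve this: Clifford theory is a statement about honest representations of $W'_F$, and invoking it here presupposes that $\pi$ is already of Galois type, which is what is to be proved. You then say the ambiguity is ``pinned down'' by matching $\mathrm{Sym}^2\pi_1$ with $\Pi_3$, but $\mathrm{Sym}^2$ only controls $\pi_v$ up to sign (it kills the ambiguity between $\alpha_v$ and $-\alpha_v$ but not between $\alpha_v$ and $\zeta\alpha_v$ for a general $d_v$-th root of unity $\zeta$). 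In the paper the crucial extra input is the identity $\mathrm{Sym}^4\theta_2 = \mathrm{Ind}_{\widetilde A_4}^{\widetilde A_5}\chi$ together with the Kim--Shahidi automorphy of $\mathrm{Sym}^4$: this gives a second, independent constraint on the Satake parameter of $\pi_{1v}$ that, combined with the $\mathrm{Sym}^2$ constraint, forces $A(\pi_{1v})$ to be conjugate to $\rho_2(\mathrm{Frob}_v)$ or $\xi\circ\rho_2(\mathrm{Frob}_v)$ (the case analysis over $d_v\in\{1,2,3,5\}$ in Lemma~\ref{lem-places}). Without the symmetric fourth power and the $\mathrm{Ind}$ identity your argument cannot eliminate the extra roots of unity, so it does not close.

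There are also two structural differences from the paper worth noting, even though they are not by themselves fatal. First, the paper's use of the $n=3$ trace identity is in the opposite direction from yours: rather than feeding a seed on $\GL_3(\A_{F'})$ into \eqref{31} to produce $\Pi_3$ on $\GL_3(\A_F)$, the paper starts from $\mathrm{Sym}^2\pi_i$ on $\GL_3(\A_F)$ in \eqref{32} (whose pole contribution is nonzero because $\mathrm{Sym}^2(\pi_i)_E\cong 1^{\boxplus 3}$) and uses the identity to conclude this representation admits a weak base change to $F'=E^{\widetilde A_4}$, hence agrees with $\mathrm{Sym}^2\pi'$ by rigidity of noncuspidal base change. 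Second, your $\pi^{\flat}$ over $E^Q$ is obtained by automorphic induction from a quadratic subextension; the paper instead invokes Langlands' tetrahedral theorem over $E^{\widetilde A_4}$ to get $\pi'$ and then base changes. These produce the same object, but the paper's route makes the later comparison with $\mathrm{Sym}^2\pi'$ over $E^{\widetilde A_4}$ available immediately. If you wish to repair your argument, the missing ingredients are precisely Lemma~\ref{lem-ind-autom} (automorphy of the quintic induction via $\mathrm{Sym}^4$) and the explicit conjugacy computation of Lemma~\ref{lem-places}; the final passage to the isobaric isomorphism is then \cite[Proposition~4.5]{HennCyc}, not strong multiplicity one alone, since $\pi_1\boxplus\pi_2$ is a priori only locally isomorphic to $\pi(\rho_2\oplus\xi\circ\rho_2)$ up to a place-by-place choice between $\rho_2$ and $\xi\circ\rho_2$.
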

Here the $\boxplus$ denotes the isobaric sum \cite{LanglEinM} \cite{JSII}.

\begin{rem}  It should be true that, upon reindexing if necessary, $\pi_{1} \cong \pi(\rho_{2})$.  However, the author does not know how to prove this at the moment.
\end{rem}

As a corollary of this theorem and work of Kim and Shahidi, we have the following:

\begin{cor} \label{cor-artin-cases}
Under the hypotheses of Theorem \ref{main-thm-3-conv}, if $\rho:\Gal(E/F)\to \GL_n(\CC)$ is an irreducible Galois representation of degree strictly greater than $3$, then there is an automorphic representation $\pi$ of $\GL_n(\A_F)$ such that $\pi=\pi(\rho)$.
\end{cor}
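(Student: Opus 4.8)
The plan is to deduce the corollary from Theorem~\ref{main-thm-3-conv} together with the established low-degree cases of Langlands functoriality for $\GL_2$, using the representation theory of $\widetilde{A}_5$ to express every irreducible representation of degree $>3$ in terms of a two-dimensional one.

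I would begin by recording what I need about $\Gal(E/F)\cong\widetilde{A}_5\cong\SL_2(\FF_5)$. It has irreducible complex representations of degrees $1,2,2,3,3,4,4,5,6$; those of degree $>3$ are two of degree $4$ (one factoring through $A_5$, one genuine), one of degree $5$ (through $A_5$), and one of degree $6$ (genuine). Each of these four has rational character — there is at most one irreducible of each relevant degree in each of the ``genuine'' and ``through $A_5$'' families — so each is isomorphic to its conjugate by $\xi\in\Gal(\QQ(\sqrt 5)/\QQ)$. Writing $\rho_2$ for a faithful two-dimensional representation (it lands in $\SL_2(\CC)$ since $\widetilde{A}_5$ is perfect) and $\xi\circ\rho_2$ for the other one, the Clebsch--Gordan rules for $\SL_2$ and a little character theory give
\[
\mathrm{Sym}^3\rho_2\cong(\text{genuine }4\text{-dim'l}),\qquad
\mathrm{Sym}^4\rho_2\cong(5\text{-dim'l}),\qquad
\rho_2\otimes(\xi\circ\rho_2)\cong(4\text{-dim'l through }A_5),
\]
\[
\mathrm{Sym}^2\rho_2\otimes(\xi\circ\rho_2)\cong\mathrm{Sym}^2(\xi\circ\rho_2)\otimes\rho_2\cong(6\text{-dim'l}),\qquad\text{whereas}\qquad
\mathrm{Sym}^2\rho_2\otimes\rho_2\cong\mathrm{Sym}^3\rho_2\oplus\rho_2.
\]
The last line is the crucial ``cross-pairing'': the six-dimensional representation is obtained only by tensoring the symmetric square of one of the two-dimensional representations with the \emph{other}; similarly $\rho_2\otimes\rho_2\cong\mathbf 1\oplus\mathrm{Sym}^2\rho_2$ is reducible, which is why the genuine four-dimensional representation is not a symmetric square.

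Next I would invoke Theorem~\ref{main-thm-3-conv} (whose hypotheses are those of the corollary) applied to $\rho_2$: it yields nonisomorphic cuspidal automorphic representations $\pi_1,\pi_2$ of $\GL_2(\A_F)$ with $\pi_1\boxplus\pi_2\cong\pi(\rho_2\oplus(\xi\circ\rho_2))$. By uniqueness of isobaric decompositions, at \emph{every} place $v$ the unordered pair of $L$-parameters $\{\varphi((\pi_1)_v),\varphi((\pi_2)_v)\}$ equals $\{\rho_2|_{W_{F_v}'},(\xi\circ\rho_2)|_{W_{F_v}'}\}$; but the theorem does not tell us which of $\pi_1,\pi_2$ is which, nor that the choice is consistent across $v$. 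The point is that the four operations above are insensitive to this. Applying, respectively, the symmetric cube lift of Kim--Shahidi, the symmetric fourth power lift of Kim, Ramakrishnan's automorphic tensor product $\GL_2\times\GL_2\to\GL_4$, and the Gelbart--Jacquet symmetric square followed by the Kim--Shahidi Rankin--Selberg lift $\GL_2\times\GL_3\to\GL_6$, I obtain isobaric automorphic representations
\[
\mathrm{Sym}^3\pi_1\text{ on }\GL_4(\A_F),\qquad \mathrm{Sym}^4\pi_1\text{ on }\GL_5(\A_F),
\]
\[
\pi_1\boxtimes\pi_2\text{ on }\GL_4(\A_F),\qquad \mathrm{Sym}^2\pi_1\boxtimes\pi_2\text{ on }\GL_6(\A_F).
\]
Their local $L$-parameters at each $v$ are $(\mathrm{Sym}^3\rho_2)|_v$, $(\mathrm{Sym}^4\rho_2)|_v$ (using that $\mathrm{Sym}^3$ and $\mathrm{Sym}^4$ of $\rho_2$ and of $\xi\circ\rho_2$ are isomorphic), $(\rho_2\otimes(\xi\circ\rho_2))|_v$ (symmetry of $\otimes$ kills the labeling ambiguity), and $(\mathrm{Sym}^2\rho_2\otimes(\xi\circ\rho_2))|_v$ — at a place where $\varphi((\pi_1)_v)=\rho_2|_v$ this last parameter is $(\mathrm{Sym}^2\rho_2\otimes(\xi\circ\rho_2))|_v$, at a place with the opposite labeling it is $(\mathrm{Sym}^2(\xi\circ\rho_2)\otimes\rho_2)|_v$, and by the cross-pairing identity both equal the restriction of the six-dimensional representation. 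Hence these four representations are $\pi(\rho)$ for $\rho$ running over the four irreducible representations of degree $>3$, and since any irreducible $\rho:\Gal(E/F)\to\GL_n(\CC)$ with $n>3$ is isomorphic to one of them, the corollary follows. (Each such $\pi(\rho)$ is in fact cuspidal since $\rho$ is irreducible, seen via orders of poles of Rankin--Selberg $L$-functions, but this is not needed for the statement.)

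The main obstacle is the bookkeeping forced by the unresolved labeling of $\pi_1$ versus $\pi_2$: one must verify that each functorial construction used is genuinely symmetric under this swap, and for the six-dimensional representation this is precisely why one must use $\mathrm{Sym}^2\pi_1\boxtimes\pi_2$ together with the cross-pairing identity $\mathrm{Sym}^2\rho_2\otimes(\xi\circ\rho_2)\cong\mathrm{Sym}^2(\xi\circ\rho_2)\otimes\rho_2$ rather than the naive $\mathrm{Sym}^2\pi_1\boxtimes\pi_1$ (whose local parameters do not even come from a single global Galois representation). A secondary technical point, needed to upgrade association at almost all places to the strong association $\pi=\pi(\rho)$, is to invoke compatibility of all the lifts used with the local Langlands correspondence at the ramified and archimedean places.
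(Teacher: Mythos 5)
Your proposal is correct and follows essentially the same route as the paper: invoke Theorem \ref{main-thm-3-conv} to produce $\pi_1, \pi_2$ with $\pi_1 \boxplus \pi_2 \cong \pi(\rho_2 \oplus \xi\circ\rho_2)$, identify the four irreducibles of degree $> 3$ with $\mathrm{Sym}^3\rho_2$, $\rho_2 \otimes \xi\circ\rho_2$, $\mathrm{Sym}^4\rho_2$, and $\mathrm{Sym}^2\rho_2 \otimes \xi\circ\rho_2$ via the character table of $\widetilde{A}_5$ (Lemma \ref{lem-icosa-reps}), and apply the corresponding automorphic lifts of Gelbart--Jacquet, Kim, Kim--Shahidi, Ramakrishnan, and Wang. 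Your explicit discussion of the labeling ambiguity between $\pi_1$ and $\pi_2$ and why each construction is symmetric under the swap (in particular the cross-pairing identity for the six-dimensional case) is left implicit in the paper's statement that the isomorphisms follow from Corollary \ref{cor-isob}, but it is precisely the right justification and a useful thing to spell out.
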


The point of the theorems above is that the sums \eqref{11}, \eqref{12} and their analogues in the other theorems can be rewritten in terms of orbital integrals using either the trace formula (compare \cite{LanglBeyond}, \cite{FLN}) or the relative trace formula (specifically the Bruggeman-Kuznetsov formula, compare \cite{Sarnak}, \cite{Venk})\footnote{We note that when applying the relative trace formula the distributions $h \mapsto \mathrm{tr}(\pi')(h^1)$ will be replaced by Bessel distributions defined using Whittaker functionals. Thus the definition of $\Sigma_{\phi}^{S_0}(X)$ has to be modified to be useful in a relative trace formula approach.}.  One then can hope to compare these limits of orbital integrals and prove nonsolvable base change and descent.  The author is actively working on this comparison.  He hopes that the idea of comparing limiting forms of trace formulae that underlies theorems \ref{main-thm-1-conv}, \ref{main-thm-2-conv}, \ref{main-thm-3-conv} will be useful to others working ``beyond endoscopy.''

To end the introduction we outline the sections of this paper.
Section \ref{sec-notat} states notation and conventions; it can be safely
skipped and later referred to if the reader encounters unfamiliar notation.
In \S \ref{sec-tf}, we review unramified base change, introduce
a notion of transfer for test functions, and prove the existence of the
transfer in certain cases.  Section  \ref{sec-limit-cusp} introduces the
smoothed test functions used in the statement and proof of our main theorems
and develops their basic properties using Rankin-Selberg theory.
Perhaps the most important result is that the trace of these test
functions over the cuspidal spectrum is well-defined and picks
out the representations of interest (see Corollary \ref{cor-aut-trace}).
The behavior of $L$-parameters under restriction along an
extension of number fields is considered in \S \ref{sec-rest-desc};
this is used to motivate the conjectures appearing in our main theorems above,
which are also stated precisely in \S \ref{sec-rest-desc}.
Section \ref{sec-proofs} contains the proofs of the theorems stated above and the proof of Corollary \ref{cor-artin-cases}.
Finally, in \S \ref{sec-groups} we explain why the group-theoretic
assumptions made in theorems \ref{main-thm-1} and \ref{main-thm-2}
are essentially no loss of generality.

\section{General notation} \label{sec-notat}

\subsection{Ad\`eles}
The ad\`eles of a number field $F$ will be denoted by $\A_F$.  We write $\widehat{\OO}_{F}:=\prod_{v \textrm{ finite}}\OO_{F_v}$. For a
set of places $S$ of $F$ we write $\A_{F,S}:=\A_F \cap \prod_{v \in
S}F_v$ and $\A^S_F:=\A_F \cap \prod_{v \not \in S}F_v$.
If $S$ is finite we sometimes write $F_S:=\A_{FS}$.  The set of
infinite places of $F$ will be denoted by $\infty$. Thus
$\A_{\QQ,\infty}=\RR$ and $\A_{\QQ}^{\infty}:=\prod_{\substack{p \in
\ZZ_{>0}
\\p \textrm{ prime}}}\QQ_p$. For an affine $F$-variety $G$ and a
subset $W \leq G(\A_F)$ the notation $W_{S}$ (resp. $W^S$) will
denote the projection of $W$ to $G(\A_{F,S})$ (resp. $G(\A^S_F)$).
If $W$ is replaced by an element of $G(\A_F)$, or if $G$ is an
algebraic group and $W$ is replaced by a character of $G(\A_F)$ or a
Haar measure on $G$, the same notation will be in force; e.g. if
$\gamma \in G(\A_F)$ then $\gamma_v$ is the projection of $\gamma$ to $G(F_v)$.

If $w,v$ are places of $E,F$ with $w|v$ we let $e(E_w/F_v)$ (resp.~$f(E_w/F_v)$) the ramification degree (resp.~inertial degree) of $E_w/F_v$.

\subsection{Restriction of scalars}

Let $A \to B$ be a morphism of $\ZZ$-algebras and let $X \to \mathrm{Spec}(B)$ be a $\mathrm{Spec}(B)$-scheme.  We denote by
$$
\mathrm{R}_{B/A}(X) \to \Spec(A)
$$
the Weil restriction of scalars of $X$.  We will only use this functor in cases
where the representability of $\mathrm{R}_{B/A}(X)$ by a scheme is well-known.   If $X \to \Spec(A)$, we often abbreviate
$$
\mathrm{R}_{B/A}(X):=\mathrm{R}_{B/A}(X_B).
$$

\subsection{Characters}

If $G$ is a group we let $G^{\wedge}$ be the group of abelian characters of $G$.  Characters are always assumed to be unitary.  A general homomorphism $G \to \CC^{\times}$ will be called a quasi-character.  If $E/F$ is a Galois extension of number fields, we often identify
$$
\Gal(E/F)^{\wedge}=F^{\times} \backslash \A_F^{\times}/\N_{E/F}(\A_E^{\times})
$$
using class field theory.

\subsection{Harish-Chandra subgroups} \label{HC-subgroup}
Let $G$ be a connected reductive group over a number field $F$.  We write $A_G \leq Z_G(F \otimes_{\QQ} \RR)$ for the connected component of the real points of the largest $\QQ$-split torus in the center of $\mathrm{R}_{F/\QQ}G$.  Here when we say ``connected component'' we mean in the real topology.  Write
$X^*$ for the group of $F$-rational characters of $G$ and set
$\mathfrak{a}_G:=\mathrm{Hom}(X^*,\RR)$. There is a morphism
\begin{align*}
HC_G:G(\A_F) \lto \mathfrak{a}_G
\end{align*}
defined by
\begin{align}
\langle HC_G(x),\chi\rangle =|\log(x^{\chi})|
\end{align}
for $x \in G(\A_F)$ and $\chi \in X^*$.  We write
\begin{align}
G(\A_F)^1:=\ker(HC_G).
\end{align}
and refer to it as the Harish-Chandra subgroup of $G(\A_F)$.  Note that $G(F) \leq G(\A_F)^1$ and
$G(\A_F)$ is the direct product of $A_G$ and $G(\A_F)^1$.
We say that $\pi$ is an \textbf{automorphic representation of $A_{GF} \backslash G(\A_F)$} if it is an automorphic representation of $G(\A_F)$ trivial on $A_{GF}$ (and therefore unitary).

\subsection{Local fields}
\label{ssec-loc-fields}
A uniformizer for a local field equipped with a discrete valuation will be denoted by $\varpi$. If $F$ is a global field and $v$ is a non-archimedian place of $F$ then we will write $\varpi_v$ for a choice of uniformizer of $F_v$.  The number of elements in the residue field of $F_v$ will be denoted by $q_v$, and we write
$$
| \cdot|_v:F_v \lto \RR_{\geq 0}
$$
 for the $v$-adic norm, normalized so $|\varpi_v|_v=q_v^{-1}$.  For an infinite place $v$, we normalize
$$
|a|_v:=\begin{cases} |a| &\textrm{(the usual absolute value) if }v \textrm{ is real}\\
a\bar{a} & \textrm{(the square of the usual absolute value) if }v \textrm{ is complex.} \end{cases}
$$

\subsection{Field extensions} \label{ssec-fe}

In this paper we will often deal with a tower of field extensions $E \geq F' \geq F$.  When in this setting we have attempted to adhere to the following notational scheme:

\begin{center}
\begin{tabular}{ l | c |c |c | c}
 & Place & Set of places & Test function & Representation \\
\hline
$E$ & $w$ & $S_0$ & $f$ & $\Pi$\\
$F'$ & $v'$ & $S'$ & $h$ & $\pi'$\\
$F$ & $v$ & $S$ & $\Phi$ & $\pi$
\end{tabular}
\end{center}
Thus, e.g. $w$ will be a place of $E$ above the place $v$ of $F$ and $h$ will denote an element of $C_c^{\infty}(\GL_2(F_{v'}'))$ for some place $v'$ of $F'$.

\section{Test functions}
\label{sec-tf}
In this section we recall
basic results on test functions that are used later in the paper.  In \S \ref{ssec-uha}
we set notation for unramified Hecke algebras and the Satake isomorphism.  In \S \ref{ssec-bcformal} we recall
the usual base change map on unramified Hecke algebras and in \S \ref{ssec-transfers} we define a notion of transfer.

\subsection{Unramified Hecke algebras} \label{ssec-uha}

For each positive integer $n$ let
$T_n \leq \GL_n$ be the standard diagonal maximal torus and let
\begin{align}
X_*(T_n) \cong \ZZ^n=\{\lambda:=(\lambda_1, \cdots, \lambda_n)\}
\end{align}
be the group of rational cocharacters.

Let $F_v$ be the completion of a global field $F$ at some non-archimedian place $v$ and let $\varpi_v$ be a uniformizer of $F_v$.  We write
$$
\mathbf{1}_{\lambda}:=\mathrm{ch}_{\GL_n(\OO_{F_v}) \varpi_v^{\lambda} \GL_n(\OO_{F_v})} \in C_c^{\infty}(\GL_n(F_v)//\GL_n(\OO_{F_v}))
$$
for the characteristic function of the double coset
$$
\GL_{n}(\OO_{F_v})\varpi_v^{\lambda}\GL_n(\OO_{F_v}) \in \GL_n(\OO_{F_v}) \backslash \GL_n(F_v) /\GL_n(\OO_{F_v}).
$$

Let $\hat{T}_n \leq \hat{\GL}_n$ denote the dual torus in the (complex) connected dual group.  We let
$$
\mathcal{S}:C_c^{\infty}(\GL_n(F_v)//\GL_n(\OO_{F_v})) \lto \CC[X^*(\widehat{T}_n)]^{W(T_n,\GL_n)} =\CC[t_1^{\pm 1}, \dots,t_n^{\pm 1}]^{S_n}
$$
denote the Satake isomorphism, normalized in the usual manner (see, e.g. \cite[\S 4.1]{Laumon}).  Here $W(T_n,\GL_n)$ is the Weyl group of $T_n$ in $\GL_n$; it is well-known that $W(T_n,\GL_n) \cong S_n$, the symmetric group on $n$ letters.

Let $E/F$ be a field extension and let $v$ be a finite place of $F$.
For the purpose of setting notation, we recall that the Satake isomorphism for $\mathrm{R}_{E/F}\GL_n(F_v)$ induces an isomorphism
$$
\mathcal{S}:C_c^{\infty}(\mathrm{R}_{E/F}\GL_n(F_v)//\mathrm{R}_{\OO_{E}/\OO_F}\GL_n(\OO_{F_v})) \tilde{\lto} \otimes_{w|v} \CC[t_{1w}^{\pm 1},\dots,t_{nw}^{\pm 1}]^{S_n}.
$$
Here the product is over the places $w$ of $E$ dividing $v$ and the $w$ factor of
$$
C_c^{\infty}(\mathrm{R}_{E/F}\GL_n(F_v)//\mathrm{R}_{\OO_{E}/\OO_F}\GL_n(\OO_{F_v})) \cong \prod_{w|v}C_c^{\infty}(\GL_n(E_{w})//\GL_n(\OO_{E_w})),
$$
is sent to the $w$ factor of $\otimes_{w|v} \CC[t_{1w}^{\pm 1},\dots,t_{nw}^{\pm 1}]^{S_n}$.

For a place $w|v$, write
$$
\mathbf{1}_{\lambda w} \in C_c^{\infty}(\mathrm{R}_{E/F}\GL_n(F_v)//\mathrm{R}_{\OO_{E}/\OO_F}\GL_n(\OO_{F_v}))
$$
for the product of $\mathbf{1}_{\lambda} \in C_c^{\infty}(\GL_n(E_{w})//\GL_n(\OO_{E_{w}}))$ with
$$
\prod_{\substack{w'\neq w\\ w' |v}}\mathbf{1}_{(0,\dots,0)w'} \in \prod_{\substack{w' \neq w\\w' |v}}C_c^{\infty}\left(\GL_n(E_{w'})//\GL_n(\OO_{E_{w'}})\right).
$$
Thus $\mathcal{S}(\mathbf{1}_{\lambda w})=p(t_{1w},\dots,t_{nw})$ for some polynomial $p \in \CC[x_1^{\pm 1},\dots,x_n^{\pm 1}]^{S_n}$.

\subsection{Base change for unramified Hecke algebras} \label{ssec-bcformal}
Let $E/F$ be an extension of global fields.
For any subfield $E \geq k \geq F$ we have a base change map
$$
b_{E/k}:{}^L\mathrm{R}_{k/F}\GL_n \lto {}^L\mathrm{R}_{E/F}\GL_n;
$$
it is given by the diagonal embedding on connected components:
$$
({}^L\mathrm{R}_{k/F}\GL_n)^{\circ} \cong \GL_n(\CC)^{[k:F]} \lto ({}^L\mathrm{R}_{E/F}\GL_n)^{\circ} \cong \GL_n(\CC)^{[E:F]}
$$
and the identity on the Weil-Deligne group.
Suppose that $E/F$ is unramified at a finite place $v$ of $F$.
We recall that via the Satake isomorphism the base change map $b_{E/k}$ defines an algebra homomorphism
\begin{align} \label{bEF}
b_{E/k}:C_c^{\infty}(\mathrm{R}_{E/F}\GL_n(F_v)//\mathrm{R}_{\OO_E/\OO_F}\GL_n(\OO_{F_v})) \lto C_c^{\infty}(\mathrm{R}_{k/F}\GL_n(F_v)//\mathrm{R}_{\OO_k/\OO_F}\GL_n(\OO_{F_v})).
\end{align}
In terms of Satake transforms, this map is given explicitly by
$$
b_{E/k}\left(\prod_{w|v}\mathcal{S}(f_w)(t_{1w},\dots,t_{nw})\right)=\prod_{v'|v}\prod_{w|v'}\mathcal{S}(f_w)(t_{1v'}^{i_{v'}},\dots,t_{nv'}^{i_{v'}})
$$
where the product over $v'|v$ is over the places of $k$ dividing $v$ and $i_{v'}$ is the inertial degree of $v'$ in the extension $E/k$ \cite[Chapter 1, \S 4.2]{AC}.  It satisfies the obvious compatibility condition
$$
b_{E/F}=b_{k/F} \circ b_{E/k}.
$$

Let $\pi_v$ be an irreducible admissible unramified representation of $\GL_n(F_v)$ and let $w$ be a place of $E$ above $v$. There exists an irreducible admissible
representation
$$
b_{E/F}(\pi_v)=\pi_{vEw}
$$
of $\GL_n(E_{w})$, unique up to equivalence of representations, such that
\begin{align} \label{BC-map}
\mathrm{tr}(b_{E/F}(\pi_v)(f))=\mathrm{tr}(\pi_v(b_{E/F}(f)))
\end{align}
for all
$f \in C_c^{\infty}(\GL_n(E_w)//\GL_n(\OO_{E_w}))$.  It is called the \textbf{base change} of $\pi_v$, and is
an unramified irreducible admissible representation of $\GL_n(E_w)$.  Explicitly, if $\pi_v=I(\chi)$ is the irreducible unramified constituent of the unitary induction of
an unramified character $\chi:T_n(F_v) \lto \CC^{\times}$, then $\pi_{vEw} \cong I(\chi \circ \N_{E_w/F_v})$, where
$$
\N_{E_w/F_v}:T_n(E_w) \lto T_n(F_v)
$$
is the norm map induced by the norm map $\N_{E_w/F_v}:E_w \to F_v$.  Here $I(\chi \circ \N_{E_w/F_v})$ is the irreducible unramified constituent of the unitary induction of $\chi \circ \N_{E_w/F_v}$.  The fact that
\begin{align} \label{BC-map2}
\mathrm{tr}(b_{E/F}(I(\chi \circ \N_{E_w/F_v})(f))=\mathrm{tr}(I(\chi)(b_{E/F}(f)))
\end{align}
is readily verified using the well-known formulas for the trace of an (unramified) Hecke operator in $C_c^{\infty}(\GL_n(F_v)//\GL_n(\OO_{F_v}))$ acting on a spherical representation in terms of Satake parameters (see \cite[Theorem 7.5.6]{Laumon}).

\subsection{Transfers} \label{ssec-transfers}
Let $E/F$ be a field extension.
As indicated at the beginning of this paper, the local Langlands correspondence for $\GL_n$ implies that the local base change transfer exists.  Thus any irreducible admissible representation $\pi_v$ of $\GL_n(F_v)$ admits a base change $\Pi_w=\pi_{vEw}$ to $\GL_n(E_w)$ for any place $w|v$; this representation is uniquely determined up to isomorphism by the requirement that
$$
\varphi(\pi_v)|_{W_{E_w}'} \cong \varphi(\Pi_w)
$$
where $\varphi(\cdot)$ is the $L$-parameter of $(\cdot)$.
\begin{rem}
There is a representation-theoretic definition, due to Shintani, of a base change of an admissible irreducible representation of $\GL_n(F_v)$ along a cyclic extension \cite[Chapter 1, Definition 6.1]{AC}.  One can iterate this definition along cyclic subextensions of a general solvable extension of local fields to arrive at a representation-theoretic definition of the base change of an irreducible admissible representation.  For unramified representations of non-archimedian local fields, one uses descent \cite[Lemma 7.5.7]{Laumon} to verify that the two definitions are compatible.  Similarly, it is easy to see that they are compatible for abelian twists of the Steinberg representation using compatibility of the local Langlands correspondence with twists, the fact that the Steinberg representation has a very simple $L$-parameter (namely the representation of \cite[(4.1.4)]{Tate}) and \cite[Chapter 1, Lemma 6.12]{AC}.  However, the author does not know of any reference for their compatibility in general.  It probably follows from the compatibility of the local Langlands correspondence with $L$-functions and $\varepsilon$-factors of pairs together with the local results of \cite[Chapter 1, \S 6]{AC}, but we have not attempted to check this.
\end{rem}

Now assume that $E \geq F' \geq F$ is a subfield.  Let $S$ be a set of places of $F$ and let $S'$ (resp.~$S_0$) be the set of places of $F'$ (resp.~$E$) lying above places in $S$.

\begin{defn} \label{defn-transf}
Two functions $h_{S'} \in C_c^{\infty}(\GL_n(\A_{F'S'}))$ and $\Phi_S \in C_c^{\infty}(\GL_n(\A_{FS}))$ are said to be \textbf{transfers} of each other if there is a function $f_{S_0} \in C_c^{\infty}(\GL_n(\A_{ES_0}))$ such that for all irreducible generic unitary representations $\pi_{S}$ of $\GL_n(F_S)$ one has
$$
\prod_{w \in S_0}\mathrm{tr}(\pi_{vEw})(f_w)=\prod_{v' \in S'} \mathrm{tr}(\pi_{vF'v'})(h_{v'})=\prod_{v \in S}\mathrm{tr}(\pi_{v})(\Phi_v)
$$
\end{defn}

We immediately state one conjecture suggested by this definition:

\begin{conj} \label{conj-transf} Let $S$ be a finite set of places of $F$ containing the infinite places.  If $\Pi^{\sigma} \cong \Pi$ for all $\sigma \in \Gal(E/F)$, then there exists an $f_{S_0} \in C_c^{\infty}(\GL_n(E_{S_0}))$ and $h_{S'} \in C_c^{\infty}(\GL_n(F'_{S'}))$ that admits a transfer $\Phi_{S} \in C_c^{\infty}(\GL_n(F_S))$ of positive type such that the identity of Definition \ref{defn-transf} holds for all irreducible generic unitary representations $\pi_S$ of $\GL_n(F_S)$ and additionally
$$
\mathrm{tr}(\Pi_{S_0})(f_{S_0}) \neq 0.
$$
\end{conj}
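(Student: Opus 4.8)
The plan is to reduce the assertion to local harmonic analysis at the finitely many places of $S$ and to build all three families of test functions from a single choice on the $E$-side. Since $C_c^{\infty}$-functions on $\GL_n(F_S)$, $\GL_n(F'_{S'})$ and $\GL_n(E_{S_0})$ may be taken to be pure tensors over the places of $S$, $S'$ and $S_0$, it is enough to fix a place $v$ of $F$ together with the places $v'|v$ of $F'$ and $w|v$ of $E$, and to produce $f_w\in C_c^{\infty}(\GL_n(E_w))$, $h_{v'}\in C_c^{\infty}(\GL_n(F'_{v'}))$ and $\Phi_v\in C_c^{\infty}(\GL_n(F_v))$ with $\Phi_v$ of positive type such that
$$
\prod_{w|v}\mathrm{tr}(\pi_{vEw})(f_w)=\prod_{v'|v}\mathrm{tr}(\pi_{vF'v'})(h_{v'})=\mathrm{tr}(\pi_v)(\Phi_v)
$$
for every irreducible generic unitary representation $\pi_v$ of $\GL_n(F_v)$, and with $\prod_{w|v}\mathrm{tr}(\Pi_w)(f_w)\neq 0$; here $\pi_{vEw}$ and $\pi_{vF'v'}$ denote the local base changes of $\pi_v$, which exist unconditionally by the local Langlands correspondence recalled in \S\ref{ssec-transfers}. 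First I would fix, for each $w|v$, a function $f_w=g_w*g_w^{*}$ of positive type with $\mathrm{tr}(\Pi_w)(f_w)=\|\Pi_w(g_w)\|^2_{\mathrm{HS}}>0$, choosing $g_w$ so that $\Pi_w(g_w)\neq 0$; the hypothesis $\Pi^{\sigma}\cong\Pi$ lets one make these choices compatibly under $\Gal(E/F)$ over the places $w|v$. Then $\prod_{w\in S_0}\mathrm{tr}(\Pi_w)(f_w)>0$ automatically, and all that remains is to match $f_{S_0}$ by suitable $h_{S'}$ and $\Phi_S$.

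The functions $h_{v'}$ and $\Phi_v$ I would obtain from one principle: the function $\pi\mapsto\prod_{w}\mathrm{tr}(\pi_{Ew})(f_w)$, where $w$ runs over the places of $E$ above $v'$ in the first case and above $v$ in the second, should lie in the image of $C_c^{\infty}$ under the trace map, so that the trace Paley--Wiener theorem (Bernstein--Deligne--Kazhdan in the $p$-adic case, Clozel--Delorme at the archimedean places) produces the desired function. On Langlands parameters $\pi\mapsto\pi_{Ew}$ is restriction along $W_{E_w}'\hookrightarrow W_{F'_{v'}}'$ (resp.\ $W_{E_w}'\hookrightarrow W_{F_v}'$), a finite-to-one algebraic map, and a finite product of pull-backs of Paley--Wiener functions along such maps should again be Paley--Wiener; the point requiring care is compatibility with constant terms along every parabolic subgroup, which one checks using that base change commutes with parabolic induction at the level of parameters. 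For the extension $E/F'$ this analysis is supported by the solvability of $\Gal(E/F')$: there $\pi_{v'Ew}$ is given by iterated Shintani base change and the requisite control is within reach of the local theory of Arthur and Clozel \cite[Chapter~1]{AC}; for $E/F$ the decomposition groups $\Gal(E_w/F_v)$ may be nonsolvable, and only local Langlands is available. Once $\Phi_v$ is produced on the tempered dual, the identity extends to every irreducible generic unitary $\pi_v$ because such a representation is irreducibly parabolically induced and $\mathrm{tr}(\mathrm{Ind}_P^{\GL_n(F_v)}(\tau))(\Phi_v)=\mathrm{tr}(\tau)(\Phi_v^{(P)})$.

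Since the $f_w$ were chosen of positive type one has $\prod_{w}\mathrm{tr}(\pi_{Ew})(f_w)\geq 0$ on the tempered dual, and a factorization argument of Fej\'er--Riesz type should let one take $h_{v'}$ and $\Phi_v$ themselves of positive type; tensoring over $v\in S$ then assembles $f_{S_0}$, $h_{S'}$ and $\Phi_S$, and $\mathrm{tr}(\Pi_{S_0})(f_{S_0})=\prod_{w\in S_0}\mathrm{tr}(\Pi_w)(f_w)\neq 0$ by construction.

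The hard part will be the claim that the base change pull-back preserves the trace Paley--Wiener space when $E/F$ is ramified at $v$ --- equivalently, that the interaction of local base change with the Bernstein decomposition and the constant-term calculus is as benign as expected --- together with the factorization needed for the positive-type condition. Unlike the cyclic (hence solvable) case, there is no Shintani-style representation-theoretic model of $\pi_v\mapsto\pi_{vEw}$ along a nonsolvable extension to fall back on, so one is forced to work purely through the local Langlands correspondence, and the harmonic-analytic input required does not appear to be recorded in the literature; this is why the statement is posed as a conjecture rather than a theorem. In the everywhere-unramified case everything is elementary: $f_w$ lies in the spherical Hecke algebra, $\mathrm{tr}(\pi_{vEw})(f_w)$ is a symmetric Laurent polynomial in the Satake parameters of $\pi_v$, and $h_{v'}$ and $\Phi_v$ can be written down directly via the Satake isomorphism of \S\ref{ssec-bcformal} --- which recovers the remark following Theorem~\ref{main-thm-1-conv} that Conjecture~\ref{conj-transf} may be dispensed with when $\Pi$ and $E/F$ are everywhere unramified.
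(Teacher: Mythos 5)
This statement is a \emph{conjecture} in the paper; there is no proof to compare against, only the cautionary remark that follows it and three partial verifications (Lemma~\ref{lem-unr-transf} in the unramified case, Lemma~\ref{lem-archi-transf} at complex archimedean places, and Lemma~\ref{lem-EP} for truncated Euler--Poincar\'e functions). Your sketch correctly recognizes that it is a conjecture and that the crux is the ramified finite places, and your recovery of the unramified case via the Satake isomorphism and of the archimedean case via descent to the torus reproduces exactly the mechanism behind Lemmas~\ref{lem-unr-transf} and~\ref{lem-archi-transf}. That much aligns with the paper.

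Where you and the paper part ways is in what you each identify as the central obstruction. Your route is spectral: reduce to a single place, fix positive-type $f_w=g_w*g_w^*$ on the $E$-side, and try to realize $\pi_v\mapsto\prod_{w}\mathrm{tr}(\pi_{vEw})(f_w)$ in the trace Paley--Wiener image on the $F'$- and $F$-sides. The paper's remark instead stresses the geometric side --- the absence of a norm map from conjugacy classes of $\GL_n(E_{S_0})$ to conjugacy classes of $\GL_n(F_S)$, and the mismatch that cyclic base change gives control of \emph{twisted} traces rather than ordinary ones --- and, crucially, it hedges by saying the equality may only hold up to an undetermined transfer factor. Your proposal does not engage with that last point: if a nontrivial transfer factor is needed, the precise shape of the Paley--Wiener matching problem changes, so even the target of your argument may be wrong as currently stated.

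There are also two concrete soft spots in your sketch beyond the ramified Paley--Wiener step you flag. First, the positivity of $\prod_{w}\mathrm{tr}(\pi_{vEw})(f_w)$ requires $\pi_{vEw}$ to be unitary; local base change of a generic \emph{unitary} representation of $\GL_n(F_v)$ need not be unitary a priori, and you do not address this, so the claim that $\prod_w\mathrm{tr}(\pi_{vEw})(f_w)\geq 0$ on the generic unitary dual is not established. Second, the ``Fej\'er--Riesz type'' factorization used to pass positivity from the $E$-side to $\Phi_v$ is asserted, not argued; moreover the conjecture only asks that $\Phi_S$ be of positive type (not $h_{S'}$ or $f_{S_0}$), so this step is doing more work than is needed while being the least supported. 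None of this overturns your overall diagnosis --- the statement is genuinely conjectural and the nonsolvable ramified case is where the unknowns live --- but the sketch is a heuristic route that is broader and more optimistic than the paper's own, and the paper's warning about transfer factors suggests the intended content of the conjecture may be weaker than the clean equality your Paley--Wiener strategy is aiming to produce.
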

Here we say that $\Phi_S$ is of positive type if $\mathrm{tr}(\pi_S)(\Phi_S) \geq 0$ for all irreducible generic unitary admissible representations $\pi_S$ of $\GL_n(F_S)$.

\begin{rem}
Understanding which $h_{S'}$ and $\Phi_S$ are transfers of each other seems  subtle.   One na\"ive guess is that those $\Phi_S$ that are supported on ``norms'' of elements of $\GL_n(E_{S_0})$ should be transfers.  However, there does not appear to be a good notion of the norm map from conjugacy classes in $\GL_n(E_{S_0})$ to conjugacy classes in $\GL_n(F_S)$ and this guess makes little sense without a notion of norm.   One also hopes to be able to make use of the theory of cyclic base change, but in that setting one is interested in twisted traces, not traces.
For these reasons, the author is somewhat hesitant in stating Conjecture \ref{conj-transf}.  The author wouldn't be surprised if the conjectured equality holds only up some transfer factor.
\end{rem}

In one case the existence of transfers  is clear:

\begin{lem} \label{lem-unr-transf}  Assume that $S$ is a set of finite places and that $E/F$ is unramified at all places in $S$.  Let
$$
f_{S_0} \in C_c^{\infty}(\GL_n(E_{S_0})//\GL_n(\OO_{E_{S_0}})).
$$
Then $b_{E/F'}(f_{S_0})$ and $b_{E/F}(f_{S_0})$ are transfers of each other.
\end{lem}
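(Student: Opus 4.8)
The plan is to prove the statement for a single field extension and apply it twice, to $E/F'$ and to $E/F$, using $f_{S_0}$ itself as the auxiliary function witnessing the transfer in Definition \ref{defn-transf}. Note that $b_{E/F'}(f_{S_0})$ and $b_{E/F}(f_{S_0})$ are unramified bi-invariant functions whose local components at a place $v\in S$ are $\prod_{v'|v}h_{v'}$ and $\Phi_v$ respectively, where $h_{v'}$, $\Phi_v$ denote these components. So the goal is to show that for every irreducible generic unitary (in fact, for every irreducible admissible) representation $\pi_S$ of $\GL_n(F_S)$ one has
\[
\prod_{w\in S_0}\mathrm{tr}(\pi_{vEw})(f_w)=\prod_{v'\in S'}\mathrm{tr}(\pi_{vF'v'})\bigl(b_{E/F'}(f_{S_0})_{v'}\bigr)=\prod_{v\in S}\mathrm{tr}(\pi_v)\bigl(b_{E/F}(f_{S_0})_{v}\bigr),
\]
i.e. the middle and right products both coincide with the left one. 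By linearity in $f_{S_0}$ one may assume $f_{S_0}=\prod_{w\in S_0}f_w$ with $f_w\in C_c^{\infty}(\GL_n(E_w)//\GL_n(\OO_{E_w}))$, and then everything factors over the places $v\in S$; so the whole assertion reduces to the identity $\prod_{w|v}\mathrm{tr}(\pi_{vEw})(f_w)=\mathrm{tr}(\pi_v)\bigl(b_{E/F}(\prod_{w|v}f_w)\bigr)$ at a single place, together with its analogue over $F'$.

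First I would dispose of the ramified case. If $\pi_v$ has no nonzero $\GL_n(\OO_{F_v})$-fixed vector, then $\pi_v(g)$ annihilates the space of $\pi_v$ for every $g$ in the spherical Hecke algebra (its image lies in $\pi_v^{\GL_n(\OO_{F_v})}=0$), so $\mathrm{tr}(\pi_v)\bigl(b_{E/F}(\prod_w f_w)\bigr)=0$; moreover, since $E/F$ is unramified at $v$ the extensions $F'_{v'}/F_v$ and $E_w/F_v$ are unramified, so restricting the $L$-parameter $\varphi(\pi_v)$ along them changes neither the inertia action nor the monodromy operator, whence $\pi_{vF'v'}$ and $\pi_{vEw}$ are ramified as well and the corresponding traces vanish too. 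Hence all three products are $0$.

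In the unramified case I would use that $\dim\pi_v^{\GL_n(\OO_{F_v})}=1$, so $g\mapsto\mathrm{tr}(\pi_v)(g)$ is an algebra homomorphism from the spherical Hecke algebra of $\GL_n(F_v)$ to $\CC$; since $b_{E/F}$ is an algebra homomorphism (\S\ref{ssec-bcformal}) this reduces the single-place identity to the one-variable case $\mathrm{tr}(\pi_{vEw})(f_w)=\mathrm{tr}(\pi_v)(b_{E/F}(f_w^{\flat}))$, where $f_w^{\flat}$ is $f_w$ at $w$ and the unit at the remaining places above $v$. But this is exactly \eqref{BC-map}--\eqref{BC-map2}: the Satake parameter of $\pi_{vEw}=b_{E/F}(\pi_v)$ is obtained from that of $\pi_v$ by raising each coordinate to the power $f(E_w/F_v)$, which is precisely how $b_{E/F}$ acts on Satake transforms as recalled in \S\ref{ssec-bcformal}, while $\mathrm{tr}(\pi_v)(g)$ is the value of the Satake transform of $g$ at the parameter of $\pi_v$ by \cite[Theorem 7.5.6]{Laumon}. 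Transitivity of base change --- $b_{E/F}=b_{F'/F}\circ b_{E/F'}$ on Hecke algebras and $(\pi_{vF'v'})_{E_w}\cong\pi_{vEw}$ on representations --- makes the $F'$-version of the identity formally identical. Combining, all three products equal $\prod_{w\in S_0}\mathrm{tr}(\pi_{vEw})(f_w)$, as desired.

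I do not expect a genuine obstacle here: the content is entirely the bookkeeping of the $\mathrm{R}_{E/F}$-formalism and the compatibility of the Hecke-algebraic and representation-theoretic descriptions of unramified local base change recalled in \S\ref{ssec-bcformal}. The two points that deserve a moment's care --- both handled above --- are that spherical test functions cannot detect ramified representations (so the ``generic unitary'' clause in Definition \ref{defn-transf} plays no role in this lemma) and that base change along an \emph{unramified} local extension preserves unramifiedness.
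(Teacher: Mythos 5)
Your proof is correct and follows the same two-case strategy as the paper: in the unramified case both you and the paper appeal to the compatibility of the Hecke-algebra map $b_{E/F}$ with unramified base change of representations recalled in \S\ref{ssec-bcformal}, and in the ramified case both observe that since $E/F$ is unramified at $v$ the local base changes $\pi_{vF'v'}$, $\pi_{vEw}$ remain ramified, so all three traces against spherical functions vanish. Your write-up simply supplies more of the bookkeeping (reduction to pure tensors, single places, and the inertia/monodromy argument) that the paper leaves implicit.
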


\begin{proof}
If $\pi_S$ is unramified then the identity in the definition of transfers follows from the discussion in \S \ref{ssec-bcformal}.  If for some $v \in S$ the representation $\pi_v$ is ramified then $\pi_{vE}$ and $\pi_{vF'}$ are both ramified since the extension $E/F$ is unramified and the local Langlands correspondence sends (un)ramified representations to (un)ramified representations.  Thus in this case for $f_{S_0}$ as in the lemma one has
$$
\prod_{w \in S_0}\mathrm{tr}(\pi_{vEw})(f_w)=\prod_{v' \in S'} \mathrm{tr}(\pi_{vF'v'})(h_{v'})=\prod_{v \in S}\mathrm{tr}(\pi_{v})(\Phi_v)=0.
$$

\end{proof}

Another case where transfers exist is the following:

\begin{lem} \label{lem-archi-transf}
Let $v$ be a complex place of $F$ and let $S=\{v\}$.  Moreover let $f_{S_0}=\otimes_{w|v}f_w \in C_c^{\infty}(\GL_n(E_{S_0}))$. Then there exist functions $h_{S'} \in C_c^{\infty}(\GL_n(F'_{S'}))$ and $\Phi_v \in C_c^{\infty}(\GL_n(F_v))$ that are transfers of each other such that the identity of Definition \ref{defn-transf} holds.
\end{lem}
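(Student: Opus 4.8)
The plan is to reduce the assertion to a ``product realization'' statement for $\GL_n(\CC)$ and then to prove that statement by transferring the problem to the diagonal torus.

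The point of the hypothesis that $v$ is complex is that then $E_w = F'_{v'} = F_v = \CC$ for every $w\mid v$ in $E$ and every $v'\mid v$ in $F'$; there are exactly $[E:F]$ of the former places and $[F':F]$ of the latter, and $\{w\mid v\}$ is the disjoint union of the fibres $\{w\mid v'\}$, each of cardinality $[E:F']$, over the places $v'$ of $F'$ above $v$. Since $E_w = F_v$, the Weil--Deligne groups coincide and the local base change maps are the identity, so $\pi_{vEw} = \pi_{vF'v'} = \pi_v$ for every irreducible admissible representation $\pi_v$ of $\GL_n(\CC)$. Taking $f_{S_0} = \otimes_{w\mid v} f_w$ as the witnessing function, the identity of Definition~\ref{defn-transf} to be arranged therefore reads
\[
\prod_{w\mid v}\mathrm{tr}(\pi_v)(f_w) \;=\; \prod_{v'\mid v}\mathrm{tr}(\pi_v)(h_{v'}) \;=\; \mathrm{tr}(\pi_v)(\Phi_v)
\]
for all irreducible generic unitary $\pi_v$. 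In view of the above partition of $\{w\mid v\}$, it suffices to prove: given finitely many $g_1,\dots,g_m\in C_c^\infty(\GL_n(\CC))$, there exists $g\in C_c^\infty(\GL_n(\CC))$ with $\mathrm{tr}(\pi)(g) = \prod_{i=1}^m \mathrm{tr}(\pi)(g_i)$ for every irreducible generic unitary $\pi$. Indeed, applying this to the family $\{f_w:w\mid v'\}$ defines $h_{v'}$ for each $v'\mid v$, applying it to $\{f_w:w\mid v\}$ defines $\Phi_v$, and then all three members of the display equal $\prod_{w\mid v}\mathrm{tr}(\pi_v)(f_w)$.

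To prove this product realization claim I would pass to the standard diagonal torus $T = T_n$ of $\GL_n$. Because $\CC$ is algebraically closed there are no essentially square-integrable representations of $\GL_m(\CC)$ for $m\geq 2$, so by the local Langlands correspondence and the classification of the generic unitary dual, every irreducible generic unitary representation of $\GL_n(\CC)$ is a full principal series $I(\chi) = \mathrm{Ind}_{B(\CC)}^{\GL_n(\CC)}\chi$ attached to a quasi-character $\chi$ of $T(\CC)\cong(\CC^\times)^n$. By the Weyl integration formula and the character formula for parabolically induced representations there is a linear transform $g\mapsto g^{B}$ from $C_c^\infty(\GL_n(\CC))$ to $C_c^\infty(T(\CC))^{W}$, with $W$ the Weyl group, such that $\mathrm{tr}(I(\chi))(g) = \widehat{g^{B}}(\chi)$ for all $\chi$, where $\widehat{h}(\chi) := \int_{T(\CC)} h(t)\chi(t)\,dt$ is the Fourier transform on the abelian group $T(\CC)$. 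Given $g_1,\dots,g_m$ I would form $u := g_1^{B}*\cdots*g_m^{B}\in C_c^\infty(T(\CC))^{W}$, the convolution over $T(\CC)$, which satisfies $\widehat{u} = \prod_i \widehat{g_i^{B}}$; choosing $g\in C_c^\infty(\GL_n(\CC))$ with $g^{B} = u$ then yields $\mathrm{tr}(I(\chi))(g) = \widehat{u}(\chi) = \prod_i \mathrm{tr}(I(\chi))(g_i)$ for every $\chi$, which is exactly the required identity.

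The step I expect to be the main obstacle is the existence of $g$ with $g^{B} = u$, i.e.\ the surjectivity of the Abel--Harish-Chandra transform $g\mapsto g^{B}$ onto $C_c^\infty(T(\CC))^{W}$ --- equivalently, the statement that the image of $g\mapsto(\pi\mapsto\mathrm{tr}(\pi)(g))$ on the generic unitary dual of $\GL_n(\CC)$ is a Paley--Wiener space closed under pointwise multiplication. For the complex group $\GL_n(\CC)$ this surjectivity is classical, going back to Harish-Chandra's analysis of complex semisimple groups; alternatively one can invoke Arthur's Paley--Wiener theorem for real reductive groups and note that its defining conditions (holomorphy in the continuous parameter, uniform exponential type, rapid decrease, and Weyl invariance) are each stable under products. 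The only other ingredient, that the irreducible generic unitary representations of $\GL_n(\CC)$ are full principal series, is a standard consequence of the local Langlands correspondence together with the absence of essentially square-integrable representations of $\GL_m(\CC)$ for $m\geq 2$.
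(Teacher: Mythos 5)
Your proof is correct and takes essentially the same route as the paper: descend to the diagonal torus via the constant term map (Knapp (10.22)--(10.23)), realize the product of traces as the Fourier transform of a convolution on $T(\CC)$, and invoke surjectivity of the constant term map onto $C_c^\infty(T(\CC))^{W}$. The paper phrases the convolution step as pushforward along the norm maps $\N_{E/F'}, \N_{E/F}$ on the torus, which at a complex place is exactly the coordinate-wise multiplication whose pushforward of a product function is the convolution you write, so the two arguments coincide.
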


\begin{proof}
We use descent to prove the lemma.  Let $B \leq \GL_n$ be the Borel subgroup of lower triangular matrices.  We let $B=MN$ be its Levi decomposition; thus $M \leq \GL_n$ is the maximal torus of diagonal matrices.
 Let $\pi_v$ be an irreducible unitary generic representation of $\GL_n(F_v)$.
It is necessarily isomorphic to the unitary induction $\mathrm{Ind}_{B(F_v)}^{\GL_n(F_v)}(\chi)$ of a quasi-character $\chi:M(F_v) \to \CC^{\times}$ (where $\chi$ is extended to a representation of $MN(F_v)$ by letting $\chi$ act trivially on $N(F_v)$).  Here  we have used our assumption that $\pi_v$ is generic to conclude that the corresponding induced representation is irreducible (see the comments following \cite[Lemma 2.5]{JacquetRS} or \cite{Vogan}).  Let
$\Phi_v^{(B(F_v))} \in C_c^{\infty}(M(F_v))$ denote the constant term of $\Phi_v$ along $B(F_v)$.  Then one has
$$
\mathrm{tr}(\pi_v)(\Phi_v)=\mathrm{tr}(\chi)(\Phi_v^{(B(F_v))})
$$
(see \cite[(10.23)]{Knapp}).

Now let
\begin{align*}
N_{E/F'}:M(E \otimes_FF_v) &\lto M(F' \otimes_FF_v)\\
\N_{E/F} M(E \otimes_F F_v) &\lto M(F_v)
\end{align*}
be the norm maps.  Let $f_{S_0}=\otimes_vf_v \in C_c^{\infty}(\GL_n(E_{S_0}))$.
From the comments above, we see that if $h_{S'} =\otimes_{v'|v}h_v\in C_c^{\infty}(\GL_n(F'_{S'}))$ and $\Phi_v \in C_c^{\infty}(\GL_n(F_v))$ are any functions such that
\begin{align*}
\N_{E/F'*}(f^{(B(E_{S_0}))})&=h_{S'}^{(B(F'_{S'}))}\\
\N_{E/F*}(f^{(B(E_{S_0}))})&=\Phi_v^{(B(F_v))}
\end{align*}
then $h_{S'}$ and $\Phi_v$ are transfers of each other and the identity of Definition \ref{defn-transf} holds.

We note that
$$
\N_{E/F'*}(f^{(B(E_{S_0}))})_{v'} \in C_c^{\infty}(M(F'_{v'}))^{W(M,\GL_n)}
$$
for each $v'|v$ and
$$
\N_{E/F*}(f^{(B(E_{S_0}))})_v \in C_c^{\infty}(M(F_v))^{W(M,\GL_n)}.
$$
Thus to complete the proof it suffices to observe that the map
\begin{align*}
C_c^{\infty}(\GL_n(F_v)) &\lto C_c^{\infty}(M(F_v))^{W(M,\GL_n)}\\
\Phi_v &\longmapsto \Phi_v^{(B(F_v))} \nonumber
\end{align*}
is surjective by definition of the constant term \cite[(10.22)]{Knapp} and the Iwasawa decomposition of $\GL_n(F_v)$.
\end{proof}

Finally we consider transfers of Euler-Poincar\'e functions.  Let $v$ be a finite place of $F$ and let
$$
f_{EP} \in C_c^{\infty}(\mathrm{SL}_n(F_v)) \quad \textrm{ and } \quad f'_{EP} \in C_c^{\infty}(\mathrm{SL}_n(F' \otimes_FF_v))
$$
be the Euler-Poincar\'e functions on $\mathrm{SL}_n(F_v)$ and $\mathrm{SL}_n(F' \otimes F_v)$, respectively, defined with respect to the Haar measures on $\SL_n(F_v)$ and $\mathrm{SL}_n(F' \otimes F_v)$ giving $\mathrm{SL}_n(\OO_{F_v})$ and $\mathrm{SL}_n(\OO_{F'} \otimes_{\OO_{F}} \OO_{F_v})$ volume one, respectively \cite[\S 2]{KottTama}.  Moreover fix functions $\nu \in C_c^{\infty}(\GL_n(F_v)/\mathrm{SL}_n(F_v))$ and $\nu' \in C_c^{\infty}(\GL_n(F' \otimes_FF_v)/\mathrm{SL}_n(F' \otimes F_v))$.  We thereby obtain functions
$$
\nu f_{EP} \in C_c^{\infty}(\GL_n(F_v)) \quad \textrm{ and } \quad \nu'f'_{EP} \in C_c^{\infty}(\GL_n(F' \otimes_F F_v))
$$
We refer to any function in $C_c^{\infty}(\GL_n(F_v))$ (resp.~$C_c^{\infty}(\GL_n(F' \otimes_F F_v))$) of the form $\nu f_{EP}$ (resp.~$\nu'f'_{EP}$) as a \textbf{truncated Euler-Poincar\'e function}.

For the purposes of stating a lemma on transfers, fix measures on $F_v^{\times}$, $(F' \otimes_F F_{v})^{\times}$, and $(E \otimes_F F_v)^{\times}$ giving their maximal compact open subgroups (e.g. $\OO_{F_v}^{\times}$) volume one.  These measures induce Haar measures on $F_v^{\times} \cong \GL_n(F_v)/\mathrm{SL}_n(F_v)$, etc.
Assume for the purposes of the following lemma that $\nu$ and $\nu'$ are chosen so that there is a function $\nu_0 \in C_c^{\infty}(\GL_n(E \otimes_F F_v)/\mathrm{SL}_n(E \otimes_F F_v))$ such that
$$
\N_{E\otimes_{F}F_v/F' \otimes_F F_v*}\nu_0=\nu' \quad
\textrm{ and } \quad \N_{E\otimes_{F}F_v/F_v*}\nu_0=\nu.
$$
\begin{lem} \label{lem-EP}
Let $v$ be a finite place of $F$ that is unramified in $E/F$ and let $S=\{v\}$.  The Euler Poincar\'e functions
$$
(-1)^{r_1}\nu'f'_{EP} \quad  \textrm{ and } \quad (-1)^{r_2} \nu f_{EP}
$$
are transfers of each other for some integers $r_1,r_2$ that depend on $v$ and $E/F$.
\end{lem}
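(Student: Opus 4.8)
The plan is to verify Definition~\ref{defn-transf} directly, taking as witnessing function the truncated Euler--Poincar\'e function $f_{S_0}:=\nu_0 f_{EP}''$ on $\GL_n(E\otimes_FF_v)$, where $f_{EP}''$ is the Euler--Poincar\'e function of $\SL_n(E\otimes_FF_v)=\prod_{w\mid v}\SL_n(E_w)$ normalized as in the statement. Everything reduces to the character theory of Euler--Poincar\'e functions together with the explicit base change of the Steinberg representation.

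First I would record the basic property of $f_{EP}$ (Kottwitz~\cite{KottTama}): for an irreducible admissible representation $\sigma$ of $\SL_n(F_v)$ one has $\mathrm{tr}(\sigma)(f_{EP})=\chi(\SL_n(F_v),\sigma)$, the Euler--Poincar\'e characteristic, which vanishes whenever $\sigma$ is neither trivial nor the Steinberg representation, and equals $(-1)^{n-1}$ on the latter (the $F_v$-split rank of $\SL_n$ being $n-1$), and likewise over $F'_{v'}$ and $E_w$. Using the splitting $\GL_n=\SL_n\rtimes\GG_m$ one has $\mathrm{tr}(\pi_v)(\nu f_{EP})=\int_{F_v^\times}\nu(s)\,\mathrm{tr}\big(\pi_v(s)\,\pi_v|_{\SL_n(F_v)}(f_{EP})\big)\,ds$; combining this with the branching law for $\GL_n\to\SL_n$ (Gelbart--Knapp: the restriction is multiplicity free, and two irreducibles of $\GL_n(F_v)$ share a constituent of their restrictions exactly when they differ by a quasi-character of $F_v^\times$ composed with $\det$), with the vanishing above, and with the fact that a generic representation has no trivial $\SL_n(F_v)$-constituent, one finds that $\mathrm{tr}(\pi_v)(\nu f_{EP})=0$ unless $\pi_v\cong\mathrm{St}_{\GL_n(F_v)}\otimes\chi$ for a quasi-character $\chi$ of $F_v^\times$; and on that locus, since $\mathrm{St}_{\GL_n(F_v)}|_{\SL_n(F_v)}=\mathrm{St}_{\SL_n(F_v)}$ is irreducible, the computation comes down to the claim that the $\mathrm{Ad}(\mathrm{diag}(s,1,\dots,1))$-twisted Euler--Poincar\'e characteristic of $\mathrm{St}_{\SL_n(F_v)}$ equals $(-1)^{n-1}$ for every $s\in F_v^\times$, which gives $\mathrm{tr}(\pi_v)(\nu f_{EP})=(-1)^{n-1}\,\widehat\nu(\chi)$ with $\widehat\nu(\chi):=\int_{F_v^\times}\nu(s)\chi(s)\,ds$; the same formulas hold over $F'_{v'}$ and $E_w$.

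Next I would feed in base change. Since $E/F$ is unramified at $v$, the $L$-parameter $\mathrm{Sp}(n)$ of $\mathrm{St}_{\GL_n(F_v)}$ restricts along $W_{E_w}'\hookrightarrow W_{F_v}'$ to the $\mathrm{Sp}(n)$-parameter over $E_w$ (the monodromy is unchanged and $|\cdot|_{F_v}\circ\N_{E_w/F_v}=|\cdot|_{E_w}$), so the base change of $\mathrm{St}_{\GL_n(F_v)}\otimes\chi$ along $E_w/F_v$ is $\mathrm{St}_{\GL_n(E_w)}\otimes(\chi\circ\N_{E_w/F_v})$, and similarly along $F'_{v'}/F_v$; conversely, as the monodromy is preserved by restriction, only quasi-character twists of Steinberg can have base changes that are such twists. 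Hence outside the locus $\pi_v\cong\mathrm{St}_{\GL_n(F_v)}\otimes\chi$ all three products in Definition~\ref{defn-transf} vanish, while on this locus, by the previous paragraph, they equal $(-1)^{(n-1)\#\{w\mid v\}}\,\widehat{\nu_0}\big(\chi\circ\N_{E\otimes_FF_v/F_v}\big)$, $(-1)^{(n-1)\#\{v'\mid v\}}\,\widehat{\nu'}\big(\chi\circ\N_{F'\otimes_FF_v/F_v}\big)$ and $(-1)^{n-1}\,\widehat\nu(\chi)$, respectively. The hypotheses $\N_{E\otimes_FF_v/F'\otimes_FF_v*}\nu_0=\nu'$ and $\N_{E\otimes_FF_v/F_v*}\nu_0=\nu$, transitivity of the norm, and the formal identity $\widehat{\N_*\mu}(\psi)=\widehat\mu(\psi\circ\N)$ identify each of these transforms with $\widehat\nu(\chi)$; matching the three expressions then forces $r_1$ and $r_2$ to be $(n-1)(\#\{w\mid v\}-\#\{v'\mid v\})$ and $(n-1)(\#\{w\mid v\}-1)$ modulo~$2$, quantities determined by the residue degrees of $v$ in $F'/F$ and $E/F$ and hence depending only on $v$ and $E/F$.

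The main obstacle is the trace computation behind the first ingredient on the locus $\pi_v\cong\mathrm{St}_{\GL_n(F_v)}\otimes\chi$: one must show that $\mathrm{tr}\big(\pi_v(s)\,\pi_v|_{\SL_n(F_v)}(f_{EP})\big)$ equals $(-1)^{n-1}\chi(s)$ independently of $s$, equivalently that $\GL_n(F_v)/\SL_n(F_v)$ acts trivially on the line $H^{n-1}(\SL_n(F_v),\mathrm{St})$. This requires care with the $\GL_n/\SL_n$-twisting, with the (mild) dependence of the pointwise definition of $\nu f_{EP}$ on the chosen section of $\det$, and with the conjugation behaviour of $f_{EP}$. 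The most robust way to settle it is to pass to the geometric side: Kottwitz's computation shows that the orbital integrals of $f_{EP}$ are supported on elliptic elements, with constant values there, so the orbital integrals of $\nu f_{EP}$ are supported on elements whose $\SL_n$-part is elliptic and are governed by $\nu$ and by the value of the Steinberg character on regular elliptic classes (a sign); the same holds for $\nu'f'_{EP}$ and $\nu_0f_{EP}''$, and the elliptic conjugacy classes in $\GL_n(F_v)$, $\GL_n(F'\otimes_FF_v)$ and $\GL_n(E\otimes_FF_v)$ correspond under the base-change norm map, whence the three pairings against the Harish-Chandra character of any irreducible generic unitary representation coincide up to the sign powers above.
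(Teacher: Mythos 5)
Your proof is correct and follows the same route as the paper: choose the witnessing function $\nu_0 f_{0EP}$ on $\GL_n(E\otimes_F F_v)$, invoke the Kottwitz--Casselman theorem that Euler--Poincar\'e functions annihilate the traces of all generic unitary irreducible representations except Steinberg (with $q(\SL_{nF_v})=n-1$), and use the compatibility $\mathrm{St}_{vEw}=\mathrm{St}_w$, $\mathrm{St}_{vF'v'}=\mathrm{St}_{v'}$ together with the push-forward relations on $\nu,\nu',\nu_0$. You spell out a point the paper leaves implicit, namely the passage from the Euler--Poincar\'e computation on $\SL_n(F_v)$ to the trace of $\nu f_{EP}$ on $\GL_n(F_v)$ via the Gelbart--Knapp branching law and the triviality of the $\GL_n/\SL_n$-action on $H^{n-1}(\SL_n(F_v),\mathrm{St})$, and you also note the converse needed to ensure all three sides vanish off the Steinberg-twist locus; the explicit parities you extract for $r_1,r_2$ are consistent with the paper's assertion.
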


\begin{proof}
Recall that if $\mathrm{St}_v$ (resp.~$\mathrm{St}_{v'}$, $\mathrm{St}_{w}$) denotes the Steinberg representation of $\GL_n(F_v)$ (resp.~$\GL_n(F'_{v'})$, $\GL_n(E_w)$), then $\mathrm{St}_{vF'v'}=\mathrm{St}_{v'}$ and $\mathrm{St}_{vEw}=\mathrm{St}_w$ for all places $v'|v$ of $F'$ and $w|v$ of $E$ \cite[Chapter 1, Lemma 6.12]{AC}.

Let $f_{0EP} \in C_c^{\infty}(\mathrm{SL}_n(E \otimes_FF_v))$ be the Euler-Poincare function constructed with respect to the Haar measure giving measure one to $\mathrm{SL}_n(\OO_E \otimes_{\OO_F} \OO_{F_v})$.
We let $f_{S}=\nu_0f_{0EP}$ in the definition of transfer.
The statement of the lemma follows from the comments in the first paragraph of this proof and \cite[Theorem 2'(b)]{KottTama}.  This theorem of Casselman states in particular that the only generic unitary irreducible admissible representations of a semisimple $p$-adic group that have nonzero trace when applied to an Euler-Poincar\'e function are the Steinberg representations and gives a formula for the trace in this case.  We note that in the case at hand one has $q(\SL_{nF_v})=n-1$ in the notation of loc.~cit.
\end{proof}

\section{Limiting forms of the cuspidal spectrum}
 \label{sec-limit-cusp}

In the statement of our main theorems we considered limits built out of sums of the trace of a test function acting on the cuspidal spectrum.  In this section we prove that these limits converge absolutely.  Let $E/F$ be a Galois extension of number fields, let $E \geq F' \geq F$ be a solvable subfield, and let $\tau \in \Gal(E/F)$.  Assume that $S'$ is a finite set of places of $F'$ including the infinite places and that $S_0$ is the set of places of $E$ above them.  Let $\phi \in C_c^{\infty}(0,\infty)$ and let
$$
\widetilde{\phi}(s):=\int_{0}^{\infty}x^s\phi(x)\frac{dx}{x}
$$
be its Mellin transform.  The first result of this section is the following proposition:

\begin{prop} \label{prop-testsums}
Let $\pi'$ be a cuspidal unitary automorphic representation of $A_{\GL_{nF'}} \backslash \GL_n(\A_{F'})$ and assume that
$\pi'$ admits a weak base change $\Pi:=\pi_E'$ to $\GL_n(\A_E)$ that satisfies $\Pi_w=(\pi_{v'}')_E$ for all $w|v'$ such that $v' \not \in S'$.  Fix $\varepsilon>0$.
One has
\begin{align*}
&\mathrm{tr}(\pi'^{S'})(b_{E/F'}\Sigma^{S_0}_{\phi}(X))=\mathrm{Res}_{s=1}\left(\widetilde{\phi}(s)X^sL(s,\Pi \times \Pi^{\tau\vee S_0})\right)+O_{E,F',n,S_0,\varepsilon}(C_{\Pi \times \Pi^{\tau \vee}}(0)^{\tfrac{1}{2}+\varepsilon})
\end{align*}
for all sufficiently large $X \geq 1$.  Here ``sufficiently large'' depends only on $E,F',n,S_0,\varepsilon$.
\end{prop}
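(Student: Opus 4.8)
The plan is to compute the Dirichlet series $\sum_{\mm \subset \OO_E^{S_0}} \mathrm{tr}(\pi'^{S'})(b_{E/F'}f(\mm))\,|\N_{E/\QQ}(\mm)|^{-s}$ explicitly in terms of an incomplete Rankin–Selberg $L$-function, and then extract the behavior of the smoothed sum by a contour shift (Perron-type) argument. First I would unwind the definition of $b_{E/F'}$ and the multiplicativity of $\nn \mapsto f(\nn)$ to reduce to the prime-power case: for $w$ a place of $E$ above $v' \notin S'$, one has $\mathrm{tr}(\pi'_{v'})(b_{E/F'}f(\varpi_w^j)) = \mathrm{tr}((\pi'_{v'})_E{}_w)(f(\varpi_w^j))$ by the defining property \eqref{BC-map} of unramified base change, and since $\Pi_w = (\pi'_{v'})_E{}_w$ by hypothesis, this equals $\mathrm{tr}(\Pi_w)(f(\varpi_w^j))$. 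Now the defining property of the test function $f(\varpi_w^j)$, namely $f(\varpi_w^j) = \mathcal{S}^{-1}(\mathrm{tr}(\mathrm{Sym}^j(A\otimes(A^\tau)^{-1})))$, together with the identity $A(\Pi^\tau_w) = A(\Pi_{w^\tau})$ recorded in the excerpt, gives $\mathrm{tr}(\Pi_w)(f(\varpi_w^j)) = \mathrm{tr}\,\mathrm{Sym}^j\!\big(A(\Pi_w)\otimes A(\Pi_w^\tau)^{-1}\big)$, which is exactly the $w$-local coefficient in the Euler product for $L(s,\Pi\times\Pi^{\tau\vee})$. Summing the geometric-type series over $j$ and multiplying over $w|v'$, $v'\notin S'$, yields
$$
\sum_{\mm \subset \OO_E^{S_0}} \frac{\mathrm{tr}(\pi'^{S'})(b_{E/F'}f(\mm))}{|\N_{E/\QQ}(\mm)|^{s}} = L^{S_0}(s,\Pi\times\Pi^{\tau\vee}),
$$
absolutely convergent for $\Re(s)$ large (this is essentially the computation already flagged after \eqref{RS-descr} in the excerpt, now with $\pi'$ and $b_{E/F'}$ inserted).

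Next I would feed this into the smoothing. By definition $\Sigma_\phi^{S_0}(X) = \sum_{\mm} \phi(X/|\N_{E/\QQ}(\mm)|) f(\mm)$, so
$$
\mathrm{tr}(\pi'^{S'})(b_{E/F'}\Sigma_\phi^{S_0}(X)) = \sum_{\mm \subset \OO_E^{S_0}} \phi\!\left(\frac{X}{|\N_{E/\QQ}(\mm)|}\right) \mathrm{tr}(\pi'^{S'})(b_{E/F'}f(\mm)).
$$
Writing $\phi(X/t) = \frac{1}{2\pi i}\int_{(\sigma)} \widetilde\phi(s)\, (t/X)^{s}\, ds$ via Mellin inversion (with $\sigma$ large enough to be in the region of absolute convergence of the $L$-function above), interchanging sum and integral, and substituting the Dirichlet series identity, gives
$$
\mathrm{tr}(\pi'^{S'})(b_{E/F'}\Sigma_\phi^{S_0}(X)) = \frac{1}{2\pi i}\int_{(\sigma)} \widetilde\phi(s)\, X^{s}\, L^{S_0}(s,\Pi\times\Pi^{\tau\vee})\, ds.
$$
Then I would shift the contour from $\Re(s)=\sigma$ to a line $\Re(s) = \tfrac12$ (or slightly to the right of it), picking up the residue at $s=1$ from the pole of the completed Rankin–Selberg $L$-function $L(s,\Pi\times\Pi^{\tau\vee})$ — which is simple, with the pole present precisely because $\Pi^\tau \cong \Pi^{\tau}$ is paired with its own contragredient in the relevant components, so this is where one uses that $\Pi$ is isobaric and the standard analytic properties of $L(s,\Pi\times\Pi'^{\vee})$ (Jacquet–Shalika, and the fact that removing the finite set $S_0$ of Euler factors changes the residue by a controlled nonzero constant — which is why the statement writes the residue of $\widetilde\phi(s)X^s L(s,\Pi\times\Pi^{\tau\vee S_0})$ with the incomplete $L$-function). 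The main term is then $\mathrm{Res}_{s=1}\big(\widetilde\phi(s) X^s L(s,\Pi\times\Pi^{\tau\vee S_0})\big)$, and the remaining integral on the $\tfrac12$-line is the error term.

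The hard part will be bounding that shifted integral by $O(C_{\Pi\times\Pi^{\tau\vee}}(0)^{1/2+\varepsilon})$ uniformly, with an implied constant depending only on $E,F',n,S_0,\varepsilon$ and with the "sufficiently large $X$" threshold also depending only on those data. This requires: (i) a convexity (or subconvexity-free, i.e.\ Phragmén–Lindelöf) bound for $L(s,\Pi\times\Pi^{\tau\vee})$ on the line $\Re(s)=\tfrac12$ in terms of its analytic conductor $C_{\Pi\times\Pi^{\tau\vee}}(\cdot)$, together with the functional equation; (ii) rapid decay of $\widetilde\phi(s)$ in vertical strips (since $\phi \in C_c^\infty(0,\infty)$, $\widetilde\phi$ decays faster than any polynomial), to make the $t$-integral converge and absorb the polynomial-in-conductor growth coming from convexity; and (iii) care with the finitely many bad Euler factors at $S_0$ — their $L$-factors are bounded polynomials in $q_w^{-s}$, uniformly on $\Re(s)\geq \tfrac12$, so they contribute only to the implied constant. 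One also has to justify the interchange of summation and integration, which follows from absolute convergence once $\sigma$ is large, and to check that the contour shift crosses no other poles, which is where "$\Pi$ isobaric" and the classification of poles of Rankin–Selberg $L$-functions enter. Assembling (i)–(iii) with the standard analytic-conductor bookkeeping gives the stated error term and completes the proof.
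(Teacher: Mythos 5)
Your proposal follows essentially the same route as the paper: the identification of the Dirichlet series $\sum_{\mm}\mathrm{tr}(\pi'^{S'})(b_{E/F'}f(\mm))\,|\N_{E/\QQ}(\mm)|^{-s}$ with $L^{S_0}(s,\Pi\times\Pi^{\tau\vee})$ via the Satake isomorphism, unramified base change and \eqref{RS-descr} is exactly the paper's first step, and your Mellin-inversion/contour-shift argument with convexity, the rapid decay of $\widetilde{\phi}$ in vertical strips, and control of the bad Euler factors is precisely what the paper packages as Proposition \ref{Perron-prop} (proved by adapting Booker's smoothed Perron argument, with the Ramanujan hypothesis replaced by the Luo--Rudnick--Sarnak bounds \eqref{LRS-bound}). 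One small correction: the pole of $L(s,\Pi\times\Pi^{\tau\vee})$ at $s=1$ need not be simple, since $\Pi=\pi'_E$ is only isobaric and the order of the pole equals $\dim\mathrm{Hom}_I(\Pi,\Pi^{\tau})$ by \eqref{ord-pole} (this is exactly why theorems \ref{main-thm-2} and \ref{main-thm-3} carry the higher-derivative normalizations); your main term is nonetheless stated correctly as the residue of the full product $\widetilde{\phi}(s)X^sL(s,\cdot)$, so nothing in the argument breaks.
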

Here the complex number $C_{\Pi \times \Pi^{\tau \vee}}(s)$  is the analytic conductor of $L(s,\Pi \times \Pi^{\tau \vee})$ normalized as in \S \ref{ssec-analytic-cond} below and
the $L$-function is the Rankin-Selberg $L$-function (see, e.g. \cite{Cog1}).  The point of Proposition \ref{prop-testsums} is the fact that
$\lim_{X \to \infty} X^{-1} \mathrm{tr}(\pi^{S'})(\Sigma_{\phi}^{S_0}(X)) \neq 0$ if and only if
$\mathrm{Hom}_{I}(\Pi, \Pi^{\tau}) \neq 0$ (see \S \ref{ssec-analytic-cond}).  This is the keystone of the approach to nonsolvable base change and descent exposed in this paper.  We will prove Proposition \ref{prop-testsums} in the following subsection.

Let
\begin{align} \label{cusp-subspace}
L^2_0(\GL_n(F')A_{\GL_{nF'}} \backslash \GL_n(\A_{F'})) \leq L^2(\GL_n(F')A_{\GL_{nF'}} \backslash \GL_n(\A_{F'}))
\end{align}
be the cuspidal subspace.  For $h\in C_c^{\infty}(\GL_n(\A_{F'})$ unramified outside of $S'$, let
$$
h^1(g):=\int_{A_{\GL_{nF'}}}h(ag)da \in C_c^{\infty}(A_{\GL_{nF'}} \backslash \GL_n(\A_{F'}))
$$
and let $R(h^1)$ be the corresponding operator on $L^2(\GL_n(F') A_{\GL_{nF'}} \backslash \GL_n(\A_{F'}))$.  Finally let $R_0(h^1)$ be its restriction to $L^2_0(\GL_n(F') A_{\GL_{nF'}} \backslash \GL_n(\A_{F'}))$.  This restriction is well-known to be of trace class \cite[Theorem 9.1]{Donnely}.

The following corollary implies that the limits and sums in \eqref{11}, \eqref{A21}, \eqref{B21}, and \eqref{31} of theorems \ref{main-thm-1}, \ref{main-thm-2} and \ref{main-thm-3} can be switched:

\begin{cor} \label{cor-aut-trace} One has
\begin{align*}
\mathrm{tr}(R_0(h^1b_{E/F'}(\Sigma^{S'}_{\phi}(X))))&+o_{E,F',n,S_0}(X)\\&=\sum_{\pi'} \mathrm{tr}(\pi')(h^1)
\mathrm{Res}_{s=1}\left( \widetilde{\phi}(s)X^sL(s,(\pi'_E \times \pi'^{\tau \vee}_E)^{S_0})\right)
\end{align*}
where the sum is over any subset of the set of equivalence classes of cuspidal automorphic representations $\pi'$ of $A_{\GL_{nF'}} \backslash \GL_n(\A_{F'})$.  The sum on the right converges absolutely.
\end{cor}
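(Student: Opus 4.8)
The plan is to pass to the $\pi'$-isotypic decomposition of the cuspidal subspace, compute the trace of $R_0(h^1b_{E/F'}(\Sigma_\phi^{S_0}(X)))$ block by block, and then substitute the asymptotic expansion of Proposition~\ref{prop-testsums}. First note that since $E/F'$ is solvable, iterated Arthur--Clozel cyclic base change \cite{AC} produces, for every cuspidal automorphic representation $\pi'$ of $A_{\GL_{nF'}}\backslash\GL_n(\A_{F'})$, an isobaric automorphic representation $\Pi:=\pi'_E$ of $\GL_n(\A_E)$ that is compatible with the local base change at every place; in particular the running hypothesis of Proposition~\ref{prop-testsums} is met for all $\pi'$, and $L(s,(\pi'_E \times \pi'^{\tau \vee}_E)^{S_0})$ is an honest (partial) Rankin--Selberg $L$-function.

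First I would record the block decomposition. Because $h$ is unramified outside $S'$ we may write $h^1=(h^1)_{S'}\otimes h^{S'}$ with $h^{S'}$ in the (commutative) spherical Hecke algebra away from $S'$; likewise $b_{E/F'}(\Sigma_\phi^{S_0}(X))$ lies in that algebra, and $\Sigma_\phi^{S_0}(X)$ is a \emph{finite} linear combination of the $f(\mm)$ because $\phi$ has compact support, so that $h^1b_{E/F'}(\Sigma_\phi^{S_0}(X))\in C_c^\infty(A_{\GL_{nF'}}\backslash\GL_n(\A_{F'}))$. Each $\pi'$-isotypic subspace $L^2_0[\pi']$ is $\GL_n(\A_{F'})$-invariant and, by multiplicity one for $\GL_n$, isomorphic to the space of $\pi'$; a short computation using the commutativity of the spherical Hecke algebra shows that $R_0(h^1b_{E/F'}(\Sigma_\phi^{S_0}(X)))$ preserves $L^2_0[\pi']$ and has $\pi'$-block trace $\mathrm{tr}(\pi')(h^1)\,\mathrm{tr}(\pi'^{S'})(b_{E/F'}\Sigma_\phi^{S_0}(X))$. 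Since $R_0$ of any element of $C_c^\infty(A_{\GL_{nF'}}\backslash\GL_n(\A_{F'}))$ is of trace class \cite[Theorem 9.1]{Donnely}, its trace is the absolutely convergent sum of its block traces; the same is true with $R_0$ replaced by its compression to $\bigoplus_{\pi'\in\mathcal S}L^2_0[\pi']$ for any subset $\mathcal S$, that compression again being of trace class. Thus $\mathrm{tr}(R_0(h^1b_{E/F'}(\Sigma_\phi^{S_0}(X))))=\sum_{\pi'}\mathrm{tr}(\pi')(h^1)\,\mathrm{tr}(\pi'^{S'})(b_{E/F'}\Sigma_\phi^{S_0}(X))$, and similarly over any subset.

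Then I would substitute Proposition~\ref{prop-testsums}, which for all $X$ larger than a threshold $X_0(E,F',n,S_0,\varepsilon)$ that is \emph{uniform in $\pi'$} gives $\mathrm{tr}(\pi'^{S'})(b_{E/F'}\Sigma_\phi^{S_0}(X))=\mathrm{Res}_{s=1}(\widetilde\phi(s)X^sL(s,(\pi'_E \times \pi'^{\tau \vee}_E)^{S_0}))+O_{E,F',n,S_0,\varepsilon}(C_{\pi'_E \times \pi'^{\tau \vee}_E}(0)^{1/2+\varepsilon})$. Multiplying by $\mathrm{tr}(\pi')(h^1)$ and summing produces the asserted main term plus an $X$-independent error of size $O_{E,F',n,S_0,\varepsilon}(\sum_{\pi'}|\mathrm{tr}(\pi')(h^1)|\,C_{\pi'_E \times \pi'^{\tau \vee}_E}(0)^{1/2+\varepsilon})$; as soon as this sum is shown finite it is $O(1)=o_{E,F',n,S_0}(X)$, which yields the identity, and together with the absolute convergence of the block-trace sum it also yields the absolute convergence of the main-term sum over any subset. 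So the proof reduces to the finiteness of $\sum_{\pi'}|\mathrm{tr}(\pi')(h^1)|\,C_{\pi'_E \times \pi'^{\tau \vee}_E}(0)^{1/2+\varepsilon}$.

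This convergence estimate is the one ingredient beyond Proposition~\ref{prop-testsums}, and it is the step I would carry out with the most care, since the legitimacy of interchanging $\lim_{X\to\infty}$ with the sum over $\pi'$ in Theorems~\ref{main-thm-1}, \ref{main-thm-2} and \ref{main-thm-3} rests on it. With $h$ fixed, every $\pi'$ with $\mathrm{tr}(\pi')(h^1)\neq0$ has conductor at the finite places bounded in terms of the level of $h$, so $C_{\pi'_E \times \pi'^{\tau \vee}_E}(0)\ll_{E,F',n,h}(1+\|\mu_{\pi'_\infty}\|)^{A}$ for some $A=A(n,[E:\QQ])$, where $\mu_{\pi'_\infty}$ denotes the archimedean infinitesimal character. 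On the other hand, for every integer $N$ the function $(1+\Delta)^Nh^1$ --- with $\Delta$ a fixed positive elliptic differential operator in the archimedean variables whose eigenvalue on $\pi'$ is $\asymp 1+\|\mu_{\pi'_\infty}\|^2$ --- again lies in $C_c^\infty(A_{\GL_{nF'}}\backslash\GL_n(\A_{F'}))$, so $R_0((1+\Delta)^Nh^1)$ is of trace class and $\sum_{\pi'}(1+\|\mu_{\pi'_\infty}\|^2)^N|\mathrm{tr}(\pi')(h^1)|\ll\|R_0((1+\Delta)^Nh^1)\|_1<\infty$. Choosing $N$ large enough (in terms of $n$, $[E:\QQ]$ and $\varepsilon$) then dominates $C_{\pi'_E \times \pi'^{\tau \vee}_E}(0)^{1/2+\varepsilon}$ and forces the target sum to converge. (Equivalently one may invoke the rapid decay of $\mathrm{tr}(\pi')(h^1)$ in $\|\mu_{\pi'_\infty}\|$, the Weyl law for $\GL_n(\A_{F'})$, and the standard polynomial bound on the analytic conductor.) This completes the argument.
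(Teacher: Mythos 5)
Your proposal is correct in outline and arrives at the same reduction as the paper: after the (absolutely convergent) block decomposition of $\mathrm{tr}(R_0(\cdot))$ and the substitution of Proposition~\ref{prop-testsums}, everything comes down to the finiteness of $\sum_{\pi'}|\mathrm{tr}(\pi')(h^1)|\,C_{\pi'_E\times\pi'^{\tau\vee}_E}(0)^{1/2+\varepsilon}$. You also correctly anchor the applicability of Proposition~\ref{prop-testsums} to all $\pi'$ in the solvability of $E/F'$ and Arthur--Clozel cyclic base change, a hypothesis the paper's proof takes for granted.

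Where you diverge is in \emph{how} you establish that finiteness. The paper proves it from three separate ingredients: Lemma~\ref{lem-cond} (rapid decay of the archimedean trace $\mathrm{tr}(\pi'_{v'})(h_{v'})$ against any power of the local analytic conductor), the Weyl law of Donnelly, and Lemma~\ref{lem-Casimir} (the Casimir eigenvalue is bounded by a polynomial in the $|\mu_{i\pi'_{v'}}|$, so that "conductor $\le T$" forces "Casimir eigenvalue $\le P(T)$", which lets the Weyl law count the relevant representations). Your primary argument replaces all three by a single trace-class bootstrap: apply a bi-invariant differential operator $(1+\Delta)^N$ to $h^1$, note the result is still in $C_c^\infty$, and use $\|R_0((1+\Delta)^Nh^1)\|_1<\infty$ to absorb the conductor factor. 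This is tidier, and you do flag the paper's route as an equivalent alternative in your closing parenthesis.

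One formulation point deserves more care: you describe $\Delta$ as a \emph{positive elliptic} differential operator whose \emph{eigenvalue on $\pi'$} is $\asymp 1+\|\mu_{\pi'_\infty}\|^2$. For the operator to act by a scalar on $\pi'_\infty$ it must lie in $Z(\mathfrak g_\infty)$ (be bi-invariant), but no element of $Z(\mathfrak g_\infty)$ for a noncompact reductive group is elliptic or positive --- the Casimir is of indefinite type and can have negative or complex eigenvalues on unitary representations. What one actually wants is a polynomial $z\in Z(\mathfrak g_\infty)$ (built from the Casimir and the other Harish-Chandra generators) whose eigenvalue satisfies $|\gamma(z)(\lambda_{\pi'_\infty})|\gg(1+\|\mu_{\pi'_\infty}\|)^a$ on the unitary dual; such a $z$ exists but is neither elliptic nor positive, and the lower bound on its eigenvalue is precisely the kind of fact that Lemma~\ref{lem-Casimir} is there to pin down (the paper needs the upper bound, your version needs the corresponding lower bound --- both live in the same circle of ideas). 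With that correction, your route goes through and is a legitimate, slightly more compact alternative to the paper's Lemma~\ref{lem-cond} + Weyl law + Lemma~\ref{lem-Casimir} chain; the paper's version has the advantage of being fully explicit, since it proves those two lemmas from scratch.
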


\subsection{Rankin-Selberg $L$-functions and their analytic conductors} \label{ssec-analytic-cond}
For our later use, in this subsection we will consider a setting slightly more general than that relevant for the proof of Proposition \ref{prop-testsums}.
Let $n_1,n_2$ be positive integers and let $\Pi_1,\Pi_2$ be isobaric automorphic representations of $A_{\GL_{n_1E}} \backslash \GL_{n_1}(\A_E)$, $A_{\GL_{n_2E}} \backslash \GL_{n_2}(\A_E)$, respectively\footnote{For generalities on isobaric automorphic representations see \cite{LanglEinM} and \cite{JSII}.}.
We always assume that
$$
\Pi_i\cong \Pi_{i1} \boxplus \cdots \boxplus \Pi_{im_i}
$$
for $i \in\{1,2\}$ where the $\Pi_{ij}$ are cuspidal automorphic representations of $A_{\GL_{d_jE}} \backslash \GL_{d_j}(\A_E)$ for some set of integers $d_j$ such that $\sum_{j=1}^{m_i}d_j=n_i$.  We let $L(s,\Pi_1 \times \Pi_2)$ be the Rankin-Selberg $L$-function \cite{Cog1}; it satisfies
$$
L(s,\Pi_1 \times \Pi_2)=\prod_{r=1}^{m_1}\prod_{s=1}^{m_2} L(s,\Pi_{1r} \times \Pi_{2s}).
$$
and is holomorphic in the complex plane apart from possible poles at $s \in \{0,1\}$ \cite[Theorem 4.2]{Cog1}.
One sets
\begin{align} \label{hom-I}
\mathrm{Hom}_I(\Pi_1,\Pi_2)=\oplus_{j_1,j_2:\Pi_{1j_1} \cong \Pi_{2j_2}^{\vee}} \CC.
\end{align}
Lest the notation mislead the reader, we note that $\Pi_1$ and $\Pi_2$ are irreducible \cite[\S 2]{LanglEinM}, so the space of ``honest'' morphisms of automorphic representations between them is at most one-dimensional.  A fundamental result due to Jacquet and Shalika is that
\begin{align} \label{ord-pole}
-\mathrm{ord}_{s=1}L(s,\Pi_1 \times \Pi_2)=\dim(\mathrm{Hom}_I(\Pi_1,\Pi_2))
\end{align}
\cite[Theorem 4.2]{Cog1}.

There is a set of complex numbers $\{\mu_{\Pi_{1iv} \times \Pi_{2jv}}\}_{\substack{1 \leq i \leq n_2\\ 1 \leq j \leq n_2}} \subset \CC$, an integer $N_{\Pi_1 \times \Pi_2}$,
and a complex number $\epsilon_{\Pi_1 \times \Pi_2}$ such that if we set
$$
\Lambda(s,\Pi_1 \times \Pi_2):=L(s,(\Pi_1 \times \Pi_2)^{\infty})\prod_{w|\infty} \prod_{i=1}^{n_1}\prod_{j=1}^{n_2}\Gamma_{w}(s+\mu_{\Pi_{1wi} \times \Pi_{2wj}})
$$
then
\begin{align} \label{FE}
\Lambda(s,\Pi_1 \times \Pi_2)=\epsilon_{\Pi_1 \times \Pi_2} N_{\Pi_1 \times \Pi_2}^{\tfrac{1}{2}(1-s)}\Lambda(1-s,\Pi_1^{\vee} \times \Pi_2^{\vee}).
\end{align}
Here
$$
\Gamma_{v}(s):=\begin{cases}\pi^{-s/2}\Gamma(s/2) \textrm{ if }w \textrm{ is real}\\2(2\pi)^{-s}\Gamma(s) \textrm{ if }w \textrm{ is complex}. \end{cases}
$$
For a proof of this statement, combine \cite[Proposition 3.5]{Cog1}, the appendix of \cite{JacquetRS} and \cite[Theorem 4.1]{Cog1} (\cite[\S 3]{Tate} is also useful).  In these references one can also find the fact that, at least after reindexing, one has
\begin{align} \label{conj}
\bar{\mu_{\Pi_{1wi}^{\vee} \times \Pi_{2wj}^{\vee}}}=\mu_{\Pi_{1wi} \times \Pi_{2wj}}.
\end{align}

One sets
$$
\gamma(\Pi_1 \times \Pi_2,s):=\prod_{w|\infty}\prod_{i=1}^{n_1}\prod_{j=1}^{n_2} \Gamma_{w}(s+\mu_{\Pi_{1wi} \times \Pi_{2wj}})
$$
Following Iwaniec-Sarnak \cite{IS} (with a slight modification), for $s \in \CC$ the \textbf{analytic conductor} is defined to be
\begin{align}
C_{\Pi_1 \times \Pi_2}(s):=N_{\Pi_1 \times \Pi_2} \prod_{w|\infty}\prod_{i=1}^{n_1}\prod_{j=1}^{n_2} \left|\frac{1+\mu_{\Pi_{1wi} \times \Pi_{2wj}}+s}{2\pi}\right|_w.
\end{align}
We recall that $N_{\Pi_1 \times \Pi_2}=\prod_{w \nmid \infty}N_{\Pi_{1w} \times \Pi_{2w}}$ for some integers numbers $N_{\Pi_{1w} \times \Pi_{2w}}$ \cite[Proposition 3.5]{Cog1}.  If $S_0$ is a set of places of $E$  set
\begin{align*}
C_{\Pi_{1S_0} \times \Pi_{2S_0}}(s):&=\left(\prod_{\textrm{infinite }w \in S_0} \prod_{i=1}^{n_1}\prod_{j=1}^{n_2}\left|\frac{1+\mu_{\Pi_{1iw} \times \Pi_{2jw}}+s}{2\pi}\right|_w\right)
\prod_{\textrm{ finite }w \in S_0} N_{\Pi_{1w} \times \Pi_{2w}}.
\end{align*}

For the purpose of stating a proposition, let $\lambda(m) \in \CC$ be the unique complex numbers such that
$$
L(s,(\Pi_1 \times \Pi_2)^{S_0})=\sum_{m=1}^{\infty}\frac{\lambda(m)}{m^s}
$$
for $\mathrm{Re}(s) \gg 0$.
With all this notation set, we have the following proposition:

\begin{prop} \label{Perron-prop}  Assume that $\Pi_1 \times \Pi_2$ is automorphic of the form
$$
\Pi_1 \times \Pi_2 \cong \Pi_1 \boxplus \cdots \boxplus \Pi_m
$$
where the $\Pi_i$ are cuspidal automorphic representations of $A_{\GL_{n_iE}} \backslash \GL_{n_i}(\A_E)$.  Fix $\varepsilon>0$.
For $X \in \RR_{>0}$ sufficiently large one has
$$
\sum_{m=1}^{\infty}\lambda(m)\phi(m/X)=\mathrm{Res}_{s=1}\left( \widetilde{\phi}(s)X^sL(s,\Pi_1 \times \Pi_2^{S_0})\right) +O_{E,n,S_0}(C_{\Pi_1 \times \Pi_2}(0)^{\tfrac{1}{2}+\varepsilon}).
$$
Here ``sufficiently large'' depends only on $E,n,S_0$.
\end{prop}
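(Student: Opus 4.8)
\emph{Proof sketch.} The plan is to prove Proposition \ref{Perron-prop} by a smoothed Perron (Mellin inversion) argument followed by a contour shift to the line $\mathrm{Re}(s)=0$, in the spirit of the estimates in \cite{Sarnak}. Two soft inputs are used throughout. First, since $\phi\in C_c^{\infty}(0,\infty)$, its Mellin transform $\widetilde{\phi}$ is entire and, integrating by parts, satisfies $|\widetilde{\phi}(\sigma+it)|\ll_{N}(1+|t|)^{-N}$ for every $N$, uniformly for $\sigma$ in any fixed compact set; second, for each $X$ the sum $\sum_{m}\lambda(m)\phi(m/X)$ is finite, as $\phi$ is supported in some interval $[a,b]\subset(0,\infty)$. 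By Mellin inversion $\phi(y)=\tfrac{1}{2\pi i}\int_{(\sigma)}\widetilde{\phi}(s)y^{-s}\,ds$ for every real $\sigma$; taking $\sigma$ large enough that the Dirichlet series $\sum_{m}\lambda(m)m^{-s}$ converges absolutely (which holds for $\mathrm{Re}(s)$ large since, by hypothesis, $L(s,(\Pi_1\times\Pi_2)^{S_0})$ is the partial standard $L$-function of an isobaric automorphic representation) one obtains
$$
\sum_{m=1}^{\infty}\lambda(m)\phi(m/X)=\frac{1}{2\pi i}\int_{(\sigma)}\widetilde{\phi}(s)X^s L(s,(\Pi_1\times\Pi_2)^{S_0})\,ds .
$$

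First I would move the contour to the line $\mathrm{Re}(s)=0$. In the strip $0\le\mathrm{Re}(s)\le\sigma$ the integrand is meromorphic with a single pole, at $s=1$: $L(s,\Pi_1\times\Pi_2)$ is holomorphic apart from possible poles at $s\in\{0,1\}$, and deleting the Euler factors over the finite places of $S_0$ introduces no new poles, each local inverse factor being an entire Dirichlet polynomial; should $L$ have a pole at $s=0$, one indents the line by a small semicircle of fixed radius into $\{\mathrm{Re}(s)<0\}$ about the origin, so that no residue there is collected. The horizontal segments at height $\pm T$ tend to $0$ as $T\to\infty$, since $\widetilde{\phi}$ decays faster than every power of $|t|$ while $L(s,\Pi_1\times\Pi_2)$ is of finite order in vertical strips (away from its pole), the latter by the functional equation \eqref{FE}, Stirling's formula for the archimedean factors, and Phragm\'en--Lindel\"of. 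Hence
$$
\sum_{m=1}^{\infty}\lambda(m)\phi(m/X)=\mathrm{Res}_{s=1}\bigl(\widetilde{\phi}(s)X^sL(s,(\Pi_1\times\Pi_2)^{S_0})\bigr)+\frac{1}{2\pi i}\int_{(0)}\widetilde{\phi}(s)X^sL(s,(\Pi_1\times\Pi_2)^{S_0})\,ds ,
$$
the last integral being indented as above if necessary.

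Next I would estimate the remaining integral. On $\mathrm{Re}(s)=0$ one has $|X^s|=1$, so the bound is independent of $X$. Indeed, the functional equation \eqref{FE} and the convexity (Phragm\'en--Lindel\"of) bound give $|L(it,\Pi_1\times\Pi_2)|\ll_{n,\varepsilon}C_{\Pi_1\times\Pi_2}(it)^{1/2+\varepsilon}$, Stirling's formula gives $C_{\Pi_1\times\Pi_2}(it)\ll_{E,n}C_{\Pi_1\times\Pi_2}(0)(1+|t|)^{A}$ for some $A=A(E,n)$, and the deleted local factors at the finite places of $S_0$ are bounded on the line by $\ll_{E,n,S_0}(1+|t|)^{A'}$ for some $A'=A'(E,n,S_0)$. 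Combining these with $|\widetilde{\phi}(it)|\ll_{N}(1+|t|)^{-N}$ and integrating — and estimating likewise the contribution of the small indentation, on which $|L|\ll_{E,n,S_0}C_{\Pi_1\times\Pi_2}(0)^{1/2+\varepsilon}$, since its Laurent coefficients at $s=0$ are so bounded, again by \eqref{FE} — one finds that the line integral is $\ll_{E,n,S_0,\varepsilon}C_{\Pi_1\times\Pi_2}(0)^{1/2+\varepsilon}$, which is the asserted error term. The hypothesis that $X$ be sufficiently large is used only to keep the passage from the Dirichlet series to the contour integral uniform and to guarantee $X\ge1$ in the estimate of the indentation; it is otherwise inessential.

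The one genuinely delicate point, I expect, will be \emph{uniformity in the analytic conductor}: every implied constant must depend on $E,n,S_0,\varepsilon$ but on neither $\Pi_1$ nor $\Pi_2$. This rests on the conductor-explicit form of Phragm\'en--Lindel\"of applied to \eqref{FE} (as in \cite{IS}; cf.\ \cite{Cog1}, \cite{JacquetRS}, \cite{Tate}), on uniform Stirling estimates for the gamma factors $\Gamma_w(s+\mu_{\Pi_{1wi}\times\Pi_{2wj}})$ (using \eqref{conj} and the fact that the $\mu_{\cdot}$ have bounded real part, $\Pi_1$ and $\Pi_2$ being unitary), and on the elementary observation that a local Rankin--Selberg Euler factor at a place of $S_0$, and its inverse, are controlled solely in terms of the residue-field cardinality and $n$. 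Everything else is the standard smoothed Perron argument; the same scheme, together with the identification of $\mathrm{tr}(\pi'^{S'})(b_{E/F'}\Sigma_{\phi}^{S_0}(X))$ with $\sum_m\lambda(m)\phi(m/X)$ that motivates the definition of $\Sigma_{\phi}^{S_0}(X)$, yields Proposition \ref{prop-testsums}.
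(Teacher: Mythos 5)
Your argument is correct and follows essentially the same route as the paper, which simply cites Booker's Theorem 3.2 (a smoothed Perron estimate via Mellin inversion, contour shift to the critical line, and a conductor-explicit convexity bound coming from \eqref{FE} and Stirling) and remarks that Booker's Ramanujan hypothesis may be replaced by the bounds toward Ramanujan--Petersson recorded in \eqref{rp-bound}--\eqref{LRS-bound}, with the gamma-factor estimates referred to Moreno. The one soft spot in your write-up is the description of the uniform control of the deleted local Euler factors at $w\in S_0$ as an ``elementary observation'' depending only on $q_w$ and $n$: uniformity in $\Pi_1,\Pi_2$ there rests precisely on the nontrivial Luo--Rudnick--Sarnak and M\"uller--Speh bounds \eqref{rp-bound}--\eqref{rbound}, which is exactly the substitution the paper flags.
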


Before beginning the proof of Proposition \ref{Perron-prop} we record one consequence of known bounds towards the Ramanujan-Petersson conjecture.
Let $r(E,n) \in \RR_{\geq 0}$ be a nonnegative real number such that for all finite places $w$ of $E$ one has
\begin{align} \label{rp-bound}
|L(s,\Pi_{1w} \times \Pi_{2w})| \leq (1-q_w^{-\mathrm{Re}(s)+r(E,n)})^{-n}
\end{align}
for $\mathrm{Re}(s)>r(E,n)$ and
\begin{align} \label{rbound}
|L(s,\Pi_{1w} \times \Pi_{2w})^{-1}| \leq (1+q_w^{-\mathrm{Re}(s)+r(E,n)})^n.
\end{align}
for all $s$.  This real number exists (and can be taken to be independent of the choice of cuspidal automorphic representations $\Pi_1$ and $\Pi_2$ of $A_{\GL_{nE}} \backslash \GL_n(\A_E)$) by \cite[Theorem 2]{LRS} in the unramified case and \cite[Proposition 3.3]{MS} in the general case.  In particular we may take
\begin{align} \label{LRS-bound}
r(E,n) < \frac{1}{2}-\frac{1}{n^2+1}.
\end{align}
If the Ramanujan-Petersson conjecture were known then we could take $r(E,n)=0$.

\begin{proof}[Proof of Proposition \ref{Perron-prop}]
The proof is a standard application of the inverse Mellin transform entirely analogous to the proof of \cite[Theorem 3.2]{Booker}.  We only make a few comments on how to adapt the proof of \cite[Theorem 3.2]{Booker}.  First, the assumption of the Ramanujan conjecture in \cite[Theorem 3.2]{Booker} can be replaced by the known bounds toward it that are recorded in \eqref{LRS-bound} above.  Second, the bounds on the gamma factors in terms of the analytic conductor are proven in detail in  \cite{Moreno}.  Finally, we recall that if $L(s,\Pi_1 \times \Pi_2)$ has a pole in the half-plane $\mathrm{Re}(s)>0$ then it is located at $s=1$ and is of order equal to $-\dim \mathrm{Hom}_{I}(\Pi_1,\Pi_2)$ by a result of Jacquet, Piatetskii-Shapiro and Shalika \cite[Theorem 4.2]{Cog1}; this accounts for the main term in the expression above.
\end{proof}

We now prepare for the proof of Proposition \ref{prop-testsums}.
If $w$ is a finite place of $E$ and $\Pi_w$ is an unramified representation of $\GL_n(E_w)$ we denote by $A(\Pi_w)$ the Langlands class of $\Pi_w$; it is a semisimple conjugacy class in $({}^L\GL_{nE})^{\circ}=\GL_n(\CC)$, the neutral component of ${}^L\GL_{nE}$.
For $\mathrm{Re}(s)>1$ we have that
\begin{align} \label{RS-descr}
L(s,(\Pi_1 \times \Pi_2)^{S_0}) =
\prod_{w \not \in S_0} \sum_{n \geq 1}\mathrm{tr}(\mathrm{Sym}^n(A(\Pi_{1w}) \otimes A(\Pi_{2w})))q_w^{-ns}
\end{align}
and the sum on the right converges absolutely \cite[Theorem 5.3 and proof of Proposition 2.3]{JS}.

\begin{proof}[Proof of Proposition \ref{prop-testsums}]
Let $v'$ be a place of $F'$ where $\pi'$ is unramified and let $a_1,\cdots,a_n$ be the Satake parameters of $\pi'_v$ (i.e. the eigenvalues of $A(\pi'_{v'})$).
We recall that
$$
\mathrm{tr}(\pi'_{v'})(h)=\mathcal{S}(h)(a_1,\dots,a_n)
$$
for all $h \in C_c^{\infty}(\GL_n(F'_{v'})//\GL_n(\OO_{F_{v'}'}))$ \cite[Theorem 7.5.6]{Laumon}.  This together with Proposition \ref{Perron-prop}, \eqref{RS-descr}, and the description of unramified base change recalled in \S \ref{ssec-bcformal}  implies the proposition.
\end{proof}

\subsection{Proof of Corollary \ref{cor-aut-trace}}
\label{ssec-cor-trace}

In this subsection our ultimate goal is to prove Corollary \ref{cor-aut-trace}.  In order to prove this corollary we first establish the following two lemmas:

\begin{lem} \label{lem-cond} Let $v'$ be an infinite place of $F'$, let $h_{v'} \in C_c^{\infty}(\GL_n(F_{v'}'))$ and let $N \in \ZZ$.  If $A$ is a countable set of inequivalent irreducible generic admissible representations of $\GL_n(F_{v'}')$, then for $\pi'_{v'} \in A$
$$
\mathrm{tr}(\pi'_{v'})(h_{v'})C_{\pi'_{v'}}(0)^N \to 0 \textrm{ as } C_{\pi'_{v'}}(0) \to \infty.
$$
\end{lem}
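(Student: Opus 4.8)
The plan is to reduce the statement to a combination of (i) an archimedean Paley--Wiener bound for the distribution $\pi_{v'}' \mapsto \operatorname{tr}(\pi_{v'}')(h_{v'})$ with $h_{v'} \in C_c^\infty(\GL_n(F_{v'}'))$ fixed, and (ii) the fact that among any countable family of inequivalent irreducible generic admissible representations, the analytic conductor $C_{\pi_{v'}'}(0)$ tends to infinity only through families whose infinitesimal characters (equivalently, the parameters $\mu$ appearing in the gamma factors) grow. Since $v'$ is archimedean, every irreducible generic unitary representation of $\GL_n(F_{v'}')$ is a fully induced representation $\operatorname{Ind}_{B}^{\GL_n}(\chi)$ from the Borel, and its analytic conductor at $s=0$ is, up to constants depending only on $n$ and whether $v'$ is real or complex, a fixed power of the product $\prod_i (1+|\mu_i|)$ of the (shifted) Langlands parameters of the characters $\chi_i$. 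So it suffices to show: if $\pi_{v',k}'$ is any sequence of such inductions with $\sum_i |\mu_i(k)| \to \infty$, then $\operatorname{tr}(\pi_{v',k}')(h_{v'})$ decays faster than any polynomial in these parameters.

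First I would pass to the constant term: as in the proof of Lemma \ref{lem-archi-transf} (see \cite[(10.23)]{Knapp}), one has $\operatorname{tr}(\pi_{v'}')(h_{v'}) = \operatorname{tr}(\chi)(h_{v'}^{(B(F_{v'}'))})$ where $h_{v'}^{(B(F_{v'}'))} \in C_c^\infty(M(F_{v'}'))$ is the constant term of $h_{v'}$ along $B(F_{v'}')$, and $M$ is the diagonal torus. Thus $\operatorname{tr}(\pi_{v'}')(h_{v'})$ is literally the value at the character $\chi$ of the Fourier/Mellin transform of the fixed compactly supported smooth function $h_{v'}^{(B(F_{v'}'))}$ on $M(F_{v'}') \cong (F_{v'}'^\times)^n$. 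The Mellin transform of a compactly supported smooth function on $\RR^\times$ (or $\CC^\times$) extends to an entire function of the character parameter that decays faster than any polynomial in vertical strips --- this is the classical Paley--Wiener estimate, applied coordinatewise. Restricting this transform to the (unitary or, more generally, bounded-real-part) characters that can arise from irreducible generic unitary $\pi_{v'}'$, one gets: for every $N$ there is $C_N$ (depending on $h_{v'}$, hence on $h_{v'}^{(B(F_{v'}'))}$) with $|\operatorname{tr}(\pi_{v'}')(h_{v'})| \le C_N \prod_i (1+|\mu_i|)^{-N}$. Combining with the comparison $C_{\pi_{v'}'}(0) \asymp_{n} \prod_i (1+|\mu_i|)^{c}$ for a fixed exponent $c = c(n) > 0$ and absorbing $N$, one obtains $\operatorname{tr}(\pi_{v'}')(h_{v'}) C_{\pi_{v'}'}(0)^N \to 0$ as $C_{\pi_{v'}'}(0) \to \infty$, for every fixed $N$; since the family $A$ is countable, $C_{\pi_{v'}'}(0) \to \infty$ along $A$ is an unambiguous statement and the conclusion follows.

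The main obstacle I anticipate is purely bookkeeping: matching the normalization of the analytic conductor $C_{\pi_{v'}'}(0)$ used here (which is defined via the gamma factors of a Rankin--Selberg pair in \S\ref{ssec-analytic-cond}, so strictly speaking one should think of $C_{\pi_{v'}' \times \tilde\pi_{v'}'}(0)$ or the like) with the elementary product $\prod_i(1+|\mu_i|)$ of archimedean Langlands parameters of the single representation $\pi_{v'}'$, and checking that real vs. complex places and the finitely many possible Langlands quotient/induced structures do not spoil the uniform power comparison. Once that identification is made, the decay is just Paley--Wiener for $C_c^\infty$ functions on a product of copies of $F_{v'}'^\times$, and no genuine analytic difficulty remains. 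I would also remark that one does not even need $\pi_{v'}'$ unitary --- genericity already forces the induced model, and boundedness of $\operatorname{Re}(\mu_i)$ (automatic on any fixed countable family with $C_{\pi_{v'}'}(0)$ the only quantity going to infinity, or imposable without loss) is all that is used.
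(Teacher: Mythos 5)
Your approach is essentially the same as the paper's at a complex place, and there your argument is correct: generic representations of $\GL_n(\CC)$ are fully induced from the Borel, the trace factors through the constant term on $M(F_{v'}')$, and Paley--Wiener on $C_c^\infty$ of a product of copies of $\CC^\times$ gives super-polynomial decay in the continuous parameters, which control the archimedean conductor.

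The gap is the real case, and you gesture at it but dismiss it as bookkeeping when in fact it is the whole point. Your claim that ``every irreducible generic unitary representation of $\GL_n(F_{v'}')$ is a fully induced representation $\operatorname{Ind}_B^{\GL_n}(\chi)$ from the Borel'' is false when $v'$ is real. Genericity forces full induction from a \emph{cuspidal} parabolic, whose Levi for $\GL_n(\RR)$ has $\GL_1$ and $\GL_2$ blocks, with discrete series or limits of discrete series on the $\GL_2$ blocks; these are not quotients of the Borel constant term in the form $\operatorname{tr}(\chi)(h^{(B)})$. Concretely, for a discrete series $D_{m+1}$ of $\GL_2(\RR)$ the conductor grows with the discrete weight $m$, and the Mellin/Paley--Wiener estimate on $h^{(B)}|_{M}$ sees only the continuous character coordinates, so it gives no decay in $m$ whatsoever. (Lemma \ref{lem-archi-transf}, which you cite for the constant-term identity, is stated only for $v$ complex precisely because of this.) The paper's proof therefore descends only to cuspidal parabolics, treats the $\GL_1$ blocks by the Paley--Wiener argument you describe, and then proves super-polynomial decay in $m$ for the $\GL_2(\RR)$ discrete/limit-of-discrete-series factor by repeated integration by parts in the Harish-Chandra character formula, using the jump relations \cite[(11.45a)--(11.45b)]{Knapp} to kill the boundary terms and finishing with the Riemann--Lebesgue lemma. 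That second half is genuinely nontrivial and is missing from your argument; without it the lemma fails for any $F'$ with a real place, which is needed (e.g.\ for Theorem \ref{main-thm-1}, where $F$ is not assumed totally complex). If you want to rescue your cleaner route, you would need to replace the Borel constant term by the constant term along the cuspidal parabolic and then supply a separate rapid-decay estimate for $m \mapsto \Theta_{m+1}(f)$; at that point you have essentially reconstructed the paper's proof.
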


\begin{lem} \label{lem-Casimir} Fix a positive integer $n$.  There is an integer $N>0$ depending on $n$ and a polynomial $P$ of degree $N$ in $n$ variables such that the Casimir eigenvalue of an irreducible generic admissible representation $\pi_v$ of $\GL_n(F_v)$ is bounded by $|P(|\mu_{1\pi_v}|,\dots,|\mu_{n\pi_v}|)|$.
\end{lem}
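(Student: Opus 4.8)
The plan is to express the Casimir eigenvalue of $\pi_v$ through its infinitesimal character and then to read that infinitesimal character off from the archimedean Langlands parameters $\mu_{1\pi_v},\dots,\mu_{n\pi_v}$. Since $v$ is archimedean and $\pi_v$ is generic, the Langlands classification together with the irreducibility of parabolic induction of generic data presents $\pi_v$ as a full normalized induced representation $\mathrm{Ind}_{P(F_v)}^{\GL_n(F_v)}(\sigma_1\otimes\cdots\otimes\sigma_r)$, with $P$ of Levi type $\prod_k\GL_{n_k}$, each $n_k\le 2$ (and $n_k=2$ only when $F_v=\RR$), and each $\sigma_k$ essentially square-integrable. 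The Casimir element $\Omega$ of the real group $\GL_n(F_v)$ lies in the centre of the enveloping algebra, so it acts on the irreducible $\pi_v$ by a scalar; by the Harish-Chandra isomorphism that scalar is, up to the (harmless) reductive normalization, $\langle\lambda,\lambda\rangle-\langle\rho,\rho\rangle$, where $\lambda\in\mathfrak{h}_\CC^{*}/W$ is the infinitesimal character of $\pi_v$, $\rho$ is half the sum of the positive roots, and $\langle\cdot,\cdot\rangle$ is the induced $W$-invariant form; $\rho$ and $\langle\cdot,\cdot\rangle$ depend only on $n$ and on whether $F_v$ is real or complex. Thus the Casimir eigenvalue is the value at $\lambda$ of a fixed polynomial of degree $2$.

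It remains to bound a representative of $\lambda$ in terms of the $\mu_{i\pi_v}$. The infinitesimal character of a normalized induced representation is the concatenation of those of the inducing constituents, and the local Langlands correspondence identifies the infinitesimal character of $\sigma_k$ with the restriction of its $L$-parameter to $\CC^\times$; comparing this with the exponents appearing in the archimedean $\Gamma$-factor of $\sigma_k$ one finds that, after a suitable indexing, $\lambda$ has a representative $(\lambda_1,\dots,\lambda_n)$ with $|\lambda_i|\le C(n)\bigl(1+|\mu_{1\pi_v}|+\cdots+|\mu_{n\pi_v}|\bigr)$ for a constant $C(n)$. Since $\langle\cdot,\cdot\rangle$ is $W$-invariant we may compute the Casimir eigenvalue on this representative, obtaining
$$
\bigl|\,\text{Casimir eigenvalue of }\pi_v\,\bigr|\;\le\;\sum_{i}|\lambda_i|^{2}+\langle\rho,\rho\rangle\;\le\;P\bigl(|\mu_{1\pi_v}|,\dots,|\mu_{n\pi_v}|\bigr)
$$
for the explicit polynomial $P(x_1,\dots,x_n):=nC(n)^{2}(1+x_1+\cdots+x_n)^{2}+\langle\rho,\rho\rangle$, which has degree $N=2$ and depends only on $n$. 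This is the assertion.

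The step requiring care is the comparison of $\lambda$ with the $\mu_{i\pi_v}$ at a complex place and for twisted $\GL_2(\RR)$-discrete-series constituents, where the $L$-parameter packages an integral weight together with a complex central exponent and the weight enters $\lambda$ but not the displayed $\Gamma$-factor exponent: one must check that this weight is $O\bigl(\sum_i|\mu_{i\pi_v}|\bigr)$. For the representations $\pi_v$ relevant here, which are unitary, this follows from the description of the unitary dual of $\GL_n(F_v)$, which bounds the real parts of the inducing exponents; the rest is formal once the Harish-Chandra computation and the local Langlands dictionary are in place.
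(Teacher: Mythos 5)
Your proof follows essentially the same route as the paper's: use the Langlands classification together with the irreducibility of generic induced representations to realize $\pi_v$ as a full induced representation from a cuspidal Levi with $\GL_1$ and (real) $\GL_2$ blocks, compute the Casimir eigenvalue through the Harish-Chandra isomorphism, and then bound the infinitesimal character representative in terms of the $\mu_{i\pi_v}$ block by block. Your added explicitness — writing the Casimir eigenvalue as $\langle\lambda,\lambda\rangle-\langle\rho,\rho\rangle$ and producing the explicit degree-$2$ polynomial $P$ — is a harmless strengthening of the same argument. The caveat you raise at the end is in fact a real one: at a complex place (or for a twisted real discrete series) the single $\Gamma$-exponent $\mu_{1\pi_v}$ alone does not determine the full infinitesimal character $(p,q)$ without an extra constraint, and your appeal to unitarity to control the ``hidden'' component is exactly what is needed; the paper handles this less explicitly, dismissing the $n=1$ cases as ``unraveling definitions'' and referring to Knapp for the $n=2$ real case, but in the intended application (Corollary \ref{cor-aut-trace}) the representations in question are unitary, so this closes the same gap.
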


Thus the trace and Casimir eigenvalue of the $\pi_v$ are controlled by the analytic conductor.  This is certainly well-known, but the author was unable to locate these results in the literature.  Moreover, the proof of Lemma \ref{lem-cond} is more interesting than one would expect.

We begin by recalling some notions that will allow us to use decent.
Let $v$ be an archimedian place of $F$; we fix an embedding $\RR \hookrightarrow F_v$ (which is an isomorphism if $v$ is real).  Let $\mathfrak{h} \leq \mathrm{R}_{F_v/\RR}\mathfrak{gl}_n$ be the Cartan subalgebra of diagonal matrices.  For a Lie algebra $\mathfrak{g}$ over $\RR$, write $\mathfrak{g}_{\CC}:=\mathfrak{g} \otimes_{\RR} \CC$.
 Without loss of generality we assume that the set of positive roots of $\mathfrak{h}_{\CC}$ inside $\mathrm{R}_{F_v/\RR}\mathfrak{gl}_{n\CC}$ is defined using the Borel subgroup $B \leq \GL_n$ of \textbf{lower} triangular matrices (this is to be consistent with \cite{JacquetRS}).  Thus standard parabolic subgroups are parabolic subgroups containing $B$.  If $Q=MN$ is the Levi decomposition of a standard parabolic subgroup then
\begin{align} \label{Levi-decomp}
M =\prod_jM_j \cong \prod_{j} \mathrm{R}_{F_v/\RR}\GL_{n_j}
\end{align}
 where $M_j \cong \mathrm{R}_{F_v/\RR}\GL_{n_j}$ and $\sum_jn_j=n$.  If $Q$ is cuspidal then $n_j \in \{1,2\}$ if $v$ is real and all $n_j=1$ if $v$ is complex.  We let $\mm:=\mathrm{Lie}(M)$ and $\mm_j:=\mathrm{Lie}(M_j)$.
 Moreover we let $\mathfrak{h}_j:=\mathfrak{h} \cap \mm_j$; thus $\mathfrak{h}_j$ is isomorphic to the Cartan subalgebra of diagonal matrices in $\mathrm{R}_{F_v/\RR}\GL_{n_j}$.

Let $\pi_v$ be an irreducible admissible representation of $\GL_n(F_v)$.
Thus there is a cuspidal standard parabolic subgroup $Q \leq \mathrm{Res}_{F_v/\RR}\GL_n$ with Levi decomposition $Q=MN$ and an irreducible admissible representation $\pi_M$ of $M(F_v)$
 such that
$$
\pi_v \cong J(\pi_M)
$$
where $J(\pi_M)$ is the Langlands quotient of the induced representation $\mathrm{Ind}_{Q(F_v)}^{\GL_n(F_v)}(\pi_M)$ \cite[Theorem 14.92]{Knapp}. Moreover $\pi_M$ can be taken to be a twist of a discrete series or limit of discrete series.  Here we are viewing $\pi_M$ as a representation of $Q(F_v)$ by letting it act trivially on $N(F_v)$.  If $\pi_v$ is generic, then $\mathrm{Ind}_{Q(F_v)}^{\GL_n(F_v)}(\pi_M)$ is irreducible and hence
$$
\pi_v \cong J(\pi_M) \cong \mathrm{Ind}_{Q(F_v)}^{\GL_n(F_v)}(\pi_M)
$$
(see the comments after \cite[Lemma 2.5]{JacquetRS} or \cite{Vogan}).
We decompose
$$
\pi_M \cong \otimes_j \pi_j
$$
where each $\pi_j$ is an admissible irreducible representation of $M_j(F_v)$.
We note that, essentially by definition,
\begin{align} \label{L-prod}
L(s,\pi_v)=\prod_jL(s,\pi_j)=\prod_j\prod_{i=1}^{n_j}\Gamma_v(s+\mu_{i\pi_{j}})
\end{align}
(compare \cite[Appendix]{JacquetRS}).

We are now in a position to prove Lemma \ref{lem-cond}:

\begin{proof}[Proof of Lemma \ref{lem-cond}]
Let $f \in C_c^{\infty}(\GL_n(F_v))$ and let $f^{(Q)} \in C_c^{\infty}(M(F_v))$ be the constant term of $f$ along $Q$ (see \cite[(10.22)]{Knapp} for notation).  Using the natural isomorphism $C_c^{\infty}(M(F_v))=\prod_{j}C_c^{\infty}(M_j(F_v))$ we decompose $f^{(Q)}=\prod_jf_j$.  One then has
\begin{align} \label{const}
\mathrm{tr}(\pi_v)(f)=\mathrm{tr}(\pi_M)(f^{(Q)})=\prod_j\mathrm{tr}(\pi_j)(f_j)
\end{align}
(see \cite[(10.23)]{Knapp}).
  Combining \eqref{const} and \eqref{L-prod}, we see that the lemma will follow if we establish it in the special cases $n \in \{1,2\}$ for $v$ real and $n=1$ for $v$ complex.  Moreover when $n=2$ we can assume that $\pi_v$ is a twist by a quasi-character of a discrete series or limit of discrete series representation.  We henceforth place ourselves in this situation.

Assume for the moment that $n=1$ and $v$ is real.  Then $\pi_v$ is a quasi-character of $\RR^{\times}$ and hence it is of the form
$$
\pi_v(t)=|t|_v^{u}\mathrm{sgn}^{k}(t)
$$
for $t \in \RR^{\times}$ and some $u \in \CC$ and $k \in \{0,1\}$.  In this case we have $\mu_{1\pi_v}=u+k$ by \cite[Appendix]{JacquetRS}.  Similarly, if $n=1$ and $v$ is complex, then $\pi_v$ is a quasi-character of $\CC^{\times}$ and hence is of the form
$$
\pi_v(z):=z^{m}(z\bar{z})^{-\tfrac{m}{2}}|z|_v^{u}
$$
for $z \in \CC^{\times}$ and some $m \in \ZZ$ and $u \in \CC$.  In this case we have $\mu_{1\pi_v}=\tfrac{m}{2}+u$ by \cite[Appendix]{JacquetRS}.  In either case, as a function of $\mu_{1\pi_v}$  the trace $\mathrm{tr}(\pi_v)(f)$ is easily seen to be rapidly decreasing since the Fourier transform of a compactly supported smooth function on $\RR^{\times}$ or $\CC^{\times}$ is rapidly decreasing.  The lemma follows in these cases.

We are left with the case where $n=2$ and $v$ is real; thus $\pi_v$ is a twist of a discrete series or nondegenerate limit of discrete series representation by a quasi-character.  For $m \in \ZZ$ let
$$
\Omega_{m}(z):=z^{m}(z\bar{z})^{-\tfrac{m}{2}} \quad \textrm{ and } \quad \sigma_m:=\mathrm{Ind}_{\CC^{\times}}^{W_{F_v}}(\Omega_m).
$$
The $L$-parameter $\varphi(\pi_v):W_{F_v} \to {}^L\GL_{2F_v}$ attached to $\pi_v$ is of the form $\sigma_m \otimes \chi$ for some $m \in \ZZ_{\geq 0}$ and some one-dimensional representation $\chi:W_{F_v} \to \RR^{\times}$.
The discrete series (or limit of discrete series) representation $\pi(\sigma_m)$ will be denoted by $D_{m+1}$; it is in the discrete series if and only if $m>0$ (see \cite[Appendix]{JacquetRS}).  The representation $D_{m+1}$ is usually referred to as the discrete series of weight $m+1$ if $m>0$ and the limit of discrete series if $m=0$.
Recall that
any one-dimensional representation of $W_{F_v}$ can be regarded (canonically) as a character $\RR^{\times} \to \RR^{\times}$; this applies in particular to $\chi$.  We note that
$$
\sigma_m \otimes \mathrm{sgn} \cong \sigma_m
$$
since $\mathrm{sgn}$ can be identified with the nontrivial character of $W_{\RR}/W_{\CC}$ by class field theory.  Since $\pi$ is assumed to be unitary, we assume without loss of generality that $\chi=|\cdot|_v^{it}$ for some real number $t$.  With this in mind, the duplication formula implies that we may take $\mu_{1\pi_v}=\tfrac{m}{2}+it$ and $\mu_{2\pi_v}=\tfrac{m}{2}+1+it$ (compare \cite[Appendix]{JacquetRS}).

We compute the trace $\mathrm{tr}(\pi_v)(f)$ for $f \in C_c^{\infty}(\GL_2(F_v))=C_c^{\infty}(\GL_2(\RR))$.  First, define
\begin{align*}
f_t:\mathrm{SL}_2(\RR) &\lto \CC\\
g &\longmapsto \int_{\RR^{\times}}|z|_v^{it}\int_{\mathrm{SO}_2(\RR)}f(k^{-1}zgk)dzdk
\end{align*}
where we normalize the measure $dk$ so that $\mathrm{meas}_{dk}(\mathrm{SO}_2(\RR))=1$ and $dz$ is some choice of Haar measure.  Thus $f_t \in C_c^{\infty}(\SL_2(\RR))$.   One has (with appropriate choices of measures)
\begin{align*}
\mathrm{tr}(\pi_v)(f)=\int_{\mathrm{SL}_2(\RR)} \Theta_{m+1}(g)f_t(g)dg=:\Theta_{m+1}(f_t)
\end{align*}
where $dg$ is the Haar measure on $\mathrm{SL}_2(\RR)$ giving $\mathrm{SO}_2(\RR)$ measure one and $\Theta_{m+1}$ is the character of $D_{m+1}|_{\mathrm{SL}_2(\RR)}$.  By Fourier theory on $C_{\GL_{2}}(\RR) \cong \RR^{\times}$, one sees that to prove the lemma it suffices to prove that as $m \to \infty$
$$
|\Theta_{m+1}(f_t)| |m|^N \to 0
$$
for all $N \in \ZZ$.  For this it suffices to show that for all $f \in C_c^{\infty}(\mathrm{SL}_2(\RR))$ such that $f(kxk^{-1})=f(x)$ one has
\begin{align} \label{to-show-induct}
|\Theta_{m+1}(f)||m|^N \to 0
\end{align}
for all $N \in \ZZ$.  This is what we will show.

Let
$$
M(F_v)=\left\{ a_t:  a_t=\begin{pmatrix}e^t & 0 \\ 0 & e^{-t} \end{pmatrix},t \in \RR\right\}
$$
 and
$$
T(F_v)=\left\{k_{\theta}: k_{\theta}=\begin{pmatrix} \cos \theta & \sin \theta\\ -\sin \theta  & \cos \theta \end{pmatrix}, 0 < \theta \leq 2\pi\right\}
$$
By \cite[(11.37)]{Knapp}\footnote{Knapp denotes  $M$ by $T$ and $T$ by $B$.} and the discussion below it, for $f \in C_c^{\infty}(\mathrm{SL}_2(\RR))$ satisfying $f(kxk^{-1})=f(x)$ for $k \in \mathrm{SL}_2(\RR)$ we have
\begin{align} \label{first-formula}
m\Theta_{m+1}(f)=&-\frac{1}{2\pi i} \int_{0}^{2 \pi}(e^{im\theta}+e^{-im\theta})\frac{d}{d\theta}F^T_f(\theta)d\theta\\& \nonumber +\frac{1}{2}\int_{-\infty}^{\infty} e^{-m|t|}(\mathrm{sgn}(t)) \frac{d}{dt} F^M_f(a_t)dt\\ \nonumber
&+\frac{1}{2}(-1)^m\int_{-\infty}^{\infty}e^{-m|t|}(\mathrm{sgn}(t))\frac{d}{dt}F_f^M(-a_t)dt
\end{align}
for $m>0$.    The $m=0$ term is unimportant for our purposes as we are interested in the behavior as $m \to \infty$.  Here $dt$ and $d\theta$ are the usual Lesbesgue measures and
\begin{align*}
F^T_f(k_{\theta})&=(e^{i\theta}-e^{-i\theta})\int_{\GL_2(F_v)/T(F_v)}f(xk_{\theta}x^{-1})d \dot{x}\\
F^M_f(\pm a_t)&=\pm|e^t-e^{-t}|\int_{\GL_2(F_v)/M(F_v)} f(xa_tx^{-1}) d\dot{x}
\end{align*}
for suitably chosen Haar measures (that are independent of $\pi_v$) \cite[(10.9a-b)]{Knapp}. We note that the functions $F^M_f$ are smooth \cite[Proposition 11.8]{Knapp} and for integers $k \geq 0$ the odd-order derivative $\frac{d^{2k+1}}{dt^{2k+1}}F^T_f(\theta)$ is continuous (see the remarks after \cite[Proposition 11.9]{Knapp}).  Moreover $F^M_f(a_t)$ vanishes outside a bounded subset of $M(\RR)$ \cite[Proposition 11.7]{Knapp}.

We claim that for $m>0$ and $k \geq 1$ one has
{\allowdisplaybreaks
\begin{align} \label{claim-induct}
m^{2k+1}\Theta_{m+1}(f)=&-\frac{1}{2\pi i^{2k+1}} \int_{0}^{2 \pi}(e^{im \theta}+e^{-im \theta})\frac{d^{2k+1}}{d\theta^{2k+1}}F_f^T(\theta) d\theta\\&+\frac{1}{2}\int_{-\infty}^{\infty}e^{-m|t|}(\mathrm{sgn}(t))\frac{d^{2k+1}}{dt^{2k+1}}F^M_f(a_t)dt \nonumber \\
&+\frac{1}{2}(-1)^m\int_{-\infty}^{\infty}e^{-m|t|} (\mathrm{sgn}(t))\frac{d^{2k+1}}{dt^{2k+1}}F^M_f(-a_t)dt. \nonumber
\end{align}}
Assuming \eqref{claim-induct}, an application of the Riemann-Lesbesgue lemma implies \eqref{to-show-induct} which in turn implies the lemma.  Thus proving \eqref{claim-induct} will complete the proof of the lemma.

Proceeding by induction, assume \eqref{claim-induct} is true for $k-1>0$.  Applying integration by parts we obtain that \eqref{claim-induct} is equal to
{\allowdisplaybreaks\begin{align*}
&-(mi)^{-1}\left(-\frac{1}{2 \pi i^{2k-1}}\right)\int_{0}^{2 \pi}( e^{im\theta}-e^{-im\theta})\frac{d^{2k}}{d\theta^{2k}}F_f^T(\theta)d\theta\\
&-\frac{1}{2m}\int_{-\infty}^{\infty} e^{-m|t|}(\mathrm{sgn}(t)) \frac{d^{2k}}{dt^{2k}} F^M_f(a_t)dt\\
&-\frac{1}{2m}(-1)^m\int_{-\infty}^{\infty}e^{-m|t|}(\mathrm{sgn}(t))\frac{d^{2k}}{dt^{2k}}F_f^M(-a_t)dt\\
&+-\frac{1}{-2m}e^{-m|0^+|}\left( \frac{d^{2k-1}}{dt^{2k-1}}F_f^M(a_0^+) +(-1)^m\frac{d^{2k-1}}{dt^{2k-1}}F_f^M(-a_0^+)\right)\\&+-\frac{1}{-2m}e^{-m|0^-|}\left( \frac{d^{2k-1}}{dt^{2k-1}}F_f^M(a_0^-) +(-1)^m\frac{d^{2k-1}}{dt^{2k-1}}F_f^M(-a_0^-)\right)\\
&=-m^{-1}\left(-\frac{1}{2 \pi i^{2k}}\right)\int_{0}^{2 \pi}( e^{im\theta}-e^{-im\theta})\frac{d^{2k}}{d\theta^{2k}}F_f^T(\theta)d\theta\\
&-\frac{1}{2m}\int_{-\infty}^{\infty} e^{-m|t|}(\mathrm{sgn}(t)) \frac{d^{2k}}{dt^{2k}} F^M_f(a_t)dt\\
&-\frac{1}{2m}(-1)^m\int_{-\infty}^{\infty}e^{-m|t|}(\mathrm{sgn}(t))\frac{d^{2k}}{dt^{2k}}F_f^M(-a_t)dt
\\&+\frac{1}{m}\left( \frac{d^{2k-1}}{dt^{2k-1}}F_f^M(a_0)+(-1)^m\frac{d^{2k-1}}{dt^{2k-1}}F_f^M(-a_0)\right)
\end{align*}}

\noindent where the $\pm$ denote values as $t \to 0^{\pm}$ (this is purely for emphasis, as $F_f^M$ is smooth).  We note that the extra terms occur because of the singularity of the sign function $\mathrm{sgn}(t)$ at $t=0$. Since $F_f^M(\pm a_t)$ is even as a function of $t$, the last terms in the expression above vanish. Thus the quantity above is equal to
\begin{align*}
&-m^{-1}\left(-\frac{1}{2 \pi i^{2k}}\right)\int_{0}^{2 \pi}( e^{im\theta}-e^{-im\theta})\frac{d^{2k}}{d\theta^{2k}}F_f^T(\theta)d\theta\\
&-\frac{1}{2m}\int_{-\infty}^{\infty} e^{-m|t|}(\mathrm{sgn}(t)) \frac{d^{2k}}{dt^{2k}} F^M_f(a_t)dt\\
&-\frac{1}{2m}(-1)^m\int_{-\infty}^{\infty}e^{-m|t|}(\mathrm{sgn}(t))\frac{d^{2k}}{dt^{2k}}F_f^M(-a_t)dt.
\end{align*}
Keeping in mind that $\frac{d^{2k}}{d\theta^{2k}}F_f^T(\theta)$ has jump discontinuities at $0$ and $\pi$ (see the remark after \cite[Proposition 11.9]{Knapp}), we now apply integration by parts again to see that this expression is equal to
\begin{align*}
&m^{-2}\left(-\frac{1}{2 \pi i^{2k+1}}\right)\int_{0}^{2 \pi}( e^{im\theta}+e^{-im\theta})\frac{d^{2k+1}}{d\theta^{2k+1}}F_f^T(\theta)d\theta\\
&-m^{-2}\left(-\frac{1}{ 2\pi i^{2k+1}}\right)(2)\left( \frac{d^{2k}}{d\theta^{2k}}F_f^T(0^-)-(-1)^m\frac{d^{2k}}{d\theta^{2k}}F_f^T(\pi^+)+
(-1)^m\frac{d^{2k}}{d\theta^{2k}}F_f^T(\pi^-)-\frac{d^{2k}}{d\theta^{2k}}F_f^T(0^+)\right)\\
&+\frac{1}{2m^2}\int_{-\infty}^{\infty} e^{-m|t|}(\mathrm{sgn}(t)) \frac{d^{2k+1}}{dt^{2k+1}} F^M_f(a_t)dt\\
&+\frac{1}{2m^2}(-1)^m\int_{-\infty}^{\infty}e^{-m|t|}(\mathrm{sgn}(t))\frac{d^{2k+1}}{dt^{2k+1}}F_f^M(-a_t)dt\\
&+\frac{1}{m^2}\frac{d^{2k}}{dt^{2k}}F_f^T(a_0)+\frac{1}{m^2}(-1)^m\frac{d^{2k}}{dt^{2k}}F_f^T(-a_0).
\end{align*}
The second and last lines of the expression above cancel by the jump relations \cite[(11.45a), (11.45b)]{Knapp}.  Thus the above is equal to
\begin{align*}
&m^{-2}\left(-\frac{1}{2 \pi i^{2k+1}}\right)\int_{0}^{2 \pi}( e^{im\theta}+e^{-im\theta})\frac{d^{2k+1}}{d\theta^{2k+1}}F_f^T(\theta)d\theta\\
&+\frac{1}{2m^2}\int_{-\infty}^{\infty} e^{-m|t|}(\mathrm{sgn}(t)) \frac{d^{2k+1}}{dt^{2k+1}} F^M_f(a_t)dt\\
&+\frac{1}{2m^2}(-1)^m\int_{-\infty}^{\infty}e^{-m|t|}(\mathrm{sgn}(t))\frac{d^{2k+1}}{dt^{2k+1}}F_f^M(-a_t)dt
\end{align*}
which completes the induction, proving \eqref{claim-induct} and hence the lemma.
\end{proof}

\begin{rem} The jump relations which appear in this proof play a role in Langlands' adelization of the trace formula and his hope that it will be amenable to Poisson summation \cite{FLN} \cite{LSing}.
\end{rem}

For the proof of Lemma \ref{lem-Casimir}, it is convenient to summarize some of the information obtained in the previous lemma in the following table:
\begin{center}
\begin{tabular}{ l | c |c }
 $\pi_{v}$& $v$ &   $(\mu_{i\pi_{v}})$\\
\hline
$t \mapsto \mathrm{sgn}(t)^k|t|_v^{u}$ & real & $k+u$\\
$D_{m+1} \otimes |\cdot|_v^u$ & real &$(m/2+u,m/2+1+u)$\\
$z \mapsto z^{m}(z\bar{z})^{-\tfrac{m}{2}}|z|_v^{u}$ & complex & $\tfrac{m}{2}+u$
\end{tabular}
\end{center}

We now prove Lemma \ref{lem-Casimir}:

\begin{proof}[Proof of Lemma \ref{lem-Casimir}]
The Harish-Chandra isomorphism \cite[\S VIII.5]{Knapp} factors as
 $$
 \gamma:Z(\mathrm{R}_{F_v/\RR}\mathfrak{gl}_{n \CC}) \tilde{\lto} Z(\mm_{\CC}) \tilde{\lto} U(\mathfrak{h}_{\CC})
 $$
 where the second map is the Harish-Chandra isomorphism
 $$
 \gamma_M:Z(\mm_{\CC}) \tilde{\lto} U(\mathfrak{h}_{\CC}).
 $$
 We also have Harish-Chandra isomorphisms
$$
\gamma_j:Z(\mm_{j\CC}) \tilde{\lto} U(\mathfrak{h}_{j\CC}).
$$

The infinitisimal character of $\pi_v$ (resp.~$\pi_M$) is of the form $\Lambda(\pi_v) \circ \gamma$ (resp.~$\Lambda(\pi_{M}) \circ \gamma_M$) for some $\Lambda(\pi_v)$ (resp.~$\Lambda(\pi_M)$) in $ \mathfrak{h}_{\CC}^{\wedge}$ \cite[\S VIII.6]{Knapp}.  Similarly the infinitisimal character of $\pi_j$ is of the form $\Lambda(\pi_j) \circ \gamma_j$ for some $\Lambda(\pi_j) \in \mathfrak{h}_j^{\wedge}$.  Moreover
\begin{align} \label{factor-Lambda}
\Lambda(\pi_v)=\Lambda(\pi_M)=\sum_j \Lambda(\pi_j)
\end{align}
\cite[Proposition 8.22]{Knapp}.

If $C \in Z(\mathrm{R}_{F_v/\RR}\mathfrak{gl}_{n\CC})$ is the Casimir operator, the eigenvalue of $C$ acting on the space of $\pi_v$  is $\Lambda(\pi_v)( \gamma(C))$.  For each $j$ let $\{\Lambda_{j,\alpha}\} \subset \mathfrak{h}_{j\CC}^{\wedge}$ be a basis, and write
$$
\Lambda(\pi_v):=\sum_{j} \sum_{\alpha} a_{j,\alpha}(\pi_j) \Lambda_{j,\alpha}
$$
for some $a_{j,\alpha}(\pi_v) \in \CC$.
We note that $\gamma(C)$ does not depend on $\pi_v$.  Therefore in order to prove the lemma it suffices to exhibit a basis as above such that the $a_{j,\alpha}(\pi_j)$ are bounded by a polynomial in the $|\mu_{i\pi_v}|$ for $1 \leq i \leq n$.  In view of \eqref{L-prod} and \eqref{factor-Lambda} it follows that in order to prove the lemma it suffices to prove this statement in the special case where $n \in \{1,2\}$ for $v$ real and the special case $n=1$ for $v$ complex.  In the $n=1$ cases this comes down to unraveling definitions.  For $n=2$ we can assume that $\pi_v$ is a twist by a quasi-character of a discrete series or limit of discrete series representation.  In this case we refer to \cite[Chapter VIII, \S 16, Problems 1]{Knapp}.
\end{proof}

We now prove Corollary \ref{cor-aut-trace}:

\begin{proof}[Proof of Corollary \ref{cor-aut-trace}]
In view of Proposition \ref{prop-testsums} in order to prove the corollary it suffices to show that the contribution of the terms in Proposition \ref{prop-testsums} that depend on the automorphic representation does not grow too fast when we sum over all automorphic representations of $A_{\GL_{nF'}} \backslash \GL_n(\A_{F'})$ fixed by a given compact open subgroup of $\GL_n(\A_{F'}^{\infty})$.
More precisely, it suffices to show that
\begin{align} \label{obound}
\sum_{\pi'}\mathrm{tr}(\pi')(h)(C_{\pi'_E \times \pi'^{\tau\vee}_E}(0)^{\tfrac{1}{2}+\varepsilon})=o(X)
\end{align}
 where the sum is over any subset of the set of equivalence classes of cuspidal automorphic representations $\pi'$ of $A_{\GL_{nF'}} \backslash \GL_{n}(\A_{F'})$ and $\pi_E'$ is the base change of $\pi'$ to $\GL_n(\A_E)$.

The basic properties of cyclic base change (i.e. the relationship between the $L$-function of an admissible representation and its base change) together with the recollections on local $L$-factors collected in \S \ref{ssec-analytic-cond} above
imply that
\begin{align*}
C_{\pi'_{E\infty} \times \pi'^{\vee}_{E\infty}}(s) \leq C_{\pi'_{\infty}}(s)^N
\end{align*}
for sufficiently large $N \geq 0$ depending on $E/F'$ and $n$.
Using Lemma \ref{lem-cond} and the Weyl law for cusp forms \cite[Theorem 9.1]{Donnely}, we see that in order to prove \eqref{obound} it suffices to show that the Casimir eigenvalues (and hence the Laplacian eigenvalues) of a cuspidal automorphic representation $\pi'$ contributing to \eqref{obound} are a polynomial function of the absolute value of the parameters
$\mu_{i\pi'_{v'}}$ for archimedian $v'$.  This is the content of Lemma \ref{lem-Casimir}.
\end{proof}

\section{Restriction and descent of $L$-parameters}\label{sec-rest-desc}

The goal of this section is to prove some properties of $L$-parameters under restriction along an extension of number fields and then formulate the conjectures in automorphic representation theory to which these properties correspond.  Criteria for parameters to descend are given in \S \ref{ssec-desc}.  In \S \ref{ssec-primitive} we define a notion of $E$-primitive parameters and automorphic representations and then use it in \S \ref{ssec-rest-param} to give
an explicit description of the fibers and image of the restriction map (see Proposition \ref{prop-bij-EF'}).  In \S \ref{ssec-icosa-gp} a complement to Proposition \ref{prop-bij-EF'}, namely Proposition \ref{prop-A5-EF}, is given.  More specifically, Proposition \ref{prop-A5-EF} deals with the case of field extensions with Galois group isomorphic to $\widetilde{A}_5$.   Propositions \ref{prop-bij-EF'} and \ref{prop-A5-EF} are meant as motivation for conjectures \ref{conj-1} and \ref{conj-2} below respectively; these are the conjectures that appear in the statements of our first two main theorems.  Finally, in \S \ref{ssec-artin-conj} we prove lemmas on restriction of $L$-parameters along subfields of a $\widetilde{A}_5$-extension that motivate conjectures \ref{conj-32} and \ref{conj-33}, the conjectures assumed in Theorem \ref{main-thm-3}.

\subsection{Parameters and base change}
\label{ssec-param-bc}
In this subsection we recall the base change map (or restriction map) on $L$-parameters which conjecturally corresponds to base change of automorphic representations of $\GL_n$.
Let $E/F$ be an extension of number fields, let $W_E$ (resp.~$W_F$) denote the Weil group of $E$ (resp.~$F$) and let
$$
W_E'=W_E \times \mathrm{SU}(2)
$$ (resp.~$W_F':=W_F \times \mathrm{SU}(2)$) denote the Weil-Deligne group of $E$ (resp.~$F$), where $\mathrm{SU}(2)$ is the real compact Lie group of unitary matrices of determinant one\footnote{There are competing definitions of representations of the Weil-Deligne group that are all equivalent, see \cite[\S 2.1]{GR}.  To pass from $\mathrm{SL}_2(\CC)$ to $\mathrm{SU}(2)$ one uses the unitary trick.}.
We will be using the notion of an $L$-parameter
$$
\varphi:W_F' \lto{}^L\GL_{nF}=W_{F}' \times \GL_n(\CC)
$$
extensively\footnote{$L$-parameters are defined in \cite[\S 8]{Borel} where they are called ``admissible homomorphisms.''}.
Part of the definition of an $L$-parameter is the stipulation that the induced map
$$
W_{F}' \lto W_F'
$$
given by projection onto the first factor of ${}^L\GL_{nF}=W_{F}' \times \GL_n(\CC)$ is the identity.
Thus $\varphi$ is determined by the representation $W_{F}' \to\GL_n(\CC)$ defined by
projection onto the second factor of ${}^L\GL_{nF}$:
\begin{align} \label{proj-second}
\begin{CD}
 W_F' @>{\phi}>> {}^L\GL_{nF}@>>> {}^L\GL_{nF}^{\circ}=\GL_n(\CC).
\end{CD}
\end{align}
Thus one can safely think of $L$-parameters as representations $W_{F}' \to \GL_n(\CC)$ satisfying certain additional properties.
We say that an $L$-parameter $\phi:W_F' \to {}^L\GL_{n}$ is irreducible if the representation \eqref{proj-second} is irreducible.

For convenience, we denote by
\begin{align} \label{L-param}
\Phi_n(F):&=\{\textrm{Equivalence classes of }L\textrm{-parameters }\varphi:W_{F}' \to {}^L\GL_{nF}\}\\
\nonumber \Phi_n^0(F):&=\{\textrm{Equivalence classes of irreducible }L\textrm{-parameters }\varphi:W_{F}' \to {}^L\GL_{nF}\}.
\end{align}
If $E/F$ is Galois then there is a natural action of $\Gal(E/F)$ on the set of $L$-parameters from $W_E'$ given by
$$
\phi^{\sigma}(g)=\phi(\sigma^{-1}g\sigma).
$$
This induces an action of $\Gal(E/F)$ on $\Phi_n(E)$ which preserves $\Phi_n^0(E)$; we denote the invariants under this action by $\Phi_n(E)^{\Gal(E/F)}$ (resp.~$\Phi_n^0(E)^{\Gal(E/F)}$).

As noted above in \S \ref{ssec-bcformal}, we have a base change $L$-map
$$
b_{E/F}:{}^L\GL_{nF} \lto {}^L\mathrm{R}_{E/F}\GL_{nF}
$$
given by the diagonal embedding on connected components and the identity on the $W_F'$-factors.  For each $L$-parameter $\varphi:W_F' \to {}^L\GL_{nF}$ the composition $b_{E/F} \circ \varphi :W_F' \to {}^L\GL_{nF}$ is another $L$-parameter (compare \cite[\S 15.3]{Borel}).  One can view this construction in an equivalent manner as follows: an $L$-parameter $\phi:W_F' \to {}^L\mathrm{R}_{E/F}\GL_{nF}$ can be identified canonically with an $L$-parameter $\phi:W_E' \to {}^L\GL_{nE}$. From this viewpoint, the base change map simply associates to a parameter $\varphi:W_F' \to {}^L\GL_{nF}$ its restriction $b_{E/F}\circ \varphi:=\varphi|_{W_E'}$ (compare \cite[\S 15.3]{Borel}).
Thus $b_{E/F}$ induces a map
\begin{align} \label{L-restr}
b_{E/F}:\Phi_n^0(F) &\lto \Phi_n(E)\\
\varphi &\longmapsto \varphi|_{W_E'} \nonumber
\end{align}
which has image in $\Phi_n(E)^{\Gal(E/F)}$ if $E/F$ is Galois.

According to Langlands functoriality, there should be a corresponding transfer of $L$-packets of automorphic representations.
In fact, since $L$-packets are singletons in the case at hand, we should obtain an honest map from the set of equivalence classes of automorphic representations of $\GL_n(\A_F)$ to the set of equivalence classes of automorphic representations of $\mathrm{R}_{E/F}\GL_n(\A_F)=\GL_n(\A_E)$.
Thus we should expect
a map
\begin{align*}
b_{E/F}:\Pi_n(F) &\stackrel{?}{\lto} \Pi_n(E)\\
\pi &\stackrel{?}{\longmapsto} \pi_E
\end{align*}
which has image in $\Pi_n(E)^{\Gal(E/F)}$ if $E/F$ is Galois.  Moreover this map should share certain properties of \eqref{L-restr}.  Making this precise in general seems to require the introduction of the conjectural Langlands group.  Rather than take this route, we will simply prove properties of the restriction map on $L$-parameters (specifically propositions \ref{prop-bij-EF'} and \ref{prop-A5-EF} and lemmas \ref{lem-A5-EF} and \ref{lem-A5-EF3}) and then state the specific conjectures in automorphic representation theory (specifically conjectures \ref{conj-1}, \ref{conj-2}, \ref{conj-32} and \ref{conj-33}) that they suggest.

\subsection{Descent of parameters}
\label{ssec-desc}

Our goal in this subsection is to prove the following lemma:

\begin{lem} \label{lem-bc-param}  Let $E/F$ be a Galois extension of number fields.
If $\phi$ is irreducible and $\phi^{\sigma} \cong \phi$ for all $\sigma \in \Gal(E/F)$, then there is an $L$-parameter
$$
\varphi:W_F' \lto {}^L\GL_{nF}
$$
such that $b_{E/F} \circ \varphi \otimes \chi=\phi$, where $\chi:W_F' \lto {}^L\GL_{1F}$ is a quasi-character invariant under $\Gal(E/F)$.  If $H^2(\Gal(E/F),\CC^{\times})=0$, then $\chi$ can be taken to be trivial.
\end{lem}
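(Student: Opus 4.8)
The plan is to use the classical theory of extending an irreducible representation across a finite group extension, which produces a cohomological obstruction via Schur's lemma, and then to annihilate that obstruction by a twist, the essential input for which is global class field theory. Since $W_F' = W_F \times \mathrm{SU}(2)$ and $W_E' = W_E \times \mathrm{SU}(2)$, there is a short exact sequence $1 \to W_E' \to W_F' \to \Gal(E/F) \to 1$ in which $W_E'$ is open of finite index. I would choose a set-theoretic section $\sigma \mapsto w_\sigma$ with $w_1 = 1$ and write $w_\sigma w_\tau = n_{\sigma,\tau}\, w_{\sigma\tau}$ with $n_{\sigma,\tau} \in W_E'$. Because $\mathrm{SU}(2)$ is perfect one has $(W_E')^{\mathrm{ab}} = W_E^{\mathrm{ab}} \cong C_E$, the idele class group of $E$, and the image of the $2$-cocycle $(n_{\sigma,\tau})$ in $H^2(\Gal(E/F), C_E)$ is the canonical (fundamental) class $u_{E/F}$ of class field theory; this is precisely the defining property of the relative Weil group $W_{E/F}$.

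By irreducibility and the hypothesis $\phi^\sigma \cong \phi$, Schur's lemma provides $A_\sigma \in \GL_n(\CC)$, unique up to a scalar and normalized by $A_1 = I$, with $\phi(w_\sigma^{-1} g w_\sigma) = A_\sigma^{-1}\phi(g)A_\sigma$ for all $g \in W_E'$. Comparing $A_\sigma A_\tau$ with $\phi(n_{\sigma,\tau})A_{\sigma\tau}$, both of which intertwine $\phi$ with its conjugate by $w_\sigma w_\tau$, Schur's lemma again yields scalars $c(\sigma,\tau) \in \CC^\times$ with $A_\sigma A_\tau = c(\sigma,\tau)\,\phi(n_{\sigma,\tau})\,A_{\sigma\tau}$; associativity makes $c$ a $2$-cocycle, and its class $[c] \in H^2(\Gal(E/F),\CC^\times)$ is independent of all choices. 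The assignment $g w_\sigma \mapsto \phi(g)A_\sigma$ is then a homomorphism $W_F' \to \GL_n(\CC)$ restricting to $\phi$ exactly when $[c] = 0$; and replacing $\phi$ by $\phi \otimes \chi$ for a $\Gal(E/F)$-invariant quasi-character $\chi$ of $W_E'$ keeps the same $A_\sigma$ admissible and replaces $[c]$ by $[c] - \chi_* u_{E/F}$, where $\chi_*$ is functoriality in the coefficients of $H^2(\Gal(E/F),-)$.

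The crux — and the step I expect to be the main obstacle, since it genuinely uses global class field theory rather than formal homological algebra — is to produce a $\Gal(E/F)$-invariant quasi-character $\chi$ of $C_E = W_E^{\mathrm{ab}}$ with $\chi_* u_{E/F} = [c]$. The point is that the homomorphism $(\widehat{C_E})^{\Gal(E/F)} \to H^2(\Gal(E/F),\CC^\times)$, $\chi \mapsto \chi_* u_{E/F}$, is surjective. Its kernel is the set of $\chi$ that extend to a quasi-character of $W_F'$, which by reciprocity is the image of the map $\widehat{C_F} \to (\widehat{C_E})^{\Gal(E/F)}$, $\eta \mapsto \eta \circ \N_{E/F}$; hence it induces an injection of the cokernel of that map into $H^2(\Gal(E/F),\CC^\times)$. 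Dualizing, that cokernel is $\bigl(\widehat{H}^{-1}(\Gal(E/F),C_E)\bigr)^{\wedge}$, and Tate--Nakayama (cup product with $u_{E/F}$) identifies it with $\bigl(H_2(\Gal(E/F),\ZZ)\bigr)^{\wedge}$, which is canonically $H^2(\Gal(E/F),\CC^\times)$ — a finite group of the same order. An injection between finite groups of equal order is an isomorphism, so $\chi \mapsto \chi_* u_{E/F}$ is onto. Choosing $\chi$ with $\chi_* u_{E/F} = [c]$ makes the obstruction of $\phi \otimes \chi$ vanish, so $\phi \otimes \chi$ extends to a homomorphism $\varphi : W_F' \to \GL_n(\CC)$; when $H^2(\Gal(E/F),\CC^\times) = 0$ one has $[c] = 0$ already and may take $\chi$ trivial.

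It remains to verify that $\varphi$ is a genuine $L$-parameter. Continuity is automatic because $W_E'$ is open of finite index and $\varphi|_{W_E'} = \phi \otimes \chi$ is continuous. On the common direct factor $\mathrm{SU}(2) \leq W_F'$, on which $\Gal(E/F)$ acts trivially, each $A_\sigma$ centralizes $\phi(\mathrm{SU}(2))$, so $\varphi|_{\mathrm{SU}(2)} = \phi|_{\mathrm{SU}(2)}$ and is therefore algebraic. Finally, for $w \in W_F$ the power $w^{[E:F]}$ lies in $W_E'$, where $\varphi(w^{[E:F]}) = (\phi \otimes \chi)(w^{[E:F]})$ is semisimple since $\phi$ is an $L$-parameter and $\chi$ is a quasi-character; as a matrix over $\CC$ with a semisimple power is semisimple, $\varphi(w)$ is semisimple. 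Hence $b_{E/F}\circ\varphi = \varphi|_{W_E'} = \phi \otimes \chi$ with $\chi$ a $\Gal(E/F)$-invariant quasi-character, trivial when $H^2(\Gal(E/F),\CC^\times) = 0$; rewriting with $\chi^{-1}$ in place of $\chi$ gives the assertion in the form stated.
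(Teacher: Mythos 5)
Your proof is correct, and it takes a genuinely different route from the paper's. The paper first extends the projective representation $P\phi_0:W_E'\to\mathrm{PGL}_n(\CC)$ to all of $W_F'$ (this needs only Schur's lemma) and then invokes Rajan's theorem that the measurable Schur cohomology $H^2(W_F',\CC^{\times})$ vanishes in order to lift the extended projective representation to an honest $L$-parameter $\varphi$; the invariant quasi-character $\chi$ then appears \emph{a posteriori} as the discrepancy between $\varphi|_{W_E'}$ and $\phi$, and the last assertion is read off from the inflation--restriction sequence. You instead compute the Clifford-theoretic obstruction $[c]\in H^2(\Gal(E/F),\CC^{\times})$ explicitly and kill it by a twist, with the needed surjectivity of $\chi\mapsto\chi_{*}u_{E/F}$ supplied by Tate--Nakayama duality and the fundamental class. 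The two arguments are closely related: your surjectivity claim is equivalent (via the five-term exact sequence, since the transgression is cup product with the extension class) to the vanishing of the inflation map $H^2(\Gal(E/F),\CC^{\times})\to H^2(W_F',\CC^{\times})$, which is exactly the consequence of Rajan's theorem the paper uses; so the class-field-theoretic input is of comparable depth, but your version isolates precisely which twists are available instead of quoting the full lifting theorem. One step deserves extra care: identifying the cokernel of $\widehat{C_F}\to(\widehat{C_E})^{\Gal(E/F)}$ with the Pontryagin dual of $\widehat{H}^{-1}(\Gal(E/F),C_E)$ requires that $I_{\Gal(E/F)}C_E$ be \emph{closed} in $\ker\N_{E/F}$, since the continuous dual only sees the Hausdorff quotient and your counting would otherwise undershoot; this does hold, because Galois preserves the idele norm, so $I_{\Gal(E/F)}C_E$ is a finite sum of continuous images of the compact norm-one idele class group and hence compact, but it should be said explicitly. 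Your closing verifications (continuity from openness of $W_E'$, algebraicity on the $\mathrm{SU}(2)$ factor after choosing coset representatives in $W_F\times\{1\}$, semisimplicity of $\varphi(w)$ from semisimplicity of $\varphi(w)^{[E:F]}$) are correct and fill in points the paper leaves implicit.
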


Before we begin the proof we set a little notation.  Let $\varphi$ and $\phi$ be $L$-parameters as above.  We let
\begin{align} \label{naughts}
\varphi_0:W_F' &\lto ({}^L\GL_{nF})^{\circ}=\GL_n(\CC)\\
\nonumber \phi_0:W_E' &\lto ({}^L\GL_{nE})^{\circ}=\GL_n(\CC)
\end{align}
be the homomorphisms defined by composing $\varphi$ (resp.~$\phi$) with the projection ${}^L\GL_{nF} \to ({}^L\GL_{nF})^{\circ}$ (resp. ${}^L\GL_{nE} \to ({}^L\GL_{nE})^{\circ}$).

\begin{proof}

By assumption, for every $\sigma \in \Gal(E/F)$ we are given a $c(\sigma) \in \GL_n(\CC)$ such that
$$
\phi_0(\sigma \zeta \sigma^{-1})=c(\sigma)\phi_0(\zeta)c(\sigma)^{-1}.
$$
Since $\phi_0$ is irreducible, Schur's lemma implies that $c(\sigma)c(\tau)=\lambda(\sigma,\tau)c(\sigma\tau)$ for some $\lambda(\sigma, \tau) \in \CC^{\times}$.
In other words, the projective representation
$$
P\phi_0:W_E' \lto ({}^L\GL_{nE})^{\circ} \lto \mathrm{PGL}_n(\CC)
$$
obtained by composing $\phi_0$ with the canonical projection
can be extended to a (continuous) projective representation
$$
\psi:W_F'\lto \mathrm{PGL}_n(\CC).
$$
This extension has the property that $\psi(w)$ is semisimple for all $w \in W_F$.

By \cite[\S 8]{Rajan2},  there is an $L$-parameter
$$
\varphi:W_F' \lto {}^L\GL_{nF}
$$
such that $\varphi$ is a lift of $\psi$.  Let $P(b_{E/F}(\varphi)_0)$ denote
 the composite of $b_{E/F}(\varphi)_0=\varphi_0|_{W_E'}$ with the projection ${}^L\GL_{nE} \to \mathrm{PGL}_n(\CC)$.  We have that
$$
P(b_{E/F}(\varphi)_0) \cong P \phi_0.
$$
It follows that $b_{E/F}(\varphi)_0 \cong \phi_0 \otimes \chi$ for some character $\chi:W_E' \to \CC^{\times}$ invariant under $\Gal(E/F)$.

To complete the proof of the lemma, we need to show that if $H^2(\Gal(E/F),\CC^{\times})=0$, then any character $\chi:W_E' \to \CC^{\times}={}^L\GL_{1F}$ that is invariant under $\Gal(E/F)$ is the restriction of a character of $W_F'$.
 Viewing $\CC^{\times}$ as a trivial $W_F'$ and $\Gal(\bar{F}/F$)-module, we have an inflation-restriction exact sequence \cite[\S 3]{Rajan2}
\begin{align} \label{inf-res}
\begin{CD}
H^1(W_F',\CC^{\times}) @>{\mathrm{res}}>> H^1(W_E',\CC^{\times})^{W_E'/W_F'} @>>> H^2(W_F'/W_E',\CC^{\times})
\end{CD}
\end{align}
coming from the Hochschild-Serre spectral sequence.  Here $H$ denotes the Moore cohomology groups.  We note that for $i \geq 1$, $G$ discrete the Moore cohomology group $H^i(G,M)$ is equal to the usual continuous group cohomology group \cite[\S 3]{Rajan2}.  Since $W_F'/W_E' \cong \Gal(E/F)$, this completes the proof of the lemma.
\end{proof}

One would like to construct functorial transfers of automorphic representations corresponding to the base change map on $L$-parameters recalled above.
The $n=1$ case is trivial, as we now explain:  Given a quasi-character
$$
\mu:\GL_1(\A_F) \lto \CC^{\times}
$$
trivial on $\GL_1(F)$ its base change $b_{E/F}(\mu)$ is given by
$$
b_{E/F}(\mu):=\mu \circ \N_{E/F}:\GL_1(\A_E) \lto \CC^{\times}
$$
where $\N_{E/F}$ is the norm map.  We have the following lemma characterizing the image of the base change:
\begin{lem} \label{lem-image-bc} Suppose that $E/F$ is Galois and $H^2(\Gal(E/F),\CC^{\times})=0$.  If $\eta:\GL_1(\A_E) \to \CC^{\times}$ is a quasi-character trivial on $\GL_1(E)$ satisfying $\eta^{\sigma} =\eta$ for all $\sigma \in \Gal(E/F)$ then $\eta=\chi \circ \N_{E/F}$ for some quasi-character $\chi:\GL_1(\A_F) \to \CC^{\times}$ trivial on $\GL_1(F)$.
\end{lem}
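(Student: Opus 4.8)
The plan is to deduce this from the inflation--restriction argument already used in the proof of Lemma \ref{lem-bc-param}, transported across the reciprocity isomorphism of class field theory. First I would identify $\eta$, a quasi-character of $\GL_1(F)\backslash\GL_1(\A_E)=E^{\times}\backslash\A_E^{\times}$, with a continuous quasi-character $\eta\colon W_E'\to\CC^{\times}$; since the $\mathrm{SU}(2)$-factor of $W_E'$ is perfect, any such quasi-character factors through $W_E^{\mathrm{ab}}\cong E^{\times}\backslash\A_E^{\times}$, so this is just class field theory for $E$. The compatibility to record at this step is that the reciprocity map is $\Gal(E/F)$-equivariant: the action of $\sigma\in\Gal(E/F)$ on $\A_E^{\times}$ induced by $\sigma\colon E\to E$ corresponds, under reciprocity, to conjugation on $W_E^{\mathrm{ab}}$ by any lift of $\sigma$ to $W_F$. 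Hence the hypothesis $\eta^{\sigma}=\eta$ for all $\sigma$ says exactly that $\eta$ lies in $H^1(W_E',\CC^{\times})^{W_F'/W_E'}$.

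Next I would feed this into the inflation--restriction sequence \eqref{inf-res} with $\CC^{\times}$ a trivial module,
\[
H^1(W_F',\CC^{\times})\xrightarrow{\ \res\ }H^1(W_E',\CC^{\times})^{W_F'/W_E'}\To H^2(W_F'/W_E',\CC^{\times}).
\]
Since $W_F'/W_E'\cong\Gal(E/F)$ and $H^2(\Gal(E/F),\CC^{\times})=0$ by hypothesis, the restriction map is surjective onto the invariants, so there is a continuous quasi-character $\widetilde{\chi}\colon W_F'\to\CC^{\times}$ with $\widetilde{\chi}|_{W_E'}=\eta$.

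Finally I would translate $\widetilde{\chi}$ back through class field theory for $F$: it factors through $W_F^{\mathrm{ab}}\cong F^{\times}\backslash\A_F^{\times}$ and so corresponds to a quasi-character $\chi\colon\GL_1(\A_F)\to\CC^{\times}$ trivial on $\GL_1(F)$. The second class field theory compatibility — that restriction of a Weil-group quasi-character along $W_E'\hookrightarrow W_F'$ corresponds, on idele class groups, to precomposition with the norm $\N_{E/F}\colon\A_E^{\times}\to\A_F^{\times}$ — then turns $\widetilde{\chi}|_{W_E'}=\eta$ into $\chi\circ\N_{E/F}=\eta$, which is the assertion. The only points requiring care are these two standard class field theory compatibilities (Galois-equivariance of reciprocity and its interaction with norms); there is no serious obstacle beyond invoking them correctly. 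I would also note that $\chi$ is determined only up to a quasi-character of $F^{\times}\backslash\A_F^{\times}$ trivial on $\N_{E/F}(\A_E^{\times})$, reflecting the kernel $H^1(\Gal(E/F),\CC^{\times})$ of the restriction map.
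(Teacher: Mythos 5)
Your proposal is correct and is essentially the same argument as the paper's: the paper records the commutative square $\N_{E/F}\colon E^{\times}\backslash\A_E^{\times}\to F^{\times}\backslash\A_F^{\times}$ versus the inclusion $W_E'\to W_F'$ from Tate's (1.2.2), and then cites the inflation--restriction step carried out at the end of the proof of Lemma \ref{lem-bc-param} — exactly the sequence \eqref{inf-res} together with $H^2(\Gal(E/F),\CC^{\times})=0$ — to lift the $\Gal(E/F)$-invariant character of $W_E'$ to $W_F'$. You have simply unpacked the two class-field-theory compatibilities the commutative diagram encodes (Galois-equivariance of reciprocity, norm $\leftrightarrow$ restriction), which the paper leaves implicit; the one slip is a harmless typo writing $\GL_1(F)\backslash\GL_1(\A_E)$ where $\GL_1(E)\backslash\GL_1(\A_E)$ is meant.
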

\begin{proof}
We have a commutative diagram
\begin{align} \label{nice-diag}
\begin{CD}
 E^{\times} \backslash \A_E^{\times} @>{r_E}>> W_E'/W'^c_E\\
@V{\N_{E/F}}VV @VVV\\
F^{\times} \backslash \A_F^{\times} @>{r_F}>> W_F'/W'^c_F
\end{CD}
\end{align}
where $(\cdot)^c$ denotes the closure of the commutator subgroup of $(\cdot)$ and the right homomorphism is induced by the inclusion $W_E' \leq W_F'$ \cite[(1.2.2)]{Tate}.  As we proved in Lemma \ref{lem-bc-param}, any quasi-character of $W_E'/W'^{c}_E$ that is invariant under $\Gal(E/F)$ is the restriction of a quasi-character of $W_F'/W'^{c}_F$.
   Translating this to the left hand side of \eqref{nice-diag}, this implies that any quasi-character of $\A_E^{\times}$ trivial on $E^{\times}$ that is invariant under $\Gal(E/F)$
   is the composition of a quasi-character of $\A_F^{\times}$ trivial on $F^{\times}$ with the norm map.
\end{proof}

\subsection{Primitive parameters and automorphic representations}
\label{ssec-primitive} Let $F$ be a number field.
It is convenient to introduce the following definition:
\begin{defn} \label{defn-induced-L}
An $L$-parameter $\varphi:W_F' \to {}^L\GL_{nF}$ is \textbf{$K$-induced} if there is a nontrivial field extension $K/F$ of finite degree and an irreducible $L$-parameter $\phi:W_K' \to {}^L\GL_{nK}$ such that $\varphi \cong \mathrm{Ind}_{W_K'}^{W_F'}\phi$.  If $E/F$ is a nontrivial field extension, then an \textbf{$E$-primitive} $L$-parameter $\varphi$ is an irreducible $L$-parameter such that $\varphi$ is not $K$-induced for all subfields $E \geq K > F$.
\end{defn}
We denote by
\begin{align}
\Phi^{\mathrm{prim}}_n(E/F):=\{ \textrm{Equiv.~classes of $E$-primitive }L\textrm{-parameters }\varphi:W_F' \to {}^L\GL_{nF}\}.
\end{align}

Let $k$ be a global or local field and let $K/k$ be an \'etale $k$-algebra.  Let $\bar{k}$ be a choice of algebraic closure of $k$.  Write $K=\oplus_iK_i$ where the $K_i$ are finite extension fields of $k$.
Let
$$
\mathrm{Ind}_{K}^k:{}^L\mathrm{R}_{K/k}\GL_{nk} \to {}^L\GL_{n[K:k]k}
$$
be the $L$-map that embeds $({}^L\mathrm{R}_{K/k}\GL_{nk})^{\circ}=\times_i\GL_n(\CC)^{\mathrm{Hom}_k(K_i,\bar{k})}$ diagonally and sends $W_{k}'$ to the the appropriate group of permutation matrices\footnote{$L$-maps are defined in \cite[\S 15.1]{Borel}.}.  We recall that $L$-parameters $\phi:W_K' \to {}^L\GL_{nK}$ can be identified canonically with $L$-parameters $\phi:W_k' \to {}^L\mathrm{Res}_{K/k}\GL_{nk}$
\cite[Proposition 8.4]{Borel}; under  this identification $\mathrm{Ind}_K^k(\phi)=\oplus_i\mathrm{Ind}_{W'_{K_i}}^{W_k'}(\phi)$.  Using the local Langlands correspondence, for any irreducible admissible representation $\Pi_v$ of $\GL_n(E \otimes_FF_v)$ we can associate an irreducible admissible representation $\pi_v$ of $\GL_n(F_v)$ by stipulating that if $\phi:W_{F_v}' \to {}^L\mathrm{R}_{E/F}\GL_n$ is the $L$-parameter attached to $\Pi_v$ then $\mathrm{Ind}_E^F \circ \phi$ is the $L$-parameter attached to $\pi_v$.  If this is the case then we write
$$
\pi_v \cong \mathrm{Ind}_E^F(\Pi_v).
$$

\begin{defn}
An automorphic representation $\pi$ of $\GL_{n}(\A_F)$ is \textbf{$K$-automorphically induced} if there is a nontrivial finite extension field $K/F$ and an automorphic representation $\Pi$ of $\GL_n(\A_K)=\mathrm{Res}_{K/F}\GL_n(\A_F)$
such that
$\pi_v \cong \mathrm{Ind}_K^F(\Pi_v)$ for almost all places $v$ of $F$.  If $E/F$ is a nontrivial field extension then an \textbf{$E$-primitive} automorphic representation of $\GL_n(\A_F)$ is a cuspidal automorphic representation of $\GL_{n}(\A_F)$ that is not $K$-induced for all subfields $E \geq K >F$.
\end{defn}
For field extensions $E/F$ let
\begin{align}
\Pi^{\mathrm{prim}}_n(E/ F):=\{\textrm{isom.~classes of $E$-primitive automorphic reps.~of }\GL_{n}(\A_F)\}.
\end{align}

\subsection{Restriction of parameters}
\label{ssec-rest-param}
In \S \ref{ssec-desc} we discussed descent of parameters along a Galois extension $E/F$; the main result being that if $H^2(\Gal(E/F),\CC^{\times})=0$ then $\Gal(E/F)$-invariant irreducible parameters descend.  In this subsection we explore certain converse statements involving restrictions of parameters.  The main result is Proposition \ref{prop-bij-EF'}.  The statement parallel to Proposition \ref{prop-bij-EF'} in the context of automorphic representations is Conjecture \ref{conj-1}, the conjecture that appeared in the statement of Theorem \ref{main-thm-1}.

Let $E \geq K \geq F$ be a subfield.  For the remainder of this section, to ease notation we will often write $K$ where more properly we should write $W_K'$, e.g.
$$
\varphi|_{K}:=\varphi|_{W_K'}.
$$

We begin with the following lemma:

\begin{lem} \label{lem-restriction} Let $E/F$ be a Galois extension of number fields such that $H^2(\Gal(E/F),\CC^{\times})=0$.  Let $\varphi:W_F' \to {}^L\GL_{nF}$ be an irreducible $L$-parameter.
Either there is a subfield $E \geq K \geq F$ and an irreducible $L$-parameter $\phi:W_K' \to {}^L\GL_{mK}$ such that $\varphi \cong \mathrm{Ind}_{K}^{F}\phi$  or there is an $L$-parameter $\varphi_1:W_F' \to {}^L\GL_{mF}$ with $\varphi_1|_{E}$ irreducible and a finite-dimensional irreducible representation $\rho:\Gal(E/F) \to \GL_{d}(\CC)$ such that
$$
\varphi \cong \rho \otimes \varphi_1.
$$
Here we view $\rho$ as an $L$-parameter via the quotient map $W_{F}' \to W_{E}'/W_F'=\Gal(E/F)$.
\end{lem}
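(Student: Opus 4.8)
The plan is to view $\varphi$ as the representation $\varphi_0:W_F'\to\GL_n(\CC)$ of \eqref{naughts} and to analyze its restriction to the normal subgroup $W_E'\trianglelefteq W_F'$ (recall $W_F'/W_E'\cong\Gal(E/F)$) by Clifford theory, which applies in the usual form since $W_E'$ has finite index in $W_F'$. Since $\varphi$ is irreducible, $\varphi|_{W_E'}$ is semisimple, and Clifford's theorem writes it as
$$
\varphi|_{W_E'}\cong\bigoplus_{i=1}^{r}\psi_i^{\oplus e},
$$
where $\psi_1$ is irreducible, the $\psi_i$ are the distinct $\Gal(E/F)$-conjugates of $\psi_1$, $r=[\Gal(E/F):H]$ with $H=\mathrm{Stab}_{\Gal(E/F)}(\psi_1)$, and $e\geq 1$ is the common multiplicity. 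I would then split into two cases according to whether $H$ is a proper subgroup of $\Gal(E/F)$ or not.

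If $H\subsetneq\Gal(E/F)$, i.e. $r>1$, set $K:=E^{H}$, so that $F\subsetneq K\subseteq E$ and the preimage of $H$ under $W_F'\to\Gal(E/F)$ is $W_K'$. Clifford theory then furnishes an irreducible representation $\phi$ of $W_K'$ lying over $\psi_1$ with $\varphi\cong\mathrm{Ind}_{W_K'}^{W_F'}\phi=\mathrm{Ind}_K^F\phi$, where $m:=\dim\phi=e\dim\psi_1$ and $n=[K:F]m$; after checking that $\phi$ is indeed an $L$-parameter $W_K'\to{}^L\GL_{mK}$, this is the first alternative of the lemma. (Note that $H^2(\Gal(E/F),\CC^\times)=0$ is not used here.)

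If instead $H=\Gal(E/F)$, then $r=1$ and $\psi:=\psi_1$ is $\Gal(E/F)$-invariant with $\varphi|_{W_E'}\cong\psi^{\oplus e}$. Since $\psi$ is irreducible and $\Gal(E/F)$-invariant and $H^2(\Gal(E/F),\CC^\times)=0$, Lemma \ref{lem-bc-param} produces an $L$-parameter $\varphi_1:W_F'\to{}^L\GL_{mF}$ ($m:=\dim\psi$) with $\varphi_1|_{W_E'}\cong\psi$, so in particular $\varphi_1|_{E}$ is irreducible. I would then take $\rho:=\mathrm{Hom}_{\CC[W_E']}(\varphi_1,\varphi)$; because $W_E'$ is normal in $W_F'$, the formula $(g\cdot T)(v)=\varphi(g)T(\varphi_1(g)^{-1}v)$ endows $\rho$ with a $W_F'$-action on which $W_E'$ acts trivially, hence $\rho$ is a representation of $\Gal(E/F)$, of dimension $e$ by Schur's lemma. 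The evaluation map $\rho\otimes\varphi_1\to\varphi$ is $W_F'$-equivariant and nonzero, hence surjective since $\varphi$ is irreducible, and then an isomorphism by the dimension count $em=\dim\varphi$. Finally $\rho$ is irreducible: a nontrivial decomposition $\rho\cong\rho'\oplus\rho''$ would force $\varphi\cong(\rho'\otimes\varphi_1)\oplus(\rho''\otimes\varphi_1)$, contradicting irreducibility of $\varphi$. Inflating $\rho$ along $W_F'\to\Gal(E/F)$ gives the second alternative.

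The representation-theoretic skeleton above is just Clifford theory together with Lemma \ref{lem-bc-param}; the only real work is bookkeeping, namely verifying that the objects one produces — the representation $\phi$ induced from $W_K'$ in the first case, and $\varphi_1$ together with the finite-image $\rho$ in the second — genuinely satisfy the continuity and Frobenius-semisimplicity conditions in the definition of an $L$-parameter. Each of these follows from the corresponding property of $\varphi$ together with the stability of those conditions under restriction to finite-index subgroups, induction, tensor product, and inflation along $W_F'\to\Gal(E/F)$; none is hard, but they must be recorded, and I expect this compatibility check to be the main (if minor) obstacle.
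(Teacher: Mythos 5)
Your argument is correct and follows the paper's own strategy: both rest on the Clifford-theoretic dichotomy for the finite-index normal subgroup $W_E' \trianglelefteq W_F'$ (the paper invokes \cite[\S 8.1, Proposition 24]{SerreFG} for the ``induced or isotypic'' alternative), followed by descent of the isotypic constituent to $\varphi_1$ via Lemma \ref{lem-bc-param}. The only real deviation is one of packaging in the isotypic case: you build $\rho$ explicitly as $\mathrm{Hom}_{\CC[W_E']}(\varphi_1,\varphi)$ and check directly that the evaluation map is an isomorphism, whereas the paper extracts the same $\rho$ from $\mathrm{Hom}_F(\varphi,\mathrm{Ind}_E^F(1)\otimes\varphi_1)\neq 0$ using Frobenius reciprocity together with the pre-established tensor identity $\mathrm{Ind}_E^F(1)\otimes\varphi_1\cong\mathrm{Ind}_E^F(\varphi_1|_E)$ of Lemma \ref{lem-basic} — two equivalent routes to $\varphi\cong\rho\otimes\varphi_1$.
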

We note in particular that in the notation of the lemma one has $m[K:F]=n$ in the former case and  $md=n$ in the latter.  The extreme cases $m=1$ and $d=1$ occur.

For our use in the proof of this lemma and later, we record the following:
\begin{lem} \label{lem-basic} Let $H \leq G$ be groups with $H$ normal in $G$ and $[G:H]< \infty$. Moreover let $\varphi:G \to \mathrm{Aut}(V)$ be a finite-dimensional complex representation that is irreducible upon restriction to $H$.  Then
$$
\mathrm{Ind}_{H}^G(1) \otimes \varphi \cong \mathrm{Ind}_{H}^G(\varphi|_H)
$$
and $\rho \otimes \varphi$ is irreducible for any irreducible representation $\rho$ of $G/H$.
\end{lem}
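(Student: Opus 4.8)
The plan is to treat the two assertions in turn, using only Schur's lemma and the elementary formalism of induced representations; nothing about $L$-parameters or number fields enters.

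For the isomorphism $\mathrm{Ind}_{H}^{G}(1)\otimes\varphi\cong\mathrm{Ind}_{H}^{G}(\varphi|_{H})$, the approach I would take is to write down an explicit $G$-equivariant map and verify it is a bijection. Realizing $\mathrm{Ind}_{H}^{G}(\varphi|_{H})$ as $\CC[G]\otimes_{\CC[H]}V$, where $V$ is the space of $\varphi$, and $\mathrm{Ind}_{H}^{G}(1)\otimes\varphi$ as $\CC[G/H]\otimes_{\CC}V$ with $G$ acting diagonally (by permutation of cosets on the first factor and by $\varphi$ on the second), I would define
\[
\Psi\colon \CC[G]\otimes_{\CC[H]}V\To \CC[G/H]\otimes_{\CC}V,\qquad g\otimes v\longmapsto gH\otimes\varphi(g)v ,
\]
check that $\Psi$ is well defined over $\CC[H]$ and $G$-equivariant, and exhibit the inverse $g_{0}H\otimes v\mapsto g_{0}\otimes\varphi(g_{0})^{-1}v$. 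This is the usual projection (push--pull) formula for induction; it uses neither the normality of $H$ nor the irreducibility hypothesis, only $[G:H]<\infty$, so one could also simply cite it.

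For the second assertion, the key observation is that, since $\rho$ is inflated from $G/H$, its restriction $\rho|_{H}$ is the trivial representation of dimension $d:=\dim\rho$, whence
\[
(\rho\otimes\varphi)\big|_{H}\;\cong\;\CC^{d}\otimes_{\CC}(\varphi|_{H}),
\]
with $H$ acting trivially on $\CC^{d}$ and irreducibly on $V=\varphi|_{H}$. I would then use that, by Schur's lemma applied to the irreducible $H$-module $\varphi|_{H}$, the $H$-stable subspaces of $\CC^{d}\otimes V$ are exactly those of the form $W\otimes V$ for a linear subspace $W\subseteq\CC^{d}$. Now take any nonzero $G$-submodule $U\subseteq\rho\otimes\varphi$; restricting to $H$ forces $U=W\otimes V$ inside the ambient space $\CC^{d}\otimes V$, and because $G$ acts on this space by $\rho(g)\otimes\varphi(g)$, the $G$-stability of $U$ gives $\rho(g)W=W$ for all $g\in G$, i.e. $W$ is a $G/H$-subrepresentation of $\rho$. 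Since $\rho$ is irreducible over $G/H$, Schur again yields $W=0$ or $W=\CC^{d}$, so $U=0$ or $U=\rho\otimes\varphi$; hence $\rho\otimes\varphi$ is irreducible.

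The only step demanding a little care, and what I would flag as the (mild) main obstacle, is the claim that every $H$-submodule of the isotypic module $\CC^{d}\otimes(\varphi|_{H})$ has the shape $W\otimes V$: this is precisely where the irreducibility of $\varphi|_{H}$ and the algebraic closedness and characteristic zero are used, via $\mathrm{End}_{H}(\varphi|_{H})=\CC$ and the resulting transparent submodule structure of $V^{\oplus d}$. Everything else is bookkeeping; in the application $G$ is finite, so complete reducibility is automatic and no subtlety about $[G:H]$ being invertible arises. (Alternatively, one can phrase the last paragraph via endomorphism rings: identify $\mathrm{End}_{H}\big((\rho\otimes\varphi)|_{H}\big)\cong M_{d}(\CC)$, observe that the residual $G/H$-action is conjugation by $\rho(g)$, and conclude $\mathrm{End}_{G}(\rho\otimes\varphi)=\mathrm{End}_{G/H}(\rho)=\CC$ together with semisimplicity of $\rho\otimes\varphi$ as a $G$-module.)
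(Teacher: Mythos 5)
Your proof is correct, but for the main (second) assertion it takes a genuinely different route from the paper. The paper deduces irreducibility of all the $\rho_i\otimes\varphi$ at once, indirectly: it decomposes $\mathrm{Ind}_H^G(1)\otimes\varphi\cong\bigoplus_i(\rho_i\otimes\varphi)^{\oplus\deg\rho_i}$, then uses the projection formula together with Frobenius reciprocity to compute $\dim_{\CC}\mathrm{End}_G\bigl(\mathrm{Ind}_H^G(1)\otimes\varphi\bigr)=\sum_i\deg(\rho_i)^2$, which is only possible if each summand is irreducible (and the summands are pairwise inequivalent). You instead work with a single $\rho\otimes\varphi$ and analyze its submodule lattice directly via Clifford-theoretic bookkeeping: restriction to $H$ is isotypic, Schur's lemma pins down every $H$-submodule as $W\otimes V$, and $G$-stability forces $W$ to be $\rho$-stable. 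Your argument is more self-contained (it does not use the first assertion, Frobenius reciprocity, or the decomposition of $\mathrm{Ind}_H^G(1)$) and it yields irreducibility without any appeal to semisimplicity of the ambient module, which is a genuine advantage since $G$ here is infinite in the intended application ($G=W_F'$); the paper's dimension count, read strictly, detects indecomposability and needs the (true, but unremarked) semisimplicity of $\mathrm{Ind}_H^G(1)\otimes\varphi$ to conclude. The paper's approach buys the extra information that distinct $\rho_i$ give inequivalent $\rho_i\otimes\varphi$, a fact it uses elsewhere (e.g.\ in the proofs of Proposition \ref{prop-bij-EF} and Proposition \ref{prop-bij-EF'}), though that also follows quickly from your setup. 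Your treatment of the first assertion is the standard explicit form of the projection formula the paper simply cites from Serre, and your observation that normality is not needed there is accurate.
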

\begin{proof}
The first statement is \cite[\S 3.3, Example 5]{SerreFG}.

As a representation of $G$, one has $\mathrm{Ind}_H^G(1) \cong \oplus_{i=1}^n \rho_i^{\oplus \mathrm{deg}(\rho_i)}$, where the sum is over a set of representatives for the irreducible representations of $G/H$.  Thus to prove the second statement of the lemma it suffices to show that
$$
\mathrm{dim}_{\CC}\mathrm{Hom}_{G}(\mathrm{Ind}_{H}^G(1) \otimes \varphi,\mathrm{Ind}_{H}^G(1) \otimes \varphi)=\sum_{i}\mathrm{deg}(\rho_i)^2.
$$
By the first assertion of the lemma and Frobenius reciprocity we have
\begin{align*}
\mathrm{Hom}_{G}(\mathrm{Ind}_{H}^G(1) \otimes \varphi,\mathrm{Ind}_{H}^G(1) \otimes \varphi)
&\cong\mathrm{Hom}_{G}(\mathrm{Ind}_{H}^G(1) \otimes \varphi,\mathrm{Ind}_{H}^G(\varphi|_H))\\&\cong\mathrm{Hom}_{H}(\oplus_{i=1}^n \varphi|_H^{\mathrm{deg}(\rho_i)^2},\varphi|_H)
\end{align*}
which has dimension $\sum_{i=1}^n \mathrm{deg}(\rho_i)^2$.
\end{proof}

\begin{proof}[Proof of Lemma \ref{lem-restriction}]
Assume that there does not exist a subfield $E \geq K \geq F$ and an irreducible $L$-parameter $\phi:W_K' \to {}^L\GL_{mK}$ such that $\varphi \cong \mathrm{Ind}_{K}^{F}(\phi)$.  Then, by \cite[\S 8.1, Proposition 24]{SerreFG} the restriction $\varphi|_{E}$ is isomorphic to a direct sum of some number of copies of a fixed
irreducible $L$-parameter $\phi_1:W_E' \to {}^L\GL_{mE}$.  Since $\varphi|_E^{\tau} \cong \varphi|_E$ for $\tau \in W_F'$ (trivially) it follows in particular that $\phi_1$ is $\Gal(E/F)$-invariant and therefore descends to a parameter $\varphi_1:W_F' \to {}^L\GL_{mF}$ by Lemma \ref{lem-bc-param}.  By Lemma \ref{lem-basic} one has
$$
\mathrm{Ind}_{E}^{F}(1) \otimes \varphi_1 \cong \mathrm{Ind}_{E}^{F}(\phi_1).
$$
Applying Frobenius reciprocity we see that
$$
0 \neq \mathrm{Hom}_{E}(\varphi|_{E},\phi_1)=\mathrm{Hom}_{F}(\varphi,\mathrm{Ind}_{E}^{F} (\phi_1))=\mathrm{Hom}_{F}(\varphi,\mathrm{Ind}_{E}^{F}(1) \otimes \varphi_1)
$$
which, in view of Lemma \ref{lem-basic}, completes the proof of the lemma.
\end{proof}
As an example, we have the following corollary:

\begin{cor} \label{cor-restriction} Under the assumptions of Lemma \ref{lem-restriction}, if $\Gal(E/F)$ is the universal perfect central extension of a finite simple nonabelian group $G$,  $n=2$ and $\varphi|_{W'_E}$ is reducible, then $G=A_5$.
\end{cor}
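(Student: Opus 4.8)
The plan is to feed the hypotheses straight into Lemma~\ref{lem-restriction} and eliminate every possibility it allows except the one forcing $G\cong A_5$. First I would record the setup: here $\varphi$ is identified with an irreducible two-dimensional representation $W_F'\to\GL_2(\CC)$, by hypothesis $\varphi|_{W_E'}$ is reducible, and $\Gal(E/F)=\widetilde{G}$ is a perfect group, being the universal perfect central extension of $G$. The standing hypothesis $H^2(\Gal(E/F),\CC^{\times})=0$ lets me apply Lemma~\ref{lem-restriction} with $n=2$, so either $\varphi\cong\mathrm{Ind}_K^F\phi$ for some subfield $E\geq K>F$ and some irreducible $\phi:W_K'\to{}^L\GL_{mK}$ with $m[K:F]=2$, or else $\varphi\cong\rho\otimes\varphi_1$ with $\varphi_1|_{W_E'}$ irreducible, $\rho:\Gal(E/F)\to\GL_d(\CC)$ irreducible, and $md=2$.

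Next I would dispose of the induced case and the degenerate tensor case. Since $K>F$ we have $[K:F]\geq 2$, so $m[K:F]=2$ forces $[K:F]=2$ and $m=1$; then $\Gal(E/K)\leq\Gal(E/F)=\widetilde{G}$ has index $2$, hence is normal with cyclic quotient of order $2$, contradicting perfectness of $\widetilde{G}$. In the remaining case $md=2$; if $d=1$ then $\rho$ is a character of the perfect group $\widetilde{G}$, hence trivial, so $\varphi\cong\varphi_1$ has irreducible restriction to $W_E'$, contradicting the hypothesis. Hence $d=2$, $m=1$, and $\rho:\widetilde{G}\to\GL_2(\CC)$ is an irreducible representation.

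It then remains to see that such a $\rho$ forces $G\cong A_5$. Composing $\rho$ with the projection $\GL_2(\CC)\to\PGL_2(\CC)$ and noting that $Z(\widetilde{G})$ acts by scalars (Schur's lemma, since $\rho$ is irreducible), I obtain a homomorphism $G=\widetilde{G}/Z(\widetilde{G})\to\PGL_2(\CC)$. Its image is nontrivial, since otherwise $\rho$ would have abelian image and hence be one-dimensional; as $G$ is simple this makes $G$ a nonabelian simple subgroup of $\PGL_2(\CC)$. By Klein's classification the finite subgroups of $\PGL_2(\CC)$ are cyclic, dihedral, $A_4$, $S_4$, or $A_5$, and $A_5$ is the only simple nonabelian one, so $G\cong A_5$ (this is precisely the projective-representation fact recorded in the first of the remarks following Theorem~\ref{main-thm-1}).

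I do not expect a serious obstacle: the corollary is a short deduction from Lemma~\ref{lem-restriction} together with the classical structure of finite subgroups of $\PGL_2(\CC)$. The only points needing a little care are that perfectness of $\widetilde{G}$ must be invoked twice (to rule out a degree-$2$ subfield $K$ in the induced case, and to kill the character $\rho$ when $d=1$), and that in the surviving case the induced map $G\to\PGL_2(\CC)$ is genuinely nontrivial, so that Klein's list actually applies.
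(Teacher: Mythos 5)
Your proposal is correct and follows essentially the same route as the paper's proof: apply Lemma \ref{lem-restriction}, use perfectness of $\Gal(E/F)$ to rule out both the induced case (no index-$2$ subgroups) and the $d=1$ tensor case, and conclude from Klein's classification that a finite simple nonabelian subgroup of $\mathrm{PGL}_2(\CC)$ must be $A_5$. The only difference is that you spell out a few steps the paper leaves implicit (the $d=1$ subcase and the nontriviality of the induced projective representation), which is harmless.
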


\begin{proof}
By Lemma \ref{lem-restriction}, if $\varphi|_{E}$ is reducible, then either there is a quadratic extension $K/F$ contained in $E$ such that $\varphi  \cong \mathrm{Ind}_{K}^{F}\varphi_1$ for some parameter
$\varphi_1:W_K' \to {}^L\GL_{1K}$ or one has a nontrivial representation $\Gal(E/F) \to \GL_2(\CC)$ (there are no nontrivial one-dimensional representations of $\Gal(E/F)$ since $\Gal(E/F)$ is perfect).  In the former case the extension $K/F$ would correspond to an index $2$ subgroup $H \leq \Gal(E/F)$, which would a fortiori be normal.  Thus we would have $\Gal(E/F)/H \cong \ZZ/2$ contradicting the assumption that $\Gal(E/F)$ is perfect.  Hence we must be in the latter case.  The nontrivial representation $\Gal(E/F) \to \GL_2(\CC)$ induces a nontrivial projective representation $G \to \mathrm{PGL}_2(\CC)$ since $\Gal(E/F)$ is perfect.  By a well-known theorem of Klein, if $G$ is a finite simple nonabelian group and $G \to \mathrm{PGL}_2(\CC)$ is a nontrivial projective representation, then $G\cong A_5$.
\end{proof}

In view of Lemma \ref{lem-restriction}, for each $n$ there are two natural cases to consider, namely the case where there is a nontrivial representation $\Gal(E/F) \to \GL_m(\CC)$ for some $m|n$ and the case where there is no nontrivial representation $\Gal(E/F) \to \GL_m(\CC)$ for any $m|n$.  We will deal with the former case under the additional assumption that $n=2$ in \S \ref{ssec-icosa-gp} below.
 In the latter case one obtains a complete description of the fibers and image of base change on primitive parameters as follows:

\begin{prop} \label{prop-bij-EF} Let $E/F$ be a Galois extension of number fields such that $\Gal(E/F)$ is the universal perfect central extension of a finite simple nonabelian group.  \begin{enumerate}
\item If $\varphi_1,\varphi_2 :W_{F} \to {}^L\GL_{nF}$ are $L$-parameters such that $\varphi_1|_{E}$ and $\varphi_2|_{E}$ are irreducible and isomorphic, then $\varphi_1 \cong \varphi_2$.

\item Assume that for all divisors $m|n$ there are no nontrivial irreducible representations $\Gal(E/F) \to \GL_{m}(\CC)$.  Under this assumption, restriction of parameters induces a bijection
 \begin{align*}
b_{E/F}:\Phi_n^{\mathrm{prim}}(E/F) &\tilde{\lto} \Phi_n^0(E)^{\Gal(E/F)}\\
\varphi &\longmapsto \varphi|_{E}.
 \end{align*}
\end{enumerate}
\end{prop}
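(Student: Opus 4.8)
The plan is to prove the first assertion and then deduce the well\-definedness, injectivity and surjectivity of $b_{E/F}$ in the second assertion from it, together with Lemmas~\ref{lem-restriction} and~\ref{lem-bc-param}. For the first assertion I would write $\varphi_{i,0}\colon W_F'\to\GL_n(\CC)$ for the composite of $\varphi_i$ with the projection to the neutral component of ${}^L\GL_{nF}$ and $V_i$ for its underlying space. Since $\varphi_{1,0}|_{W_E'}$ and $\varphi_{2,0}|_{W_E'}$ are irreducible and isomorphic, Schur's lemma gives $\dim_{\CC}\mathrm{Hom}_{W_E'}(V_1,V_2)=1$. Letting $W_F'$ act on $\mathrm{Hom}_{\CC}(V_1,V_2)$ by $\sigma\cdot A:=\varphi_{2,0}(\sigma)A\varphi_{1,0}(\sigma)^{-1}$, one checks, using that $W_E'$ is normal in $W_F'$, that this action preserves the line $\mathrm{Hom}_{W_E'}(V_1,V_2)$, on which $W_E'$ acts trivially; hence it induces a character $\chi\colon W_F'/W_E'\cong\Gal(E/F)\to\CC^{\times}$. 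As $\Gal(E/F)$ is the universal perfect central extension of a finite simple nonabelian group it is perfect, so $\chi$ is trivial. Then a generator $A$ of $\mathrm{Hom}_{W_E'}(V_1,V_2)$ is $W_F'$-equivariant, and it is invertible by Schur, giving $\varphi_1\cong\varphi_2$.

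For the second assertion, I would first observe that $\varphi|_{W_E'}$ is automatically $\Gal(E/F)$-invariant: if $\sigma\in W_F'$ lifts a Galois element, conjugation by $\varphi_0(\sigma)$ intertwines $\varphi|_{W_E'}$ with its Galois conjugate. Next, to show $\varphi|_{W_E'}$ is irreducible whenever $\varphi$ is $E$-primitive, I would apply Lemma~\ref{lem-restriction}: the alternative in which $\varphi$ is induced from a subfield $E\geq K>F$ is excluded by $E$-primitivity, so $\varphi\cong\rho\otimes\varphi_1$ with $\varphi_1|_{W_E'}$ irreducible and $\rho$ an irreducible representation of $\Gal(E/F)$ of degree $d\mid n$; the hypothesis that $\Gal(E/F)$ has no nontrivial irreducible representation of degree dividing $n$ forces $\rho$ trivial, so $\varphi\cong\varphi_1$ and $\varphi|_{W_E'}$ is irreducible. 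This shows $b_{E/F}$ lands in $\Phi_n^0(E)^{\Gal(E/F)}$, and injectivity is exactly the first assertion. For surjectivity I would use that a universal perfect central extension has trivial Schur multiplier, so $H^2(\Gal(E/F),\CC^{\times})=0$; given $\phi\in\Phi_n^0(E)^{\Gal(E/F)}$, Lemma~\ref{lem-bc-param} then produces $\varphi\colon W_F'\to{}^L\GL_{nF}$ with $\varphi|_{W_E'}\cong\phi$ (the twisting quasi-character being trivial since $H^2$ vanishes). This $\varphi$ is irreducible since its restriction is, and it is not $K$-induced for any $E\geq K>F$: if $\varphi\cong\mathrm{Ind}_{W_K'}^{W_F'}\psi$ with $r:=[K:F]>1$, then $W_E'\cap{}^{g}W_K'=W_E'$ for all $g$, because $W_E'$ is normal in $W_F'$ and contained in $W_K'$, so Mackey's restriction formula collapses to $\varphi|_{W_E'}\cong\bigoplus_{g\in W_K'\backslash W_F'}{}^{g}\psi|_{W_E'}$, a direct sum of $r>1$ nonzero representations, contradicting the irreducibility of $\varphi|_{W_E'}$. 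Hence $\varphi$ is $E$-primitive and $b_{E/F}(\varphi)=\phi$.

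I expect the main obstacle to be the first assertion: the crux is to isolate the twisting character $\chi$ of $\Gal(E/F)$ coming from the $W_E'$-intertwiner between $\varphi_1$ and $\varphi_2$, and then to annihilate it using perfectness of $\Gal(E/F)$. Everything in the second assertion is then bookkeeping with Lemmas~\ref{lem-restriction} and~\ref{lem-bc-param}, Mackey's restriction formula in the special case of a normal subgroup, and the standard fact that the universal perfect central extension of a perfect group has vanishing $H^2(-,\CC^{\times})$.
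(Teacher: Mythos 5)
Your proof is correct, and the first assertion is argued by a genuinely different route than the paper's. The paper proves (1) by Frobenius reciprocity: from $0\neq\mathrm{Hom}_{W_E'}(\varphi_1|_{E},\varphi_2|_{E})=\mathrm{Hom}_{W_F'}(\mathrm{Ind}_{E}^{F}(\varphi_1|_E),\varphi_2)$, it invokes Lemma~\ref{lem-basic} to write $\mathrm{Ind}_E^F(\varphi_1|_E)\cong\mathrm{Ind}_E^F(1)\otimes\varphi_1\cong\bigoplus_i\rho_i^{\oplus\deg\rho_i}\otimes\varphi_1$ with each $\rho_i\otimes\varphi_1$ irreducible, and a degree count forces some $\rho_i$ to be a one-dimensional character, which perfectness kills. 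You instead run the classical Clifford-theory argument directly: $W_F'$ acts on the line $\mathrm{Hom}_{W_E'}(V_1,V_2)$ by $\sigma\cdot A=\varphi_{2,0}(\sigma)A\varphi_{1,0}(\sigma)^{-1}$ (normality of $W_E'$ ensures the line is preserved, and $W_E'$ acts trivially since $A$ is already an intertwiner), producing a character of $\Gal(E/F)$ that vanishes by perfectness, after which $A$ is an invertible $W_F'$-intertwiner. The two arguments are logically equivalent --- the character you extract is exactly the $\rho_i$ singled out by the paper's degree count --- but yours avoids appealing to Frobenius reciprocity and Lemma~\ref{lem-basic}, so it is somewhat more self-contained; the paper's formulation, on the other hand, reuses Lemma~\ref{lem-basic}, which is also the workhorse of Proposition~\ref{prop-bij-EF'}, so it keeps the whole section uniform. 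For part (2) your argument matches the paper's, but with one welcome addition: the paper cites Lemma~\ref{lem-bc-param} for surjectivity without explicitly checking that the resulting $\varphi$ is $E$-primitive, whereas your Mackey computation (using that $W_E'$ is normal and contained in every $W_K'$, so $\mathrm{Res}_{W_E'}\mathrm{Ind}_{W_K'}^{W_F'}\psi$ splits into $[K:F]$ nonzero pieces) makes that step explicit. One small point worth flagging: both your argument and the paper's rely on Lemma~\ref{lem-restriction} and Lemma~\ref{lem-bc-param}, which require $H^2(\Gal(E/F),\CC^{\times})=0$; you correctly note that this holds because a universal perfect central extension has trivial Schur multiplier, and the paper leaves this implicit.
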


\begin{proof}
We first check (1).  Suppose that $\varphi_1,\varphi_2:W_{F}' \to {}^L\GL_{nF'}$ are two irreducible parameters with isomorphic irreducible restrictions to $W_{E}'$.
Then by Frobenius reciprocity and Lemma \ref{lem-basic} we have
\begin{align*}
0 \neq \mathrm{Hom}_E(\varphi_1|_E,\varphi_2|_E)&=\mathrm{Hom}_F(\mathrm{Ind}_{E}^F(\varphi_1|_E),\varphi_2)\\&=\oplus_i \mathrm{Hom}_F(\rho_i \otimes \varphi_1,\varphi_2)^{\oplus \mathrm{deg}(\rho_i)}
\end{align*}
where the sum is over a set of representatives for the irreducible representations of $\Gal(E/F)$.
By Lemma \ref{lem-basic}, $\rho_i \otimes \varphi_1$ is irreducible for all $i$, so by considering degrees we must have $\rho_i \otimes \varphi_1 \cong \varphi_2$ where $\rho_i$ is an abelian character of $\Gal(E/F)$.
 Since $\Gal(E/F)$ is perfect, this $\rho_i$ is necessarily trivial.

Moving on to (2), we note that the restriction map from $L$-parameters of $W_F'$ to $L$-parameters of $W_E'$ obviously has image in the set of $\Gal(E/F)$-invariant parameters and under the addition assumption in (2) it has image in the set of irreducible parameters by Lemma \ref{lem-restriction}.  In other words we have a well-defined
map
$$
b_{E/F}:\Phi_n^{\mathrm{prim}}(E/F) \lto \Phi_n^{0}(E)^{\Gal(E/F)}.
$$
It is injective by (1) and surjective by Lemma \ref{lem-bc-param}, which completes the proof of the proposition.
\end{proof}

To set up trace identities it is convenient to work with automorphic representations attached to a subfield $F' \leq E$.  In view of this we prove the following modification of Proposition \ref{prop-bij-EF}:

\begin{prop} \label{prop-bij-EF'}  Let $E/F$ be a Galois extension of number fields such that $\Gal(E/F)$ is the universal perfect central extension of a finite simple nonabelian group.  Assume that for all $m|n$ there are no nontrivial irreducible representations $\Gal(E/F) \to \GL_{m}(\CC)$.  If $E \geq F' \geq F$ is a subfield  then the restriction map induces an injection
\begin{align} \label{restr-map}
b_{F'/F}:\Phi_n^{\mathrm{prim}}(E/F) & \lto \Phi_n^{\mathrm{prim}}(E/F') \\
\varphi &\longmapsto \varphi|_{{F'}}. \nonumber
\end{align}
If $\phi':W_{F'}' \to {}^L\GL_{nF'}$ is an $L$-parameter such that $\phi'|_{W_E'}$ is irreducible and $\Gal(E/F)$-invariant then there is a unique character $\chi' \in \Gal(E/F')^{\wedge}$ such that $\phi' \otimes \chi'$ is in the image of the restriction map \eqref{restr-map}.  If $\Gal(E/F')$ is solvable of order coprime to $n$ then for any irreducible $L$-parameter $\phi':W_{F'}' \to {}^L\GL_{nF'}$ the restriction $\phi'|_{W_E'}$ is again irreducible.
\end{prop}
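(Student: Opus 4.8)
The plan is to prove the final assertion by induction on $[E:F']=|\Gal(E/F')|$, using the solvability of $\Gal(E/F')$ to reduce to the case of a cyclic extension of prime degree prime to $n$, and then to finish that case with Clifford theory together with Lemma \ref{lem-bc-param} and Lemma \ref{lem-basic}. Note first that $E/F'$ is automatically Galois, so every intermediate extension we encounter will be Galois as well; also, since $E/F$ is Galois every subextension $W_{K}'\trianglelefteq W_{F'}'$ with quotient $\Gal(K/F')$ is normal, which is what makes Clifford theory applicable.

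The base case $E=F'$ is trivial. For the inductive step, write $G:=\Gal(E/F')$; since $G$ is solvable and nontrivial there is a normal subgroup $N\trianglelefteq G$ with $G/N$ cyclic of some prime order $p$, and $p\mid |G|$ forces $p\nmid n$ because $\gcd(|G|,n)=1$. Put $K:=E^{N}$, so $K/F'$ is Galois with $\Gal(K/F')\cong\ZZ/p$, while $E/K$ is Galois with $\Gal(E/K)=N$ solvable of order dividing $|G|$, hence prime to $n$. If one knows that $\phi'|_{W_K'}$ is again an irreducible $L$-parameter, then the inductive hypothesis applied to the extension $E/K$ and the parameter $\phi'|_{W_K'}$ gives that $\phi'|_{W_E'}=(\phi'|_{W_K'})|_{W_E'}$ is irreducible, which is what we want. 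So everything reduces to the prime-degree case: given an irreducible $L$-parameter $\psi:W_{F'}'\to{}^L\GL_{nF'}$ and a cyclic extension $K/F'$ of prime degree $p$ with $p\nmid n$, show that $\psi|_{W_K'}$ is irreducible.

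To handle that case I would proceed as follows. Since $W_K'\trianglelefteq W_{F'}'$ has index $p$, Clifford's theorem (valid verbatim for a normal subgroup of finite index) gives a decomposition $\psi|_{W_K'}\cong\bigoplus_{i=1}^{t}\tau_i^{\oplus e}$ into $e$ copies of each of $t$ pairwise non-isomorphic irreducible parameters $\tau_i$, which are permuted transitively by $\Gal(K/F')\cong\ZZ/p$; in particular $t\in\{1,p\}$ and, comparing dimensions, $n=et\dim\tau_1$, so $t\mid n$. As $p\nmid n$ this forces $t=1$, so $\tau:=\tau_1$ is $\Gal(K/F')$-invariant and $\psi|_{W_K'}\cong\tau^{\oplus e}$, with $\dim\tau=n/e$. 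Because $H^{2}(\ZZ/p,\CC^{\times})=0$, Lemma \ref{lem-bc-param} applied to $K/F'$ produces an $L$-parameter $\widetilde{\tau}:W_{F'}'\to{}^L\GL_{(n/e)F'}$ with $\widetilde{\tau}|_{W_K'}\cong\tau$. Then $\mathrm{Hom}_{W_K'}(\widetilde{\tau}|_{W_K'},\psi|_{W_K'})\neq 0$, so by Frobenius reciprocity and the projection formula of Lemma \ref{lem-basic} ($\mathrm{Ind}_{W_K'}^{W_{F'}'}(\widetilde{\tau}|_{W_K'})\cong\widetilde{\tau}\otimes\mathrm{Ind}_{W_K'}^{W_{F'}'}(\mathbf{1})\cong\bigoplus_{\chi}\widetilde{\tau}\otimes\chi$, the sum over the characters $\chi$ of $W_{F'}'/W_K'\cong\ZZ/p$) there is a character $\chi$ with $\mathrm{Hom}_{W_{F'}'}(\widetilde{\tau}\otimes\chi,\psi)\neq 0$. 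By Lemma \ref{lem-basic} the parameter $\widetilde{\tau}\otimes\chi$ is irreducible, $\psi$ is irreducible, so $\widetilde{\tau}\otimes\chi\cong\psi$, and comparing dimensions gives $n/e=n$, i.e.\ $e=1$. Hence $\psi|_{W_K'}\cong\tau$ is irreducible, completing the induction.

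I expect no genuinely deep obstacle here; the structural inputs (a prime-order quotient of a solvable group, the vanishing of $H^{2}$ for cyclic groups, and the induction/projection formula of Lemma \ref{lem-basic}) do all the work, and the argument runs parallel to the proof of Lemma \ref{lem-restriction}. The only points requiring care are bookkeeping ones: checking that the constituents $\tau_i$ appearing in the Clifford decomposition are themselves $L$-parameters so that Lemma \ref{lem-bc-param} applies, checking that Clifford's theorem is legitimately used for the infinite groups $W_K'\trianglelefteq W_{F'}'$ (it is, since the relevant proof only uses finiteness of the index), and verifying the dimension count $n=et\dim\tau_1$ with $t\mid p$, which is what simultaneously forces $t=1$ and, via the final comparison, $e=1$. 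An alternative route that avoids the induction would be to combine $t=1$ with the fact that the multiplicity $e$ is the dimension of an irreducible projective representation of $\Gal(E/F')$ and hence divides $|\Gal(E/F')|$; I prefer the inductive argument above since it stays within the elementary framework already set up in the paper.
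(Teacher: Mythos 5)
Your argument correctly establishes the \emph{final} assertion, and it runs along the same lines as the paper's: solvability of $\Gal(E/F')$ yields a chain of cyclic subextensions of prime degree, and the prime-degree step is handled by a Clifford-theoretic analysis. The paper simply cites its Lemma \ref{lem-restriction} for this step; you reconstruct the relevant dichotomy by hand, first forcing $t=1$ from $p\nmid n$, then forcing $e=1$ by extending $\tau$ to $\widetilde{\tau}$ on $W_{F'}'$ (possible by Lemma \ref{lem-bc-param} since $H^2(\ZZ/p,\CC^{\times})=0$), applying the projection formula of Lemma \ref{lem-basic} together with Frobenius reciprocity, and comparing dimensions. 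Both routes ultimately rest on the same Clifford-type input (the \cite[\S 8.1, Proposition 24]{SerreFG} cited in the proof of Lemma \ref{lem-restriction}), so this is the paper's proof made explicit rather than a genuinely different one; the non-inductive variant you sketch at the end ($t\mid\gcd(n,|\Gal(E/F')|)=1$ and $e\mid\gcd(n,|\Gal(E/F')|)=1$ via projective representation degrees) would also work and is slightly shorter.

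However, the proposal addresses only one of the three assertions of the proposition. You do not prove that $b_{F'/F}$ gives a well-defined injection $\Phi_n^{\mathrm{prim}}(E/F)\to\Phi_n^{\mathrm{prim}}(E/F')$, nor that when $\phi'|_{W_E'}$ is irreducible and $\Gal(E/F)$-invariant there is a unique $\chi'\in\Gal(E/F')^{\wedge}$ with $\phi'\otimes\chi'$ in the image of \eqref{restr-map}. These are genuinely separate arguments in the paper: well-definedness and injectivity of \eqref{restr-map} are deduced from Proposition \ref{prop-bij-EF} via the identity $(\varphi|_{F'})|_E=\varphi|_E$, and the twist statement is proved by descending $\phi'|_E$ to $W_F'$ using Lemma \ref{lem-bc-param} and then comparing with $\phi'$ via Frobenius reciprocity and Lemma \ref{lem-basic}. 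If this is meant as a proof of the full proposition, those two parts still need to be supplied.
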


\begin{proof} Note that $\varphi|_{E}=(\varphi|_{F'})|_{E}$.  Thus part (2) of
Proposition \ref{prop-bij-EF} implies that restriction of $L$-parameters from $W_F'$ to $W_{F'}'$ maps primitive $L$-parameters to primitive $L$-parameters, so \eqref{restr-map} is well-defined.  Parts (1) and (2) of Proposition \ref{prop-bij-EF} imply that \eqref{restr-map} is injective.

Now suppose that $\phi':W_{F'} \to {}^L\GL_{nF'}$ is an $L$-parameter such that $\phi'|_{E}$ is irreducible and $\Gal(E/F)$-invariant.  By Lemma \ref{lem-bc-param} the restriction $\phi'|_{{E}}$ descends to an irreducible parameter $\varphi:W_F' \to {}^L\GL_{nF}$.  By Frobenius reciprocity and Lemma \ref{lem-basic} we have
\begin{align}
\mathrm{Hom}_{E}(\varphi|_{E},\phi'|_{E})=
\mathrm{Hom}_{{F'}}(\mathrm{Ind}_{E}^{F'}(\varphi|_{E'}),\phi')
=\oplus_{i}\mathrm{Hom}_{{F'}}(\rho_i \otimes \varphi|_{{F'}},\phi')^{\mathrm{deg}(\rho_i)}
\end{align}
where the sum is over a set of representatives for the irreducible representations of $\Gal(E/F')$.
The first space is one dimensional and hence so is the last.  By Lemma \ref{lem-basic} $\rho_i \otimes \varphi|_{F'}$ is irreducible for all $i$, so by considering dimensions we see that
$\rho_i \otimes \varphi|_{F'} \cong \phi'$ for some character $\rho_i$ of $\Gal(E/F')$.
This proves the second claim of the proposition.

We are left with the final assertion of the proposition.  Since $\Gal(E/F')$ is solvable there is a chain of subfields $F'=E_0 \leq  \cdots \leq E_n=E$ such that $E_j/E_{j-1}$ is cyclic of prime degree.  Using this fact the final assertion follows from Lemma \ref{lem-restriction}.
\end{proof}

Motivated by Proposition \ref{prop-bij-EF'}, we make the following conjecture, which is an elaboration of a case of Langlands functoriality:

\begin{conj} \label{conj-1}
Let $E/F$ be a Galois extension of number fields and let $n$ be an integer such that
\begin{itemize}
\item $\Gal(E/F)$ is the universal perfect central extension of a finite simple nonabelian group, and
\item For every divisor $m|n$ there are no nontrivial irreducible representations $\Gal(E/F) \to \GL_{m}(\CC)$.
\end{itemize}
Let $E \geq F' \geq F$ be a subfield.
Every $E$-primitive automorphic representation $\pi$ of $\GL_n(\A_F)$ admits a unique base change $\pi_{F'}$ to $\GL_n(\A_{F'})$ and a unique base change to $\GL_n(\A_E)$, the first of which is an $E$-primitive automorphic representation. Thus base change induces an injection
\begin{align*}
b_{E/F'}:\Pi_n^{\mathrm{prim}}(E/F) &\lto \Pi_n^{\mathrm{prim}}(E/F')\\
\pi &\longmapsto \pi_{F'}
\end{align*}

If $\pi'$ is a cuspidal automorphic representation of $\GL_n(\A_{F'})$ such that its base change $\pi'_E$ to $\GL_n(\A_E)$ is cuspidal and $\Gal(E/F)$-invariant then $\pi'_E$ descends to an automorphic representation of $\GL_n(\A_F)$.
\end{conj}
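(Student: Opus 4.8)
The plan is to avoid invoking Theorem~\ref{main-thm-1} --- which already presupposes Conjecture~\ref{conj-1} --- and instead to establish the equality of the limits \eqref{11} and \eqref{12} \emph{unconditionally}, then to feed that into the converse Theorem~\ref{main-thm-1-conv}, and finally to pass between $E$, $F$ and the intermediate field $F'$ using the solvable theory of Arthur--Clozel and Rajan. The decisive first step is to rewrite each of \eqref{11} and \eqref{12} as a limit of sums of weighted orbital integrals --- via the Arthur--Selberg trace formula, or via the Kuznetsov relative trace formula as in \cite{Sarnak}, \cite{Venk} --- and to match the two resulting limiting geometric expressions directly, i.e.\ to prove that the geometric side attached to $\GL_n(\A_{F'})$ together with $b_{E/F'}$ agrees, after normalization by $|\Gal(E/F')^{\mathrm{ab}}|$, with the one attached to $\GL_n(\A_F)$ together with $b_{E/F}$. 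Granting this purely geometric identity, Corollary~\ref{cor-aut-trace} shows both spectral sides converge absolutely and are equal, which is precisely the hypothesis of Theorem~\ref{main-thm-1-conv}.

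With the trace identity available, the descent half of the conjecture follows as in Theorem~\ref{main-thm-1-conv}. By Proposition~\ref{prop-testsums} the normalized limit attached to a cuspidal $\pi'$ on $\GL_n(\A_{F'})$ is controlled by the order of the pole at $s=1$ of $L(s,\pi'_E\times(\pi'_E)^{\tau\vee})$, so it detects whether $\pi'_E\cong(\pi'_E)^{\tau}$; since $\pi'_E$ is automatically $\Gal(E/F')$-invariant and $\Gal(E/F)=\langle\tau,\Gal(E/F')\rangle$, this in fact detects full $\Gal(E/F)$-invariance, and comparing the two sides of the trace identity --- after splitting off the primitive contributions as in Theorem~\ref{main-thm-1-conv} --- produces, for every $\Gal(E/F)$-invariant cuspidal $\Pi$ on $\GL_n(\A_E)$ that occurs as a base change, a weak descent to $\GL_n(\A_F)$, unique by strong multiplicity one. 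This step imports from Theorem~\ref{main-thm-1-conv} the need for Conjecture~\ref{conj-transf} (enough matching pairs $(h,\Phi)$), or the everywhere-unramified simplification, together with the hypothesis that $F$ be totally complex.

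For the forward direction I would first prove Conjecture~\ref{conj-nonzero} for the relevant $\pi$, i.e.\ that \eqref{non-zero-Sig} is nonzero; by the Rankin--Selberg computation of \S\ref{ssec-analytic-cond} this yields a weak base change $\pi_E$ of $\pi$ to $\GL_n(\A_E)$, and ideally this nonvanishing is obtained without presupposing base change, in the spirit in which the Chebotarev density theorem precedes the Artin conjecture. For $E$-primitive $\pi$ the parameter of $\pi_E$ restricts irreducibly to $W_E'$, so $\pi_E$ should be cuspidal (this is the cuspidality criterion for base change, whose parameter-level content is Proposition~\ref{prop-bij-EF'}); then $\pi_E$ is $\Gal(E/F)$-invariant, hence $\Gal(E/F')$-invariant, and since $\Gal(E/F')$ is solvable with $H^2(\Gal(E/F'),\CC^{\times})=0$, solvable base change (\cite[Chapter~3]{AC}, \cite{Rajan3}) descends $\pi_E$ to a cuspidal $\pi_{F'}$ on $\GL_n(\A_{F'})$ with $b_{E/F'}(\pi_{F'})=\pi_E$. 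Any proper automorphic-induction structure on $\pi_{F'}$ along a subfield $E\geq K>F'$ would, upon base change to $E$, make $\pi_E$ non-cuspidal, contradicting the previous line; hence $\pi_{F'}$ is $E$-primitive. Finally the injectivity of $\pi\mapsto\pi_{F'}$ follows since $\pi\mapsto\pi_E$ is injective by strong multiplicity one and the parameter-level injectivity of Proposition~\ref{prop-bij-EF'} (here the perfectness of $\Gal(E/F)$ is used to rule out twists by characters of $\Gal(E/F)$).

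The main obstacle is unambiguously the first step: proving the equality of \eqref{11} and \eqref{12} unconditionally. This is a genuine ``beyond endoscopy'' problem (compare \cite{FLN}): one must control the $X\to\infty$ limit of the geometric side --- truncation and its interaction with the limit, absolute convergence, the contribution of non-semisimple and otherwise singular conjugacy classes, and the weighted orbital integrals at the ramified and archimedean places --- and then match orbital integrals across the three groups $\GL_n/F$, $\GL_n/F'$ and $\GL_n/E$ without circularly invoking the functoriality one is trying to prove. A secondary, lesser obstacle is Conjecture~\ref{conj-nonzero}: it is immediate once base change is known, so an argument for it independent of base change is required, though the relevant smoothed sum is close to sums that succumb to Chebotarev-type reasoning, and I regard this part as comparatively approachable.
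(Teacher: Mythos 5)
Conjecture~\ref{conj-1} is stated in the paper as a \emph{conjecture} and carries no proof there; the paper's only supporting evidence is the $L$-parameter analogue, Proposition~\ref{prop-bij-EF'} (together with Proposition~\ref{prop-bij-EF}), so your proposal cannot be compared against an existing argument of the paper. What you describe is, in outline, the program the paper itself advocates --- see the discussion following Theorem~\ref{main-thm-3-conv} about rewriting~\eqref{11} and~\eqref{12} geometrically via the Arthur--Selberg or Kuznetsov trace formula and then matching orbital integrals. So the architecture (establish the trace identity unconditionally, feed it into Theorem~\ref{main-thm-1-conv}, pass between $E$, $F'$, $F$ via solvable base change) is sound and non-circular in principle, and you have correctly identified the central open obstacle.

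That said, the chain of unestablished premises is longer than a single open step. Besides the unconditional trace identity (the ``beyond endoscopy'' comparison) you are also invoking Conjecture~\ref{conj-transf}, Conjecture~\ref{conj-nonzero}, the hypothesis that $F$ be totally complex, and, for the descent leg, Conjecture~\ref{conj-solv}; none of these is a theorem in the generality needed. Two further gaps deserve explicit mention. First, Theorem~\ref{main-thm-1-conv} produces only a \emph{weak} base change (agreement at almost all places plus the finitely many controllable ones), whereas Conjecture~\ref{conj-1} is phrased in terms of a base change $\pi_{F'}$ in the paper's stronger, everywhere-local sense; closing that discrepancy requires additional local compatibility input that the converse theorem does not provide. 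Second, the step ``for $E$-primitive $\pi$ the parameter of $\pi_E$ restricts irreducibly to $W_E'$, so $\pi_E$ should be cuspidal'' presupposes a global Langlands parameter for $\pi$, which is not available unconditionally; the cuspidality of $\pi_E$ for $E$-primitive $\pi$ must instead be extracted from the Arthur--Clozel characterization of cuspidality under cyclic base change, iterated along a solvable chain inside $E/F'$ --- and even then one only controls the tower $E/F'$, not $E/F$ directly. Likewise, the injectivity of $\pi\mapsto\pi_{F'}$ needs an automorphic argument (e.g.\ via strong multiplicity one and the triviality of $\Gal(E/F)^{\mathrm{ab}}$), with part (1) of Proposition~\ref{prop-bij-EF} serving only as heuristic motivation, not as proof.
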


We also require a conjecture which can be addressed using endoscopic techniques, is discussed at length in \cite{Rajan3}, and is a theorem when $n=2$ \cite[Theorems 1 and 2]{Rajan3} or $\Gal(E/F')$ is cyclic \cite[Chapter 3, Theorems 4.2 and 5.1]{AC}:

\begin{conj} \label{conj-solv} Let $E/F'$ be a solvable extension of number fields and let $\Pi$ be a cuspidal automorphic representation of $\GL_n(\A_E)$.  If $\Pi$ is $\Gal(E/F')$-invariant, then there is a $\Gal(E/F')$-invariant character $\chi \in (E^{\times} \backslash \A_E^{\times})^{\wedge}$ such that $\Pi \otimes \chi$ descends to $\GL_n(\A_{F'})$.  If $H^2(\Gal(E/F'),\CC^{\times})=0$, then $\chi$ can be taken to be the trivial character.  Conversely, if $\pi'_1$, $\pi_2'$ are cuspidal automorphic representations of $\GL_n(\A_{F'})$ that both base change to a cuspidal automorphic representation $\Pi$ of $\GL_n(\A_E)$, then there is a unique $\chi \in \Gal(E/F')^{\wedge}$ such that $\pi_1' \cong \pi_2' \otimes \chi$.
\end{conj}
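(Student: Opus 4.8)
The plan is to follow the outline of Rajan \cite{Rajan3}, which derives this statement from the cyclic base change of Arthur and Clozel \cite{AC} via the method of Lapid and Rogawski \cite{LR}; the architecture of the argument parallels the proof of Lemma \ref{lem-bc-param}. The basic input is the prime cyclic case of \cite[Chapter 3]{AC}: for a cyclic extension $K/k$ of prime degree, $b_{K/k}$ is defined on all isobaric automorphic representations; a cuspidal representation $\Pi_0$ of $\GL_n(\A_K)$ lies in the image of $b_{K/k}$ if and only if $\Pi_0^{\sigma} \cong \Pi_0$ for a generator $\sigma$ of $\Gal(K/k)$, in which case it is the base change of a cuspidal representation; and the cuspidal representations of $\GL_n(\A_k)$ base changing to such a $\Pi_0$ form a torsor under $\Gal(K/k)^{\wedge}$ acting by twisting. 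Iterating over a solvable tower, solvable base change of a cuspidal representation always exists as an isobaric automorphic representation, and strong multiplicity one \cite[Theorem 4.4]{JSII} is used throughout to recognize automorphic representations from almost all of their local components.

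The first and main step is to show that a $\Gal(E/F')$-invariant cuspidal $\Pi$ on $\GL_n(\A_E)$ descends to $\GL_n(\A_{F'})$ up to a $\Gal(E/F')$-invariant twist; this is the automorphic analogue of extending and lifting the projective representation in the proof of Lemma \ref{lem-bc-param}. I would argue by induction over a chief series $1 = G_0 \triangleleft G_1 \triangleleft \cdots \triangleleft G_r = \Gal(E/F')$ of the solvable group $\Gal(E/F')$, with each $G_i$ normal in $\Gal(E/F')$ and each chief factor $G_i/G_{i-1}$ elementary abelian; writing $K_i := E^{G_i}$ gives a tower $E = K_0 \supseteq K_1 \supseteq \cdots \supseteq K_r = F'$ with each $K_{i-1}/K_i$ abelian and each $K_i/F'$ Galois. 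One descends $\Pi$ down the tower: across the abelian layer $K_0/K_1$, all of whose intermediate fields are Galois over $K_1$, iterating the prime cyclic case produces a cuspidal $\Pi_1$ on $\GL_n(\A_{K_1})$ with $b_{E/K_1}(\Pi_1) \cong \Pi$. As in the proof of Lemma \ref{lem-bc-param}, $\Pi_1$ need only be $\Gal(K_1/F')$-invariant up to twisting by a character — for $\bar\sigma \in \Gal(K_1/F')$, a lift $\sigma$ to $\Gal(E/F')$ fixes $\Pi$, so $\Pi_1^{\bar\sigma}$ and $\Pi_1$ are two cuspidal base changes of the same representation and hence differ by a character of $\Gal(E/K_1)^{\wedge}$ — but, precisely because every $H^2$ of a cyclic group vanishes, one can twist $\Pi_1$ by a Hecke character of $\A_{K_1}^{\times}/K_1^{\times}$ so as to make it genuinely $\Gal(K_1/F')$-invariant; tracking how this twist propagates (this is the Lapid--Rogawski bookkeeping, and it is delicate exactly because the intermediate fields of $K_0/K_1$ need not be Galois over $F'$) shows the twist pushed to $\GL_n(\A_E)$ is $\Gal(E/F')$-invariant. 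Applying the inductive hypothesis to $E/K_1$ and $\Pi_1$ completes the descent and yields a $\Gal(E/F')$-invariant character $\chi$ of $\A_E^{\times}/E^{\times}$ such that $\Pi \otimes \chi$ descends to $\GL_n(\A_{F'})$.

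If moreover $H^2(\Gal(E/F'),\CC^{\times}) = 0$, then by Lemma \ref{lem-image-bc} the invariant character $\chi$ is of the form $\chi_0 \circ \N_{E/F'}$, and untwisting by $\chi_0$ shows that $\Pi$ itself descends, so $\chi$ may be taken trivial. For the converse, twisting by $\Gal(E/F')^{\wedge}$ (identified by class field theory with characters of $\A_{F'}^{\times}/F'^{\times}$ trivial on norms from $E$) acts on the set of cuspidal $\pi'$ on $\GL_n(\A_{F'})$ with $b_{E/F'}(\pi') \cong \Pi$, since such a twist becomes trivial after base change to $E$; this action is transitive by iterating the cyclic fiber description along the tower (part of the analysis of \cite{Rajan3}), producing a $\chi \in \Gal(E/F')^{\wedge}$ with $\pi_1' \cong \pi_2' \otimes \chi$, and it is free because a nontrivial self-twist $\pi_2' \otimes \eta \cong \pi_2'$ with $\eta$ factoring through $\Gal(E/F')$ would force $\pi_2'$ to be automorphically induced from the cyclic proper subextension $L/F'$ cut out by $\ker \eta$, so that $b_{L/F'}(\pi_2')$ is a nontrivial isobaric sum and $\Pi = b_{E/L}(b_{L/F'}(\pi_2'))$ cannot be cuspidal, a contradiction. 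Freeness gives uniqueness of $\chi$. The main obstacle throughout is the first step: carrying the equivariance data coherently through a solvable tower whose layers, while abelian, sit inside the full Galois group $\Gal(E/F')$ — this is exactly the analysis of \cite{LR} as adapted by \cite{Rajan3}, and the automorphic counterpart of the projective-representation manipulations in Lemma \ref{lem-bc-param}.
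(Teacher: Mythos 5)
The first thing to observe is that the statement you are proving is stated in the paper as Conjecture \ref{conj-solv}, not as a theorem: the paper has no proof of it, and explicitly records that it is known only when $n=2$ \cite[Theorem 1]{Rajan3} or when the extension is cyclic of prime degree \cite[Chapter 3, Theorems 4.2 and 5.1]{AC}, while Rajan \cite{Rajan3} has explained how it could ``in principle'' be proved for general $n$ via the method of Lapid and Rogawski \cite{LR}. Your proposal reproduces that ``in principle'' outline, but the step you defer to ``Lapid--Rogawski bookkeeping'' is exactly the part that is open, so the argument does not close.

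Concretely, the gap is in the rigidification of $\Pi_1$ on $K_1$. After descending across the layer $E/K_1$ you only know $\Pi_1^{\bar{\sigma}}\cong \Pi_1\otimes\mu(\bar{\sigma})$ for some assignment $\bar{\sigma}\mapsto\mu(\bar{\sigma})$ valued in characters of $\Gal(E/K_1)$, and this assignment is only well defined modulo the group of self-twists of $\Pi_1$ (if $\Pi_1\cong\Pi_1\otimes\eta$ for a nontrivial $\eta$, which can happen and forces automorphic induction phenomena, $\mu$ lives in a quotient); controlling that group and then trivializing the resulting $1$-cocycle in the group of Hecke characters of $K_1$ is the actual content of \cite{LR}, and the obstruction is a cohomology class of $\Gal(K_1/F')$ with those coefficients (heuristically, via the argument of Lemma \ref{lem-bc-param}, a class in $H^2(\Gal(E/F'),\CC^{\times})$) --- not the Schur multiplier of a cyclic group. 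Your appeal to vanishing of $H^2$ of cyclic groups is also misplaced on its own terms, since your chief factors are elementary abelian and $H^2((\ZZ/p)^2,\CC^{\times})\neq 0$. The converse half has the same defect: transitivity of the twisting action on the fiber over $\Pi$ is not formal from the prime-cyclic case, because the descents of $\pi_1'$ and $\pi_2'$ to an intermediate field need not be isomorphic, only isomorphic up to a character which must then be shown to extend to $F'$; this is again the uncompleted Lapid--Rogawski analysis. For $n>2$ none of these steps is in the literature, which is precisely why the paper leaves the statement as a conjecture and assumes it as a hypothesis in its theorems.
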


\subsection{The icosahedral group} \label{ssec-icosa-gp}

Assume that $n=2$ and that
$$
\Gal(E/F) \cong \mathrm{SL}_2(\ZZ/5) \cong \widetilde{A}_5,
$$
the universal perfect central extension of $A_5$.  We fix such an isomorphism for the remainder of this section and view it as an identification: $\Gal(E/F)=\widetilde{A}_5$.  In this subsection we describe the image and fibers of the base change map on $L$-parameters in this setting.  This description is used as motivation for Conjecture \ref{conj-2} below, the conjecture used in the statement of Theorem \ref{main-thm-2} above.

As remarked below Theorem \ref{main-thm-1}, if $n=2$ the case where $\Gal(E/F)$ is the universal perfect central extension of $A_5$ is the only
case in which the hypotheses of Proposition \ref{prop-bij-EF} do not hold.  Moreover, the conclusion of Proposition \ref{prop-bij-EF} does not hold.  Indeed, any irreducible $2$-dimensional representation of $\SL_2(\ZZ/5)$ induces an irreducible $L$-parameter $\varphi:W_{F}' \to {}^L\GL_{2F}$ such that $\varphi|_{E}$ is the direct sum of two copies of the trivial representation.  For our purposes it is more important to find an analogue of Proposition \ref{prop-bij-EF'}.

The facts from group theory that we require in this subsection are collected in \S \ref{appendix}.  Fix an injection $A_4 \hookrightarrow A_5$ and let $\widetilde{A}_4$ denote the inverse image of $A_4$ under the projection map $\widetilde{A}_5 \to A_5$.  It is a double cover of $A_4$.

\begin{lem} \label{lem-gen} Let $\tau \in \Gal(E/F)$ be of order $5$.  Then
$\langle \tau ,\widetilde{A}_4\rangle=\Gal(E/F)$.
\end{lem}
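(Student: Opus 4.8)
The plan is to exploit the fact that $\Gal(E/F) = \widetilde{A}_5 \cong \SL_2(\ZZ/5)$ is the \emph{universal perfect central extension} of the simple group $A_5$, so that its only proper normal subgroup is the center $Z = Z(\widetilde{A}_5) \cong \ZZ/2$, and the quotient $\widetilde{A}_5/Z \cong A_5$ is simple. Let $H := \langle \tau, \widetilde{A}_4 \rangle \leq \widetilde{A}_5$. First I would show $H$ surjects onto $A_5$ under the projection $p \colon \widetilde{A}_5 \to A_5$: the image $p(H)$ contains $p(\tau)$, an element of order $5$ (the projection cannot kill $\tau$ since $|Z| = 2$ is coprime to $5$), together with $A_4 = p(\widetilde{A}_4)$. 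But in $A_5$, the subgroup $A_4$ is maximal, and $A_4$ does not contain any element of order $5$; hence $\langle p(\tau), A_4 \rangle$ is a subgroup strictly larger than $A_4$, which forces $\langle p(\tau), A_4 \rangle = A_5$ by maximality. Therefore $p(H) = A_5$.

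Next I would upgrade this to $H = \widetilde{A}_5$. Since $p(H) = A_5 = p(\widetilde{A}_5)$, we have $\widetilde{A}_5 = H \cdot Z$, so $[\widetilde{A}_5 : H]$ divides $|Z| = 2$; thus either $H = \widetilde{A}_5$ or $H$ has index $2$. But $\widetilde{A}_5$ is perfect (it is a perfect central extension), and a perfect group has no subgroup of index $2$ — indeed any index-$2$ subgroup is normal with abelian quotient $\ZZ/2$, contradicting $[\widetilde{A}_5, \widetilde{A}_5] = \widetilde{A}_5$. Hence $H = \widetilde{A}_5 = \Gal(E/F)$, as claimed.

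The only genuine content is the purely group-theoretic input about $A_5$: that $A_4 \leq A_5$ is maximal and contains no $5$-cycle, and that $\widetilde{A}_5$ is perfect with center of order $2$. These are standard (the maximality of $A_4$ in $A_5$ follows from $[A_5 : A_4] = 5$ being the smallest index of a proper subgroup, and the Schur multiplier of $A_5$ is $\ZZ/2$); presumably they are recorded in \S \ref{appendix}. I do not anticipate a real obstacle — the argument is a two-step lifting along the central extension — though one should be a little careful that $\tau$ of order $5$ in $\widetilde{A}_5$ really does project to an element of order $5$ in $A_5$, which is immediate since the kernel $Z$ has order $2$ and $\gcd(2,5) = 1$, so $p|_{\langle \tau \rangle}$ is injective.
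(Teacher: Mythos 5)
Your proof is correct, but the paper's own argument is a one-line Lagrange count that you could have used instead. Since $\langle\tau\rangle$ and $\widetilde{A}_4$ are both subgroups of $H:=\langle\tau,\widetilde{A}_4\rangle$, Lagrange forces $|H|$ to be divisible by both $5$ and $|\widetilde{A}_4|=24$, hence by $\mathrm{lcm}(5,24)=120=|\widetilde{A}_5|$, and the conclusion is immediate. Your route — project to $A_5$, invoke maximality of $A_4$ (which indeed follows from the prime index $[A_5:A_4]=5$), and then climb back up the central extension using perfectness to rule out an index-$2$ subgroup — is a perfectly sound two-step lifting argument, and it has the virtue of being structural: it would still work, say, for quasi-simple groups where the cardinalities happen not to be coprime. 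But in this concrete situation the coprimality $\gcd(5,24)=1$ does all the work at once, so the Lagrange count is strictly shorter and uses none of the maximality, centrality, or perfectness machinery your argument invokes.
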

\begin{proof}
By Lagrange's theorem for any element $\tau \in \Gal(E/F)$ of order $5$ the group $\langle \tau,\widetilde{A}_4 \rangle$ has order divisible by $(5)(24)=120$.
\end{proof}

 Our analogue of Proposition \ref{prop-bij-EF'} is the following proposition:
\begin{prop} \label{prop-A5-EF}
Assume that $F'=E^{\widetilde{A}_4}$  and $\tau \in \Gal(E/F)$ is of order $5$. In this case restriction of parameters induces a map
\begin{align} \label{restr-map2}
b_{F'/F}:\Phi_2^0(F) &\lto \Phi_2^0(F')\\
\varphi &\longmapsto \varphi|_{{F'}}. \nonumber
\end{align}
If $\phi':W_{F'}' \to {}^L\GL_{2F'}$ is an $L$-parameter such that $\phi'|_{E}$ is irreducible and $\Gal(E/F)$-invariant
then $\phi' \otimes \chi'$ is in the image of the restriction map \eqref{restr-map2} for a unique $\chi' \in \Gal(E/F')^{\wedge}$.

If $\phi'|_{E}$ is reducible and $\mathrm{Hom}_{E}(\phi'|_{E},\phi'|_{E}^{\tau}) \neq 0$ then $\phi'|_E$ is the restriction of a parameter $\phi:W_F' \to {}^L\GL_{2F}$. There are exactly two nonisomorphic irreducible $\phi_1,\phi_2 :W_{F}' \to {}^L\GL_{2F}$ such that $\phi|_{E} \cong \phi_{1}|_{E} \cong \phi_{2}|_{E}$.
\end{prop}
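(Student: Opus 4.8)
The plan is to settle the four assertions of Proposition~\ref{prop-A5-EF} in turn, using throughout that $\widetilde{A}_5$ is perfect, that $H^2(\widetilde{A}_5,\CC^{\times})=0$, and the group theory of $\widetilde{A}_5$ and $\widetilde{A}_4$ collected in \S\ref{appendix} --- in particular that $\widetilde{A}_5$ has exactly two irreducible two-dimensional representations $\rho,\rho'$ (necessarily nonisomorphic) and that both restrict irreducibly to $\widetilde{A}_4$. First I would show that $b_{F'/F}$ sends $\Phi_2^0(F)$ into $\Phi_2^0(F')$: given irreducible $\varphi:W_F'\to{}^L\GL_{2F}$, Lemma~\ref{lem-restriction} applied to $E/F$, combined with perfectness of $\widetilde{A}_5$ (the induced alternative would force an index-$2$ subgroup of $\widetilde{A}_5$, and any one-dimensional representation of $\widetilde{A}_5$ is trivial), leaves two possibilities: either $\varphi|_E$ is irreducible --- whence $\varphi|_{F'}$ is a fortiori irreducible --- or $\varphi\cong\rho_0\otimes\chi$ with $\rho_0\in\{\rho,\rho'\}$ and $\chi$ a quasi-character of $W_F'$, whence $\varphi|_{F'}\cong(\rho_0|_{\widetilde{A}_4})\otimes\chi|_{F'}$ is irreducible by \S\ref{appendix}.

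Next, for the second claim, suppose $\phi':W_{F'}'\to{}^L\GL_{2F'}$ has $\phi'|_E$ irreducible and $\Gal(E/F)$-invariant; then $\phi'$ is irreducible, and by Lemma~\ref{lem-bc-param} there is an irreducible $\varphi:W_F'\to{}^L\GL_{2F}$ with $\varphi|_E\cong\phi'|_E$, whose restriction $\varphi|_{F'}$ is irreducible by the previous step. Exactly as in the proof of Proposition~\ref{prop-bij-EF'}, Lemma~\ref{lem-basic} and Frobenius reciprocity give
\[
\CC\cong\mathrm{Hom}_E(\varphi|_E,\phi'|_E)\cong\bigoplus_i\mathrm{Hom}_{F'}(\rho_i\otimes\varphi|_{F'},\phi')^{\oplus\deg\rho_i},
\]
the sum over the irreducibles $\rho_i$ of $\Gal(E/F')=\widetilde{A}_4$; since each $\rho_i\otimes\varphi|_{F'}$ is irreducible of dimension $2\deg\rho_i$ while $\phi'$ has dimension $2$, exactly one summand is nonzero and its $\rho_i$ is one-dimensional, say $\overline{\chi'}$ with $\chi'\in\Gal(E/F')^{\wedge}$, giving $\phi'\otimes\chi'\cong\varphi|_{F'}$ in the image of $b_{F'/F}$; uniqueness of $\chi'$ follows from part~(1) of Proposition~\ref{prop-bij-EF} applied to two preimages.

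For the reducible case I would take $\phi'$ irreducible with $\phi'|_E$ reducible. Being a two-dimensional $L$-parameter, $\phi'|_E$ is a sum of quasi-characters, and Clifford theory for $W_E'\trianglelefteq W_{F'}'$ with quotient $\widetilde{A}_4$ rules out a genuine two-element orbit (it would produce an index-$2$ subgroup of $\widetilde{A}_4$, impossible since $\widetilde{A}_4^{\mathrm{ab}}\cong\ZZ/3$) and also rules out two distinct $\widetilde{A}_4$-fixed constituents (the corresponding isotypic line would be a subrepresentation of the irreducible $\phi'$), so $\phi'|_E\cong\eta\oplus\eta$ for a single quasi-character $\eta$ of $W_E'$, automatically $\Gal(E/F')$-invariant. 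The hypothesis $\mathrm{Hom}_E(\phi'|_E,\phi'|_E^{\tau})\neq0$ then forces $\eta^{\tau}\cong\eta$, so $\eta$ is invariant under $\langle\tau,\Gal(E/F')\rangle=\Gal(E/F)$ by Lemma~\ref{lem-gen}, and by Lemma~\ref{lem-bc-param} we may write $\eta=\chi|_E$ for a quasi-character $\chi$ of $W_F'$; then $(\rho\otimes\chi)|_E\cong\eta\oplus\eta\cong\phi'|_E$, exhibiting $\phi'|_E$ as the restriction of $\phi:=\rho\otimes\chi$. For the count, any irreducible $\psi:W_F'\to{}^L\GL_{2F}$ with $\psi|_E\cong\phi'|_E$ has $\psi\otimes\chi^{-1}$ trivial on $W_E'$, hence factoring through $\Gal(E/F)=\widetilde{A}_5$ as an irreducible two-dimensional representation; since there are exactly two of those and $\chi$ is the only quasi-character of $W_F'$ restricting to $\eta$ (a quotient of two such would be a nontrivial character of the perfect group $\widetilde{A}_5$), the solutions are exactly the nonisomorphic parameters $\rho\otimes\chi$ and $\rho'\otimes\chi$.

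I expect the reducible case to be the real obstacle: the content lies in forcing $\phi'|_E$ to be isotypic rather than a sum of two distinct characters, and in wringing the descent of $\eta$ out of the single hypothesis $\mathrm{Hom}_E(\phi'|_E,\phi'|_E^{\tau})\neq0$ --- which is exactly where the assumption that $\tau$ has order $5$ and Lemma~\ref{lem-gen} become indispensable. The remaining genuinely load-bearing input is the package from \S\ref{appendix} (exactly two two-dimensional irreducibles of $\widetilde{A}_5$, each restricting irreducibly to $\widetilde{A}_4$); everything else is formal manipulation with Lemmas~\ref{lem-restriction}, \ref{lem-bc-param}, \ref{lem-basic} and Frobenius reciprocity, closely paralleling the proof of Proposition~\ref{prop-bij-EF'}.
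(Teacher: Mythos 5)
Your proof is correct and follows essentially the same route as the paper's: Lemma~\ref{lem-restriction} plus perfectness of $\widetilde{A}_5$ and irreducibility of $\theta_2|_{\widetilde{A}_4}$ for the first claim, the Frobenius-reciprocity computation from Proposition~\ref{prop-bij-EF'} for the second, and Clifford theory (no index-$2$ subgroup of $\widetilde{A}_4$) followed by Lemma~\ref{lem-gen} and Lemma~\ref{lem-bc-param} for the reducible case. The only cosmetic difference is in the final count: you observe directly that $\psi\otimes\chi^{-1}$ is an irreducible two-dimensional representation of $\Gal(E/F)=\widetilde{A}_5$ and there are exactly two of these, whereas the paper phrases the same fact via $\dim\mathrm{Hom}_F(\mathrm{Ind}_E^F(1)\otimes\chi,\phi)=2$; the content is identical.
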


\begin{proof}
We first check that an irreducible $L$-parameter $\varphi$ as above restricts to an irreducible $L$-parameter on $W_{F'}'$.    Since $\SL_2(\ZZ/5)$ is perfect, there are no subgroups of $\mathrm{SL}_2(\ZZ/5)$ of index $2$.
Since $\varphi$ has degree $2$ it follows from Lemma \ref{lem-restriction} that $\varphi$ is not induced,
and hence $\varphi|_{E}$ is either irreducible or
$$
\varphi \cong \chi \otimes \rho
$$
for some character $\chi:W_F' \to {}^L\GL_{1F}$ and some irreducible representation
$$
\rho:\Gal(E/F) \to \GL_2(\CC).
$$
In the former case $\varphi|_{F'}$ is also irreducible, and hence we are done.  Suppose on the other hand that $\varphi \cong \chi \otimes \rho$.
Notice that any irreducible two-dimensional representation of $\Gal(E/F)$ is necessarily
faithful. Indeed, the only normal subgroup of
$\Gal(E/F)$ is its center and if such a representation
was trivial on the center it would descend to a
representation of $A_5$, a group that has no irreducible
two-dimensional representations.  Since $\Gal(E/F')$ is nonabelian
$\rho(\Gal(E/F'))$ is nonabelian and it follows that $\rho|_{{F'}}$ is
irreducible and hence so is $\varphi|_{F'}$.

The second statement of the proposition is proved by the same argument as the analogous statement in Proposition \ref{prop-bij-EF'}.

For the last assertion assume that  $\phi':W_{F'}' \to {}^L\GL_{2F'}$ is an $L$-parameter such that $\phi'|_{E}$ is reducible.   It follows from Lemma \ref{lem-restriction} that there is an element $\sigma_0 \in \Gal(E/F')$ of order dividing $2$ and a character $\chi_0:W_E' \to {}^L\GL_{1E}$ such that $\phi'|_{E} \cong \chi_0 \oplus \chi_0^{\sigma_0}$.
Since $\phi'|_E \cong \phi'|_E^{\sigma}$ for all $\sigma \in \Gal(E/F)$, the group $W_{F'}' / W_E'$ acts by conjugation on these two factors and this action defines a homomorphism $\Gal(E/F')\cong W_{F'}'/W_E' \to \ZZ/2$.   Now $\widetilde{A}_4$ has no subgroup of index two, so this implies that homomorphism $\widetilde{A}_4 \to \ZZ/2$ just considered is trivial and hence the action of $W_{F'}'/W_E'$ on the pair $\{\chi_0,\chi_0^{\sigma_0}\}$ is trivial.  It follows in particular that $\chi_0 \cong \chi_0^{\sigma_0}$ and additionally $\chi_0$ is isomorphic to all of its $\Gal(E/F')$-conjugates.  If additionally
$\mathrm{Hom}_{E}(\phi'|_{E},\phi'|_{E}^{\tau}) \neq 0$ then $\chi_0$ is fixed under $\Gal(E/F')$ and $\tau$ and hence it is isomorphic to all of its $\Gal(E/F)$-conjugates by Lemma \ref{lem-gen}.
Thus $\chi_0$ descends to a character $\chi:W_F' \to {}^L\GL_{1F}$ by Lemma \ref{lem-bc-param}.

Let $\rho_2$ be a rank two irreducible representation of $\Gal(E/F)$ with character $\theta_2$
in the notation of \S \ref{appendix} and let $\langle \xi \rangle =\Gal(\QQ(\sqrt{5})/\QQ)$.  Then
$$
\rho_2 \otimes \chi \quad \textrm{ and } \quad \xi \circ \rho_2 \otimes \chi
$$
are two nonisomorphic $L$-parameters from $W_F'$ with restriction to $W_E'$ isomorphic to $\phi'|_E$.

If $\phi:W_F' \to {}^L\GL_{2F}$ is any $L$-parameter with $\phi|_E \cong \phi'|_{E}$, then
\begin{align} \label{1-frob}
2=\mathrm{dim}(\mathrm{Hom}_{E}(\chi|_{E},\phi|_{E}))=
\mathrm{dim}(\mathrm{Hom}_{{F}}(\mathrm{Ind}_{{E}}^{{F}}(\chi|_E),\phi)).
\end{align}
Now by Lemma \ref{lem-basic}
$$
\mathrm{Ind}_{{E}}^{{F}}(\chi|_{{E}})\cong \mathrm{Ind}_{{E}}^{{F}}(1)\otimes \chi.
$$
This combined with \eqref{1-frob} implies that
$$
\phi \cong \rho_2 \otimes \chi \textrm{ or }\phi \cong \xi \circ \rho_2 \otimes \chi.
$$
\end{proof}

Motivated by Proposition \ref{prop-A5-EF} and Proposition \ref{prop-bij-EF}
we propose the following conjecture.  It is the (conjectural) translation of Proposition \ref{prop-A5-EF} and part (1) of Proposition \ref{prop-bij-EF} into
a statement on automorphic representations.

\begin{conj} \label{conj-2}
In the setting of Proposition \ref{prop-A5-EF} above each cuspidal automorphic representation $\pi$ of $\GL_2(\A_F)$ admits a unique cuspidal base change to $\GL_2(\A_{F'})$ and a unique base change to an isobaric automorphic representation of $\GL_2(\A_E)$.
If $\pi'$ is a cuspidal automorphic representation of $\GL_2(\A_{F'})$ such that $\pi'_E$ is cuspidal and $\mathrm{Hom}_I(\pi_E',\pi'^{\tau}_E) \neq 0$, then there is a unique cuspidal automorphic representation $\pi$ of $\GL_2(\A_F)$ that has $\pi'_E$ as a base change.  If $\pi'_E$ is not cuspidal and $\mathrm{Hom}_I(\pi'_E,\pi'^{\tau}_E) \neq 0$ then there are precisely two isomorphism classes of cuspidal automorphic representations of $\GL_2(\A_F)$ that base change to $\pi'_E$.
\end{conj}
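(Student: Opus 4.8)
The plan is to obtain Conjecture \ref{conj-2} by transporting Proposition \ref{prop-A5-EF} and part~(1) of Proposition \ref{prop-bij-EF} across the conjectural global Langlands correspondence for $\GL_2$; since that correspondence --- together with the nonsolvable base change and descent along $E/F$ that it entails --- is precisely what one must assume, the reduction sketched here is conditional by its very nature, which is why the statement is recorded as a conjecture rather than a theorem.

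First I would fix the dictionary. In the expected framework a cuspidal automorphic representation of $\GL_2(\A_k)$ corresponds to an irreducible two-dimensional $L$-parameter $W_k' \to {}^L\GL_{2k}$; base change along a subextension corresponds to the restriction maps $b_{E/k}$, $b_{F'/F}$ of \S\ref{ssec-param-bc}; cuspidality of a base change corresponds to irreducibility of the restricted parameter; and, by the theorem of Jacquet and Shalika recorded in \eqref{ord-pole}, the nonvanishing of $\mathrm{Hom}_I(\pi'_E,\pi'^{\tau}_E)$ corresponds to the intertwining condition on $L$-parameters that is imposed in Proposition \ref{prop-A5-EF}. Under this dictionary each of the three clauses of the conjecture becomes the automorphic shadow of a clause already established for parameters.

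Next I would match the clauses. (i) For $\pi$ cuspidal on $\GL_2(\A_F)$ with parameter $\varphi$, the base change $\pi_{F'}$ should be the representation attached to $\varphi|_{W_{F'}'}$; this is cuspidal because $\varphi|_{W_{F'}'}$ is irreducible --- the content of the first paragraph of the proof of Proposition \ref{prop-A5-EF}, which uses that $\widetilde{A}_5$ has no subgroup of index $2$ and that a two-dimensional irreducible representation of $\widetilde{A}_5$ restricts irreducibly to $\widetilde{A}_4$ --- and it is unique by strong multiplicity one; the further base change $\pi_E := (\pi_{F'})_E$ then exists as an isobaric representation because $E/F'$ is solvable, by Arthur--Clozel \cite{AC}. (ii) If $\pi'_E$ is cuspidal with $\mathrm{Hom}_I(\pi'_E,\pi'^{\tau}_E) \neq 0$, then $\pi'_E$ is invariant under $\Gal(E/F')$ (being a base change) and under $\tau$, hence under $\Gal(E/F) = \langle \tau, \Gal(E/F')\rangle$ by Lemma \ref{lem-gen}; since $H^2(\widetilde{A}_5,\CC^{\times}) = 0$, Lemma \ref{lem-bc-param} descends its parameter to an irreducible $\varphi : W_F' \to {}^L\GL_{2F}$, the required $\pi$ is the (conjecturally automorphic) representation attached to $\varphi$, and its uniqueness is exactly part~(1) of Proposition \ref{prop-bij-EF}. (iii) If $\pi'_E$ is not cuspidal and $\mathrm{Hom}_I(\pi'_E,\pi'^{\tau}_E) \neq 0$, then the last assertion of Proposition \ref{prop-A5-EF} supplies exactly two nonisomorphic irreducible parameters $W_F' \to {}^L\GL_{2F}$ restricting to the parameter of $\pi'_E$ --- a fixed two-dimensional irreducible representation of $\widetilde{A}_5$ and its $\Gal(\QQ(\sqrt{5})/\QQ)$-conjugate, each twisted by a character of $W_F'$ --- so the cuspidal representations of $\GL_2(\A_F)$ base-changing to $\pi'_E$ are the two attached to these parameters and there are no others.

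The hard part --- and the reason this cannot be made unconditional by these means --- is the automorphy input hidden in steps (ii) and (iii): one must know that the two-dimensional $L$-parameters produced by Lemma \ref{lem-bc-param} and by the last part of Proposition \ref{prop-A5-EF} are attached to cuspidal automorphic representations of $\GL_2(\A_F)$, and, underlying everything, that base change and descent along the nonsolvable extension $E/F$ behave as predicted. For $n = 2$ this is essentially the Artin conjecture for projectively icosahedral representations together with nonsolvable descent --- exactly the phenomena that the trace identities of Theorems \ref{main-thm-2} and \ref{main-thm-2-conv} are designed to reach. By contrast, the parameter-theoretic inputs on which the reduction rests, Propositions \ref{prop-A5-EF} and \ref{prop-bij-EF}, are unconditional.
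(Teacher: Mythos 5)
Your reduction is exactly the paper's own rationale: the paper offers no proof of Conjecture \ref{conj-2} (it is stated as a conjecture precisely because the dictionary between cuspidal representations and irreducible parameters is unavailable), and instead presents it as the translation of Proposition \ref{prop-A5-EF} together with part (1) of Proposition \ref{prop-bij-EF} into automorphic language, which is the clause-by-clause matching you carry out. Your identification of where the unprovable automorphy input enters, and of the unconditional parameter-theoretic inputs, agrees with the paper.
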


\begin{rem} In understanding the analogy between Proposition \ref{prop-A5-EF} and Conjecture \ref{conj-2} it is helpful to recall that if $\pi'_E$ is cuspidal and $\mathrm{Hom}_I(\pi'_E,\pi'^{\tau}_E)\neq 0$ then $\pi'_E$ is isomorphic to all of its twists under elements of $\langle \Gal(E/F'),\tau\rangle=\Gal(E/F)$.
\end{rem}

\subsection{Motivating conjectures \ref{conj-32} and \ref{conj-33}} \label{ssec-artin-conj}

In this section we prove some lemmas on restriction of $L$-parameters along subfields of an $\widetilde{A}_5$-extension and then state the conjectures (namely conjectures \ref{conj-32} and \ref{conj-33}) that are the translation of these statements to the context of automorphic representations.  These conjectures are used in the statement of Theorem \ref{main-thm-3} above.

As above, we identify $\Gal(E/F) =\widetilde{A}_5$.  Fix an embedding $\ZZ/2 \times \ZZ/2 \hookrightarrow  A_5$ and let $Q \hookrightarrow \widetilde{A}_5$ be its inverse image under the quotient map $\widetilde{A}_5 \to A_5$; it is isomorphic to the quaternion group.

\begin{lem} \label{lem-A5-EF} Let $F'=E^Q$.
For all quasi-characters $\chi_0:W_{E}' \to {}^L\GL_{1E}$ invariant under $\Gal(E/F')$ there is an irreducible parameter $\varphi':W_{F'}' \to {}^L\GL_{2F'}$ such that $\varphi'|_E \cong \chi_0 \oplus \chi_0$.  The parameter $\varphi'$ is unique up to isomorphism.

Let $\varphi:W_F' \to {}^L\GL_{2F}$ be an irreducible $L$-parameter such that $\varphi|_E\cong \chi_0 \oplus \chi_0$ where $\chi_0:W_E' \to {}^L\GL_{1E}$ is $\Gal(E/F)$-invariant.  Then $\varphi|_{F'}$ is irreducible, and there are precisely two distinct equivalence classes of $L$-parameters in $\Phi^0_2(F)$ that restrict to $\varphi|_{F'}$.
Conversely, if $\varphi': W_{F'}' \to {}^L\GL_{2F'}$ is an irreducible parameter such that $\varphi'|_E \cong \chi_0 \oplus \chi_0$ for some quasi-character $\chi_0:W_{E}' \to {}^L\GL_{1E}$ invariant under $\Gal(E/F)$, then $\varphi'$ extends to an $L$-parameter on $W_F'$.
\end{lem}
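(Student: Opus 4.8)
The plan is to reduce the whole statement to the representation theory of the two finite groups $Q=\Gal(E/F')$, the quaternion group of order $8$, and $\widetilde{A}_5=\Gal(E/F)$, feeding it through the descent statement established inside the proof of Lemma \ref{lem-bc-param}. The group‑theoretic inputs I will use (all recorded in, or immediate from, \S\ref{appendix}) are: $H^2(Q,\CC^\times)=H^2(\widetilde{A}_5,\CC^\times)=0$; the group $Q$ has a unique irreducible $2$‑dimensional representation $\rho_Q$, which is faithful; and $\widetilde{A}_5$ has exactly two irreducible $2$‑dimensional representations $\rho,\rho'$ (the representations $\rho_2$ and $\xi\circ\rho_2$ of \S\ref{appendix}), both faithful. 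Since $Q$ has center of order $2$ and contains $\ker(\widetilde{A}_5\to A_5)=Z(\widetilde{A}_5)$, which is central in $Q$, one gets $Z(Q)=Z(\widetilde{A}_5)$, and this common central subgroup acts by $-I$ on each of $\rho_Q,\rho,\rho'$; because a reducible $2$‑dimensional representation of $Q$ is a sum of characters and hence trivial on $Z(Q)$, it follows that $\rho|_Q$ and $\rho'|_Q$ are irreducible, so $\rho|_Q\cong\rho'|_Q\cong\rho_Q$. I will also use that $\eta\otimes\rho_Q\cong\rho_Q$ for every character $\eta$ of $Q$ (uniqueness of $\rho_Q$), and that a quasi-character of $W_{F'}'$ or $W_F'$, or a representation factoring through $\Gal(E/F')$ or $\Gal(E/F)$, automatically defines an $L$-parameter (Frobenius semisimplicity and triviality on the $\mathrm{SU}(2)$-factor being immediate). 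Everywhere below $\rho_Q,\rho,\rho'$ are pulled back to $W_{F'}'$ or $W_F'$ along the quotient maps to $\Gal(E/F')$ and $\Gal(E/F)$.

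For the first two sentences of the lemma: given $\chi_0$ invariant under $\Gal(E/F')$, the argument at the end of the proof of Lemma \ref{lem-bc-param} (using $H^2(Q,\CC^\times)=0$) yields a quasi-character $\chi'$ of $W_{F'}'$ with $\chi'|_{W_E'}=\chi_0$, and I set $\varphi':=\chi'\otimes\rho_Q$, which is irreducible and satisfies $\varphi'|_{W_E'}\cong\chi_0\oplus\chi_0$. For uniqueness, if $\varphi'$ is any parameter as in the lemma then $\varphi'|_{W_E'}=\chi_0\cdot I_2$ is scalar, so $\varphi'\otimes(\chi')^{-1}$ is trivial on $W_E'$, factors through $Q$, is irreducible of degree $2$, hence $\cong\rho_Q$; thus $\varphi'\cong\chi'\otimes\rho_Q$, and any other descent of $\chi_0$ differs from $\chi'$ by a character of $Q$ which is absorbed by $\rho_Q$, so the isomorphism class of $\varphi'$ is determined.

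For the second paragraph of the lemma: with $\varphi$ irreducible on $W_F'$ and $\varphi|_{W_E'}=\chi_0\cdot I_2$ for $\chi_0$ now $\Gal(E/F)$-invariant, descend $\chi_0$ to a quasi-character $\chi$ of $W_F'$ (now using $H^2(\widetilde{A}_5,\CC^\times)=0$); then $\varphi\otimes\chi^{-1}$ is trivial on $W_E'$, factors through $\widetilde{A}_5$, and is irreducible of degree $2$, so $\varphi\cong\chi\otimes\rho$ or $\varphi\cong\chi\otimes\rho'$. Restricting to $W_{F'}'$ and using that $\rho|_Q\cong\rho'|_Q\cong\rho_Q$ is irreducible shows $\varphi|_{W_{F'}'}$ is irreducible. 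For the count, any $\psi\in\Phi_2^0(F)$ with $\psi|_{W_{F'}'}\cong\varphi|_{W_{F'}'}$ has $\psi|_{W_E'}=\chi_0\cdot I_2$, so by the same argument $\psi\cong\chi\otimes\rho$ or $\chi\otimes\rho'$; conversely both of these restrict on $W_{F'}'$ to $(\chi|_{W_{F'}'})\otimes\rho_Q\cong\varphi|_{W_{F'}'}$, and they are inequivalent since $\rho\not\cong\rho'$ and twisting by $\chi$ is a bijection on equivalence classes. Hence there are exactly two.

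For the converse: given $\varphi'$ irreducible on $W_{F'}'$ with $\varphi'|_{W_E'}=\chi_0\cdot I_2$ and $\chi_0$ invariant under $\Gal(E/F)$, the first part gives $\varphi'\cong\chi'\otimes\rho_Q$ for some descent $\chi'$ of $\chi_0$ to $W_{F'}'$; using $H^2(\widetilde{A}_5,\CC^\times)=0$ I also descend $\chi_0$ to a quasi-character $\chi$ of $W_F'$ and set $\psi:=\chi\otimes\rho$, an $L$-parameter on $W_F'$. Then $\psi|_{W_{F'}'}\cong(\chi|_{W_{F'}'})\otimes\rho_Q$, and since $\chi|_{W_{F'}'}$ and $\chi'$ agree on $W_E'$ they differ by a character $\eta$ of $Q$, so $\psi|_{W_{F'}'}\cong\eta\otimes\chi'\otimes\rho_Q\cong\chi'\otimes(\eta\otimes\rho_Q)\cong\chi'\otimes\rho_Q\cong\varphi'$, i.e. $\psi$ extends $\varphi'$. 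The only genuinely non-formal point — the rest being Schur's lemma, Frobenius reciprocity (as in the proofs of Lemma \ref{lem-restriction} and Proposition \ref{prop-A5-EF}), and Lemma \ref{lem-bc-param} — is the group theory showing that the spin representations of $\widetilde{A}_5$ stay irreducible on $Q$, equivalently that $Z(Q)=Z(\widetilde{A}_5)$ and acts by $-I$; that is where I expect the main care to be needed.
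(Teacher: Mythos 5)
Your proposal is correct and takes essentially the same route as the paper's own proof: the paper twists by $\chi_0^{-1}$ (and an extension of it, using the vanishing of the Schur multipliers of $Q$ and $\widetilde{A}_5$ as in Lemma \ref{lem-bc-param}) to reduce to trivial $\chi_0$, and then reads the result off the character tables of \S\ref{appendix} via Lemma \ref{lem-Q}. You have simply unwound the appeal to the tables, rederiving the key fact $\theta_2|_Q = (\xi\circ\theta_2)|_Q = \Theta_2$ from $Z(Q)=Z(\widetilde{A}_5)$ acting by $-I$, and carried the twist-and-descend bookkeeping out in full.
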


\begin{proof} One can twist by $\chi_0^{-1}$ and its extension to $W_F'$ to reduce the lemma to the case where $\chi_0$ is trivial (recall that a $\Gal(E/F')$ (resp.~$\Gal(E/F)$)-invariant quasi-character descends by Lemma \ref{lem-bc-param} and the fact that both of these groups have trivial Schur multiplier).  In this case the lemma follows immediately from the character tables included in \S \ref{appendix} below (see Lemma \ref{lem-Q} in particular).
\end{proof}

The following is the conjectural translation of this statement (via Langlands functoriality) into the language of automorphic representations:

\begin{conj} \label{conj-32}
Let $F'=E^Q$.
 Let $\pi$ be a cuspidal automorphic representation of $\GL_2(\A_F)$ with base change $\chi_0 \boxplus \chi_0$ to an isobaric automorphic representation of $\GL_2(\A_E)$.  Then $\pi$ admits a base change $\pi_{F'}$ to $\GL_2(\A_{F'})$ that is cuspidal.  There are precisely two distinct isomorphism classes of cuspidal automorphic representations of $\GL_2(\A_{F})$ that base change to $\pi_{F'}$.  Conversely, if $\pi'$ is a cuspidal automorphic representation of $\GL_2(\A_{F'})$ such that $\pi'_E \cong \chi_0 \boxplus \chi_0$ where $\chi_0$ is $\Gal(E/F)$-invariant, then $\pi'$ descends to a cuspidal automorphic representation of $\GL_2(\A_{F})$.
\end{conj}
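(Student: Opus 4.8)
My plan is to deduce Conjecture \ref{conj-32} from Lemma \ref{lem-A5-EF} by transporting that lemma, clause by clause, across the (conjectural) Langlands correspondence for the tower $E \geq F' \geq F$ with $\Gal(E/F) \cong \widetilde{A}_5$ and $F' = E^{Q}$, $Q$ the quaternion group. Under the dictionary, cuspidal automorphic representations of $\GL_2$ over each of $F$, $F'$, $E$ correspond to equivalence classes of irreducible $L$-parameters $W_k' \to {}^L\GL_{2k}$; base change corresponds to restriction of parameters; the isobaric sum $\boxplus$ corresponds to $\oplus$; and characters of $\GL_1(\A_k)$ correspond to characters of $W_k'$ by class field theory, so that $\chi_0 \boxplus \chi_0$ matches $\chi_0 \oplus \chi_0$. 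I would first observe that the correspondence is unproblematic here even without the full Langlands group: exactly as in the proof of Lemma \ref{lem-A5-EF}, after twisting by (the base change of) $\chi_0^{-1}$ every parameter in sight is trivial on $W_E'$, hence has finite image and lives on the Weil group, so the representations involved are of Galois type in the sense of \S\ref{ssec-icosa-gp} and the twist, being by an idele class character, preserves cuspidality.

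With this set up, the three assertions of Conjecture \ref{conj-32} become the corresponding assertions of Lemma \ref{lem-A5-EF}. A cuspidal $\pi$ over $F$ with $\pi_E \cong \chi_0 \boxplus \chi_0$ matches an irreducible $\varphi : W_F' \to {}^L\GL_{2F}$ with $\varphi|_E \cong \chi_0 \oplus \chi_0$; Lemma \ref{lem-A5-EF} gives that $\varphi|_{F'}$ is irreducible (the cuspidality of the base change $\pi_{F'}$, whose existence along the non-Galois extension $F'/F$ is supplied by the assumed functoriality for $E/F$) and that precisely two classes in $\Phi_2^0(F)$ restrict to $\varphi|_{F'}$, namely $\rho_2 \otimes \chi$ and $(\xi \circ \rho_2) \otimes \chi$, which are the two cuspidal representations of $\GL_2(\A_F)$ base-changing to $\pi_{F'}$. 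For the converse, a cuspidal $\pi'$ over $F'$ with $\pi'_E \cong \chi_0 \boxplus \chi_0$ and $\chi_0$ invariant under $\Gal(E/F)$ matches an irreducible $\varphi'$ with $\varphi'|_E \cong \chi_0 \oplus \chi_0$; Lemma \ref{lem-A5-EF} extends $\varphi'$ to $W_F'$, and the extension is irreducible because after the twist it is the unique $2$-dimensional irreducible of $Q$ extended to one of the two faithful $2$-dimensional representations of $\widetilde{A}_5$ --- so $\pi'$ descends to a cuspidal representation of $\GL_2(\A_F)$. I would remark in passing that this converse half can also be argued without invoking functoriality for $E/F$: since $\Gal(E/F') \cong Q$ is solvable with trivial Schur multiplier, Arthur--Clozel solvable base change forces $\pi'$ to be $\pi(\rho_{Q} \otimes \chi_0)$ with $\rho_{Q}$ the $2$-dimensional irreducible of $Q$, which is automorphic by Langlands--Tunnell, and $\rho_{Q}$ extends to $\rho_2$ on $\widetilde{A}_5$.

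The step I expect to be the genuine obstruction is the first half: producing the cuspidal $\pi$ over $F$, equivalently knowing that any cuspidal $\pi$ over $F$ with $\pi_E \cong \chi_0 \boxplus \chi_0$ is $\pi(\rho_2 \otimes \chi)$ or $\pi((\xi \circ \rho_2) \otimes \chi)$. This is the strong Artin conjecture for the two $2$-dimensional icosahedral representations of $\widetilde{A}_5$ and their twists, which is not known in the generality required. Accordingly the realistic proposal is not an unconditional proof of Conjecture \ref{conj-32} but its derivation from the assumed case of Langlands functoriality for $E/F$, with Lemma \ref{lem-A5-EF} performing the group-theoretic bookkeeping --- which is exactly the way the conjecture is meant to be used as an input to Theorem \ref{main-thm-3}.
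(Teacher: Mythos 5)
The paper never proves Conjecture \ref{conj-32}; it is stated as a conjecture and presented explicitly as the translation, via (unproven) Langlands functoriality, of Lemma \ref{lem-A5-EF} into automorphic language. Your proposal recognizes this and reproduces exactly the paper's motivation — transport Lemma \ref{lem-A5-EF} clause by clause across the correspondence, twisting by $\chi_0^{-1}$ to reduce to parameters trivial on $W_E'$ and reading off the two classes $\rho_2\otimes\chi$ and $(\xi\circ\rho_2)\otimes\chi$ — so on the main point you are in agreement with the paper, and you correctly locate the genuine obstruction in the strong Artin conjecture for the icosahedral two-dimensional representations.

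One caveat: your passing remark that the converse half can be argued \emph{without} invoking functoriality for $E/F$ is not correct. Solvable base change for $\Gal(E/F')\cong Q$ does force $\pi'=\pi(\rho_Q\otimes\chi')$ (this is essentially the argument of Proposition \ref{prop-solv} and Corollary \ref{cor-rho-type}), and the Galois representation $\rho_Q$ does extend to $\rho_2$ on $\widetilde{A}_5$; but the conjecture asserts that $\pi'$ descends to a cuspidal \emph{automorphic} representation of $\GL_2(\A_F)$, which requires knowing that $\rho_2\otimes\chi$ is automorphic over $F$. Since $F'/F$ has degree $5$ and its Galois closure is the full $\widetilde{A}_5$-extension, this descent step crosses the nonsolvable part of the tower: it is precisely the icosahedral Artin conjecture, which Langlands--Tunnell (covering only solvable projective images) does not supply. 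So the converse half is just as conditional as the forward half, and the conjecture as a whole remains, as the paper intends, an input rather than a theorem.
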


The situation for $n=3$ is similar:
\begin{lem} \label{lem-A5-EF3}
Let $F'=E^{\widetilde{A}_4}$.  Let $\chi_0:W_{E}' \to {}^L\GL_{1E}$ be a character invariant under $\Gal(E/F')$.  There is an irreducible parameter $\varphi':W_{F'}' \to {}^L\GL_{3F'}$ such that $\varphi'|_{W_E'} \cong \chi_0^{\oplus 3}$, unique up to isomorphism.

Let $\varphi:W_F' \to {}^L\GL_{3F}$ be an irreducible $L$-parameter such that $\varphi|_E\cong \chi_0^{\oplus 3}$ where
$$
\chi_0:W_E' \to {}^L\GL_{1E}
$$ is $\Gal(E/F)$-invariant.  Then $\varphi|_{F'}$ is irreducible, and there are precisely two inequivalent isomorphism classes of $L$-parameters in $\Phi^0_3(F)$ that restrict to the isomorphism class of $\varphi|_{F'}$.  Conversely, if $\varphi': W_{F'}' \to {}^L\GL_{3F'}$ is an irreducible parameter such that $\varphi'|_E \cong \chi_0^{\oplus 3}$ for some quasi-character $\chi_0:W_{E}' \to {}^L\GL_{1F}$ invariant under $\Gal(E/F)$, then $\varphi'$ extends to an $L$-parameter on $W_F'$.
\end{lem}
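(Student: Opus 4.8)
The plan is to follow the proof of Lemma~\ref{lem-A5-EF} essentially verbatim, with the quaternion subgroup $Q$ replaced by $\widetilde{A}_4 = \Gal(E/F')$ and degree $2$ replaced by degree $3$. The first move is to twist by an extension of $\chi_0^{-1}$ so as to reduce to the case $\chi_0 = 1$: a $\Gal(E/F')$-invariant quasi-character of $W_E'$ extends to $W_{F'}'$ because $\widetilde{A}_4$ has trivial Schur multiplier (see \S\ref{appendix}), via the inflation--restriction argument used in the proof of Lemma~\ref{lem-bc-param}, and similarly a $\Gal(E/F)$-invariant quasi-character extends to $W_F'$ since $\widetilde{A}_5$, being the universal perfect central extension of $A_5$, also has trivial Schur multiplier. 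After this reduction every parameter in sight is trivial on $W_E'$ (hence on the $\mathrm{SU}(2)$-factor) and therefore factors through $W_{F'}'/W_E'\cong\widetilde{A}_4$, resp.~$W_F'/W_E'\cong\widetilde{A}_5$, so that the lemma turns into a statement about the three-dimensional complex representations of these two finite groups, to be extracted from the character tables in \S\ref{appendix}.

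For the first assertion I would use that $\widetilde{A}_4$ has a unique irreducible three-dimensional representation $\rho$, namely the inflation along $\widetilde{A}_4\to A_4$ of the standard three-dimensional representation of $A_4$: irreducibility is inherited from $A_4$, and the count $1+1+1+9+4+4+4 = 24 = |\widetilde{A}_4|$ leaves room for no other. For existence, twist $\rho$ (viewed as an $L$-parameter through $W_{F'}'\twoheadrightarrow\widetilde{A}_4$) by any extension of $\chi_0$ to $W_{F'}'$; for uniqueness, any irreducible $\varphi'$ with $\varphi'|_{W_E'}\cong\chi_0^{\oplus3}$, once twisted by a fixed such extension, is trivial on $W_E'$, hence equals $\rho$.

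For the rest, the key group-theoretic input is that $\widetilde{A}_5$ has exactly two irreducible three-dimensional representations, the icosahedral representations $\rho_3$ and $\xi\circ\rho_3$ (Galois conjugate over $\QQ(\sqrt{5})$; see \S\ref{appendix}), both of which factor through $A_5$, and that $\rho_3|_{\widetilde{A}_4}\cong(\xi\circ\rho_3)|_{\widetilde{A}_4}\cong\rho$. Granting this: if $\varphi:W_F'\to{}^L\GL_{3F}$ is irreducible with $\varphi|_E\cong\chi_0^{\oplus3}$ and $\chi_0$ $\Gal(E/F)$-invariant, then after twisting by an extension $\chi$ of $\chi_0$ to $W_F'$ it becomes $\rho_3$ or $\xi\circ\rho_3$, so $\varphi|_{F'}$ is a twist of $\rho$ and in particular irreducible, and the parameters in $\Phi^0_3(F)$ restricting to $\varphi|_{F'}$ are exactly $\rho_3\otimes\chi$ and $(\xi\circ\rho_3)\otimes\chi$, which are inequivalent since $\rho_3\not\cong\xi\circ\rho_3$. (An irreducible $\psi$ over $F$ with $\psi|_{F'}\cong\varphi|_{F'}$ automatically has $\psi|_E\cong\chi_0^{\oplus3}$, so the same classification applies.) For the converse, an irreducible $\varphi':W_{F'}'\to{}^L\GL_{3F'}$ with $\varphi'|_E\cong\chi_0^{\oplus3}$ and $\chi_0$ $\Gal(E/F)$-invariant becomes, after twisting by $\chi^{-1}$, the parameter $\rho=\rho_3|_{\widetilde{A}_4}$, which extends to $\rho_3$ on $W_F'$; hence $\varphi'\cong(\rho_3\otimes\chi)|_{W_{F'}'}$ extends.

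The one step that is not purely formal is verifying from the character tables of \S\ref{appendix} that $\rho_3|_{\widetilde{A}_4}\cong(\xi\circ\rho_3)|_{\widetilde{A}_4}$ and that this common restriction is irreducible. This is where I expect the main (and only) work: $\rho_3$ and $\xi\circ\rho_3$ have equal characters except on the classes of elements of order $5$, of which $\widetilde{A}_4$ contains none, so the restrictions agree; and their common character $(3,-1,0,0)$ on $A_4$ has norm $1$, so the restriction is irreducible. This is the exact $n=3$ analogue of the role of Lemma~\ref{lem-Q} in Lemma~\ref{lem-A5-EF}, and it is what pins the fibre of restriction down to exactly two parameters.
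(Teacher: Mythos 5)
Your proposal is correct and follows essentially the same route as the paper: the paper's proof likewise twists by an extension of $\chi_0^{-1}$ (using the vanishing Schur multipliers of $\widetilde{A}_4$ and $\widetilde{A}_5$) to reduce to $\chi_0$ trivial, and then reads everything off the character tables of \S\ref{appendix}, with your computation that $\theta_3|_{\widetilde{A}_4}=\theta_3'|_{\widetilde{A}_4}=\psi_3$ being exactly the content of Lemma \ref{lem-tetra-reps} that the paper cites. You have merely written out explicitly the character-table verifications the paper leaves implicit.
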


\begin{proof}One can twist by $\chi_0^{-1}$ and its extension to $W_F'$ to reduce the lemma to the case where $\chi_0$ is trivial (recall that a $\Gal(E/F')$ (resp.~$\Gal(E/F)$)-invariant quasi-character descends by Lemma \ref{lem-bc-param} and the fact that both of these groups have trivial Schur multiplier).  In this case the lemma follows immediately from the character tables included in \S \ref{appendix} (see Lemma \ref{lem-tetra-reps} in particular).
\end{proof}

The corresponding conjecture is the following:

\begin{conj} \label{conj-33}
Let $F'=E^{\widetilde{A}_4}$.   Let $\pi$ be a cuspidal automorphic representation of $\GL_3(\A_F)$ with base change $\chi_0^{\boxplus 3}$ to an isobaric automorphic representation of $\GL_3(\A_E)$.  Then $\pi$ admits a base change $\pi_{F'}$ to $\GL_3(\A_{F'})$ that is cuspidal.  There are precisely two nonisomorphic cuspidal automorphic representations of $\GL_3(\A_{F})$ that base change to $\pi_{F'}$.  Conversely, if $\pi'$ is a cuspidal automorphic representation of $\GL_3(\A_{F'})$ such that $\pi'_E \cong \chi_0^{\boxplus 3}$ where $\chi_0$ is $\Gal(E/F)$ invariant, then $\pi'$ descends to a cuspidal automorphic representation of $\GL_3(\A_{F})$.
\end{conj}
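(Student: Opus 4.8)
The plan is to imitate the proof of Lemma~\ref{lem-A5-EF}: reduce to the case of the trivial character, and then read everything off the character tables of $\widetilde A_4$ and $\widetilde A_5 = \mathrm{SL}_2(\ZZ/5)$ collected in \S\ref{appendix}. For the reduction: since $\widetilde A_4$ and $\widetilde A_5$ have trivial Schur multiplier, any $\Gal(E/F')$-invariant (resp.\ $\Gal(E/F)$-invariant) quasi-character of $W_E'$ extends to a quasi-character $\chi$ of $W_{F'}'$ (resp.\ $W_F'$) by Lemma~\ref{lem-bc-param}. Twisting each of the $L$-parameters in the statement by $\chi^{-1}$ is a bijection that commutes with restriction of parameters, preserves irreducibility, and carries the hypothesis that the restriction to $W_E'$ be isomorphic to $\chi_0^{\oplus 3}$ to the hypothesis that it be isomorphic to $\one^{\oplus 3}$. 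So I may assume $\chi_0 = \one$, in which case every parameter in sight is trivial on $W_E'$ and hence factors through $\Gal(E/F') = \widetilde A_4$ or $\Gal(E/F) = \widetilde A_5$; the lemma becomes a set of assertions about $3$-dimensional complex representations of these two finite groups.

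With this reduction in place, an $L$-parameter $W_{F'}' \to {}^L\GL_{3F'}$ trivial on $W_E'$ is the same as a $3$-dimensional representation of $\widetilde A_4$, whose restriction to $W_E'$ is automatically $\one^{\oplus 3}$; thus the first assertion is exactly the statement that $\widetilde A_4$ has, up to isomorphism, a unique irreducible $3$-dimensional representation, namely the inflation of the standard representation of $A_4$, which is read off the character table of \S\ref{appendix} (Lemma~\ref{lem-tetra-reps}). For the other assertions, let $\varphi : W_F' \to {}^L\GL_{3F}$ be irreducible and trivial on $W_E'$, i.e.\ an irreducible $3$-dimensional representation of $\widetilde A_5$; the character table of $\widetilde A_5 = \mathrm{SL}_2(\ZZ/5)$ shows there are exactly two of these, which I denote $\rho_3$ and $\xi\circ\rho_3$ for $\langle\xi\rangle = \Gal(\QQ(\sqrt{5})/\QQ)$. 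Both are inflated from $A_5$ and their characters are $\xi$-conjugate but unequal, so $\rho_3 \not\cong \xi\circ\rho_3$; a comparison of character values (again \S\ref{appendix}) shows that each restricts on $\widetilde A_4$ to the unique irreducible $3$-dimensional representation of $\widetilde A_4$ found above. Hence $\varphi|_{F'}$ is irreducible and independent of the choice of $\varphi \in \{\rho_3, \xi\circ\rho_3\}$, and since any $\psi \in \Phi_3^0(F)$ with $\psi|_{F'} \cong \varphi|_{F'}$ is again trivial on $W_E'$ (because $\varphi|_{F'}$ is) and therefore one of $\rho_3, \xi\circ\rho_3$, there are precisely two such $\psi$. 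For the converse, an irreducible $\varphi' : W_{F'}' \to {}^L\GL_{3F'}$ trivial on $W_E'$ is, by the uniqueness just proved, the inflation of the standard representation of $A_4$; restricting the standard representation of $A_5$ along $A_4 \hookrightarrow A_5$ returns this representation, so $\rho_3$ (regarded on $W_F'$) restricts on $W_{F'}'$ to $\varphi'$, which furnishes the desired extension.

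The only real content is the character-theoretic input: that $\widetilde A_4$ has a unique irreducible $3$-dimensional representation, that $\widetilde A_5$ has exactly two, that those two are inequivalent, and that each restricts irreducibly to $\widetilde A_4$ recovering the $\widetilde A_4$-representation. These are exactly the facts that \S\ref{appendix} is designed to provide, and granting them the rest of the argument is formal manipulation of inflation, restriction, and twisting of $L$-parameters together with Lemma~\ref{lem-bc-param}. A secondary point to keep straight is that the embedding $\widetilde A_4 \hookrightarrow \widetilde A_5$ is compatible with $A_4 \hookrightarrow A_5$ under the covering maps, so that ``restrict then inflate'' and ``inflate then restrict'' agree --- immediate from the definition of $\widetilde A_4$ as the preimage of $A_4$ in $\widetilde A_5$.
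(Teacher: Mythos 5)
You have proved the wrong statement. The labeled item is \emph{Conjecture}~\ref{conj-33}, which the paper never proves --- it is stated explicitly as a conjecture, namely the translation, via (conjectural) Langlands functoriality, of the companion $L$-parameter Lemma~\ref{lem-A5-EF3} into the language of automorphic representations. Your argument --- twist by the extension of $\chi_0$ to reduce to the trivial character using Lemma~\ref{lem-bc-param} and the vanishing of $H^2(\widetilde{A}_4,\CC^{\times})$ and $H^2(\widetilde{A}_5,\CC^{\times})$, then read off the character tables of $\widetilde{A}_4$ and $\widetilde{A}_5$ from \S\ref{appendix} (Lemma~\ref{lem-tetra-reps}) --- is exactly the paper's proof of Lemma~\ref{lem-A5-EF3}, and it is correct as a proof of \emph{that} lemma. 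But every object in your argument is an $L$-parameter trivial on $W_E'$, i.e.\ a finite-dimensional representation of $\widetilde{A}_4$ or $\widetilde{A}_5$; nothing in it touches the cuspidal automorphic representations $\pi$ of $\GL_3(\A_F)$ and $\pi'$ of $\GL_3(\A_{F'})$ that the conjecture is about.

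The gap is the dictionary between the two. To deduce Conjecture~\ref{conj-33} from Lemma~\ref{lem-A5-EF3} you would need to know that the map $\rho \mapsto \pi(\rho)$ from irreducible $L$-parameters trivial on $W_E'$ to cuspidal automorphic representations of $\rho$-type exists, is well defined, and matches the counting and cuspidality assertions --- and for $\widetilde{A}_5$-valued parameters on $W_F'$ in degree $3$ this is precisely the Artin conjecture, which is open and is what the whole paper is conditionally working towards (see Theorem~\ref{main-thm-3-conv} and Corollary~\ref{cor-artin-cases}). Parts of the conjecture that live over $F'$ \emph{can} be established unconditionally using solvable base change: Proposition~\ref{prop-solv3} shows (via cyclic base change along $\widetilde{A}_4 \supset Q$ and automorphic induction) that there is a unique cuspidal $\pi'$ on $\GL_3(\A_{F'})$ with $\pi'_E \cong \chi_0^{\boxplus 3}$ and that it is of $\rho_3$-type. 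But the descent of $\pi'$ to a cuspidal representation of $\GL_3(\A_F)$, the existence of the cuspidal base change $\pi_{F'}$, and the ``precisely two'' claim over $F$ all require nonsolvable descent along $E/F$, which is not available as a black box and is the point of the conjectural trace identities. Your proposal therefore does not prove the statement; it proves its $L$-parameter shadow.
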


\subsection{Appendix:  The representations of some binary groups} \label{appendix}

In \S \ref{ssec-icosa-gp} and \S \ref{ssec-artin-conj} we studied the problem of base change along an extension $E/F$ where $\Gal(E/F)$  was isomorphic to the binary icosahedral group, that is, the universal perfect central extension $\widetilde{A}_5$ of the alternating group $A_5$ on $5$ letters.  Fix an embedding $A_4 \hookrightarrow A_5$, and let $\widetilde{A}_4$ be the inverse image of $A_4$ under the quotient map $\widetilde{A}_5 \to A_5$.  Similarly fix an embedding $\ZZ/2 \times \ZZ/2 \hookrightarrow A_5$ and let $Q$ be the inverse of $\ZZ/2 \times \ZZ/2$ under the quotient map $\widetilde{A}_5 \to A_5$.  In \S \ref{ssec-icosa-gp} and \S \eqref{ssec-artin-conj} we required various properties of the representations of $\widetilde{A}_5$, $\widetilde{A}_4$, and $Q$.  We collect these properties in this subsection for ease of reference.

We now write down the character table of $\widetilde{A}_5$.
For $n \in \{1,2,3,4,6\}$ let $C_n$ be the unique conjugacy class of $\widetilde{A}_5$ consisting of the elements of order $n$.  Let $C_5$ and $C_5'$ be the two conjugacy classes of elements of order $5$, and if $g \in C_5$ (resp.~$C_5'$) let $C_{10}$ (resp.~$C_{10}'$) be the conjugacy class of $-g$ (viewed as a matrix in $\SL_2(\ZZ/5) \cong \widetilde{A}_5$).  The degree of an irreducible representation is given by its subscript.  We let $u,v$ be the distinct roots of the polynomial $x^2-x-1$.  The following character table is in \cite[\S 7]{Buhler} (see \cite[Proof of Lemma 5.1]{KimIcos} for corrections).

\begin{center}
\begin{tabular}{ l | c |c |c | c | c | c | c| c |c |}
 & $C_1$ & $C_2$  & $C_4$ & $C_3$  & $C_6$ & $C_5$ & $C_{10}$ & $C_{5}'$ & $C_{10}'$  \\
\hline
$1$ & $1$ & $1$ & $1$ & $1$ & $1$ & $1$ & $1$ & $1$ & $1$\\
$\theta_3$ & $3$ & $3$ & $-1$ & $0$ & $0$ & $u$ & $u$ & $v$ & $v$ \\
$\theta_3'$ & $3$ & $3$ & $-1$ & $0$ & $0$ & $v$ & $v$ & $u$ & $u$ \\
$\theta_4$ & $4$ & $4$ & $0$ & $1$ & $1$ & $-1$ & $-1$ & $-1$ & $-1$\\
$\theta_5$ & $5$ &$5$ & $1$ & $-1$ & $-1$ & 0 & 0 & 0 & 0\\
$\theta_2$ & $2$ & $-2$ & $0$ & $-1$ & $1$ & $u-1$ & $1-u$ & $v-1$ & $1-v$ \\
$\theta_2'$ & $2$ & $-2$ & $0$ & $-1$ & $1$ & $v-1$ & $1-v$ & $u-1$ & $1-u$ \\
$\theta_4'$ & $4$ & $-4$ & $0$ & $1$ & $-1$ & $-1$ & $1$ & $-1$ & $1$ \\
$\theta_6$ & $6$ & $-6$ & $0$ & $0$ & $0$ & $1$ & $-1$ & $1$ & $-1$
\end{tabular}
\end{center}

Let $\chi$ be a nontrivial character of $\widetilde{A}_4$.  It is of order $3$, as $\widetilde{A}_4^{\mathrm{ab}} \cong \ZZ/3$.  Using the character table above, one proves the following lemma \cite[Lemmas 5.1-5.3]{KimIcos}
\begin{lem} \label{lem-icosa-reps}Let $\langle \xi \rangle =\Gal(\QQ(\sqrt{5})/\QQ)$.  The following is a complete list of irreducible characters of $\widetilde{A}_5$:
\begin{enumerate}
\item trivial
\item $\theta_2$, $\xi \circ \theta_2$ ($2$-dimensional)
\item $\mathrm{Sym}^2(\theta_2)$, $\mathrm{Sym}^2(\xi \circ \theta_2)$ ($3$-dimensional)
\item $\mathrm{Sym}^3(\theta_2)=\mathrm{Sym}^3(\xi \circ \theta_2)$, $\theta_2 \otimes \xi \circ \theta$ ($4$-dimensional)
\item $\mathrm{Ind}_{\widetilde{A}_4}^{\widetilde{A}_5}(\chi) = \mathrm{Sym}^4(\theta_2) = \mathrm{Sym}^4(\xi \circ \theta_2)$ ($5$-dimensional)
\item $\mathrm{Sym}^2(\theta_2) \otimes \xi \circ \theta_2 =\theta_2 \otimes \mathrm{Sym}^2(\xi \circ \theta_2)=\mathrm{Sym}^5(\theta_2)$ ($6$-dimensional)
\end{enumerate}
There two characters of degree $2,3,4$ given above are not equivalent.
\end{lem}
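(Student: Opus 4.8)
The plan is to combine the given character table with an analysis of the central element and a few inner-product computations. From the table, $\widetilde{A}_5$ has exactly nine irreducible characters, of degrees $1,2,2,3,3,4,4,5,6$ (and $1+2^2+2^2+3^2+3^2+4^2+4^2+5^2+6^2=120=|\widetilde{A}_5|$, so these are all of them). The nontrivial central element lies in the class $C_2$ and acts on a degree-$d$ irreducible $\psi$ by the scalar $\psi(C_2)/d$; reading off the table this scalar is $-1$ on the characters of degrees $2,2,4,6$ (call these \emph{spinorial}) and $+1$ on those of degrees $1,3,3,4,5$ (the latter factor through $A_5$). Hence there is a unique spinorial irreducible of degree $4$, namely $\theta_4'$; a unique one of degree $6$, namely $\theta_6$; a unique non-spinorial one of degree $4$, namely $\theta_4$; a unique one of degree $5$, namely $\theta_5$; and exactly two each of degrees $2$ and $3$, interchanged by the $\QQ(\sqrt5)$-Galois element $\xi$, which acts on the table by swapping the columns $C_5\leftrightarrow C_5'$ and $C_{10}\leftrightarrow C_{10}'$.

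Next I would record that $\theta_2$ is spinorial and lands in $\SL_2(\CC)$: $\det\theta_2$ is a character of the perfect group $\widetilde{A}_5$, hence trivial, so $\wedge^2\theta_2\cong\mathbf{1}$ and $\theta_2\otimes\theta_2\cong\mathbf{1}\oplus\mathrm{Sym}^2(\theta_2)$; consequently $\mathrm{Sym}^j(\theta_2)$ has degree $j+1$ and central character $(-1)^j$, and likewise for $\theta_2':=\xi\circ\theta_2$. The crux is that $\mathrm{Sym}^j(\theta_2)$ is irreducible for $0\le j\le 5$. I would check this by a direct character computation: on a class $C$ the eigenvalues of $\theta_2(C)$ are $\alpha_C,\alpha_C^{-1}$ with $\alpha_C+\alpha_C^{-1}=\theta_2(C)$ read off the table, so $\chi_{\mathrm{Sym}^j(\theta_2)}(C)=\sum_{i=0}^j\alpha_C^{\,j-2i}$ is a Chebyshev polynomial in $\theta_2(C)$ (e.g. $\chi_{\mathrm{Sym}^2(\theta_2)}(C)=\theta_2(C)^2-1$, $\chi_{\mathrm{Sym}^3(\theta_2)}(C)=\theta_2(C)^3-2\theta_2(C)$), and one evaluates $\langle\chi_{\mathrm{Sym}^j(\theta_2)},\chi_{\mathrm{Sym}^j(\theta_2)}\rangle=1$ as a finite sum over the nine classes, the golden-ratio values $u,v$ entering only on the classes of order $5$ and $10$; the same computation matches $\chi_{\mathrm{Sym}^j(\theta_2)}$ to a row of the table. (Alternatively, one can induct using $\mathrm{Sym}^{j-1}(\theta_2)\otimes\theta_2\cong\mathrm{Sym}^j(\theta_2)\oplus\mathrm{Sym}^{j-2}(\theta_2)$.) Given irreducibility, degree and central character force $\mathrm{Sym}^0(\theta_2)=\mathbf{1}$, $\mathrm{Sym}^1(\theta_2)=\theta_2$, $\mathrm{Sym}^2(\theta_2)\in\{\theta_3,\theta_3'\}$ (fix the labelling so $\mathrm{Sym}^2(\theta_2)=\theta_3$), $\mathrm{Sym}^3(\theta_2)=\theta_4'$, $\mathrm{Sym}^4(\theta_2)=\theta_5$, $\mathrm{Sym}^5(\theta_2)=\theta_6$, and the same for $\theta_2'$; since $\theta_4',\theta_5,\theta_6$ are each unique, $\mathrm{Sym}^j(\theta_2)=\mathrm{Sym}^j(\xi\circ\theta_2)$ for $j=3,4,5$, while for $j=1,2$ the two are the distinct members of their Galois pair (the character involving $u$ versus $v$ on $C_5$).

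For the remaining entries I would argue by the same mechanism. The representation $\theta_2\otimes\theta_2'$ has degree $4$ and central character $(-1)(-1)=+1$, and is irreducible since, using self-duality and $\theta_2\otimes\theta_2\cong\mathbf{1}\oplus\mathrm{Sym}^2(\theta_2)$, one gets $\langle\chi_{\theta_2}\chi_{\theta_2'},\chi_{\theta_2}\chi_{\theta_2'}\rangle=\langle\mathbf{1}\oplus\mathrm{Sym}^2(\theta_2),\mathbf{1}\oplus\mathrm{Sym}^2(\theta_2')\rangle=1$ (the degree-$3$ pieces being distinct); hence $\theta_2\otimes\theta_2'=\theta_4$, which is distinct from $\mathrm{Sym}^3(\theta_2)=\theta_4'$ already by the central character (equivalently, by the value on $C_2$). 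Similarly $\mathrm{Sym}^2(\theta_2)\otimes\theta_2'$ and $\theta_2\otimes\mathrm{Sym}^2(\theta_2')$ have degree $6$ and central character $-1$, and using $\mathrm{Sym}^2(\theta_2)\otimes\mathrm{Sym}^2(\theta_2)\cong\mathbf{1}\oplus\mathrm{Sym}^2(\theta_2)\oplus\mathrm{Sym}^4(\theta_2)$ the analogous computation gives irreducibility, so each equals the unique spinorial degree-$6$ irreducible $\theta_6=\mathrm{Sym}^5(\theta_2)=\mathrm{Sym}^5(\theta_2')$. Finally $\mathrm{Ind}_{\widetilde{A}_4}^{\widetilde{A}_5}(\chi)$ has degree $[\widetilde{A}_5:\widetilde{A}_4]=5$; since the center of $\widetilde{A}_5$ lies in $[\widetilde{A}_4,\widetilde{A}_4]$ (the copy of $Q_8$ inside the binary tetrahedral group), $\chi$ is trivial on it, so this induced representation is inflated from $A_4$ and has trivial central character, and computing its character from the induction formula — or applying Mackey's irreducibility criterion using the two double cosets of $\widetilde{A}_4$ in $\widetilde{A}_5$ — shows it is irreducible, hence equals $\theta_5=\mathrm{Sym}^4(\theta_2)$.

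Assembling these identifications, the nine representations in the lemma — $\mathbf{1}$; $\theta_2$ and $\xi\circ\theta_2$; $\mathrm{Sym}^2(\theta_2)$ and $\mathrm{Sym}^2(\xi\circ\theta_2)$; $\mathrm{Sym}^3(\theta_2)$ and $\theta_2\otimes\xi\circ\theta_2$; $\mathrm{Sym}^4(\theta_2)$; $\mathrm{Sym}^5(\theta_2)$ — are pairwise inequivalent (the inequivalences within degrees $2$ and $3$ from the $\xi$-action on the columns $C_5,C_5'$, and within degree $4$ from the central character), so by the count in the first paragraph they exhaust the irreducible characters of $\widetilde{A}_5$. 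The one genuinely non-formal step, and the place where care is needed, is the irreducibility of the symmetric powers $\mathrm{Sym}^j(\theta_2)$ for $2\le j\le 5$: the parity-and-degree bookkeeping constrains but does not by itself preclude, say, $\mathrm{Sym}^2(\theta_2)$ or $\mathrm{Sym}^3(\theta_2)$ from splitting, so one must carry out the orthogonality computations over the nine conjugacy classes, keeping track of the values $u,v$ on the elements of order $5$ and $10$.
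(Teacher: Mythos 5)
Your proof is correct and is exactly the verification the paper delegates to the character table (and to Kim's Lemmas 5.1--5.3): central-character bookkeeping to sort the nine rows into spinorial and non-spinorial ones, inner-product/Chebyshev computations to show the symmetric powers, the tensor products, and $\mathrm{Ind}_{\widetilde{A}_4}^{\widetilde{A}_5}(\chi)$ are irreducible, and degree-plus-central-character matching to pin each down to a unique row. The only quibble is cosmetic: with the table as printed one finds $\mathrm{Sym}^2(\theta_2)(C_5)=(u-1)^2-1=1-u=v$, so $\mathrm{Sym}^2(\theta_2)$ is the row labelled $\theta_3'$ rather than $\theta_3$ --- your parenthetical relabelling takes care of this, and nothing in the lemma depends on the choice.
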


\begin{rem}
The fact that $\mathrm{Sym}^4(\theta_2) = \mathrm{Ind}_{\widetilde{A}_4}^{\widetilde{A}_5}(\chi)$ was observed by D.~Ramakrishnan (see \cite{KimIcos}).  We point this out because it turns out to be an important fact for the arguments of \S \ref{ssec-trace-to-func2} below.
\end{rem}

Next we discuss the representations of $\widetilde{A}_4$.  Write $t=(123)$ and let $\bar{C}_{t^i}$ be the conjugacy classes of $t^i$ for $i \in \{1,2\}$ in $A_4$, respectively.  The inverse image of $\bar{C}_{t^i}$ is a union of two conjugacy classes  $C_{t^i},C_{t^i}'$ for each $i \in \{1,2\}$.  We assume that for $c \in C_{t^i}$ one has $|c|=3$ and for $c' \in C_{t^i}'$ one has $|c'|=6$.
Write $C_2$ for the conjugacy class of elements of order $2$ and $C_4$ for the conjugacy class of order $4$.  One has the following character table:
\begin{center}
\begin{tabular}{ l | c |c |c | c | c | c | c }
 & $C_1$ & $C_2$ & $C_4$  & $C_t$ & $C_t'$& $C_{t^2}'$ & $C_{t^2}'$ \\
\hline
$1$ & $1$ & $1$ & $1$ & $1$ & $1$ & $1$ & $1$\\
$\psi_1$ & $1$ & $1$ & $1$ & $e^{2 \pi i/3}$ & $e^{2 \pi i/3}$ & $e^{4 \pi i/3}$ & $e^{4 \pi i/3}$   \\
$\psi_1^2$ & $1$& $1$ & $1$ & $e^{4 \pi i/3}$ & $e^{4 \pi i/3}$ & $e^{2 \pi i/3}$ & $e^{2 \pi i/3}$  \\
$\psi_3$ & $3$ & $3$ & $-1$ & $0$ & $0$ & 0 & $0$\\
$\psi_2$ & $2$ & $-2$ & $0$ & $-1$ & $1$ & $-1$ & $1$\\
$\psi_2 \psi_1$ &$2$ &$-2$ &$0$& $-e^{2 \pi i/3}$& $e^{2 \pi i/3}$& $-e^{4\pi i/3}$& $e^{4 \pi i/3}$\\
$\psi_2\psi_1^2$ &$2$ & $-2$&$0$& $-e^{4 \pi i/3}$ & $e^{4 \pi i/3}$ & $-e^{2 \pi i/3}$ & $e^{2 \pi i/3}$
\end{tabular}
\end{center}
We make a few comments on the computation of this table.  First, the characters that are lifts of characters of $A_4$ are computed in \cite[\S 5.7]{SerreFG}.  Second, we note that $\psi_2:=\theta_2|_{\widetilde{A}_4}$ is irreducible.  Indeed,  the only normal subgroup of $\widetilde{A}_5$ is the center and $\theta_2$ is not the restriction of a character of $A_5$ since there are no rank two characters of $A_5$.  Thus any representation with character $\theta_2$ is faithful.  Since $\theta_2$ is of degree $2$, if the representation with character $\theta_2|_{\widetilde{A}_4}$ were reducible, it would provide an isomorphism from $\widetilde{A}_4$ to an abelian group.  Since $\widetilde{A}_3$ is nonabelian, this shows that $\theta_2|_{\widetilde{A}_4}$ is irreducible.  Its character values therefore follow from the character table for $\widetilde{A}_5$ above.  The fact that the characters $\psi_2\psi_1^i$ are distinct for distinct $i \in \{1,2,3\}$ follows by considering determinants.  Using the fact that
that the sum of the squares of the degrees of the irreducible characters must equal the order of the group we see that the table is complete.

\begin{lem} \label{lem-tetra-reps} Let $\langle \xi \rangle=\Gal(\QQ(\sqrt{5})/\QQ)$.
One has
$$
\theta_2|_{\widetilde{A}_4}=\xi \circ \theta_2|_{\widetilde{A}_4}=\psi_2.
$$
Moreover
$$
\theta_3|_{\widetilde{A}_4}=\psi_3.
$$
\end{lem}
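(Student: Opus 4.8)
The plan is to compute the two restricted characters directly from the character tables of $\widetilde{A}_5$ and $\widetilde{A}_4$ recorded above and to match them against the rows $\psi_2$ and $\psi_3$. The first step is to record how the conjugacy classes of $\widetilde{A}_4$ distribute among those of $\widetilde{A}_5$ under the fixed inclusion $\widetilde{A}_4 \hookrightarrow \widetilde{A}_5$. From the $\widetilde{A}_5$-table one reads off that $\widetilde{A}_5$ has a \emph{unique} conjugacy class of elements of each order $n \in \{1,2,3,4,6\}$, while $\widetilde{A}_4$, having order $24$, contains no element of order $5$ or $10$. Hence an element of $\widetilde{A}_4$ lies in the class $C_n$ of $\widetilde{A}_5$ exactly when it has order $n$: the classes $C_2$ and $C_4$ of $\widetilde{A}_4$ map into $C_2$ and $C_4$; both classes $C_t, C_{t^2}$ (order $3$) map into $C_3$; and both classes $C_t', C_{t^2}'$ (order $6$) map into $C_6$.

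Given this bookkeeping, the second step is a line-by-line comparison. Reading the $\widetilde{A}_5$-table, $\theta_2$ takes the values $2,-2,0,-1,1$ on $C_1,C_2,C_4,C_3,C_6$, so $\theta_2|_{\widetilde{A}_4}$ takes the values $2,-2,0,-1,1,-1,1$ on $C_1,C_2,C_4,C_t,C_t',C_{t^2},C_{t^2}'$, which is precisely the row of $\psi_2$; since a class function is determined by its values on conjugacy classes this gives $\theta_2|_{\widetilde{A}_4}=\psi_2$. For $\xi\circ\theta_2$ one observes that $\xi$ merely interchanges $u$ and $v$, whereas the values of $\theta_2$ on $C_1,C_2,C_4,C_3,C_6$ are rational and hence $\xi$-fixed; thus $\xi\circ\theta_2$ has the same restriction to $\widetilde{A}_4$ as $\theta_2$, giving $\xi\circ\theta_2|_{\widetilde{A}_4}=\psi_2$. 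Finally $\theta_3$ takes the values $3,3,-1,0,0$ on $C_1,C_2,C_4,C_3,C_6$, so $\theta_3|_{\widetilde{A}_4}$ takes the values $3,3,-1,0,0,0,0$, which is the row of $\psi_3$; hence $\theta_3|_{\widetilde{A}_4}=\psi_3$.

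There is no real obstacle here; the only point requiring a moment's care is the first step, namely checking that order alone pins down the image class in $\widetilde{A}_5$ — this uses the uniqueness of the order-$n$ class for $n\in\{1,2,3,4,6\}$, visible from the $\widetilde{A}_5$-table, together with the absence of elements of order $5$ or $10$ in $\widetilde{A}_4$. I would also note that irreducibility of $\psi_2$ and $\psi_3$ is not needed for the statement, since equality of characters already forces equality of the (semisimple) representations; in fact the identity $\theta_2|_{\widetilde{A}_4}=\psi_2$ reproves the irreducibility of $\psi_2$ that was asserted in the discussion preceding the lemma.
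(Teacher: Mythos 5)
Your proof is correct and takes the same approach the paper does (whose proof is simply ``This follows immediately from the character tables above''); you have just made explicit the class fusion from $\widetilde{A}_4$ into $\widetilde{A}_5$ and the resulting row-by-row comparison. The key bookkeeping point — that orders $1,2,3,4,6$ each have a unique class in $\widetilde{A}_5$ while $\widetilde{A}_4$ has no elements of order $5$ or $10$ — is exactly the detail the paper leaves implicit, and your handling of $\xi$ (noting the values of $\theta_2$ on the relevant classes are rational) is also correct.
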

\begin{proof}  This follows immediately from the character tables above.
\end{proof}

Finally we record the character table for the quaternion group $Q$.
We present the group as
$$
Q= \langle i, j : i^4=1,\, \,i^2=j^2,\, \,i^{-1}ji=j^{-1} \rangle.
$$
Denoting by $C_x$ the conjugacy class of an element $x \in Q$, one has the following character table \cite[\S 19.1]{DF}:

\begin{center}
\begin{tabular}{ l | c |c |c | c | c | }
 & $C_1$ & $C_{-1}$ & $C_{i}$ & $C_{j}$ & $C_{ij}$ \\
\hline
$1$ & $1$ & $1$ & $1$ & $1$ & $1$  \\
$\Theta_1$ & $1$ & $1$ & $-1$ & $1$ & $-1$  \\
$\Theta_1'$ & $1$ & $1$ & $1$ & $-1$ & $-1$ \\
$\Theta_1\Theta_1'$ & $1$ & $1$ & $-1$ & $-1$ & $1$\\
$\Theta_2$ & $2$ & $-2$ & $0$ & $0$ & $0$
\end{tabular}
\end{center}
We note that as before the subscript indicates the degree of the representation.

By examining the character tables of $\widetilde{A}_5$ and $Q$ one immediately deduces the following lemma:
\begin{lem} \label{lem-Q} Let $\langle \xi \rangle=\Gal(\QQ(\sqrt{5})/\QQ)$.
One has $\theta_2|_{Q}=\xi \circ \theta_2|_Q=\Theta_2$.   \qed
\end{lem}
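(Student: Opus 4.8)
The plan is to prove the identity by a direct comparison of character values, using the two character tables already displayed. First I would determine how the conjugacy classes of $Q$ sit inside $\widetilde{A}_5$ under the inclusion $Q \hookrightarrow \widetilde{A}_5$. Since $Q$ is the quaternion group, its only element of order $2$ is the central element, which is carried to the central element of $\widetilde{A}_5$ and hence lands in the class $C_2$; and the elements $i, j, ij$ all have order $4$ in $Q$ (their images in $A_5 = \widetilde{A}_5/\{\pm 1\}$ have order $2$, but the extension $Q \to \ZZ/2 \times \ZZ/2$ is nonsplit), so each of the classes $C_i$, $C_j$, $C_{ij}$ maps into the single class $C_4$ of order-$4$ elements of $\widetilde{A}_5$.

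Next I would read off the restriction of $\theta_2$. From the character table of $\widetilde{A}_5$ one has $\theta_2(C_1) = 2$, $\theta_2(C_2) = -2$, and $\theta_2(C_4) = 0$, so the character of $\theta_2|_Q$ takes the values $2, -2, 0, 0, 0$ on the classes $C_1, C_{-1}, C_i, C_j, C_{ij}$ of $Q$. Comparing with the character table of $Q$, this is exactly the character of $\Theta_2$; since a complex representation of a finite group is determined up to isomorphism by its character, $\theta_2|_Q \cong \Theta_2$. For $\xi \circ \theta_2$ one observes that the values $2, -2, 0$ lie in $\QQ$ and are therefore fixed by the nontrivial automorphism $\xi$ of $\QQ(\sqrt{5})$, so $\xi \circ \theta_2|_Q$ has the same character as $\theta_2|_Q$ and hence also equals $\Theta_2$.

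There is no real obstacle: the only point that requires care is the bookkeeping in the first step, namely the fact that $Q$ is the quaternion group (rather than, say, $\ZZ/2 \times \ZZ/2 \times \ZZ/2$ or another order-$8$ group), which is what forces $i, j, ij$ into the order-$4$ class $C_4$ rather than the order-$2$ class $C_2$; this is recorded in the discussion preceding the statement. As an alternative that sidesteps this entirely, one could note that $\theta_2$ is faithful --- its kernel, being a proper normal subgroup of $\widetilde{A}_5$, must be trivial, since otherwise $\theta_2$ would factor through $A_5$, which has no two-dimensional irreducible representation --- and hence $\theta_2|_Q$ is a faithful two-dimensional complex representation of $Q$, of which $\Theta_2$ is the only one; the same argument applies verbatim to $\xi \circ \theta_2$.
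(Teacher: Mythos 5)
Your proof is correct and follows exactly the route the paper intends: the paper's "proof" is just the one-line remark that the lemma follows by examining the two character tables, and you have carried out that comparison in detail, including the necessary observation that the order-$4$ elements $i,j,ij$ of $Q$ land in the class $C_4$ of $\widetilde{A}_5$ while $-1$ lands in $C_2$. The alternative argument via faithfulness of $\theta_2$ is a nice complement but is not a genuinely different method.
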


\section{Proofs of the main theorems} \label{sec-proofs}

In this section we prove the theorems stated in the introduction.

\subsection{Preparation}

The propositions of this subsection will be used in the proof of our main theorems in \S \ref{ssec-func-to-trace} and \S \ref{ssec-trace-to-func} below.

\begin{prop} \label{prop-solv} Let $E/F'$ be a Galois extension with $\Gal(E/F') \cong \widetilde{A}_4$ and let $\pi'$ be a cuspidal automorphic representation of $\GL_2(\A_{F'})$.  There are precisely $|\Gal(E/F')^{\mathrm{ab}}|$ non-isomorphic cuspidal automorphic representations of $\GL_2(\A_{F'})$ that have $\pi'_E$ as a base change.
\end{prop}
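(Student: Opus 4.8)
The plan is to reduce the statement to a question about solvable base change for the tower $E/F'$, where $\Gal(E/F') \cong \widetilde{A}_4$ is solvable, and then apply Conjecture \ref{conj-solv} (which is a theorem for $n=2$ by \cite[Theorem 1]{Rajan3}). First I would recall that $\widetilde{A}_4$ has a normal series: it contains the center $Z \cong \ZZ/2$, the quotient $\widetilde{A}_4/Z \cong A_4$, and $A_4$ has the Klein four-group $V$ as its normal Sylow $2$-subgroup with $A_4/V \cong \ZZ/3$. Pulling back, $\widetilde{A}_4$ has a normal subgroup $\widetilde{V} = Q$ (the quaternion group, the preimage of $V$) with $\widetilde{A}_4/Q \cong \ZZ/3$, and $Q$ itself is a $2$-group with a filtration by cyclic quotients of order $2$. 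So one gets a filtration $F' = K_0 \leq K_1 \leq \cdots \leq K_r = E$ with each step cyclic of prime degree. Note $|\Gal(E/F')^{\mathrm{ab}}| = |\widetilde{A}_4^{\mathrm{ab}}| = 3$, since the commutator subgroup of $\widetilde{A}_4$ is $Q$ (as $A_4$ has commutator subgroup $V$ and $\widetilde{A}_4$ is a nonsplit central extension), so the answer we are shooting for is exactly $3$.

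The key steps, in order: (1) By the converse direction of cyclic base change \cite[Chapter 3, Theorem 4.2]{AC}, applied iteratively up the tower, the fiber over $\pi'_E$ of the base change map from $\GL_2(\A_{F'})$ is governed by characters. More precisely, I would show that the set of cuspidal $\pi'$ with a given cuspidal base change $\pi'_E$ is either empty or a torsor under a subgroup of $\Gal(E/F')^{\wedge} = \Gal(E/F')^{\mathrm{ab},\wedge}$; here one uses that $\Pi = \pi'_E$ is cuspidal (so $\pi'$ is $E$-primitive-ish and in particular $\pi'_E$ being cuspidal forces the intermediate base changes to behave well) and that $H^2(\widetilde{A}_4, \CC^\times) = 0$ since $\widetilde{A}_4$ is its own Schur cover — actually more carefully $H^2$ of the relevant quotients vanish, which is what makes the "twist by a character" in Conjecture \ref{conj-solv} unnecessary. (2) Show the torsor is nonempty and the full group $\Gal(E/F')^{\wedge}$ acts faithfully: nonemptiness is immediate since $\pi'$ itself is in the fiber, and for faithfulness/transitivity I would invoke the precise description of fibers of solvable base change — a cuspidal $\Pi$ of $\GL_2(\A_E)$ that is $\Gal(E/F')$-invariant descends, and any two descents differ by a character of $\Gal(E/F')^{\wedge}$ (this is exactly the "Conversely" clause of Conjecture \ref{conj-solv}), combined with the fact that $\pi' \otimes \chi \not\cong \pi'$ for nontrivial $\chi \in \Gal(E/F')^{\wedge}$ when $\pi'_E$ is cuspidal — if $\pi' \cong \pi' \otimes \chi$ then $\pi'$ would be automorphically induced from the fixed field of $\ker\chi$, forcing $\pi'_E$ to be non-cuspidal (induced parameters restricted to $E$ become reducible), a contradiction. (3) Conclude the fiber has exactly $|\Gal(E/F')^{\wedge}| = |\Gal(E/F')^{\mathrm{ab}}| = 3$ elements.

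The main obstacle I anticipate is step (2), specifically verifying that the character-twisting action is \emph{free} on the fiber, i.e. ruling out that $\pi'$ is isomorphic to a nontrivial self-twist. For $\GL_2$ this is classical (self-twist-invariant cusp forms are exactly the dihedral/automorphically-induced ones, by Labesse–Langlands), so the content is showing that $\pi'$ automorphically induced from a subfield $F' \lneq L \leq E$ would force $\pi'_E$ to be non-cuspidal: restricting an induced parameter $\mathrm{Ind}_{W_L'}^{W_{F'}'}(\lambda)$ to $W_E' \leq W_L'$ gives $\bigoplus_{g} \lambda^g|_{W_E'}$, which is visibly reducible, so $\pi'_E$ cannot be cuspidal — this is clean on the Galois side and transfers to automorphic representations via the compatibility of base change with automorphic induction (as recorded in \S\ref{ssec-primitive} and used in Conjecture \ref{conj-solv}). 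A secondary technical point is bookkeeping the iterated application of cyclic base change along the prime-degree filtration of $\widetilde{A}_4$, making sure cuspidality of $\pi'_E$ propagates downward through each intermediate field so that every intermediate base change is itself cuspidal, which is what licenses the clean torsor description; but since all intermediate extensions are cyclic this follows from \cite[Chapter 3, Theorems 4.2 and 5.1]{AC} directly.
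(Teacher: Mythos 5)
Your proposal only handles one of the two cases, and it is the easier one. Throughout your argument you take $\pi'_E$ to be cuspidal: your reduction to a torsor ``uses that $\Pi = \pi'_E$ is cuspidal,'' and your freeness verification (that $\pi' \not\cong \pi' \otimes \chi$ for nontrivial $\chi$) explicitly derives a contradiction from the cuspidality of $\pi'_E$. When $\pi'_E$ is cuspidal, the statement is exactly \cite[Theorem~2]{Rajan3}, which the paper simply cites in one line. The entire content of the paper's proof is the case where $\pi'_E$ is \emph{not} cuspidal — which is not a degenerate edge case here but the relevant one for the later applications to icosahedral representations. Your step (1) also quietly asserts that cuspidality of $\pi'_E$ ``forces the intermediate base changes to behave well,'' but for a non-cuspidal $\pi'_E$ no such control is available and the torsor description does not come for free.

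In the non-cuspidal case the paper argues as follows: by cyclic prime-degree base change $\pi'_E \cong \chi_0 \boxplus \chi_0^{\sigma_0}$; Lemma~\ref{lem-const-term} (a constant-term/Eisenstein-series argument) forces the two-element set $\{\chi_0,\chi_0^{\sigma_0}\}$ to be $\widetilde{A}_4$-stable, and since $\widetilde{A}_4$ has no index-two subgroup one gets $\chi_0$ is $\Gal(E/F')$-invariant; using $H^2(\widetilde{A}_4,\CC^\times)=0$ one twists to reduce to $\pi'_E\cong 1\boxplus 1$; one then produces three descents $\pi',\pi'\otimes\mu,\pi'\otimes\mu^2$ separated by central characters, and the delicate part is the upper bound: passing to the subfield $E^Q$ fixed by the quaternion subgroup, showing any such descent must come from the \emph{unique} irreducible two-dimensional representation of $Q$, and hence that there is a unique candidate over $E^Q$. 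None of these moves appear in your sketch. Your argument that $\pi'\cong\pi'\otimes\chi$ implies $\pi'_E$ non-cuspidal is correct but useless here, since in this case $\pi'_E$ \emph{is} non-cuspidal, and one must instead distinguish the twists by central characters and establish the count via the structure of $Q$. So the proposal has a genuine gap: it proves the proposition only under the unstated hypothesis that $\pi'_E$ is cuspidal.
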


\begin{prop} \label{prop-solv3}
Let $E/F'$ be a Galois extension with $\Gal(E/F') \cong \widetilde{A}_4$
 and let $\pi'$ be a cuspidal automorphic representation of $\GL_3(\A_{F'})$.  If $\pi'_E\cong \chi_0^{\boxplus 3}$ where $\chi_0$ is a quasi-character invariant under $\Gal(E/F')$, then there is a unique cuspidal automorphic representations of $\GL_2(\A_{F'})$ that has $\pi'_E$ as a base change.  It is of $\rho_3$-type, where $\rho_3: W_{F'}' \to {}^L\GL_{3F'}$ is a representation trivial on $W_E'$ that has character equal to the unique degree three irreducible character of $\widetilde{A}_4$.
\end{prop}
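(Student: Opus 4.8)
The plan is to reduce everything to cyclic base change and automorphic induction inside the tower $E/F'$, exploiting that $\widetilde{A}_4$ is solvable with a normal Sylow $2$-subgroup isomorphic to the quaternion group of order $8$, together with the identification of $\rho_3$ as an induced representation. First, since $H^2(\widetilde{A}_4,\CC^\times)=0$, Lemma \ref{lem-image-bc} lets me write $\chi_0=\chi\circ\N_{E/F'}$ for a Hecke character $\chi$ of $F'$; replacing $\pi'$ by $\pi'\otimes\chi^{-1}$ I may and do assume $\chi_0$ is trivial, so $\pi'_E\cong\one^{\boxplus 3}$. (Note that Conjecture \ref{conj-solv} does not apply directly here, as $\pi'_E$ is not cuspidal.) Let $K=E^{H}$, where $H\leq\widetilde{A}_4$ is the unique, normal Sylow $2$-subgroup, so that $\Gal(E/K)=H\cong Q_8$ and $\Gal(K/F')\cong\ZZ/3$; fix a generator $\sigma$ of the latter.

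For existence and the $\rho_3$-type assertion: from the character table of $\S\ref{appendix}$ one checks that $\rho_3\cong\mathrm{Ind}_{H}^{\widetilde{A}_4}(\psi)$ for any nontrivial character $\psi$ of $H^{\mathrm{ab}}\cong\ZZ/2\times\ZZ/2$ inflated to $H$: it has degree $3$, it is irreducible because $\widetilde{A}_4/H\cong\ZZ/3$ permutes the three nontrivial characters of $H^{\mathrm{ab}}$ transitively, and it is trivial on the centre, hence is the unique irreducible $3$-dimensional representation of $\widetilde{A}_4$. Via class field theory $\psi$ corresponds to a finite-order Hecke character $\mu$ of $K$, and automorphic induction along the cyclic cubic extension $K/F'$ (\cite[Ch.~3]{AC}) produces a representation $\mathrm{AI}_{K/F'}(\mu)$ of $\GL_3(\A_{F'})$; it is cuspidal because $\mu,\mu^{\sigma},\mu^{\sigma^2}$ are the three distinct nontrivial characters of $\Gal(E/K)^{\mathrm{ab}}$, and by compatibility of automorphic induction with the local Langlands correspondence it is associated to $\rho_3$ and satisfies $\mathrm{AI}_{K/F'}(\mu)_E\cong\one^{\boxplus 3}$.

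For uniqueness, let $\pi'$ be any cuspidal representation of $\GL_3(\A_{F'})$ with $\pi'_E\cong\one^{\boxplus 3}$. I claim $\pi'_K$ is not cuspidal. Indeed $H\cong Q_8$ is a $2$-group, so $E/K$ is built from a chain of cyclic quadratic extensions $K\subset\cdots\subset E$; since $2\nmid 3$, base change of a cuspidal representation of $\GL_3$ along each such step remains cuspidal (a cuspidal representation of $\GL_3$ is never isomorphic to a quadratic twist of itself, by \cite[Ch.~3]{AC}), so cuspidality of $\pi'_K$ would force $\pi'_E$ cuspidal, a contradiction. Hence $\pi'_K$ is not cuspidal, and since $K/F'$ is cyclic of prime degree $3$ and $\pi'$ is cuspidal, \cite[Ch.~3]{AC} yields $\pi'\cong\mathrm{AI}_{K/F'}(\mu_0)$ for a Hecke character $\mu_0$ of $K$, with $\pi'_K\cong\mu_0\boxplus\mu_0^{\sigma}\boxplus\mu_0^{\sigma^2}$. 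Comparing with $(\pi'_K)_E\cong\one^{\boxplus 3}$ and using uniqueness of the isobaric decomposition forces $\mu_0\circ\N_{E/K}$ to be trivial, i.e.\ $\mu_0$ is inflated from a character of $\Gal(E/K)^{\mathrm{ab}}\cong\ZZ/2\times\ZZ/2$, necessarily nontrivial since $\pi'$ is cuspidal. The three nontrivial such characters form a single $\Gal(K/F')$-orbit, and automorphic induction is insensitive to Galois conjugation of the inducing character, so $\pi'$ is uniquely determined and coincides with the representation constructed above; undoing the twist by $\chi$ shows the original $\pi'$ is of $\rho_3$-type.

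The main obstacle is the careful bookkeeping with \cite{AC}: one must use precisely that base change along the quadratic steps of the $Q_8$-tower preserves cuspidality (this is exactly where $2\nmid 3$ enters) and that automorphic induction from $K$ is compatible with the local Langlands correspondence, so that the resulting automorphic representation really is of $\rho_3$-type. The remaining ingredients are elementary character theory of $\widetilde{A}_4$ and its Sylow $2$-subgroup, recorded in $\S\ref{appendix}$.
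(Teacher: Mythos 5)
Your proof is correct and follows essentially the same route as the paper's: twist to reduce to trivial $\chi_0$, pass to the intermediate field $E^Q$ cut out by the quaternion subgroup, deduce from \cite{AC} that $\pi'$ is automorphically induced from a character of $E^Q$ trivial on norms from $E$, and identify the resulting parameter with $\rho_3$ via the character table in the appendix. Your explicit verification that $\pi'_{E^Q}$ is non-cuspidal (using that quadratic base change preserves cuspidality on $\GL_3$ because $2\nmid 3$) fills in a step the paper's proof leaves implicit.
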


These propositions correspond to the first (and easiest) assertions on $L$-parameters in lemmas \ref{lem-A5-EF} and \ref{lem-A5-EF3}, respectively.  They will be proven in a moment after some preparation. Let $P \leq \GL_{n}$ be a parabolic subgroup and let $P=MN$ be its Levi decomposition.  Suppose that $\Pi_M$ is a cuspidal automorphic representation of $M(E) A_{\GL_{nE}} \backslash M(\A_E)$ and that
$$
\Pi=\mathrm{Ind}_{M(\A_E)}^{\GL_{n}(\A_E)}(\Pi_M)
$$
is an (irreducible) automorphic representation of $\GL_n(E) A_{\GL_{nE}} \backslash \GL_n(\A_E)$. Here $\Pi_M$ is extended to a representation of $P(\A_E)$ by letting the action of $N(\A_F)$ be trivial.  We note that $\Pi$ is irreducible and unitary \cite[Chapter 3, \S 4]{AC}.
Write $M=\prod_{i} \GL_{n_i}$ for some set of integers $n_i \geq 1$ and $\Pi_M=\otimes_i\Pi_i$ where the $\Pi_i$ are cuspidal automorphic representations of $\GL_{n_i}(F) \backslash \GL_{n_i}(\A_F)$.

\begin{lem} \label{lem-const-term}
Suppose that $E/F'$ is a Galois extension of number fields and  $\Pi^{\sigma} \cong \Pi$ for all $\sigma \in \Gal(E/F')$.  Then $\{\Pi_i\}=\{\Pi_i^{\sigma}\}$ for all $\sigma \in \Gal(E/F')$.
\end{lem}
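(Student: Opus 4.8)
The plan is to deduce the statement from the uniqueness of the isobaric decomposition together with the compatibility of the Galois action with parabolic induction. Since $\Pi_M=\otimes_i\Pi_i$ is a cuspidal automorphic representation of $M(\A_E)$ and $\Pi=\mathrm{Ind}_{M(\A_E)}^{\GL_n(\A_E)}(\Pi_M)$ is irreducible (and unitary, by \cite[Chapter 3, \S 4]{AC}), $\Pi$ is by definition the isobaric sum $\Pi\cong\boxplus_i\Pi_i$ (see \cite{LanglEinM}, \cite{JSII}). By strong multiplicity one \cite[Theorem 4.4]{JSII} the multiset $\{\Pi_i\}$ of isomorphism classes of cuspidal constituents is an invariant of the isomorphism class of $\Pi$: if $\boxplus_i\Pi_i\cong\boxplus_j\Sigma_j$ with all $\Pi_i,\Sigma_j$ cuspidal, then the two multisets coincide up to reordering. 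Hence it suffices to identify $\Pi^\sigma$ with $\boxplus_i\Pi_i^\sigma$ and then invoke the hypothesis $\Pi^\sigma\cong\Pi$.

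For this, recall that, $E/F'$ being Galois, each $\sigma\in\Gal(E/F')$ acts on the topological ring $\A_E$ (permuting the completions of $E$ above a given place of $F'$), hence on $\GL_n(\A_E)$. This automorphism fixes $\GL_n(E)$, the Harish-Chandra subgroup, and the center $A_{\GL_{nE}}$, carries the standard parabolic $P$, its Levi $M=\prod_i\GL_{n_i}$, and its unipotent radical $N$ to themselves (these being defined over $\ZZ$), and preserves the relevant Haar measures up to the permutation of local factors. Consequently precomposition with $\sigma$ carries cuspidal automorphic representations to cuspidal automorphic representations, commutes with the formation of $\mathrm{Ind}_{M(\A_E)}^{\GL_n(\A_E)}(-)$, and satisfies $(\otimes_i\Pi_i)^\sigma\cong\otimes_i\Pi_i^\sigma$. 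Therefore
\[
\Pi^\sigma\cong\mathrm{Ind}_{M(\A_E)}^{\GL_n(\A_E)}\bigl(\otimes_i\Pi_i^\sigma\bigr)\cong\boxplus_i\Pi_i^\sigma,
\]
with each $\Pi_i^\sigma$ cuspidal.

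Combining the two paragraphs, $\boxplus_i\Pi_i\cong\Pi\cong\Pi^\sigma\cong\boxplus_i\Pi_i^\sigma$, so by uniqueness of the isobaric decomposition $\{\Pi_i\}=\{\Pi_i^\sigma\}$ as multisets of isomorphism classes, which is the assertion; since $\sigma\in\Gal(E/F')$ was arbitrary this finishes the proof. The only point that requires any care — and it is a mild one — is the bookkeeping verification that the Galois action is compatible with parabolic induction and preserves the cuspidal spectrum; once this is granted the statement is formal from strong multiplicity one.
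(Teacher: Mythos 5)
Your proof is correct, and it takes a somewhat different route from the paper's. The paper realizes $\Pi$ inside $L^2(\GL_n(E)A_{\GL_{nE}}\backslash \GL_n(\A_E))$ via the theory of Eisenstein series, checks that the constant-term map $\phi\mapsto\phi_P$ is $\Gal(E/F')$-equivariant (the normal basis theorem gives $d(n^{\sigma})=dn$), and then invokes \cite[Propositions II.1.7 and IV.1.9]{MW} to conclude that the Galois group preserves the cuspidal datum of $\Pi$, finishing with \cite[(4.3)]{JSII}. You instead never touch the $L^2$-realization: you observe that $\sigma$-twisting commutes with parabolic induction and preserves cuspidality, so $\Pi^{\sigma}\cong\boxplus_i\Pi_i^{\sigma}$, and then appeal to the uniqueness of the cuspidal support of an isobaric representation (strong multiplicity one, \cite[Theorem 4.4]{JSII}). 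Both arguments ultimately rest on Jacquet--Shalika; yours trades the Eisenstein-series input of Moeglin--Waldspurger for the formal compatibility of the Galois action with induction, which makes it shorter and arguably more transparent. The only mildly loose point is the phrase ``$\Pi$ is by definition the isobaric sum'': the isobaric sum is defined as a Langlands quotient with the exponents suitably ordered, so one should add the (standard) remark that since $\mathrm{Ind}_{M(\A_E)}^{\GL_n(\A_E)}(\Pi_M)$ is irreducible and inductions from associate data share the same constituents, it coincides with that Langlands quotient; with that said, the argument is complete.
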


\begin{proof}
Since $\Pi$ is induced from cuspidal we use the theory of Eisenstein series to view $\Pi$ as a subrepresentation (not just subquotient) of $L^2(\GL_n(E) A_{\GL_{nE}} \backslash \GL_n(\A_E))$.  Let $V_{\Pi} \leq L^2(\GL_n(E) A_{\GL_{nE}} \backslash \GL_n(\A_E))$ be the subspace of $\Pi$-isotypic automorphic forms.  Consider the constant term
$$
\phi_P(m):=\int_{N(E) \backslash N(\A_E)} \phi(nm)dn.
$$
It is an automorphic form on $M(\A_E)$ \cite[Lemma 4]{LanglNotion}.
 There is a natural action of $\Gal(E/F')$ on $L^2( M(E)A_{\GL_{2E}} \backslash M(\A_E))$.  By the normal basis theorem one has $d(n^{\sigma})=dn$ for all $\sigma \in \Gal(E/F')$,  and hence the map
\begin{align*}
V_{\Pi} &\lto L^2(A_{\GL_{2E}} M(E) \backslash M(\A_E))\\
\phi &\longmapsto  \phi_P
\end{align*}
is $\Gal(E/F')$-invariant.  Using the theory of Eisenstein series, specifically \cite[Propositions II.1.7 and IV.1.9]{MW}, it follows that that $\Gal(E/F')$ preserves the set of representations $\Pi_{1M}$ of $M(F) A_{\GL_{nE}} \backslash M(\A_E)$ such that $\Pi$ is a constituent of $\mathrm{Ind}_{M(\A_E)}^{\GL_{n}(\A_E)}(\Pi_{1M})$.  Here, as before, we are extending $\Pi_{1M}$ to a representation of $P(\A_E)$ by letting $N(\A_E)$ act trivially.  To make this statement easier for the reader to check, we note that our assumptions imply that $\Pi$ is not in the discrete spectrum, so no residues of Eisenstein series come into play (see \cite{MW2} for the classification of the discrete non-cuspidal spectrum of $\GL_n$).
By the  results contained in \cite[(4.3)]{JSII} on isobaric automorphic representations, the lemma follows.
\end{proof}

We now prove Proposition \ref{prop-solv}:

\begin{proof}[Proof of Proposition \ref{prop-solv}] Recall that $H^2(\Gal(E/F'),\CC^{\times})=H^2(\widetilde{A}_4,\CC^{\times})=0$.  Thus if $\pi'_E$ is cuspidal then the proposition is \cite[Theorem 2]{Rajan3}.

In the remainder of the proof we will constantly use facts on cyclic prime degree base change established in \cite{Langlands}.  A convenient list of the basic properties (in a more general setting) is given in \cite[Chapter 3, Theorems 4.2 and 5.1]{AC}.

  Assume that $\pi'_E$ is not cuspidal.  By the theory of prime degree base change we must then have  $\pi'_E \cong \chi_0 \boxplus \chi_0^{\sigma_0}$ for some quasi-character $\chi_0:E^{\times} \backslash \A_E^{\times} \to \CC^{\times}$ and some $\sigma_0 \in \Gal(E/F')$.  Therefore we can apply Lemma \ref{lem-const-term} to see that $\widetilde{A}_4$ permutes the two-element set $\{\chi_0,\chi_0^{\sigma_0}\}$.  Since $\Gal(E/F') \cong \widetilde{A}_4$ has no subgroup of index two one has $\chi_0^{\sigma}=\chi_0=\chi_0$ for all $\sigma \in \Gal(E/F')$.

Since $\chi_0$ is $\Gal(E/F')$-invariant and $H^2(\Gal(E/F'),\CC^{\times})=0$, Lemma \ref{lem-bc-param} implies that $\chi_0$ extends to a quasi-character $\chi'$ of  $F'^{\times} \backslash \A_{F'}^{\times}$.  Thus, upon replacing $\pi'$ by $\pi' \otimes \chi'^{-1}$ if necessary, we see that to complete the proof of the proposition it suffices to show that there are $|\Gal(E/F)^{\mathrm{ab}}|$ distinct isomorphism classes of cuspidal automorphic representations $\pi'$ of $\GL_{2}(\A_{F'})$ such that $\pi'_E \cong 1 \boxplus 1$.

We now look more closely at the structure of $\widetilde{A}_4$.
Let $V = \ZZ/2 \times \ZZ/2$ denote the Klein $4$ group and fix an embedding $V \hookrightarrow A_4$.  The inverse image $Q$ of $V$ under the covering map $\widetilde{A}_4 \to A_4$ is isomorphic to the quaternion group; it is a nonabelian group of order $8$.  The subgroup $Q \leq \widetilde{A}_4$ is normal and the quotient $\widetilde{A}_4 \to \widetilde{A}_4/Q \cong \ZZ/3$ induces an isomorphism
$$
\widetilde{A}_4^{\mathrm{ab}} \tilde{\lto} \ZZ/3.
$$
Let $\mu$ be a nontrivial character of $F'^{\times} \backslash \A_{F'}^{\times}$ trivial on $\N_{E/F'}\A_E^{\times}$.  Then since $\widetilde{A}_4^{\mathrm{ab}} \tilde{\lto}\ZZ/3$ we have $\mu^3=1$.  The three cuspidal automorphic representations $\pi', \pi' \otimes \mu$ and $\pi' \otimes \mu^2$ are all nonisomorphic (as can be seen by examining central characters) and all have the property that their base changes to $E$ are isomorphic to $1 \boxplus 1$.  Therefore our task is to show that there are no other isomorphism classes of cuspidal automorphic representations of $\GL_2(\A_{F'})$ that base change to $1 \boxplus 1$.  We note that $(\pi'\otimes \mu^i)_{E^Q}$ is independent of $i$ and is cuspidal by prime degree base change.  Therefore it suffices to show that there is at most one cuspidal automorphic representation $\pi_0$ of $\GL_2(\A_{E^{Q}})$ whose base change to $\GL_2(\A_E)$ is $1 \boxplus 1$.

Let $\pi_0$ be a cuspidal automorphic representation of $\GL_2(\A_{E^Q})$ whose base change to $\GL_2(\A_E)$ is $1 \boxplus 1$.  Choose a chain of subfields $E >E_1>E_2>E^Q$.  We denote by $\chi_1 \in \Gal(E/E_2)^{\wedge}$ a character that restricts nontrivially to $\Gal(E/E_1)$.  The theory of prime degree base change implies that $\pi_{0E_1}$ cannot be cuspidal since $1$ is invariant under $\Gal(E/E_1)$. Hence $\pi_{0E_1}$ must be isomorphic to one of
\begin{align}
1 \boxplus 1, \quad 1 \boxplus \chi_1|_{\A_{E_1}^{\times}},\quad  \textrm{ or } \quad \chi_1|_{\A_{E_1}^{\times}} \boxplus \chi_1|_{\A_{E_1}^{\times}}.
  \end{align}
  Thus applying the theory of prime degree base change again we see that $\pi_{E_2}$ cannot be cuspidal.
Now by assumption $\pi_0$ is cuspidal, and since $\pi_{0E_2}$ is not cuspidal $\pi_0$ is $E_2$-induced.  In particular, $\pi_0=\pi(\phi)$ for an irreducible $L$-parameter $\phi:W_{E^Q}' \to {}^L\GL_{2E_Q}$ (compare \cite[\S 2 C)]{Langlands}).  Note that $\phi$ is necessarily trivial on $W_E'$, and hence
can be identified with a two-dimensional irreducible representation of $\Gal(E/E^Q)$.  There is just one isomorphism class of such representations by the character table for $Q$ recorded in \S \ref{appendix}.  It follows that $\pi_0$ is the unique cuspidal automorphic representation of $\GL_2(\A_{E^Q})$ whose base change to $\GL_{2}(\A_E)$ is $1 \boxplus 1$.  As mentioned above, this implies the proposition.
\end{proof}

As a corollary of the proof, we have the following:

\begin{cor} \label{cor-rho-type} Let $E/F'$ be a Galois extension with $\Gal(E/F') \cong \widetilde{A}_4$, and let $\pi'$ be a cuspidal automorphic representation of $\GL_2(\A_{F'})$.  If $\pi'_E$ is not cuspidal, then $\pi'$ is of $\rho$-type for some $L$-parameter $\rho$ trivial on $W_E'$.
\end{cor}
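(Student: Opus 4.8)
The plan is to reduce by a twist to the case $\pi'_E\cong 1\boxplus 1$, descend $\pi'$ through the cubic subextension $E^Q/F'$ (where $Q\leq\widetilde A_4$ is the inverse image of the Klein four-group, so $Q$ is a quaternion group and $[\widetilde A_4:Q]=3$), identify that descent using the argument already run in the proof of Proposition~\ref{prop-solv}, and finally recognize $\pi'$ as coming from a two-dimensional representation of $\Gal(E/F')$ via Langlands' tetrahedral case of the Artin conjecture.

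\emph{Reduction to $\pi'_E\cong 1\boxplus 1$.} Arguing as in the proof of Proposition~\ref{prop-solv}: since $\pi'_E$ is not cuspidal, cyclic prime degree base change along a chain of subfields of $E/F'$ gives $\pi'_E\cong\chi_0\boxplus\chi_0'$ for Hecke characters of $E$; by Lemma~\ref{lem-const-term} the group $\Gal(E/F')\cong\widetilde A_4$ permutes $\{\chi_0,\chi_0'\}$, and since $\widetilde A_4$ has no subgroup of index two this action is trivial, whence $\chi_0\cong\chi_0'$ and $\chi_0$ is $\Gal(E/F')$-invariant. As $H^2(\widetilde A_4,\CC^{\times})=0$, Lemma~\ref{lem-image-bc} yields a Hecke character $\chi'$ of $F'$ with $\chi_0=\chi'\circ\N_{E/F'}$. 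Replacing $\pi'$ by $\pi'\otimes\chi'^{-1}$ alters neither the hypothesis nor the conclusion — being of $\rho$-type is unaffected by twisting, as a twist is absorbed into the character $\chi$ of the definition — so I may assume $\pi'_E\cong 1\boxplus 1$.

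\emph{Descent to $E^Q$.} Since $\Gal(E^Q/F')$ has order $3$, $\pi'$ is not isomorphic to $\pi'\otimes\mu$ for a generator $\mu$ of $\Gal(E^Q/F')^{\wedge}$ (comparing central characters would force $\mu^2=1$), so the base change $\pi'_{E^Q}$ is cuspidal, and $(\pi'_{E^Q})_E=\pi'_E\cong 1\boxplus 1$. By the argument in the proof of Proposition~\ref{prop-solv}, any cuspidal automorphic representation of $\GL_2(\A_{E^Q})$ whose base change to $E$ is $1\boxplus 1$ has the form $\pi(\phi)$ for the unique two-dimensional irreducible representation $\phi$ of $\Gal(E/E^Q)\cong Q$ — namely $\Theta_2$ in the notation of \S\ref{appendix} — regarded as an $L$-parameter trivial on $W_E'$; hence $\pi'_{E^Q}=\pi(\Theta_2)$.

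\emph{Conclusion.} Let $\rho:=\psi_2:\widetilde A_4=\Gal(E/F')\to\GL_2(\CC)$, viewed as an $L$-parameter trivial on $W_E'$; by Lemmas~\ref{lem-tetra-reps} and~\ref{lem-Q} its restriction to $W_{E^Q}'$ is the parameter attached to $\psi_2|_Q=\Theta_2$. The projective image of $\rho$ is $A_4$, which is solvable, so the tetrahedral case of Langlands' work \cite{Langlands} furnishes a cuspidal automorphic representation $\pi(\rho)$ of $\GL_2(\A_{F'})$ strongly associated to $\rho$, with $\pi(\rho)_{E^Q}=\pi(\rho|_{W_{E^Q}'})=\pi(\Theta_2)=\pi'_{E^Q}$. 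Now $\pi'$ and $\pi(\rho)$ are cuspidal representations of $\GL_2(\A_{F'})$ with isomorphic cuspidal base change to $E^Q$, so the description of the fibers of cyclic prime degree base change \cite[Chapter~3, Theorems~4.2 and 5.1]{AC} gives $\pi'\cong\pi(\rho)\otimes\nu$ for some $\nu\in\Gal(E^Q/F')^{\wedge}$; since twisting commutes with the local Langlands correspondence, $\pi'\cong\pi(\rho\otimes\nu)$ with $\rho\otimes\nu$ trivial on $W_E'$, so $\pi'$ is of $\rho$-type. The step demanding most care is the identification of $\pi'_{E^Q}$ above — that it is cuspidal and equal to $\pi(\Theta_2)$ — but this merely transcribes the relevant portion of the proof of Proposition~\ref{prop-solv}; the only genuinely new ingredient is the unconditional existence of $\pi(\rho)$ for the tetrahedral $\rho=\psi_2$.
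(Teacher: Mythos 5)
Your proof is correct and follows essentially the same route as the paper's: twist to reduce to $\pi'_E\cong 1\boxplus 1$, identify $\pi'_{E^Q}$ with $\pi(\Theta_2)$ via the argument of Proposition \ref{prop-solv}, and invoke Langlands' tetrahedral theory to descend to $F'$. The only (cosmetic) difference is in the last step, where the paper twists $\pi'$ to have trivial central character and applies the descent criterion of \cite[\S 3]{Langlands} directly, whereas you instead cite the automorphy of $\pi(\psi_2)$ and compare fibers of the cubic base change -- both rest on the same input.
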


\begin{proof} The proof of Proposition \ref{prop-solv} implies that there is a character $\chi'$ of $F'^{\times} \backslash \A_{F'}^{\times}$ such that $(\pi' \otimes \chi'^{-1})|_{E} \cong 1 \boxplus 1$, so it suffices to treat the case where $\pi'|_E \cong 1 \boxplus 1$.
By the argument in the proof of proposition \ref{prop-solv} and using the notation therein we have that $\pi'|_{E^Q}=\pi(\phi)$, where $\phi:W_{E^Q}' \to {}^L\GL_{2E^Q}$ is the unique irreducible $L$-parameter trivial on $W_E'$.  Twisting $\pi'$ by an abelian character of $\widetilde{A}_4$ if necessary, we can and do assume that the central character of $\pi'$ is trivial.  Thus we can apply \cite[\S 3]{Langlands} to conclude that
$\pi'=\pi(\phi')$ for some $L$-parameter $\phi':W_{F'}' \to{}^L\GL_{2F'}$.
\end{proof}

We now prove Proposition \ref{prop-solv3}:

\begin{proof}[Proof of Proposition \ref{prop-solv3}] The quasi-character $\chi_0$ descends to a quasi-character $\chi':F'^{\times} \backslash \A_{F'}^{\times} \to \CC^{\times}$ by Lemma \ref{lem-bc-param} and the fact that $H^2(\Gal(E/F'),\CC^{\times})=H^2(\widetilde{A}_4,\CC^{\times})=0$.  Replacing $\pi'$ by $\pi' \otimes \chi'^{-1}$ if necessary, we can and do assume that $\pi'_E\cong 1^{\boxplus 3}$.

In the remainder of the proof we will constantly use facts on cyclic prime degree base change established in \cite{AC}.  A convenient list of the basic properties is given in \cite[Chapter 3, Theorems 4.2 and 5.1]{AC}.

Let $Q \hookrightarrow \widetilde{A}_4$ be as in the proof of Proposition \ref{prop-solv}.
By the theory of cyclic prime-degree base change
$$
\pi'_{E^Q} =\chi_1 \boxplus \chi_2 \boxplus \chi_3
$$
for some characters $\chi_i:F'^{\times} \backslash \A_{F'}^{\times} \to \CC^{\times}$.  Thus, by \cite[Chapter 3, Theorem 6.2]{AC} and its proof, since $\pi'$ is cuspidal we conclude that
$$
\pi'\cong\mathrm{Ind}_{E^Q}^{F'}(\chi_1)
$$
and hence is of $\rho$-type for some irreducible degree three $L$-parameter $\rho:W_{F'}' \to {}^L\GL_{nF'}$ trivial on $W_E'$.  By the character table of $\widetilde{A}_4$ recorded in \S \ref{appendix}, we conclude that $\rho \cong \rho_3$ for $\rho_3$ as in the proposition.

\end{proof}

We also require the following linear independence statement:
\begin{lem} \label{lem-lin-ind} Let $M \leq \GL_n$ be the maximal torus of diagonal matrices.
Let $v$ be a place of $F$.  Suppose that there is a countable set $\mathcal{X}$ of quasi-characters of $M(F_v)$ and that the set $\mathcal{X}$ is stable under the natural action of $W(M,\GL_n)$.  Suppose moreover that  $\{a(\chi_v)\}_{\chi_v \in \mathcal{X}}$ is a set of complex numbers such that for all $f_v \in C_c^{\infty}(M(F_v))^{W(M,\GL_n)}$ one has
$$
\sum_{\chi_v \in \mathcal{X}} a(\chi_v)\mathrm{tr}(\chi_v)(f_v)=0
$$
where the sum is absolutely convergent.  Then
$$
\sum_{W \in W(M,\GL_n)}a(\chi_v \circ W)=0
$$
for each $\chi_v \in \mathcal{X}$.
\end{lem}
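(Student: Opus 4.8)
The plan is to reduce the identity to a statement of linear independence of quasi-characters of the abelian group $M(F_v)\cong(F_v^\times)^n$ and then to prove that by abelian harmonic analysis. First I would symmetrize: given $g\in C_c^\infty(M(F_v))$, the function $f:=\sum_{W\in W(M,\GL_n)}g\circ W$ lies in $C_c^\infty(M(F_v))^{W(M,\GL_n)}$, so the hypothesis applies to it. Since $M$ is a torus, $\mathrm{tr}(\chi_v)(g\circ W)=\mathrm{tr}(\chi_v\circ W)(g)$ for every quasi-character $\chi_v$ (change of variables, conjugation by a Weyl representative preserving Haar measure on $M(F_v)$), and reindexing the resulting double sum using the $W(M,\GL_n)$-stability of $\mathcal X$ gives
\[
\sum_{\chi_v\in\mathcal X}b(\chi_v)\,\mathrm{tr}(\chi_v)(g)=0,\qquad b(\chi_v):=\sum_{W\in W(M,\GL_n)}a(\chi_v\circ W),
\]
for every $g\in C_c^\infty(M(F_v))$, the sum converging absolutely by the same manipulation. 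As $b(\chi_v)$ is precisely the quantity whose vanishing is asserted, it suffices to show that if $\mathcal X$ is any countable set of quasi-characters of $A:=M(F_v)$ and $\sum_{\chi_v}b(\chi_v)\mathrm{tr}(\chi_v)(g)=0$ absolutely for all $g\in C_c^\infty(A)$, then $b\equiv0$.

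Next I would peel off the maximal compact subgroup. Write $A=A_0\times\Lambda$ with $A_0\leq A$ maximal compact and $\Lambda\cong\ZZ^n$ if $v$ is non-archimedean (after choosing uniformizers) or $\Lambda\cong\RR^n$ if $v$ is archimedean (via $\log|\cdot|$); every quasi-character of $A$ factors as $\chi_v^{A_0}\boxtimes\chi_v^\Lambda$ with $\chi_v^{A_0}$ unitary. Fixing $\chi_0\in\mathcal X$ and testing against functions $\overline{\chi_0^{A_0}}\otimes g_\Lambda$ with $g_\Lambda\in C_c^\infty(\Lambda)$, orthogonality of the characters of the compact group $A_0$ annihilates every term except those $\chi_v$ with $\chi_v^{A_0}=\chi_0^{A_0}$, which have pairwise distinct $\Lambda$-components. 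This reduces everything to: if $\{\zeta_j\}$ are pairwise distinct quasi-characters of $\Lambda$ and $\sum_j c_j\,\mathrm{tr}(\zeta_j)(g)=0$ absolutely for all $g\in C_c^\infty(\Lambda)$, then all $c_j=0$; applying this with $j$ the index of $\chi_0$ finishes the proof.

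Finally, for $\Lambda\cong\ZZ^n$ one has $\zeta_j(k)=z_j^{\,k}$ with distinct $z_j\in(\CC^\times)^n$ and $\mathrm{tr}(\zeta_j)(g)=\sum_k g(k)z_j^{\,k}$ on finitely supported $g$; testing against $\delta_0$ shows $(c_j)\in\ell^1$, so $\nu:=\sum_j c_j\delta_{z_j}$ is a finite complex measure on $(\CC^\times)^n$, and testing against $\delta_k$ gives $\int z^{\,k}\,d\nu=0$ for all $k\in\ZZ^n$. The Cauchy transform $F_\nu(w)=\int\prod_i(w_i-z_i)^{-1}\,d\nu(z)$ is holomorphic on $\CC^n\setminus\mathrm{supp}\,\nu$, a connected set since $\mathrm{supp}\,\nu$ is countable; the Laurent expansions of $F_\nu$ near $0$ and near $\infty$ have the (vanishing) moments of $\nu$ as coefficients, forcing $F_\nu\equiv0$, and iterated contour integrals over products of circles avoiding the countable support then force $\nu$ to vanish on every box, whence $\nu=0$ and all $c_j=0$. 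The archimedean case $\Lambda\cong\RR^n$ goes the same way once one reduces to unitary $\zeta_j$ and appeals to uniqueness of a finite measure on the compact dual torus from its Fourier coefficients. I expect this measure-uniqueness step to be the main obstacle: one must check that absolute convergence against \emph{all} $g\in C_c^\infty$ — equivalently, finiteness of every two-sided exponential moment of $\nu$ — together with countability of $\mathrm{supp}\,\nu$ really does suffice to run the Cauchy-transform argument when the support is unbounded. In the applications this is a non-issue, since there the parameters $z_j$ are confined to a fixed compact subset of $(\CC^\times)^n$ by the bounds toward Ramanujan recalled in \S\ref{ssec-analytic-cond}.
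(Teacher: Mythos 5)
Your first step --- symmetrizing the test function and reindexing to obtain $\sum_{\chi_v\in\mathcal{X}}b(\chi_v)\,\mathrm{tr}(\chi_v)(f_v)=0$ for all $f_v\in C_c^{\infty}(M(F_v))$, where $b(\chi_v)=\sum_{W}a(\chi_v\circ W)$ --- is exactly the paper's proof. The divergence is in what comes next: the paper stops there and cites ``generalized linear independence of characters'' (Labesse--Langlands, Lemma 6.1, and Jacquet--Langlands, Lemma 16.1.1), whereas you attempt to prove that independence from scratch by peeling off the maximal compact subgroup and running a moment/Cauchy-transform argument on $\Lambda\cong\ZZ^n$ or $\RR^n$. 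The reduction to $\Lambda$ via orthogonality on the compact part is fine.

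The obstacle you flag in your last sentence, however, is not a removable technicality: the statement you reduce to is \emph{false} for quasi-characters with unbounded exponents. Take $n=1$, $v$ finite, $\Lambda\cong\ZZ$, fix $0<q<1$, and set $z_m=q^m$, $c_m=(-1)^mq^{m(m+1)/2}$ for $m\in\ZZ$. Then $\sum_m|c_m|\,|z_m|^{k}=\sum_mq^{m(m+1+2k)/2}<\infty$ for every $k\in\ZZ$, yet the fixed-point-free involution $m\mapsto -m-1-2k$ preserves the exponent $m(m+1+2k)/2$ and reverses the sign $(-1)^m$, so $\sum_mc_mz_m^{\,k}=0$ for every $k\in\ZZ$ (this is the classical Stieltjes--Wigert indeterminacy). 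The distinct unramified quasi-characters $\chi_m=|\cdot|_v^{s_m}$ with $\chi_m(\varpi_v)=q^m$ therefore satisfy a nontrivial, absolutely convergent relation $\sum_mc_m\,\mathrm{tr}(\chi_m)(f_v)=0$ for all $f_v\in C_c^{\infty}(F_v^{\times})$. So no argument can close the gap in the generality in which you (and, read literally, the lemma itself) are working; and even for compactly supported $\nu$ your contour step is delicate, since the complement of the closure of a countable set need not be connected, and the Laurent expansions at $0$ and $\infty$ only kill $F_\nu$ on two of its components. The correct repair is the one you point to: restrict to $\mathcal{X}$ whose exponents lie in a fixed bounded set --- as they do in the application in \S\ref{ssec-trace-to-func}, where the $\chi_{\pi_\infty}$ are inducing data of irreducible unitary generic representations --- and then the clean route is Fourier uniqueness for the unitary characters of $\Lambda$ combined with $\sum_j|c_j|<\infty$, which is precisely what the lemmas cited in the paper supply.
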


\begin{proof}
 By assumption
\begin{align*}
0&=\sum_{\chi_v \in \mathcal{X}} \sum_{W \in W(M,\GL_n)}a(\chi_v)\mathrm{tr}(\chi_v)(f_v \circ W)\\
&=\sum_{\chi_v \in \mathcal{X}}\sum_{W \in W(M,\GL_n)} a(\chi_v)\mathrm{tr}(\chi_v \circ W^{-1})(f_v)\\
&=\sum_{\chi_v \in \mathcal{X}}\mathrm{tr}(\chi_v)(f_v)\sum_{W \in W(M,\GL_n)} a(\chi_v \circ W)
\end{align*}
for all  $f_v \in C_c^{\infty}(M(F_v))$. The result now follows from generalized linear independence of characters (see \cite[Lemma 6.1]{LabLan} and \cite[Lemma 16.l.1]{JacquetLanglands}).
\end{proof}

\subsection{Functoriality implies the trace identities} \label{ssec-func-to-trace}
In this subsection we prove theorems \ref{main-thm-1}, \ref{main-thm-2}, and \ref{main-thm-3}, namely that cases of Langlands functoriality explicated in conjectures \ref{conj-1} and \ref{conj-solv} in the first case,  Conjecture \ref{conj-2} in the second case, and conjectures \ref{conj-32} and \ref{conj-33} in the third case imply the stated trace identities.
By Corollary \ref{cor-aut-trace} the sum
\begin{align*}
\sum_{\pi'} \mathrm{tr}(\pi')(h^1b_{E/F'}(\Sigma_{\phi}^{S_0}(X)))
\end{align*}
is equal to $o(X)$ plus
\begin{align*}
\sum_{\pi'} \mathrm{tr}(\pi')(h^1)
\mathrm{Res}_{s=1}\left( \widetilde{\phi}(s)X^sL(s,(\pi'_E \times \pi'^{\tau \vee}_E)^{S_0})\right).
\end{align*}
Here the sum is over a set of equivalence classes of automorphic representations of $A_{\GL_{nF'}} \backslash \GL_n(\A_{F'})$.  Specifically, for \eqref{11} of Theorem \ref{main-thm-1} we take it to be over $E$-primitive representations,
for \eqref{A21} of Theorem \ref{main-thm-2} and \eqref{31} of Theorem \ref{main-thm-3} we take it to be over all cuspidal representations, and for \eqref{B21} of Theorem \ref{main-thm-2} we take it to be over cuspidal representations not of $\rho$-type for any two-dimensional representation $\rho:W_{F'}' \to {}^L\GL_{2F'}$ trivial on $W_E'$.
The only nonzero contributions to this sum occur when $L(s,(\pi'_E \times \pi'^{\tau\vee}_E)^{S_0})$ has a pole, which implies that $\mathrm{Hom}_I(\pi'_E,\pi'^{\tau}_E) \neq 0$ (see \eqref{ord-pole}).
In this case if $\pi'_E$ is cuspidal it is then invariant under $\langle \Gal(E/F'),\tau \rangle=\Gal(E/F)$ and the pole is simple \eqref{ord-pole}.
 In view of conjectures \ref{conj-1} and \ref{conj-2}, in the setting of theorems \ref{main-thm-1} and \ref{main-thm-2} this implies that if $L(s,(\pi'_E \times \pi'^{\tau\vee}_E)^{S_0})$ has a pole then $\pi'_E$ descends to a cuspidal representation $\pi$ of $F$, whether or not $\pi'_E$ is cuspidal.  On the other hand, the only nonzero contributions to the quantity \eqref{31} in Theorem \ref{main-thm-3} come from representations where $\dim \mathrm{Hom}_I(\pi_E',\pi'^{\tau}_E)=n^2$, and this is the case if and only if $\pi'_E \cong \chi_0^{\boxplus n}$ where $\chi_0:E^{\times} \backslash \A_E^{\times} \to \CC^{\times}$ is a quasi-character invariant under $\Gal(E/F) =\langle \Gal(E/F'),\tau \rangle$.  In these cases $\pi'_E$  descends to a cuspidal representation of $\GL_n(\A_F)$ by conjectures \ref{conj-32} and \ref{conj-33}.

Assume for the moment that we are in the setting of Theorem \ref{main-thm-1}.  In this case by Conjecture \ref{conj-solv} there are precisely $|\Gal(E/F')^{\mathrm{ab}}|$ inequivalent cuspidal representations of $A_{\GL_{nF'}} \backslash \GL_n(\A_{F'})$ that base change to $\pi'_E$, since $\pi'_E$ is cuspidal by the theory of prime degree base change \cite[Chapter 3, Theorems 4.2 and 5.1]{AC}.  With this in mind, the definition of transfer (see \S \ref{ssec-transfers} and Lemma \ref{lem-unr-transf}) completes the proof of the claimed trace identity. We only remark that the absolute convergence of the two sums follows from Corollary \ref{cor-aut-trace} and the fact that $L(s,(\pi'_E \times \pi'^{\tau \vee}_E)^{S_0})$ has a pole of order $\mathrm{Hom}_I(\pi'_E,\pi'^{\tau}_E)$ (see \eqref{ord-pole}).

Now assume that we are in the setting of Theorem \ref{main-thm-2}.  In this case $\pi'_E$ may not be cuspidal, but by Proposition \ref{prop-solv} there are still exactly $|\Gal(E/F')^{\mathrm{ab}}|$ non-isomorphic cuspidal automorphic representations of $\GL_2(\A_{F'})$ that base change to $\pi'_E$.  With this in mind, the claimed trace identity follows as before.

The proof of Theorem \ref{main-thm-3} is essentially the same.  We only point out the most
significant difference, namely that we are claiming that one can consider arbitrary Hecke functions on $C_c^{\infty}(\GL_n(F'_{S'_1})//\GL_n(\OO_{F'S'_1}))$ instead of just those that are base changes of Hecke functions in $C_c^{\infty}(\GL_n(E_{S_{10}})//\GL_n(\OO_{ES_{10}}))$.  The reason this is possible is that for each $\Gal(E/F')$-invariant quasi-character $\chi_0:E^{\times} \backslash \A_E^{\times} \to \CC^{\times}$ there is a unique cuspidal automorphic representation of $\GL_n(\A_{F'})$ such that
$\pi'_E \cong \chi_0^{\boxplus n}$ by propositions \ref{prop-solv} and \ref{prop-solv3}.

\qed

\subsection{The trace identity implies functoriality: first two cases} \label{ssec-trace-to-func}

In this subsection we prove theorems \ref{main-thm-1-conv} and \ref{main-thm-2-conv}, namely that the trace identities of Theorem \ref{main-thm-1}, and \ref{main-thm-2} imply the corresponding cases of functoriality
under the assumption of a supply of transfers (more precisely, under Conjecture \ref{conj-transf}).
By assumption, for all $h$ unramified outside of $S'$ with transfer $\Phi$ unramified outside of $S$ one has an identity
\begin{align} \label{id1}
&\lim_{X \to \infty}|\Gal(E/F')^{\mathrm{ab}}|^{-1}X^{-1}
\sum'_{\pi'} \mathrm{tr}(\pi')(h^1b_{E/F'}(\Sigma^{S_0}_{\phi}(X))
\\&=
\lim_{X \to \infty} X^{-1}\sum'_{\pi} \mathrm{tr}(\pi)(
\Phi^1b_{E/F}(\Sigma^{S_0}_{\phi}(X))). \nonumber
\end{align}
Here for the proof of Theorem \ref{main-thm-1-conv}, the sums are over a set of representatives for the equivalence classes of $E$-primitive automorphic representations of $A_{\GL_{nF'}} \backslash \GL_n(\A_{F'})$ and $A_{\GL_{nF}} \backslash \GL_n(\A_F)$, respectively.  For the proof of Theorem \ref{main-thm-2-conv}, the sums are over a set of representatives for the equivalence classes of cuspidal automorphic representations of $A_{\GL_{nF'}} \backslash \GL_n(\A_{F'})$ and $A_{\GL_{nF}} \backslash \GL_n(\A_F)$, respectively, that are not of $\rho$-type for $\rho$ trivial on $W_E'$.

We start by refining \eqref{id1}.  Notice that each representation $\pi'$ appearing in \eqref{id1} above admits a base change $\pi'_E$ to $\GL_n(\A_E)$ by a series of cyclic base changes.  We claim that $\pi'_E$ is cuspidal.  In Theorem \ref{main-thm-1-conv} we have assumed that $\pi'$ is $E$-primitive.  Hence, by the theory of cyclic base change, $\pi'_E$ must be cuspidal \cite[Chapter 3, Theorem 4.2 and Theorem 5.1]{AC}.  In Theorem \ref{main-thm-2-conv} we have assumed that $\pi'$ is not of $\rho$-type for any $\rho$ trivial on $W_E'$.  Thus $\pi'_E$ is cuspidal by Corollary \ref{cor-rho-type}.

Now applying Corollary \ref{cor-aut-trace} and \eqref{ord-pole} we see that the top line of \eqref{id1} is equal to
\begin{align*}
|\Gal(E/F')^{\mathrm{ab}}|^{-1}
\sum'_{\pi':\pi'_E \cong \pi'^{\tau}_E} \mathrm{tr}(\pi')(h^1)\widetilde{\phi}(1)\mathrm{Res}_{s=1}L(s,(\pi'_E \times \pi'^{\tau \vee}_E)^{S_0}).
\end{align*}
Note that the given residue is nonzero and that this sum is absolutely convergent by Corollary \ref{cor-aut-trace}.  At this point we assume that the function $\Phi_S$ is chosen so that at finite places $v \in S$ where $\Phi_v \not \in C_c^{\infty}(\GL_n(F_v)//\GL_n(\OO_{F_v}))$ the function $\Phi_v$ is of positive type (this is possible by Conjecture \ref{conj-transf}).  Under this assumption we claim that the second line of \eqref{id1} is absolutely convergent.  Indeed, the $L$-function $L(s,(\pi_E \times \pi_E^{\vee})^{S_0})$ of the admissible representation $\pi_E^{S_0} \times \pi_E^{\vee S_0}$ is defined and convergent in some half plane \cite[Theorem 13.2]{Borel}, \cite{LanglProb}, and its Dirichlet series coefficients are positive \cite[Lemma a]{HR}.  Thus the smoothed partial sums
$\mathrm{tr}(\pi)(b_{E/F}(\Sigma^{S_0}_{\phi}(X)))$ have positive coefficients.  The fact that the second line of \eqref{id1} converges absolutely follows.

Now we have refined \eqref{id1} to an identity of absolutely convergent sums
\begin{align} \label{id2}
&|\Gal(E/F')^{\mathrm{ab}}|^{-1}
\sum'_{\pi':\pi'_E \cong \pi'^{\tau}_E} \mathrm{tr}(\pi')(h^1)\widetilde{\phi}(1)\mathrm{Res}_{s=1}L(s,(\pi'_E \times \pi'^{\tau \vee}_E)^{S_0})
\\&=
\sum'_{\pi} \mathrm{tr}(\pi)(\Phi^1)\lim_{X \to \infty} X^{-1}\mathrm{tr}(\pi)(b_{E/F}(\Sigma^{S_0}_{\phi}(X))). \nonumber
\end{align}
where the residues in the top line are nonzero.  Before starting the proof in earnest, we wish to refine \eqref{id2} yet again to an identity where only representations of a given infinity type are involved.  Let $\Psi=\otimes_{w|\infty}\Psi_w \in \otimes_{w |\infty}C_c^{\infty}(M(E_{w}))^{W(M,\GL_n)}$, where $M \leq \GL_n$ is the standard maximal torus of diagonal matrices.  For an irreducible unitary generic admissible  representation $\Pi_{\infty}$ of $\GL_n(E_{\infty})$ (resp.~$\pi_{\infty}$ of $\GL_n(F_{\infty})$) write $\chi_{\Pi_{\infty}}: M(E_{\infty}) \to \CC^{\times}$ (resp.~$\chi_{\pi_{\infty}}: M(F_{\infty}) \to \CC^{\times}$) for a choice of quasi-character whose unitary induction to $\GL_n(E_{\infty})$ (resp.~$\GL_n(F_{\infty})$) is $\Pi_{\infty}$ (resp.~$\pi_{\infty}$).  Here we are using our assumption that $F$ is totally complex.  The quasi-characters $\chi_{\Pi_{\infty}w}$ and  $\chi_{\pi_{\infty}v}$ for infinite places $w$ of $E$ and $v$ of $F$ are determined by $\Pi_{w}$ and $\pi_{v}$, respectively, up to the action of $W(M,\GL_n)$.  Moreover, they determine $\Pi_w$ and $\pi_v$, respectively.

We note that by an application of the descent arguments proving Lemma \ref{lem-archi-transf} the identity \eqref{id2} implies
\begin{align} \label{id3}
&|\Gal(E/F')^{\mathrm{ab}}|^{-1}
\sum'_{\pi':\pi'_E \cong \pi'^{\tau}_E} \mathrm{tr}(\chi_{\pi'_E})(\Psi)\mathrm{tr}(h^{\infty})\widetilde{\phi}(1)\mathrm{Res}_{s=1}L(s,(\pi'_E \times \pi'^{\tau \vee}_E)^{S_0})\\
&=
\sum'_{\pi} \mathrm{tr}(\chi_{\pi'})(\otimes_{v |\infty} (*_{w|v}\Psi_w))\mathrm{tr}(\pi)(\Phi^{\infty})\lim_{X \to \infty} X^{-1}\mathrm{tr}(\pi)(b_{E/F}(\Sigma^{S_0}_{\phi}(X))) \nonumber
\end{align}
where the $*$ denotes convolution in $M(F_{\infty})$ (note we are implicitly choosing isomorphisms $M(E \otimes_F F_{v}) \cong \times_{w|v} M(F_{v})$ for each $v|\infty$ to make sense of this).  Let $\chi_0:M(F_{\infty}) \to \CC^{\times}$ be a given quasi-character.  By Lemma \ref{lem-lin-ind} the identity \eqref{id3} can be refined to
\begin{align} \label{id3.5}
&|\Gal(E/F')^{\mathrm{ab}}|^{-1}
\sum'_{\substack{\pi':\chi_{\pi'_{E\infty}w}=\chi_{0Ew}^{W}\\
\textrm{ for some }W \in W(M,\GL_n)\\ \textrm{for all }w|\infty}} \mathrm{tr}(\chi_{\pi'_E})(\Psi)\mathrm{tr}(h^{\infty})\widetilde{\phi}(1)\mathrm{Res}_{s=1}L(s,(\pi'_E \times \pi'^{\tau \vee}_E)^{S_0})\\
&=
\sum'_{\substack{\pi:\chi_{\pi_{\infty}v}=\chi_{0v}^W\\
 \textrm{ for some }W \in W(M,\GL_n)\\\textrm{for all }v|\infty}} \mathrm{tr}(\chi_{\pi'})(\otimes_{v |\infty} (*_{w|v}\Psi_w))\mathrm{tr}(\pi)(\Phi^{\infty})\lim_{X \to \infty} X^{-1}\mathrm{tr}(\pi)(b_{E/F}(\Sigma^{S_0}_{\phi}(X))).\nonumber
\end{align}

Now by descent \eqref{id3.5} implies the identity
\begin{align} \label{id4}
&|\Gal(E/F')^{\mathrm{ab}}|^{-1}
\sum'_{\substack{\pi':\pi'_E \cong \pi'^{\tau}_E\\\pi'_E \cong \pi_{0\infty E}}} \mathrm{tr}(\pi')(h^1)\widetilde{\phi}(1)\mathrm{Res}_{s=1}L(s,(\pi'_E \times \pi'^{\tau \vee}_E)^{S_0})
\\&=
\sum'_{\pi:\pi_{\infty} \cong \pi_{0\infty}} \mathrm{tr}(\pi)(\Phi_S^1)\lim_{X \to \infty} X^{-1}\mathrm{tr}(\pi)(b_{E/F}(\Sigma^{S_0}_{\phi}(X))) \nonumber
\end{align}
for all irreducible admissible generic unitary representations $\pi_{0\infty}$ of $\GL_n(F_{\infty})$; here, as before, for finite $v$ the functions $\Phi_v$ are assumed to be of positive type when they are ramified (i.e.~not in $C_c^{\infty}(\GL_n(F_v)//\GL_n(\OO_{F_v}))$).  Note in particular that for any given $\Phi_S$ and $h_S$ the sums in \eqref{id4} are finite.

We now start to work with \eqref{id4}.  First consider descent of primitive representations.  Suppose that $\Pi$ is a $\Gal(E/F)$-invariant primitive automorphic representation of $A_{\GL_{nE}} \backslash \GL_n(\A_{E})$.  Then by Conjecture \ref{conj-solv} $\Pi$ descends to a representation $\pi'$ of $A_{\GL_{nF'}} \backslash \GL_n(\A_{F'})$.  Here in the $n=2$ case we are using the fact that
$H^2(\Gal(E/F'),\CC^{\times})=H^2(\widetilde{A}_4,\CC^{\times})=0$.
The existence of a primitive automorphic representation $\pi$ of $A_{\GL_{nF}} \backslash \GL_n(\A_F)$ that is a weak descent of $\Pi$ now follows from \eqref{id4} and a standard argument using the transfer of unramified functions (Lemma \ref{lem-unr-transf}).

In more detail, assume that $\Pi$ and $E/F$ are unramified outside of $S$.
Then choosing $h^{S'}=b_{E/F'}(f^{S_0})$ and $\Phi^{S}=b_{E/F}(f^{S_0})$ for $f^{S_0} \in C_c^{\infty}(\GL_n(\A_{E}^{S_0})//\GL_n(\widehat{\OO}_{E}^{S_0}))$ (which are transfers of each other by Lemma \ref{lem-unr-transf}) the identity
\eqref{id4} implies an identity of the form
\begin{align*}
&\sum'_{\substack{\pi':\pi'_E \cong \pi'^{\tau}_E\\\pi'_E \cong \pi_{0\infty E}}} a(\pi')\mathrm{tr}(\pi')(b_{E/F'}(f^{S_{0}}))
=
\sum'_{\pi:\pi_{\infty} \cong \pi_{0\infty}} c(\pi)\mathrm{tr}(\pi)(b_{E/F}(f^{S_{0}}))\nonumber
\end{align*}
for some $a(\pi') \in \RR_{>0},c(\pi) \in \RR_{ \geq 0}$ (here we are using the fact that we assumed the functions $\Phi_v$ to be of positive type if they are ramified). Applying linear independence of characters, this implies a refined identity
\begin{align*}
&\sum a(\pi')\mathrm{tr}(\pi')(b_{E/F'}(f^S))=
\sum c(\pi)\mathrm{tr}(\pi)(b_{E/F}(f^S))
\end{align*}
where the sum on top (resp.~bottom) is over cuspidal automorphic representations $\pi'$ (resp.~$\pi$) such that the character $\mathrm{tr}(\pi' \circ b_{E/F'})$ (resp.~$\mathrm{tr}(\pi \circ b_{E/F})$) of $C_c^{\infty}(\GL_n(\A_E^{S})//\GL_n(\OO_{E}^{S}))$ is equal to $\mathrm{tr}(\Pi)$.  Thus any of the representations $\pi$ on the right is a weak descent of $\Pi$, and there must be some representation on the right because the sum on the left is not identically zero as a function of $f^{S_0}$.

We also note that the base change is compatible at places $v$ where $\pi$ is an abelian twist of the Steinberg representation by Lemma \ref{lem-EP}.
This proves the statements on descent of cuspidal automorphic representations contained in theorems \ref{main-thm-1-conv} and \ref{main-thm-2-conv}.

Now assume that $\pi$ is an $E$-primitive automorphic representation of $A_{\GL_{nF}} \backslash \GL_n(\A_F)$, and if $n=2$ assume that $\pi$ is not of $\rho$-type for any $\rho:W_F' \to {}^L\GL_{2F}$ trivial on $W_E'$.  By assumption we have that
\begin{align} \label{claim-45}
\lim_{X \to \infty}X^{-1}\mathrm{tr}(\pi)(b_{E/F}(\Sigma^{S_0}_{\phi}(X))) \neq 0.
\end{align}

Let $\pi'$ be a cuspidal automorphic representation of $A_{\GL_{nF'}} \backslash \GL_n(\A_{F'})$ that is not of $\rho'$-type for any $\rho':W_{F'}' \to {}^L\GL_{2F'}$ trivial on $W_E'$.
By Lemma \ref{lem-unr-transf} one has
$$
\mathrm{tr}(\pi'_{v'})(b_{E/F'}(f_w))=\pi'_{Ew}(f_w)
$$
whenever $w$ is a finite place of $E$ dividing $v'$ and $f_w \in C_c^{\infty}(\GL_n(E_w)//\GL_n(\OO_{E_w}))$.
Thus by \eqref{claim-45}, the existence of a weak base change of $\pi$ to $A_{\GL_{nE}} \backslash \GL_n(\A_E)$ follows as before.  This completes the proof of Theorem \ref{main-thm-1-conv} and Theorem \ref{main-thm-2-conv}. \qed

\subsection{Artin representations: Theorem \ref{main-thm-3-conv}} \label{ssec-trace-to-func2}

Let $E/F$ be a Galois extension such that $\Gal(E/F)\cong \widetilde{A}_5$.  We assume that $F$ is totally complex. As above, we fix embeddings $A_4 \hookrightarrow A_5$ and $\ZZ/2 \times \ZZ/2 \hookrightarrow A_4 \hookrightarrow A_5$ and let $\widetilde{A}_4,Q \leq \widetilde{A}_5$ denote the pull-backs of these groups under the quotient map $\widetilde{A}_5 \to A_5$.  Throughout this subsection we assume the hypotheses of Theorem \ref{main-thm-3-conv}.
We fix throughout this subsection a representation $\rho_2:W_F' \to {}^L\GL_{2F}$ trivial on $W_E'$ that has character $\theta_2$ in the notation of \S \ref{appendix}.  There is exactly one other nonisomorphic irreducible degree-two character of $\Gal(E/F)$, namely $\xi \circ \theta_2$ where $\xi \in \Gal(\QQ(\sqrt{5})/\QQ)$.

In this subsection we prove Theorem \ref{main-thm-3-conv}, which asserts that the trace identities of Theorem \ref{main-thm-3} imply that $\rho_2 \oplus \xi \circ \rho_2$ has an associated isobaric automorphic representation.   We note at the outset that the argument is modeled on a well-known argument of Langlands in the tetrahedral case \cite[\S 3]{Langlands}.

The trace identities of Theorem \ref{main-thm-3} involve two different fields that were both denoted by $F'$; it is now necessary to distinguish between them.  We let
$$
F':=E^{\widetilde{A}_4} \leq K :=E^Q.
$$
We require the following lemma:
\begin{lem} \label{lem-Q-autom} There is a cuspidal automorphic representation
$\pi'$ of $\GL_2(\A_{F'})$ and a cuspidal automorphic representation $\sigma$ of $\GL_2(\A_K)$ such that $\pi'=\pi(\rho_2|_{F'})$ and $\sigma=\pi(\rho_2|_K)=\pi'_K$.
\end{lem}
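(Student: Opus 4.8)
The plan is as follows. Recall from the setup of \S\ref{ssec-trace-to-func2} that $F'=E^{\widetilde{A}_4}\le K=E^Q$, so that $\Gal(E/F')\cong\widetilde{A}_4$, $\Gal(E/K)\cong Q$, and $K/F'$ is cyclic of degree $[\widetilde{A}_4:Q]=3$. Since $\rho_2$ is trivial on $W_E'$ it is the inflation of the two-dimensional representation of $\Gal(E/F)$ with character $\theta_2$, and hence $\rho_2|_{F'}$ (resp. $\rho_2|_K$) is the inflation of the representation of $\widetilde{A}_4$ (resp. $Q$) obtained by restricting $\theta_2$. By Lemma \ref{lem-tetra-reps} this restriction to $\widetilde{A}_4$ has character $\psi_2$, the unique two-dimensional irreducible character of $\widetilde{A}_4$, and by Lemma \ref{lem-Q} its restriction to $Q$ has character $\Theta_2$, the unique two-dimensional irreducible character of $Q$. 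In particular $\rho_2|_{F'}$ is irreducible; moreover, since $\theta_2$ is faithful on $\widetilde{A}_5$ (whence $\psi_2$ is faithful on $\widetilde{A}_4$) and the centre of $\widetilde{A}_4$ is carried into the scalars, the associated projective representation $W_{F'}'\to\PGL_2(\CC)$ has image $\widetilde{A}_4/Z(\widetilde{A}_4)\cong A_4$. Thus $\rho_2|_{F'}$ is a tetrahedral representation.

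First I would apply Langlands' theorem on tetrahedral Galois representations \cite[\S 3]{Langlands} (note that $\widetilde{A}_4$ is solvable, so this is a known instance of the strong Artin conjecture): it produces a cuspidal automorphic representation $\pi'$ of $\GL_2(\A_{F'})$ strongly associated to $\rho_2|_{F'}$, that is, $\pi'=\pi(\rho_2|_{F'})$, where one upgrades the agreement at almost all places coming from the converse theorem to agreement at all places using strong multiplicity one together with the compatibility of the local Langlands correspondence with $\varepsilon$-factors. Next, let $\sigma:=\pi'_K$ be the base change of $\pi'$ along the cyclic cubic extension $K/F'$, which exists by prime-degree cyclic base change \cite{Langlands} (see also \cite[Chapter 3, Theorems 4.2 and 5.1]{AC}). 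By the compatibility of base change with restriction of $L$-parameters recalled in \S\ref{ssec-bcformal} and \S\ref{ssec-param-bc}, at each place $\sigma$ has $L$-parameter $(\rho_2|_{F'})|_{W_K'}=\rho_2|_K$, so $\sigma=\pi(\rho_2|_K)$, and then $\pi'_K=\sigma=\pi(\rho_2|_K)$ is the last equality in the statement.

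It remains to see that $\sigma$ is cuspidal. If it were not, then $\sigma$ would be a proper isobaric sum of cuspidal representations of smaller general linear groups over $K$, and comparing Langlands classes at almost all places via strong multiplicity one its $L$-parameter $\rho_2|_K$ would be reducible; but $\rho_2|_K$ has irreducible character $\Theta_2$, a contradiction. (Equivalently, one may invoke the cuspidality criterion for prime-degree cyclic base change: $\pi'_K$ is non-cuspidal only if $\pi'\cong\pi'\otimes\mu$ for a nontrivial $\mu\in\Gal(K/F')^{\wedge}$, which is impossible since comparing central characters would force $\mu^2=1$ while $\mu$ has order $3$ --- or directly since $\psi_2\otimes\psi_1\not\cong\psi_2$ by the character table of $\widetilde{A}_4$.) Given the citation to Langlands' tetrahedral theorem, none of these steps presents a genuine obstacle; the only points requiring care are the identification of the projective image of $\rho_2|_{F'}$ as $A_4$ and the bookkeeping that base change carries $\rho_2|_{F'}$ to $\rho_2|_K$ at the level of parameters, both of which follow from the tables of \S\ref{appendix} and the formalism of \S\ref{ssec-param-bc}.
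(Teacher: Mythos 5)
Your proposal is correct and follows the same route as the paper: invoke Langlands' tetrahedral theorem to produce $\pi'=\pi(\rho_2|_{F'})$, then obtain $\sigma$ as the cyclic cubic base change $\pi'_K$, whose parameter is $\rho_2|_K$; the paper's proof is just a terser version of this, citing \cite[\S 3]{Langlands} and noting that the compatibility with base change to $K$ is built into the construction. One inessential slip: $\psi_2$ is not the unique two-dimensional irreducible character of $\widetilde{A}_4$ (there are also $\psi_2\psi_1$ and $\psi_2\psi_1^2$), but all your argument needs is that $\theta_2|_{\widetilde{A}_4}=\psi_2$ is irreducible, which is Lemma \ref{lem-tetra-reps}.
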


\begin{proof}
One has an automorphic representation $\pi'$ such that  $\pi'=\pi(\rho_2|_{F'})$ by Langlands' work
 \cite[\S 3]{Langlands}; see also \cite[\S 6]{GerLab}. By its construction  $\pi'_K$ is isomorphic to
$\sigma:=\pi(\rho_2|_K)$.
\end{proof}

Choose $\sigma$ and $\pi'$ as in the lemma.
Assuming the trace identities of Theorem \ref{main-thm-3} in the $n=2$ case there are precisely two distinct isomorphism classes of cuspidal automorphic representations represented by, say, $\pi_1,\pi_2$, such that $\pi_{iK} \cong \sigma$.  Using our assumption that $F$ is totally complex this can be proven by  arguments analogous to those used in \S \ref{ssec-trace-to-func}; we only note that
$$
\lim_{X \to \infty} (\frac{d^{3}}{ds^3}(\widetilde{\phi}(s)X^s))^{-1}\mathrm{tr}(\sigma)(b_{E/F'}(\Sigma^{S_0}(X)) \neq 0
$$
since $\mathrm{dim}(\mathrm{Hom}_I(\pi'_E,\pi'^{\tau}_E)) =4$ by construction of $\pi$ (compare Proposition \ref{Perron-prop}).  We emphasize that the trace identity of Theorem \ref{main-thm-3} tells us that $\sigma$ is the unique weak base change of $\pi_i$, which is stronger than the statement that $\sigma_E$ is the unique weak base change of $\pi_i$.  We note in particular that using the transfers supplied in \S \ref{ssec-transfers} we have that the base changes are compatible at finite places $v$ that are unramified in $E/F$ and at all infinite places (which are complex by assumption).  Moreover the $\pi_i$ are unramified outside of the set of places where $E/F$ is ramified.

One expects that upon reindexing if necessary one has
\begin{align*}
\pi_{1} &\stackrel{?}{\cong} \pi(\rho_{2})\\
\pi_{2} &\stackrel{?}{\cong} \pi(\xi \circ \rho_{2}).
\end{align*}
We do not know how to prove this, but we will prove something close to it, namely Corollary \ref{cor-isob} below.

Consider $\mathrm{Sym}^2(\pi')$ and $\mathrm{Sym}^2(\sigma)$; the first is a cuspidal automorphic representation of $\GL_3(\A_{F'})$ by \cite[Theorem 9.3]{GJ} and the second is an isobaric (noncuspidal) automorphic
representation of $\GL_3(\A_K)$ \cite[Remark 9.9]{GJ}.

\begin{lem}  \label{lem-sym}
For $i \in \{1,2\}$ one has
$$
\mathrm{Sym}^2(\pi_i)_K \cong \mathrm{Sym}^2(\sigma)
$$
and
$$
\mathrm{Sym}^2(\pi_i)_{F'} \cong \mathrm{Sym}^2(\pi').
$$
\end{lem}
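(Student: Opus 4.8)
The plan is to deduce both isomorphisms from one structural fact: the Gelbart--Jacquet symmetric square lift commutes with weak base change. Concretely, if $\pi_v$ is an unramified representation of $\GL_2(F_v)$ with Satake class $A$ and $w\mid v$ has inertial degree $f$ in some extension, then the local base change of $\pi_v$ has Satake class $A^{f}$, and diagonalising $A$ gives $\mathrm{Sym}^2(A^{f})=(\mathrm{Sym}^2A)^{f}$. Hence whenever $\pi$ on $\GL_2$ admits a weak base change $\Pi$ to $\GL_2$ over that extension, $\mathrm{Sym}^2(\Pi)$ is a weak base change of $\mathrm{Sym}^2(\pi)$, and by strong multiplicity one for isobaric representations \cite[Theorem 4.4]{JSII} it is \emph{the} weak base change. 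Applied to the isomorphism $\pi_{iK}\cong\sigma$, which we already possess, this yields $\mathrm{Sym}^2(\pi_i)_K\cong\mathrm{Sym}^2(\sigma)$, the first assertion. Composing further with the solvable ($2$-group) extension $E/K$ and using $\sigma_E\cong\mathbf{1}\boxplus\mathbf{1}$ --- which holds since $\sigma=\pi(\rho_2|_K)$ and $\rho_2$ is trivial on $W_E'$ --- we also obtain $\mathrm{Sym}^2(\pi_i)_E\cong\mathbf{1}\boxplus\mathbf{1}\boxplus\mathbf{1}$.

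For the second assertion the crux is that $\mathrm{Sym}^2(\pi_i)$ is cuspidal on $\GL_3(\A_F)$, equivalently that $\pi_i$ is not dihedral. If $\pi_i$ were automorphically induced from a Hecke character of a quadratic extension $L/F$, then since $\pi_{iK}\cong\sigma$ is cuspidal one must have $[LK:K]=2$ and $\sigma$ monomial from $LK$; but $\sigma=\pi(\rho_2|_K)$ has projective image $Q/Z(Q)\cong\ZZ/2\times\ZZ/2$, so it is monomial precisely from the three quadratic subextensions of the corresponding Klein four extension of $K$, all of which lie inside $E$. This forces $L\subseteq E$, which is impossible because $\Gal(E/F)\cong\widetilde{A}_5$ is perfect and has no subgroup of index two. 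Hence $\mathrm{Sym}^2(\pi_i)$ is cuspidal by \cite[Theorem 9.3]{GJ}.

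Now $\mathrm{Sym}^2(\pi_i)$ is a cuspidal representation of $\GL_3(\A_F)$ with $\mathrm{Sym}^2(\pi_i)_E\cong\mathbf{1}^{\boxplus 3}$, so by \eqref{hom-I} one has $\dim\mathrm{Hom}_I(\mathrm{Sym}^2(\pi_i)_E,\mathrm{Sym}^2(\pi_i)_E^{\tau})=9=3^2$ and the contribution of $\mathrm{Sym}^2(\pi_i)$ to the limit \eqref{32} is nonzero. Running the $n=3$ trace identity of Theorem \ref{main-thm-3} in the converse direction --- by the argument of \S \ref{ssec-trace-to-func} applied to it, using Conjecture \ref{conj-transf} and the hypothesis that $F$ is totally complex, exactly as in the $n=2$ case that produced $\pi_1,\pi_2$ --- shows that $\mathrm{Sym}^2(\pi_i)$ admits a weak base change $\mathrm{Sym}^2(\pi_i)_{F'}$ to $\GL_3(\A_{F'})$ which is cuspidal and, by transitivity through the solvable extension $E/F'$, has base change $\mathbf{1}^{\boxplus 3}$ to $\GL_3(\A_E)$. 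By Proposition \ref{prop-solv3} there is a unique such representation, namely $\pi(\rho_3)$ with $\rho_3$ the irreducible three-dimensional representation of $\Gal(E/F')\cong\widetilde{A}_4$ (character $\psi_3$). On the other hand $\mathrm{Sym}^2(\pi')=\pi(\mathrm{Sym}^2(\rho_2|_{F'}))$ by Gelbart--Jacquet, and $\mathrm{Sym}^2(\rho_2|_{F'})$ has character $\mathrm{Sym}^2(\psi_2)=\psi_3$ by the character tables of \S \ref{appendix} (Lemma \ref{lem-tetra-reps}); hence $\mathrm{Sym}^2(\pi')\cong\pi(\rho_3)\cong\mathrm{Sym}^2(\pi_i)_{F'}$, which is the second assertion.

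The main obstacle is this second assertion: base change along $F'/F$ is not accessible by solvable methods (the Galois closure of $F'/F$ has group $A_5$), so one cannot simply commute $\mathrm{Sym}^2$ with it, and the identification $\mathrm{Sym}^2(\pi_i)_{F'}\cong\mathrm{Sym}^2(\pi')$ genuinely requires feeding $\mathrm{Sym}^2(\pi_i)$ back through the $n=3$ trace identity and then pinning down the resulting $\GL_3(\A_{F'})$-representation via the uniqueness in Proposition \ref{prop-solv3}. That step in turn rests on the cuspidality of $\mathrm{Sym}^2(\pi_i)$, hence on the non-dihedrality of $\pi_i$, which is where the structure of $\widetilde{A}_5$ is used; one must also take care that transitivity of base change through the non-solvable steps $K/F$ and $F'/F$ is only ever invoked in the weak sense, with one of the two base changes already known to exist.
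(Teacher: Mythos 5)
Your proof is correct, and its engine is the same as the paper's: the first isomorphism is obtained by comparing Satake parameters at the unramified places and invoking strong multiplicity one for isobaric representations, and the second is obtained by feeding $\mathrm{Sym}^2(\pi_i)$ (whose restriction to $E$ is $\mathbf{1}^{\boxplus 3}$, giving the order-$9$ pole needed for \eqref{32}) through the $n=3$ trace identity of Theorem \ref{main-thm-3} to produce a cuspidal weak base change $\mathrm{Sym}^2(\pi_i)_{F'}$. Where you diverge is the endgame: the paper observes that $\mathrm{Sym}^2(\pi_i)_{F'}$ and $\mathrm{Sym}^2(\pi')$ both base change to the non-cuspidal $\mathrm{Sym}^2(\sigma)$ along the cyclic cubic extension $K/F'$ and concludes from \cite[Chapter 3, Theorems 4.2 and 5.1]{AC}, whereas you go all the way up to $E$ and appeal to the uniqueness in Proposition \ref{prop-solv3} together with the character-table identification $\mathrm{Sym}^2(\psi_2)=\psi_3$. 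Both closings are valid; yours leans on machinery already set up for Theorem \ref{main-thm-3} and makes the representation $\pi(\rho_3)$ explicit, while the paper's stays one step lower in the tower. You also supply something the paper leaves tacit but genuinely needs, namely that $\mathrm{Sym}^2(\pi_i)$ is cuspidal on $\GL_3(\A_F)$ (so that it actually appears in the sum \eqref{32}); your non-dihedrality argument — any quadratic $L/F$ inducing $\pi_i$ would force $LK\subseteq E$ because the twist-stabilizer of $\sigma=\pi(\rho_2|_K)$ is cut out by $\mathrm{Ad}(\rho_2|_K)$, contradicting the perfectness of $\widetilde{A}_5$ — is a worthwhile addition.
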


\begin{proof}
Since $\pi_{iK} \cong \pi'$, it is easy to see that
$\mathrm{Sym}^2(\pi_{i})_{Kv_K} \cong \mathrm{Sym}^2(\sigma)_{v_K}$
for all places $v_K$ of $K$ that are finite and such that $K/F$ and $\sigma_i$ are unramified.
The first statement then follows from strong multiplicity one for isobaric automorphic representations \cite[Theorem 4.4]{JSII}.

Since the $\pi_i$ were defined to be weak descents of $\sigma$, they are in particular weak descents of the isobaric representation $1 \boxplus 1$ of $\GL_2(\A_E)$.  Thus
$$
\lim_{X \to \infty} \left(\frac{d^{8}}{ds^8}(\widetilde{\phi}(s)X^s)\big|_{s=1}\right)^{-1}\mathrm{tr}(\mathrm{Sym}^2(\pi_i))(b_{E/F}(\Sigma^{S_0}(X))) \neq 0
$$
since $\mathrm{tr}(\mathrm{Sym}^2(\pi_i))(b_{E/F}(\Sigma^{S_0}(X))$ is a smoothed partial sum of the Dirichlet series $\zeta_E^{S_0}(s)^9$.
Applying the trace identities of Theorem \ref{main-thm-3} we conclude that $\mathrm{Sym}^2(\pi_{i})$ admits a weak base change $\mathrm{Sym}^2(\pi_i)_{F'}$ to $F'$.  Now $\mathrm{Sym}^2(\pi_i)_{F'}$ and $\mathrm{Sym}^2(\pi')$ both base change to $\mathrm{Sym}^2(\sigma)$.  Since $\mathrm{Sym}^2(\sigma)$ is not cuspidal, this implies that $\mathrm{Sym}^2(\pi') \cong \mathrm{Sym}^2(\pi_i)_{F'}$ \cite[Chapter 3, Theorems 4.2 and 5.1]{AC}.
\end{proof}

For convenience, let $S$ be the set of finite places where $E/F$ is ramified.
Thus the base change from $\pi_i$ to $\sigma$ is compatible outside of $S$ and the
the base changes from $\mathrm{Sym}^2(\pi_i)$ to $\mathrm{Sym}^2(\pi')$ and $\mathrm{Sym}^2(\sigma)$ are all compatible outside of $S$

\begin{lem} \label{lem-F'}
For $i \in \{1,2\}$ the cuspidal automorphic representation $\pi'$ is a weak base change of $\pi_i$:
$$
\pi_{iF'} \cong \pi'.
$$
The base change is compatible for $v \not \in S$.
\end{lem}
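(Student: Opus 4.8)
The plan is to build the would‑be base change place by place and then recognise it as $\pi'$. Let $\Pi_i$ be the irreducible admissible representation of $\GL_2(\A_{F'})$ whose component at a place $v'$ of $F'$ (lying over the place $v$ of $F$) is the local base change $(\pi_{iv})_{F'v'}$. It suffices to prove $\Pi_{i,v'}\cong\pi'_{v'}$ for every $v'\notin S'$: then $\pi'$ is a weak base change of $\pi_i$ compatible at all $v\notin S$, and it is the unique weak base change by strong multiplicity one. Recall that $Q\le\widetilde A_4$, that $K/F'$ is Galois with $\Gal(K/F')\cong\widetilde A_4/Q\cong\ZZ/3$, and fix a generator $\mu$ of $\Gal(K/F')^{\wedge}$, regarded also as a cubic character of $F'^{\times}\backslash\A_{F'}^{\times}$. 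The two inputs I would use are: (i) $\pi_{iK}\cong\sigma\cong\pi'_K$, with the base changes compatible outside $S$; and (ii) Lemma~\ref{lem-sym}, which together with the fact that $\mathrm{Sym}^2$ commutes with unramified base change gives $\mathrm{Sym}^2(\Pi_{i,v'})\cong\mathrm{Sym}^2(\pi'_{v'})$ for all $v'\notin S'$.

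First I would run the local analysis. Fix $v'\notin S'$ and $v_K\mid v'$; since $v\notin S$ the extension $K_{v_K}/F'_{v'}$ is unramified of degree $m\in\{1,3\}$. If $m=1$ (in particular if $v'$ is archimedean, as $F$ is totally complex) then $K_{v_K}=F'_{v'}$ and $\Pi_{i,v'}=(\pi_{iv})_{Kv_K}=\sigma_{v_K}=\pi'_{v'}$. If $m=3$, then $\mu_{v'}$ has order $3$ and $\mathrm{Frob}_{v'}$ has order $3$ or $6$ in $\widetilde A_4$. Using the character table of $\widetilde A_4$ in \S\ref{appendix} (notably $\det\theta_2=1$ and the value of $\theta_2$ on these classes), the Satake parameter of $\pi'_{v'}$, i.e.\ the eigenvalues of $\psi_2(\mathrm{Frob}_{v'})$ with $\psi_2=\theta_2|_{\widetilde A_4}$, is $\{\omega,\omega^2\}$ or $\{-\omega,-\omega^2\}$ for a primitive cube root of unity $\omega$. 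Writing $\Pi_{i,v'}=I(d_1,d_2)$, input (i) (that $\Pi_{i,v'}$ base changes to $\sigma_{v_K}=(\pi'_{v'})_{v_K}$) gives $\{d_1^3,d_2^3\}$ equal to the cube of that parameter, while input (ii) gives $\{d_1^2,d_1d_2,d_2^2\}$ equal to the $\mathrm{Sym}^2$‑parameter of $\pi'_{v'}$. Solving this small system shows $\Pi_{i,v'}\cong\pi'_{v'}\otimes\mu_{v'}^{\,b(v')}$ for some $b(v')\in\{0,1,2\}$; putting $b(v')=0$ when $m=1$, this holds at every $v'\notin S'$.

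Next I would deduce $b(v')\equiv0$ using central characters. Since $\omega_{\pi'}=\det(\rho_2|_{F'})$ is trivial ($\det\theta_2$ is a character of the perfect group $\widetilde A_5$), comparison of central characters in $\Pi_{i,v'}\cong\pi'_{v'}\otimes\mu_{v'}^{\,b(v')}$ shows that the Hecke character $\nu:=\omega_{\pi_i}\circ\N_{F'/F}$ of $F'$ satisfies $\nu_{v'}=\mu_{v'}^{\,-b(v')}$ for every $v'\notin S'$. In particular $\nu$ is unramified outside $S'$ and $\nu^3=1$, so $\nu$ cuts out a cyclic extension $L/F'$ of degree dividing $3$; as $\nu_{v'}=1$ whenever $v'$ splits completely in $K$, Chebotarev forces $L\subseteq K$, so $\nu=\mu^{c}$ for some $c\in\{0,1,2\}$. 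On the other hand, by the diagram of Lemma~\ref{lem-image-bc} applied to $F'/F$, $\nu$ is the restriction to $W_{F'}'$ of the character $\omega_{\pi_i}$ of $W_F'$. If $c\ne0$ then $\omega_{\pi_i}^{\,3}$ restricts trivially to $W_{F'}'$ and so cuts out a cyclic extension of $F$ contained in $F'$, which is $F$ because $F'/F$ has prime degree $5$ and is not Galois; then $\omega_{\pi_i}$ is cubic, cutting out a cyclic cubic $L_0/F$ with $L_0F'=K\subseteq E$, hence $L_0\subseteq E$, contradicting the perfectness of $\widetilde A_5$. Therefore $\nu=1$; since $\mu_{v'}$ has order exactly $3$ whenever $m=3$, we get $b(v')=0$ for all $v'\notin S'$, i.e.\ $\Pi_{i,v'}\cong\pi'_{v'}$ there, which proves the lemma.

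The main obstacle is exactly the passage from the local identifications to the global one: the local computation pins $\Pi_i$ down only up to a cubic twist by $\mu_{v'}$ at the positive‑density set of places where $\mathrm{Frob}_{v'}$ has order $3$ or $6$, so no naive strong‑multiplicity‑one argument suffices. What makes it work is that these local cubic twists are forced by central characters to assemble into a single global character of $\Gal(K/F')$, and that the perfectness of $\widetilde A_5$ — equivalently, the absence of a cyclic cubic subextension of $E/F$ — rules out all but the trivial possibility.
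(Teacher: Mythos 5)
Your proof is correct, but it follows a genuinely different route from the paper's, and it is worth contrasting them.

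Both arguments start from the same two inputs: $\pi_{iK}\cong\sigma\cong\pi'_K$ (which immediately handles places split in $K/F'$ and gives ``the cubes of the Satake parameters agree'' at inert places), and Lemma~\ref{lem-sym} (so the symmetric squares of the parameters agree). The difference is in how the residual cubic ambiguity is killed. The paper observes first that $\pi_i$ and $\pi'$ have \emph{trivial central character} (``by construction''---indeed $\omega_{\pi_{iK}}=\omega_\sigma=1$ and $K$ has no nontrivial abelian subextension over $F$, since $\widetilde A_5$ is perfect). With the determinant pinned to $1$, the paper can write $A(\pi_{iF'v'})=\mathrm{diag}(a\zeta,a^{-1}\zeta^{-1})$ with $\zeta^3=1$ and finish purely locally: the symmetric square comparison together with the fact that $\mathrm{Sym}^2(\rho_2|_{F'})$ factors through $A_4$, a group with no element of order $6$, forces the parameters to coincide. (The ``$\mathrm{Sym}^2$ has order $6$'' step in the paper is stated a bit loosely---when $a$ has order $3$ or $6$, $\mathrm{Sym}^2(A)$ has order $3$ and one checks directly that the conjugacy classes already agree; only the phantom order-$12$ eigenvalue case produces order $6$, and that is what the $A_4$ observation eliminates.) You, on the other hand, do not presuppose $\omega_{\pi_i}=1$. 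Instead your local analysis (which is carried out without the determinant constraint and is slightly more case-heavy, but correct, given the explicit list of Satake parameters $\{\omega,\omega^2\}$ or $\{-\omega,-\omega^2\}$) yields only $\Pi_{i,v'}\cong\pi'_{v'}\otimes\mu_{v'}^{b(v')}$, and you then run a global argument---comparison of central characters, $\nu^3=1$, Chebotarev to identify $\nu$ with a power of $\mu$, and the perfectness of $\widetilde A_5$ to rule out a nontrivial cubic character of $F$ inside $E$---to show the twist is globally trivial. That global step is essentially a re-derivation of the triviality of $\omega_{\pi_i}$, which the paper invokes directly. Net effect: the paper's route is shorter once $\omega_{\pi_i}=1$ is granted; yours is self-contained on that point but longer, and it trades the paper's delicate eigenvalue/order bookkeeping for a class-field-theoretic argument. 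Both are valid, and yours has the pedagogical virtue of making explicit that a global input is needed to exclude the cubic twist; it would be worth streamlining it by noting at the outset that $\omega_{\pi_i}$ is killed by $\widetilde A_5$ being perfect.
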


\begin{proof}  Fix $i \in \{1,2\}$.  We will verify that the local base change $\pi_{iF'v'}$ is isomorphic to $\pi'_{v'}$ for all places  $v'$ of $F'$ not dividing places in $S$; this will complete the proof of the lemma (notice that the local base change is well-defined even though we do not yet know that $\pi_{iF'}$ exists as an automorphic representation).
Notice that $\pi_{iK} \cong \pi'_K \cong \sigma$ by construction of $\pi_i$.  Thus if $v'$ is a place of $F'$ split in $K/F'$ and not lying above a place of $S$,  then $\pi_{iF'v'} \cong \pi'_v$.

Suppose that $v'$ is a place of $F'$ that is nonsplit in $K/F'$ and not lying above a place of $S$.  Then there is a unique place $v_K|v'$ and $[K_{v_K}:F'_{v'}]=3$.  Notice that $\pi_{i}$ and $\pi'$ have trivial central character by construction.  Thus their Langlands classes are of the form
\begin{align*}
A(\pi_{iF'v'})=\begin{pmatrix} a\zeta & \\ & a^{-1} \zeta^{-1}\end{pmatrix}\quad \textrm{ and }\quad A(\pi_{v'}')=\begin{pmatrix} a &\\ & a^{-1}\end{pmatrix}
\end{align*}
for some $a \in \CC^{\times}$ and some third root of unity $\zeta$.  By Lemma \ref{lem-sym} we have that
$$
\mathrm{Sym}^2(A(\pi_{iF'v'}))=\begin{pmatrix}a^2\zeta^2 & & \\ & 1 & \\&&a^{-2} \zeta^{-2} \end{pmatrix}
$$
is conjugate under $\GL_3(\CC)$ to
$$
\mathrm{Sym}^2(A(\pi'_{v'}))=\begin{pmatrix} a^2 & & \\ & 1 & \\ & & a^{-2} \end{pmatrix}.
$$
Thus $\{a^2,a^{-2}\}=\{a^2\zeta^2,a^{-2}\zeta^{-2}\}$.  If $a^2=a^2\zeta^2$ then $\zeta=1$, proving that $\pi_{iF'v'} \cong \pi'_{v'}$.  If on the other hand $a^2=a^{-2}\zeta^{-2}$ and $\zeta \neq 1$, then
$$
a^4=\zeta^{-2}
$$
and the matrix $\mathrm{Sym}^2(A(\pi'_{v'}))$ has order $6$.  On the other hand, $\mathrm{Sym}^2(A(\pi'_{v'}))$ is the image of a Frobenius element of $\Gal(E/F')$ under the Galois representation corresponding to $\mathrm{Sym}^2(\pi')$.  This Galois representation is the symmetric square of a representation of $\widetilde{A}_4$ with trivial determinant, and hence factors through $A_4$. As $A_4$ has no elements of order $6$, we arrive at a contradiction, proving that $\zeta=1$.  Hence
$\pi_{iF'v'} \cong \pi'_{v'}$.

\end{proof}

Let $\chi \in \widetilde{A}_4^{\wedge}$ be a nontrivial (abelian) character.  Then for all places $v$ of $F$ one has an admissible representation $\mathrm{Ind}_{F'}^{F}(\chi)_v$.  It is equal to
$\otimes_{v'|v} \mathrm{Ind}_{F'_{v'}}^{F_v}(\chi_{v'})$.  Note that one does not know a priori whether or not $\mathrm{Ind}_{F'}^F(\chi)$ is automorphic; proving this in a special case is the subject matter of \cite{KimIcos}. By class field theory we can also view $\mathrm{Ind}_{F'}^F(\chi)$ as an $L$-parameter
$$
\mathrm{Ind}_{F'}^{F}(\chi):W_{F}' \lto {}^L\GL_{5F'}
$$
and with this viewpoint in mind we prove the following lemma:

\begin{lem} \label{lem-ind-autom}
For each $i \in \{1,2\}$ the $L$-parameter $\mathrm{Ind}_{F'}^F(\chi)$ is associated to $\mathrm{Sym}^4(\pi_i)$.  More precisely, $\mathrm{Ind}_{F'}^F(\chi)_v=\pi(\mathrm{Ind}_{F'}^F(\chi)_{v}) \cong \mathrm{Sym}^4(\pi_{iv})$ for all $v \not \in S$.
\end{lem}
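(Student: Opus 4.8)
The plan is to verify the claimed local isomorphism $\mathrm{Ind}_{F'}^F(\chi)_v \cong \mathrm{Sym}^4(\pi_{iv})$ place by place for $v \notin S$, using the group-theoretic identity $\mathrm{Sym}^4(\theta_2) = \mathrm{Ind}_{\widetilde{A}_4}^{\widetilde{A}_5}(\chi)$ recorded in Lemma \ref{lem-icosa-reps} (and attributed there to Ramakrishnan) together with the compatibility of base change from $\pi_i$ to $\pi'$ established in Lemma \ref{lem-F'}. First I would fix $i \in \{1,2\}$ and a place $v \notin S$; since $E/F$ is unramified at $v$ and $\pi_i$ is unramified outside $S$, the representation $\pi_{iv}$ is unramified, with Langlands class $A(\pi_{iv})$. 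By Lemma \ref{lem-F'}, for each place $v'|v$ of $F'$ we have $\pi_{iF'v'} \cong \pi'_{v'}$, and by construction $\pi' = \pi(\rho_2|_{F'})$, so $A(\pi'_{v'})$ is the semisimple conjugacy class $\rho_2|_{W'_{F'}}(\mathrm{Frob}_{v'})$ attached to a Frobenius at $v'$. Thus $A(\pi_{iv})$ restricted along $W'_{F'_{v'}}$ matches $\rho_2(\mathrm{Frob}_{v'})$; since this holds for all $v'|v$ and $\rho_2$ is a genuine Galois representation of $\Gal(E/F)$, the Satake parameter $A(\pi_{iv})$ of $\pi_{iv}$ is forced to be $\rho_2(\mathrm{Frob}_v)$ up to conjugacy.

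Granting this, the computation of $\mathrm{Sym}^4(\pi_{iv})$ reduces to pure representation theory of $\widetilde{A}_5$: $\mathrm{Sym}^4(A(\pi_{iv})) = \mathrm{Sym}^4(\rho_2(\mathrm{Frob}_v)) = (\mathrm{Sym}^4\rho_2)(\mathrm{Frob}_v)$, and by Lemma \ref{lem-icosa-reps} one has $\mathrm{Sym}^4 \rho_2 \cong \mathrm{Ind}_{\widetilde{A}_4}^{\widetilde{A}_5}(\chi)$ as representations of $\Gal(E/F) = \widetilde{A}_5$. Therefore $\mathrm{Sym}^4(A(\pi_{iv}))$ is conjugate to the Satake parameter of the (unramified, by $v \notin S$) admissible representation $\pi(\mathrm{Ind}_{F'}^F(\chi)_v) = \otimes_{v'|v}\pi(\mathrm{Ind}_{F'_{v'}}^{F_v}(\chi_{v'}))$, the latter identification being exactly the unramified part of the local Langlands correspondence for automorphic induction (cf. the discussion of $\mathrm{Ind}_E^F$ in \S\ref{ssec-primitive}). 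Since an unramified irreducible admissible representation of $\GL_5(F_v)$ is determined by its Satake parameter, this yields $\mathrm{Sym}^4(\pi_{iv}) \cong \mathrm{Ind}_{F'}^F(\chi)_v = \pi(\mathrm{Ind}_{F'}^F(\chi)_v)$ for all $v \notin S$, which is the assertion of the lemma; the "associated" statement then follows since these form all but finitely many places.

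I expect the main subtlety to be the descent-of-Satake-parameters step: namely, arguing carefully that knowing $A(\pi_{iv})|_{W'_{F'_{v'}}} = \rho_2(\mathrm{Frob}_{v'})$ for every $v'|v$ pins down $A(\pi_{iv}) = \rho_2(\mathrm{Frob}_v)$ and not merely some $\Gal(E/F)$-twist of it. Here I would use that $\rho_2$ is an \emph{irreducible} two-dimensional representation of $\Gal(E/F)$ and that $\det \rho_2$ is trivial (so $\pi_i$ has trivial central character, as already noted in the proof of Lemma \ref{lem-F'}): the two eigenvalues $\{a\zeta, a^{-1}\zeta^{-1}\}$ of $A(\pi_{iv})$ with $\zeta$ a root of unity arising from the possible discrepancy between $F$ and $F'$ must, after passing to $F'$, agree with those of $\rho_2|_{F'}(\mathrm{Frob})$, and a Chebotarev/order-of-element argument analogous to the one in the proof of Lemma \ref{lem-F'} (comparing orders of $\mathrm{Sym}^2$-images, which factor through $A_4$) rules out the nontrivial twist. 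Alternatively, and more cleanly, one can avoid this by working entirely with $L$-parameters: the local base change compatibility of Lemma \ref{lem-F'} says precisely $\varphi(\pi_{iv})|_{W'_{F'_{v'}}} \cong \rho_2|_{W'_{F'_{v'}}}$, and since $\rho_2$ is the restriction to $W'_{F_v}$ of a representation of $\Gal(E/F)$ with $E/F$ unramified at $v$, together with the fact (already used in \S\ref{ssec-trace-to-func2} via the trace identities) that $\pi_i$ is a weak base change of $1 \boxplus 1$ on $\GL_2(\A_E)$ so $\varphi(\pi_{iv})|_{W'_{E_w}}$ is trivial, one concludes $\varphi(\pi_{iv})$ factors through $\Gal(E/F)$ and restricts to $\rho_2$ on $W'_{F'_{v'}}$; irreducibility of $\rho_2$ and Frobenius reciprocity (Lemma \ref{lem-basic}) then give $\varphi(\pi_{iv}) \cong \rho_2$ after checking the twist vanishes exactly as above. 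Applying $\mathrm{Sym}^4$ to $L$-parameters and invoking $\mathrm{Sym}^4\rho_2 \cong \mathrm{Ind}_{\widetilde{A}_4}^{\widetilde{A}_5}(\chi)$ finishes the argument, the equality $\mathrm{Sym}^4(\pi_{iv}) = \pi(\mathrm{Ind}_{F'}^F(\chi)_v)$ being a compatibility of the local Langlands correspondence with symmetric powers on the one hand and with automorphic induction on the other.
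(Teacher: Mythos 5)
Your proposed route is genuinely different from the paper's, but it has a real gap that makes it circular as written.

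The paper never attempts to pin down the Satake parameter $A(\pi_{iv})$ itself. Instead it tensors: by Frobenius reciprocity (in the form of Lemma \ref{lem-basic} applied to unramified $L$-parameters),
$$\mathrm{Sym}^4(\pi_i)^{\vee}_v \otimes \mathrm{Ind}_{F'}^F(\chi)_v \cong \otimes_{v'|v}\mathrm{Ind}_{F'_{v'}}^{F_v}\bigl(\mathrm{Sym}^4(\pi'_{v'})^{\vee} \otimes \chi_{v'}\bigr),$$
which depends only on data over $F'$, where Lemma \ref{lem-F'} gives $\pi_{iF'v'}\cong\pi'_{v'}$. Then $\mathrm{Sym}^4\rho_2 \cong \mathrm{Sym}^4\rho_2^{\vee} \cong \mathrm{Ind}_{F'}^F(\chi) \cong \mathrm{Ind}_{F'}^F(\chi)^{\vee}$ (Lemma \ref{lem-icosa-reps}) turns the right-hand side into $\otimes_{v'|v}\bigl(\mathrm{Ind}_{F'_{v'}}^{F_v}(\chi_{v'})^{\vee}\otimes\mathrm{Ind}_{F'_{v'}}^{F_v}(\chi_{v'})\bigr)$, and one then ``divides'' by the known eigenvalues of $\mathrm{Ind}_{F'}^F(\chi)_v$ to conclude $\mathrm{Sym}^4(\pi_i)_v\cong\mathrm{Ind}_{F'}^F(\chi)_v$. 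This is the whole point of the maneuver: the undetermined root-of-unity ambiguity in $A(\pi_{iv})$ never enters.

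Your proposal instead tries to show directly that $A(\pi_{iv})$ is conjugate to $\rho_2(\mathrm{Frob}_v)$, but this is not forced by knowing $\pi_{iF'v'}\cong\pi'_{v'}$: unramified base change along a degree-$f$ extension sends $A\mapsto A^f$, so you only learn $A(\pi_{iv})^{f(F'_{v'}/F_v)}$, and at places where $F'/F$ is not totally split (e.g.\ $f=5$) this leaves a genuine $f$-th-root-of-unity ambiguity. You acknowledge this as ``the main subtlety,'' but neither of your proposed fixes closes it. The $\mathrm{Sym}^2$/order-of-element argument you invoke from Lemma \ref{lem-F'} was tailored to the degree-$3$ extension $K/F'$ (ruling out order $6$ in $A_4$), and there is no analogous elementary statement handed to you for the degree-$5$ case $F'/F$. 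Your ``cleaner'' alternative is circular: the twist-vanishing argument you gesture at (``exactly as above'') is precisely the content of Lemma \ref{lem-places}, whose proof in the paper explicitly uses the conjugacy of the two matrices in \eqref{are-conj}, i.e.\ uses Lemma \ref{lem-ind-autom}. In short, what you are trying to establish as a stepping stone is a strictly stronger statement than the lemma itself, and in the paper it is derived \emph{from} the lemma, not before it. The paper's Frobenius-reciprocity-plus-eigenvalue-division argument is what lets one avoid this logical knot.
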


We note that $\mathrm{Sym}^4(\pi_i)$ is an automorphic representation of $\GL_5(\A_F)$ by work of Kim \cite{Kim} and Kim-Shahidi \cite[Theorem 3.3.7]{KScusp}.

\begin{proof}
At the level of admissible representations for $v \not \in S$ one has
\begin{align} \label{first-frob}
\mathrm{Sym}^4(\pi_i)^{\vee}_v \otimes \mathrm{Ind}_{F'}^F(\chi)_v \cong \otimes_{v'|v}\mathrm{Ind}_{F'_{v'}}^{F_v}(\mathrm{Sym}^4(\pi'_{v'})^{\vee} \otimes \chi_{v'})
\end{align}
by Frobenius reciprocity.
On the other hand $\pi'^{\vee}=\pi(\rho_{2}|_{F'}^{\vee})$ and
$$
\mathrm{Sym}^4(\rho_2)^{\vee} \cong \mathrm{Sym}^4(\rho_2^{\vee}) \cong \mathrm{Sym}^4(\rho_2) \cong \mathrm{Ind}_{F'}^F(\chi) \cong \mathrm{Ind}_{F'}^F(\chi)^{\vee}
$$
at the level of Galois representations (see Lemma \ref{lem-icosa-reps}).  Thus the right hand side of \eqref{first-frob} is isomorphic to
$$
\otimes_{v'|v}\mathrm{Ind}_{F'_{v'}}^{F_v}(\mathrm{Ind}_{F'_{v'}}^{F_v}(\chi_{v'})^{\vee}|_{F'_{v'}} \otimes \chi_{v'}) \cong \otimes_{v'|v}\mathrm{Ind}_{F'_{v'}}^{F_v}(\chi_{v'})^{\vee} \otimes \mathrm{Ind}_{F'_{v'}}^{F_v}(\chi_{v'})
$$
and we conclude that
\begin{align} \label{sym-isom}
\mathrm{Sym}^4(\pi_i)^{\vee}_v \otimes \mathrm{Ind}_{F'}^F(\chi)_v \cong \otimes_{v'|v}\left(\mathrm{Ind}_{F'_{v'}}^{F_v}(\chi_{v'})^{\vee} \otimes \mathrm{Ind}_{F'_{v'}}^{F_v}(\chi_{v'})\right).
\end{align}

Now if $A$ and $B$ are two square invertible diagonal matrices of rank $n$, the eigenvalues of $A$ can be recovered from knowledge of the eigenvalues of $A \otimes B$ and the eigenvalues of $B$.  With this remark in hand, we see that \eqref{sym-isom} implies that
$$
\mathrm{Sym}^4(\pi_i)_v \cong \mathrm{Ind}_{F'}^F(\chi)_v
$$
for all $v \not \in S$.
\end{proof}

With this preparation in place, we make a step towards proving that $\rho_2$ and $\pi_1$
are associated:

\begin{lem} \label{lem-places}
Let $v \not \in S$.  One has $\pi_{1v} \cong \pi(\rho_{2v})$ or $\pi_{1v} \cong \pi(\xi \circ \rho_{2v})$.

\end{lem}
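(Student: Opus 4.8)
The plan is to pass to unramified local $L$-parameters and to argue through the geometry of Frobenius conjugacy classes in $\widetilde{A}_5$, using two facts already proved: that the base change $\pi_{1F'} \cong \pi'$ of Lemma~\ref{lem-F'} is compatible at every $v \notin S$, and that $\mathrm{Sym}^4(\pi_{1v}) \cong \pi(\mathrm{Sym}^4(\rho_2)_v)$ for $v \notin S$ (Lemma~\ref{lem-ind-autom}, together with the identity $\mathrm{Sym}^4(\theta_2) = \mathrm{Ind}_{\widetilde{A}_4}^{\widetilde{A}_5}(\chi) = \mathrm{Sym}^4(\xi \circ \theta_2)$ of Lemma~\ref{lem-icosa-reps}). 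First I would fix $v \notin S$ and a place $w \mid v$ of $E$; as $E/F$ is unramified at $v$, the decomposition group $\Gal(E_w/F_v)$ is cyclic, generated by a Frobenius $\phi_w$. Since $\pi_1$ is unramified outside $S$, the representation $\pi_{1v}$ is spherical, and since $\pi_1$ has trivial central character (it is trivial on $\N_{K/F}(\A_K^{\times})$ because $\pi_{1K} \cong \sigma$ has trivial central character, hence trivial because $\widetilde{A}_5^{\mathrm{ab}} = 1$), its Satake parameter $A := A(\pi_{1v})$ lies in $\SL_2(\CC)$. Put $B := \rho_2(\phi_w) = \theta_2(\phi_w) \in \SL_2(\CC)$, and let $\bar{\phi}_w$ be the image of $\phi_w$ in $A_5 = \widetilde{A}_5/\{\pm I\}$; it acts on the five cosets of $\widetilde{A}_4$ via the natural degree-five action of $A_5$, the action factoring through $A_5$ because $\{\pm I\}$ is the core of $\widetilde{A}_4$ in $\widetilde{A}_5$.

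The argument splits according to whether $\bar{\phi}_w$ is a $5$-cycle. If it is not, then $\phi_w$ has order $1,2,3,4$ or $6$, and in each case $\bar{\phi}_w$ fixes one of the five cosets: the identity, a product of two transpositions and a $3$-cycle all fix a letter among five, and the images in $A_5$ of order-$4$ and order-$6$ elements $g$ of $\widetilde{A}_5$ have order $2$ and $3$ respectively (as $g^2$, resp.\ $g^3$, is the central involution $-I$). Hence there is a place $v' \mid v$ of $F'$ of residue degree one, and since $F'/F$ is unramified at $v$ this gives $F'_{v'} = F_v$; Lemma~\ref{lem-F'} then yields $\pi_{1v} = \pi_{1F'v'} \cong \pi'_{v'} = \pi(\rho_{2v})$, and the lemma holds in this case. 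If $\bar{\phi}_w$ is a $5$-cycle, then $\phi_w$ has order $5$ or $10$, and in either case $\bar{\phi}_w$ is transitive on the five cosets, so there is a unique place $v' \mid v$ of $F'$, of residue degree $5$. Lemma~\ref{lem-F'} then gives $A^{5} \sim \rho_2(\phi_w^{5})$, which equals $I$ when $\mathrm{ord}(\phi_w) = 5$ and equals $\theta_2(-I) = -I$ when $\mathrm{ord}(\phi_w) = 10$. Since $A$ is semisimple with $\det A = 1$, it is conjugate to $\mathrm{diag}(\zeta,\zeta^{-1})$ with $\zeta^{5} = 1$ in the first case and $\zeta^{5} = -1$ in the second; consulting the character table of \S\ref{appendix}, this forces $\mathrm{tr}(A) \in \{2\} \cup \{\theta_2(C_5),\theta_2(C_5')\}$ in the first case and $\mathrm{tr}(A) \in \{-2\} \cup \{\theta_2(C_{10}),\theta_2(C_{10}')\}$ in the second, where $\{\theta_2(C_5),\theta_2(C_5')\}$ (resp.\ $\{\theta_2(C_{10}),\theta_2(C_{10}')\}$) is exactly $\{\mathrm{tr}\,\rho_{2v}(\phi_w),\ \mathrm{tr}(\xi \circ \rho_{2v})(\phi_w)\}$.

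It then remains to exclude $\mathrm{tr}(A) = \pm 2$, and here I would invoke $\mathrm{Sym}^4$: as $B$ has order $5$ or $10$ with eigenvalues $\gamma^{\pm 1}$ for $\gamma$ a primitive root of that order, the eigenvalues of $\mathrm{Sym}^4(B)$ are exactly the five fifth roots of unity, whence $\mathrm{tr}\,\mathrm{Sym}^4(B) = 0$; but if $\mathrm{tr}(A) = \pm 2$ then $A = \pm I$ and $\mathrm{tr}\,\mathrm{Sym}^4(A) = 5 \neq 0$, contradicting $\mathrm{Sym}^4(A) \sim \mathrm{Sym}^4(B)$ from Lemma~\ref{lem-ind-autom}. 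Therefore $\mathrm{tr}(A)$ is $\mathrm{tr}\,\rho_{2v}(\phi_w)$ or $\mathrm{tr}(\xi \circ \rho_{2v})(\phi_w)$, and since a semisimple conjugacy class in $\SL_2(\CC)$ is determined by its trace, $A$ is conjugate to $\rho_{2v}(\phi_w)$ or to $(\xi \circ \rho_{2v})(\phi_w)$, i.e.\ $\pi_{1v} \cong \pi(\rho_{2v})$ or $\pi_{1v} \cong \pi(\xi \circ \rho_{2v})$.

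The main obstacle is exactly the $5$-cycle case. There the compatibility $\pi_{1F'} \cong \pi'$ only delivers the congruence $A^{5} \sim \pm I$, which leaves three conjugacy classes in play, and the two ``correct'' ones $\rho_{2v}(\phi_w)$, $(\xi \circ \rho_{2v})(\phi_w)$ cannot be separated by any base change whatsoever, because $\theta_2|_{\widetilde{A}_4} = (\xi \circ \theta_2)|_{\widetilde{A}_4}$ by Lemma~\ref{lem-tetra-reps} (and indeed using $\pi_{1K} \cong \sigma$ in place of $\pi_{1F'} \cong \pi'$ reproduces the same congruence). What removes the spurious third class is the symmetric fourth power — the fact that $\mathrm{Sym}^4(\theta_2) = \mathrm{Ind}_{\widetilde{A}_4}^{\widetilde{A}_5}(\chi)$ is automorphic, by the work of Kim and Kim-Shahidi, and coincides with $\mathrm{Sym}^4(\xi \circ \theta_2)$ — so Lemma~\ref{lem-ind-autom} is indispensable here.
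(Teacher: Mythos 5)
Your argument is correct, and it rests on exactly the two inputs the paper uses: the compatible base change $\pi_{1F'}\cong \pi'$ of Lemma \ref{lem-F'}, which pins down $A(\pi_{1v})$ up to a root of unity of order dividing the residue degree, and the $\mathrm{Sym}^4$ identity of Lemma \ref{lem-ind-autom} (via $\mathrm{Sym}^4(\theta_2)=\mathrm{Ind}_{\widetilde{A}_4}^{\widetilde{A}_5}(\chi)$ from Lemma \ref{lem-icosa-reps}), which removes the residual ambiguity. What you do differently is the case decomposition. The paper splits according to the partitions of $5=[F':F]$ into residue degrees and carries out separate eigenvalue computations for a place of relative degree $5$ and for one of relative degree $3$; you split on the cycle type of Frobenius acting on the five cosets of $\widetilde{A}_4$, observing that any non-$5$-cycle in $A_5$ fixes a letter, so that there is a place $v'\mid v$ with $F'_{v'}=F_v$ and compatibility at $v'$ finishes the argument outright. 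This shows in particular that the paper's ``degrees $2$ and $3$'' case is vacuous (a single element of $A_5$ cannot have orbit type $(2,3)$ on five letters), so its degree-$3$ computation is superfluous; and in the one genuine case, the $5$-cycle, your exclusion of $A=\pm I$ by comparing $\mathrm{tr}\,\mathrm{Sym}^4(A)=5$ with $\mathrm{tr}\,\mathrm{Sym}^4(\rho_2(\phi_w))=0$ is a tidier packaging of the paper's constraint $\zeta\neq\nu^{-1}$. The only loose end is that your Frobenius formalism applies only to finite $v$, whereas $S$ consists of finite places; you should dispatch the infinite places in a separate sentence, as the paper does, using that $F$ is totally complex so that $F'_{v'}=F_v=\CC$ and Lemma \ref{lem-F'} gives the claim directly there.
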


\begin{proof}  For infinite places we use our running assumption that $F$ is totally complex together with Lemma \ref{lem-F'}.  This allows one to deduce the lemma in this case.  Assume now that $v$ is finite and choose $v'|v$.  By Lemma \ref{lem-F'}, up to conjugation, the Langlands class of $\pi_{1v}$ and the
Frobenius eigenvalue of $\rho_2(\mathrm{Frob}_v)$ satisfy
\begin{align*}
A(\pi_{1v})=\begin{pmatrix} a\zeta & \\ & a^{-1} \zeta^{-1}\end{pmatrix}\quad \textrm{ and } \quad \rho_2(\mathrm{Frob}_v)=\begin{pmatrix} a &\\ & a^{-1}\end{pmatrix}
\end{align*}
where $\zeta$ is a $[F'_{v'}:F_v]$-root of unity.  Thus if there is a place $v'|v$ such that $[F'_{v'}:F_v]=1$, then we are done.  Since $[F':F]=5$, there are two other cases to consider, namely where there is a single $v'|v$ of relative degree $5$ and when there are two places $v'_2,v'_3|v$, one of them of relative degree $2$ and the other of relative degree $3$.

By Lemma \ref{lem-ind-autom} the two matrices
\begin{align} \label{are-conj}
\begin{pmatrix} a^4\zeta^4 & & & & \\
& a^2\zeta^2 & & &\\ & & 1 & &  \\ & & & a^{-2} \zeta^{-2} & \\
& & & & a^{-4}\zeta^{-4}\end{pmatrix} \quad \textrm{ and }\quad \begin{pmatrix} a^4 & & & & \\
& a^2 & & &\\ & & 1 & &  \\ & & & a^{-2} & \\
& & & & a^{-4}\end{pmatrix}
\end{align}
are conjugate.  We will use this fact and a  case-by-case argument to prove the lemma.  Assume $[F'_{v'}:F_{v}]=5$.  In this case $a+a^{-1}$ is $\pm \frac{\sqrt{5}-1}{2}$ or $\pm \frac{-\sqrt{5}-1}{2}$ by the character table of $\widetilde{A}_5$ in \S \ref{appendix} above, which implies that $a=\pm\nu$ for a primitive fifth root of unity $\nu$.  We conclude from the conjugacy of the two
matrices \eqref{are-conj} that $\zeta \neq \nu^{-1}$.  On the other hand, if $\zeta$ is any other fifth root of unity then the matrix $A(\pi_{1v})$ is conjugate to either $\rho_2(\mathrm{Frob}_v)$ or $\xi \circ \rho_2(\mathrm{Frob}_v)$, where as above $\xi$ is the generator of $\Gal(\QQ(\sqrt{5})/\QQ)$.  Thus the lemma follows in this case.

Assume now that $[F'_{v'}:F_v]=3$; this is the last case we must check.  By consulting the character table of $\widetilde{A}_5$ in \S \ref{appendix} we see that $a+a^{-1}=\pm 1$ which implies $a$ is a primitive $6$th root of unity or a primitive $3$rd root of unity.  By the conjugacy of the matrices \eqref{are-conj} we conclude that $\zeta \neq \pm a^{-1}$.  Thus if $a$ is a primitive $3$rd root of unity the matrices
$$
\begin{pmatrix} a \zeta & \\ & a^{-1} \zeta^{-1} \end{pmatrix}, \, \, \, \begin{pmatrix} a & \\ & a^{-1} \end{pmatrix}
$$
are either equal (if $\zeta=1$) or conjugate (if $\zeta \neq a^{-1}$ is a nontrivial $3$rd root of unity).  Now suppose that $a$ is a primitive $6$th root of unity; by replacing $a$ by $a^{-1}$ if necessary we may assume that $a=e^{2 \pi i/6}$.  In this case the right matrix in \eqref{are-conj} has eigenvalues $e^{\pm 2 \pi i/3},1$ (the first two with multiplicity two and the last with multiplicity one).  Since $\zeta \neq \pm a^{-1}$, we must have $\zeta=1$ or $\zeta=e^{-2\pi i /3}$.  In the former case $A(\pi_{1v})$ and $\rho_2(\mathrm{Frob}_v)$ are equal and in the latter case they are conjugate.
\end{proof}

Another way of stating the lemma that appears more ``global'' is the following corollary:

\begin{cor}  \label{cor-isob} One has
$$
\pi_1 \boxplus \pi_2 \cong \pi(\rho_2 \oplus \xi \circ \rho_2).
$$\
\end{cor}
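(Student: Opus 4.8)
The plan is to reduce Corollary~\ref{cor-isob} to a family of local statements at the places $v\notin S$ and then to settle these by combining Lemma~\ref{lem-places} with a Rankin--Selberg density argument. Recall first that $\pi_1\boxplus\pi_2$ is an isobaric automorphic representation of $\GL_4(\A_F)$ (\cite[\S2]{LanglEinM}, \cite[(4.3)]{JSII}), and that by strong multiplicity one for isobaric automorphic representations (\cite[Theorem~4.4]{JSII}), which tolerates exceptional sets of Dirichlet density zero, it suffices to prove that $\pi_{1v}\boxplus\pi_{2v}\cong\pi((\rho_2\oplus\xi\circ\rho_2)_v)$ for all $v\notin S$ outside a set of density zero. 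Since $\pi_1$ and $\pi_2$ have trivial central character and, by Lemma~\ref{lem-places} (whose proof applies verbatim with $\pi_2$ in place of $\pi_1$), their local components at almost all places lie among $\pi((\rho_2)_v)$ and $\pi((\xi\circ\rho_2)_v)$ and hence are self-dual ($\theta_2$ being symplectic), strong multiplicity one on $\GL_2$ gives $\pi_i^{\vee}\cong\pi_i$; in particular $\pi_1\not\cong\pi_2^{\vee}$.

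Next I would organize the places $v\notin S$ by the order of a Frobenius element $\mathrm{Frob}_v$ in $\Gal(E/F)\cong\widetilde A_5$. By Lemma~\ref{lem-places}, each of $\pi_{1v},\pi_{2v}$ is isomorphic to $\pi((\rho_2)_v)$ or to $\pi((\xi\circ\rho_2)_v)$. From the character table of $\widetilde A_5$ in \S\ref{appendix} the value $\theta_2(\mathrm{Frob}_v)$ is rational unless $\mathrm{Frob}_v$ has order $5$ or $10$, and $\xi$ fixes every rational value; because $\rho_2$ has trivial determinant an unramified Satake parameter here is determined by its trace, so $(\rho_2)_v\cong(\xi\circ\rho_2)_v$ whenever $\mathrm{Frob}_v$ has order different from $5$ and $10$, and then $\pi_{1v}\boxplus\pi_{2v}\cong\pi((\rho_2)_v)^{\boxplus2}\cong\pi((\rho_2\oplus\xi\circ\rho_2)_v)$ automatically. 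Thus everything reduces to the case where $\mathrm{Frob}_v$ has order $5$ or $10$; there $\{\pi_{1v},\pi_{2v}\}$ already lies in the two-element set $\{\pi((\rho_2)_v),\pi((\xi\circ\rho_2)_v)\}$, whose members are distinct, so the required local identity is equivalent to $\pi_{1v}\not\cong\pi_{2v}$.

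The main point, and the step I expect to be least routine, is to show that the set $T$ of $v\notin S$ with $\mathrm{Frob}_v$ of order $5$ or $10$ and $\pi_{1v}\cong\pi_{2v}$ has Dirichlet density zero. I would argue by comparing Euler products near $s=1$. Writing $a_\pi(v)$ for the trace of the Langlands class $A(\pi_v)$: since $\pi_1\not\cong\pi_2^{\vee}$ the Rankin--Selberg $L$-function $L^S(s,\pi_1\times\pi_2)$ is holomorphic and non-vanishing at $s=1$, so $\sum_{v\notin S}a_{\pi_1}(v)a_{\pi_2}(v)q_v^{-s}$ is bounded as $s\to1^{+}$, whereas $L^S(s,\pi_i\times\pi_i)$ has a simple pole at $s=1$, so $\sum_{v\notin S}a_{\pi_i}(v)^2q_v^{-s}=\log\frac{1}{s-1}+O(1)$. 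At $v$ with $\mathrm{Frob}_v$ of order different from $5,10$ one has $\pi_{1v}\cong\pi_{2v}$, hence $a_{\pi_1}(v)a_{\pi_2}(v)=a_{\pi_1}(v)^2\ge0$; at $v\in T$ likewise $a_{\pi_1}(v)a_{\pi_2}(v)=a_{\pi_1}(v)^2$; and at the remaining order-$5$/$10$ places, using the relations $uv=-1$ and $u+v=1$ for the roots $u,v$ of $x^2-x-1$, one finds $a_{\pi_1}(v)a_{\pi_2}(v)=-1$ and $a_{\pi_1}(v)^2+a_{\pi_2}(v)^2=3$. Subtracting the bounded quantity from the $i=1$ polar identity yields $\sum_{v\in R}(a_{\pi_1}(v)^2+1)q_v^{-s}=\log\frac{1}{s-1}+O(1)$, where $R$ is the set of order-$5$/$10$ places with $\pi_{1v}\not\cong\pi_{2v}$; doing the same with $\pi_2$ and adding, the coefficient $a_{\pi_1}(v)^2+a_{\pi_2}(v)^2+2$ is identically $5$ on $R$, so $\sum_{v\in R}q_v^{-s}=\tfrac{2}{5}\log\frac{1}{s-1}+O(1)$, i.e. $R$ has density $\tfrac{2}{5}$. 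As the order-$5$/$10$ places have density $\tfrac{2}{5}$ by the Chebotarev density theorem, $T$ has density $0$, which completes the reduction and hence the proof of Corollary~\ref{cor-isob}.

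The one remaining loose end is whether $T$ is in fact empty, as would be needed to read ``$\pi_1\boxplus\pi_2\cong\pi((\rho_2\oplus\xi\circ\rho_2))$'' as strong association at every place and to handle the places in $S$. I expect this to be the genuine obstacle. I would attempt it by a finer local analysis at a hypothetical $v_0\in T$, exploiting the compatibility of the base changes $\pi_i\to\pi_{iF'}=\pi'$ and $\pi_i\to\pi_{iK}=\sigma$ from Lemma~\ref{lem-F'} and Lemma~\ref{lem-sym} together with Lemma~\ref{lem-ind-autom} on $\mathrm{Sym}^4(\pi_i)$, or by feeding a suitably ramified test function at $v_0$ into the trace identity of Theorem~\ref{main-thm-3}; a cleaner but seemingly harder route would be to prove directly that $\pi_1\times\pi_2$ is of Galois type over $F$, which would also give the stronger assertion $\pi_1\cong\pi(\rho_2)$ left open in the remark following Theorem~\ref{main-thm-3-conv}. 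For the applications to Corollary~\ref{cor-artin-cases} only association at almost all places is required.
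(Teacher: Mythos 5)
Your Rankin--Selberg density computation is correct, and it makes explicit a step that the paper's own one-line proof leaves implicit: Lemma \ref{lem-places} (applied to each $\pi_i$ separately) only says that each local component lies in the two-element set $\{\pi((\rho_2)_v),\pi((\xi\circ\rho_2)_v)\}$, and at the places where a Frobenius element has order $5$ or $10$ these two options are genuinely distinct, so one really must rule out $\pi_{1v}\cong\pi_{2v}$ there. Your arithmetic is right ($(u-1)(v-1)=-1$, $(u-1)^2+(v-1)^2=3$, the order-$5/10$ classes have measure $48/120=2/5$), so $R$ has density $2/5$ and $T$ has density $0$. Two caveats. First, ``strong multiplicity one tolerates density-zero exceptional sets'' is not \cite[Theorem 4.4]{JSII}, and in general a density-zero set cannot simply be discarded from a Rankin--Selberg Euler product near $s=1$; what saves you here is that your own identities give the stronger bound $\sum_{v\in T}q_v^{-s}=O(1)$ as $s\to 1^{+}$ (on $T$ one has $a_{\pi_1}(v)^2\ge(3-\sqrt5)/2>0$, and the order-$5/10$ contribution to $\log L^S(s,\pi_1\times\pi_1)$ is $\tfrac35\log\tfrac1{s-1}+O(1)$ by the $O(1)$-form of Chebotarev), hence $\sum_{v\in T}q_v^{-1}<\infty$, and the Satake parameters on $T$ are roots of unity; you should record this. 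Second, and more importantly, strong multiplicity one is the wrong tool for the final step in any case: $\pi(\rho_2\oplus\xi\circ\rho_2)$ is not a second automorphic representation against which $\pi_1\boxplus\pi_2$ can be compared --- its existence as an automorphic object is precisely what is being established.

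The genuine gap is therefore the step you flag at the end and do not close, and your proposed remedies (showing $T=\emptyset$, ramified test functions) are not the right tool. By the paper's conventions $\pi(\rho_2\oplus\xi\circ\rho_2)$ means strong association, i.e. $(\pi_1\boxplus\pi_2)_v\cong\pi((\rho_2\oplus\xi\circ\rho_2)_v)$ at \emph{every} place, including those in $S$ and in $T$; this is also the form consumed by the proof of Corollary \ref{cor-artin-cases}. The missing ingredient --- and the entire content of the paper's citation --- is the local-global rigidity result \cite[Proposition 4.5]{HennCyc}: an isobaric automorphic representation of $\GL_n(\A_F)$ all of whose local components are generic, and which agrees with an $n$-dimensional Weil-group parameter at almost all places, agrees with it at every place. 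Local genericity of the isobaric sum $\pi_1\boxplus\pi_2$ is exactly why the paper appeals to \cite[\S 0.2]{Bernstein}. With that proposition, your density-one (indeed co-summable) agreement upgrades immediately to agreement everywhere, and there is no need to prove $T$ empty. The correct assembly is thus: your density argument (which the paper does not spell out and which is needed) to get agreement off $S\cup T$, followed by \cite[Proposition 4.5]{HennCyc} in place of your appeal to strong multiplicity one.
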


This is precisely Theorem \ref{main-thm-3-conv}.

\begin{proof}
This follows from Lemma \ref{lem-places} and \cite[Proposition 4.5]{HennCyc}.  To apply \cite[Proposition 4.5]{HennCyc} one uses the fact that the isobaric sum $\pi_1 \boxplus \pi_2$ is necessarily locally generic (see \cite[\S 0.2]{Bernstein} for  the nonarchimedian case, which is all we use).
\end{proof}

Finally, we prove Corollary \ref{cor-artin-cases}:
\begin{proof}[Proof of Corollary \ref{cor-artin-cases}]
In the notation above, Corollary \ref{cor-isob} implies the following isomorphisms at the level of admissible representations:
\begin{align*}
\mathrm{Sym}^3(\pi_1) &\cong \pi(\mathrm{Sym}^3(\rho_2))\\
\pi_1 \boxtimes \pi_2 &\cong \pi(\rho_2 \otimes \xi \circ \rho_2)\\
\mathrm{Sym}^4(\pi_1) &\cong \pi(\mathrm{Sym}^4(\rho_2))\\
\mathrm{Sym}^2(\pi_1) \boxtimes \pi_2 &\cong \pi(\mathrm{Sym}^2(\rho_2) \otimes \xi \circ \rho_2).
\end{align*}
Notice that any irreducible representation of $\Gal(E/F)$ of dimension greater than $3$ is on this list by Lemma \ref{lem-icosa-reps}.  Therefore to complete the proof of the corollary it suffices to recall that all of the representations on the left are known to be automorphic.
  More precisely, the $\mathrm{Sym}^3$ lift was treated by work of Kim and Shahidi \cite[Theorem B]{KiSh}.  The Rankin product $\pi_1 \boxtimes \pi_2$ is automorphic by work of Ramakrishnan \cite[Theorem M]{RRS}.  The fact that the symmetric fourth is automorphic follows from \cite[Theorem 3.3.7]{KScusp} (see also \cite[Theorem 4.2]{KimIcos}).  Finally, for the last case,
one can invoke \cite[Theorem A]{KiSh} and  \cite[Proposition 4.1]{SW}.
\end{proof}

\section{Some group theory} \label{sec-groups}

In this section we explain why two group-theoretic assumptions we have made in theorems \ref{main-thm-1} and \ref{main-thm-1-conv} are essentially no loss of generality.

\subsection{Comments on universal perfect central extensions}

\label{ssec-upce}

The underlying goal of this paper is to study the functorial transfer conjecturally attached to the map of $L$-groups
$$
b_{E/F}:{}^L\GL_{nF} \lto {}^L\mathrm{Res}_{E/F}\GL_{nE}
$$
for Galois extensions $E/F$.  We explain how ``in principle'' this can be reduced to the study of Galois extensions $E/F$ where $\Gal(E/F)$ is the universal perfect central extension of a finite simple group.  Given a Galois extension $E/F$, we can find a chain of subextensions $E_0=E \geq E_1 \geq \cdots  \geq E_m=F$ such that $E_i/E_{i+1}$ is Galois with simple Galois group.  Using this, one can in principle reduce the study of arbitrary Galois extensions to the study of extensions with simple Galois group\footnote{Of course, this reduction will be subtle; see \cite{LR} and \cite{Rajan3} for the solvable case.}.  If the extension is cyclic, then we can apply the body of work culminating in the book of Arthur and Clozel \cite{AC}.  We therefore consider the case where $\Gal(E/F)$ is a finite simple nonabelian group.

Assume for the moment that $\Gal(E/F)$ is a finite simple nonabelian group.  There exists an extension $L/E$ such that $L/F$ is Galois,
\begin{align} \label{star}
\begin{CD}
 1 @>>> \Gal(L/E) @>>>\Gal(L/F) @>>> \Gal(E/F)@>>>1
\end{CD}
\end{align}
is a central extension and
\begin{align} \label{abundant}
\Gal(L/L \cap EF^{\mathrm{ab}}) \cong H^2(\Gal(E/F),\CC^{\times})^{\wedge},
\end{align}
 where $F^{\mathrm{ab}}$ is the maximal abelian extension of $F$ (in some algebraic closure) \cite[Theorem 5]{Miyake} (in fact Miyake's theorem is valid for an arbitrary Galois extension $E/F$)\footnote{Here the $\wedge$ denotes the dual, so $H^2(\Gal(E/F),\CC^{\times})^{\wedge} \cong H^2(\Gal(E/F),\CC^{\times})$ since $H^2(\Gal(E/F),\CC^{\times})$ is finite abelian.}.
Such an extension $L$ is called an abundant finite central extension in loc.~cit.  Choose an abelian extension $F'/F$ such that
$$
L \cap EF^{\mathrm{ab}}=EF'.
$$
We claim that $\Gal(L/F')$ is the universal perfect central extension of a finite simple group.  Indeed, the central extension \eqref{star} restricts to induce
 a central extension
$$
\begin{CD}
1 @>>> \Gal(L/EF') @>>>\Gal(L/F') @>>> \Gal(E/F)@>>>1
\end{CD}
$$
Moreover, $L \cap EF^{\mathrm{ab}}=EF'$ implies $L \cap F^{\mathrm{ab}}=F'$ since $\Gal(E/F)$ is a simple nonabelian group and therefore $\Gal(L/F')$ is perfect.  By \eqref{abundant}, we conclude that $\Gal(L/F')$ is the universal perfect central extension of the finite simple group $\Gal(E/F)$ \cite[Proposition 4.228]{Gorenstein}.

We observe that if we understand the functorial lifting conjecturally defined by $b_{L/F'}$, then we can ``in principle'' use abelian base change to understand the functorial lifting conjecturally defined by $b_{E/F}$.  Thus assuming that $\Gal(E/F)$ is the universal perfect central extension of a finite simple group from the outset is essentially no loss of generality.

\subsection{Generating $\Gal(E/F)$} \label{gen-gal}

In the statement of Theorems \ref{main-thm-1} and \ref{main-thm-1-conv}, we required that $\Gal(E/F)=\langle \tau, \Gal(E/F') \rangle$ for some subfield $E \geq F' \geq F$ with $E/F'$ solvable and some element $\tau$.  We also placed restrictions on the order of $\Gal(E/F')$.  The theorems we recall in this subsection indicate that these restrictions are little or no loss of generality, and also demonstrate that one has a great deal of freedom in choosing generators of universal perfect central extensions of finite simple groups.  To state some results, recall that a finite group $G$ is quasi-simple if $G/Z_G$ is a nonabelian simple group and $G$ is perfect.  Thus universal perfect central extensions of simple nonabelian groups are quasi-simple.

\begin{thm}[Guralnick and Kantor] \label{thm-GK} Let $G$ be a quasi-simple group.  Let $x \in G$ that is not in the center $Z_G$ of $G$.  Then there is an element $g \in G$ such that $\langle x,g \rangle=G$.
\end{thm}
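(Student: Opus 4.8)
\textbf{Proof proposal for Theorem \ref{thm-GK} (Guralnick--Kantor).}

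The plan is to reduce the statement to the classification of finite simple groups, then invoke the known density of two-generator subsets in simple groups and handle the passage to quasi-simple covers separately. First I would reduce to the case where $G$ itself is simple. Suppose the theorem is known for all finite simple groups. Let $G$ be quasi-simple with center $Z_G$ and let $\bar{G}:=G/Z_G$, a nonabelian simple group. Given $x \in G \setminus Z_G$, its image $\bar{x} \in \bar{G}$ is nontrivial, so by the simple case there exists $\bar{g} \in \bar{G}$ with $\langle \bar{x},\bar{g}\rangle = \bar{G}$. Lift $\bar{g}$ to some $g_0 \in G$. Then $H:=\langle x, g_0\rangle$ surjects onto $\bar{G}$, so $H Z_G = G$; since $G$ is perfect one has $G=[G,G]=[HZ_G,HZ_G]=[H,H]\le H$, using that $Z_G$ is central. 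Hence $H=G$ and we may take $g=g_0$. (Here I am using only that $Z_G$ is central and $G$ perfect, exactly as in the universal perfect central extension argument of \S\ref{ssec-upce}.)

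Next I would establish the simple case. The key input is the theorem of Guralnick--Kantor on the \emph{spread} of finite simple groups: for every finite simple group $G$ and every nontrivial $x \in G$, the set of $g \in G$ with $\langle x,g\rangle = G$ is nonempty (indeed a positive proportion of $G$). This is proved by the classification: for alternating groups one gives explicit companion generators; for groups of Lie type one uses the theory of maximal subgroups together with character-theoretic estimates (the "probabilistic generation" method of Liebeck--Shalev and Guralnick--Kantor), bounding the proportion of pairs $(x,g)$ lying in a common maximal subgroup away from $1$; the finitely many sporadic groups are checked directly. I would simply cite Guralnick--Kantor for this. Since the excerpt permits assuming results "stated earlier," and this theorem is the one being attributed, the honest write-up is to state it as the cited input and supply only the quasi-simple reduction above.

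The main obstacle, conceptually, is nothing internal to this paper: the simple-group case genuinely rests on the classification and on nontrivial probabilistic estimates, which are not things one reproves. Within our control, the only point requiring care is the reduction step: one must make sure that lifting $\bar{g}$ to $G$ does not accidentally produce a proper subgroup, and this is precisely where perfectness of $G$ is used to kill the central "error term" $Z_G$. I would double-check that $Z_G$ being central (not merely normal) is what makes $[HZ_G,HZ_G]=[H,H]$ valid, and note that if $x \in Z_G$ the statement can fail (e.g. take $x$ central of order $2$ in $\mathrm{SL}_2(\mathbb{Z}/5)$), which is why the hypothesis $x \notin Z_G$ is essential. With that reduction in place the theorem follows immediately from the cited result.
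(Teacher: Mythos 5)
Your proof takes essentially the same route as the paper: reduce to the simple quotient $\bar G = G/Z_G$, cite the Guralnick--Kantor corollary for simple groups, and lift. In fact your write-up is more complete than the paper's, which merely says ``let $g\in G$ be any element mapping to $\bar g$'' without explaining why $\langle x,g\rangle$ must then equal all of $G$; your perfectness argument ($HZ_G=G$ and $G=[G,G]=[HZ_G,HZ_G]=[H,H]\le H$, using centrality of $Z_G$) is exactly the missing justification, and your remark that the hypothesis $x\notin Z_G$ is genuinely needed (e.g.\ for the central involution in $\mathrm{SL}_2(\ZZ/5)$) is a good sanity check.
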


\begin{proof}
Let $\bar{x}$ be the image of $x$ in $G/Z_G$.  Then there exists a $\bar{g} \in G/Z_G$ such that
$\langle \bar{x}, \bar{g}\rangle=G/Z_G$ by \cite[Corollary]{GurKant}.
We simply let $g \in G$ be any element mapping to $\bar{g}$.
\end{proof}

For applications to base change and descent of automorphic representations of $\GL_2$, preliminary investigation indicates that the primes $2$ and $3$ are troublesome.  With this in mind, the following theorem might be useful (see \cite[Corollary 8.3]{GurM}):

\begin{thm}[Guralnick and Malle] \label{thm-good} Let $G$ be a quasi-simple group.   Then $G$ can be generated by two elements of order prime to $6$. \qed
\end{thm}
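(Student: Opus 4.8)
The plan is to deduce this from the classification of finite simple groups, in two stages: a purely group-theoretic reduction from the quasi-simple group $G$ to its simple quotient $S=G/Z_G$, followed by a case-by-case verification for $S$. For the reduction, suppose we have produced $\bar x,\bar y\in S$ of order prime to $6$ with $\langle\bar x,\bar y\rangle=S$. I would pick any lift $x_0\in G$ of $\bar x$ and write the finite abelian group $\langle x_0\rangle Z_G$ as a direct product $A\times B$ with $|A|$ prime to $6$ and $|B|$ a $\{2,3\}$-number. Since $\langle\bar x\rangle$ has order prime to $6$, the image of $B$ under $G\to S$ is trivial, so already $A$ maps onto $\langle\bar x\rangle$; hence $\bar x$ lifts to an element $x\in G$ of order prime to $6$, and similarly $\bar y$ to such a $y$. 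Then $\langle x,y\rangle Z_G=G$, and because $Z_G$ is central one has $[G,G]=[\langle x,y\rangle,\langle x,y\rangle]\le\langle x,y\rangle$, so the perfectness of $G$ forces $\langle x,y\rangle=G$. This is exactly the generality needed in \S\ref{ssec-upce}, where $\Gal(E/F')$ is the universal perfect central extension of a finite simple group, hence quasi-simple.

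It remains to prove the statement for a nonabelian finite simple group $S$, and here one runs through the CFSG families. Note first that by Burnside's $p^aq^b$-theorem every such $S$ has an element of prime order $\ge 5$, so a supply of $6'$-elements is always available. For the alternating groups $A_n$ I would exhibit explicit generating pairs of $6'$-elements (built, for large $n$, from a single long cycle of suitable length together with one further permutation) and treat the finitely many small $n$ by hand; the $26$ sporadic groups are a finite computation from their character tables. The substance is the groups of Lie type: for each family, choose two conjugacy classes $C_1,C_2$ of regular semisimple elements of order prime to $6$, lying in maximal tori of order prime to $6$ — these exist once the underlying field is not too small — and bound, by inclusion--exclusion over the conjugacy classes of maximal subgroups $M\le S$,
\[
\frac{\#\{(x,y)\in C_1\times C_2:\langle x,y\rangle\ne S\}}{|C_1|\,|C_2|}\ \le\ \sum_{M}\,[S:N_S(M)]\cdot\frac{|C_1\cap M|}{|C_1|}\cdot\frac{|C_2\cap M|}{|C_2|}.
\]
Each ratio $|C_i\cap M|/|C_i|$ is a fixed-point ratio $\mathrm{fpr}(x_i,S/M)$, and the Liebeck--Shalev estimates for these ratios make the right-hand side strictly less than $1$ once $q$ is large, producing a generating pair $(x,y)$ of $6'$-elements.

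The hard part will be the small and low-rank cases, especially in characteristics $2$ and $3$. In characteristic $p\in\{2,3\}$ every unipotent element is a bad $p$-element, so one is forced to work only with semisimple classes and must moreover arrange that their orders avoid the other small prime; for small $q$ the relevant tori may contain no regular $6'$-element and the fixed-point-ratio bounds become too weak, so the families $\mathrm{PSL}_2(q)$, $\mathrm{PSL}_3^{\pm}(q)$, $\mathrm{Sp}_4(q)$, the Suzuki and Ree groups, and the small exceptional groups would have to be handled by ad hoc constructions or explicit computation. I would likewise check the genuinely tiny simple groups ($A_5$, $A_6$, $\mathrm{PSL}_2(7)$, $\mathrm{PSL}_2(8)$, $\mathrm{PSL}_2(11),\dots$) directly, to confirm that the theorem holds with no exceptions.
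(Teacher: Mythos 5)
The paper offers no proof of this statement: it simply cites Guralnick and Malle \cite[Corollary~8.3]{GurM}, and the acknowledgments thank those authors for including the proof in that reference precisely so this paper would not have to. So there is nothing in the paper to compare your argument against; the only question is whether your blind attempt actually constitutes a proof.

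Your reduction from the quasi-simple group $G$ to the simple quotient $S=G/Z_G$ is correct and is a genuinely useful clean step. Writing $\langle x_0\rangle Z_G$ as $A\times B$ with $A$ the Hall $6'$-part and $B$ the $\{2,3\}$-part, the image of $B$ in $\langle\bar x\rangle$ is a $\{2,3\}$-group inside a $6'$-group and hence trivial, so $A$ already surjects onto $\langle\bar x\rangle$ and one can pick a $6'$-lift $x$; and then $[G,G]=[\langle x,y\rangle Z_G,\langle x,y\rangle Z_G]=[\langle x,y\rangle,\langle x,y\rangle]\le\langle x,y\rangle$ together with perfectness of $G$ forces $\langle x,y\rangle=G$. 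That part stands. But what follows is a plan, not a proof. The Liebeck--Shalev fixed-point-ratio machinery for large $q$, explicit generating pairs for $A_n$, and a table lookup for the sporadics are the right shape of argument, yet the low-rank Lie-type families in characteristics $2$ and $3$ and for small $q$, together with the small alternating and ``tiny'' simple groups, are precisely where the theorem could conceivably fail and where Guralnick and Malle had to do real case-by-case work. You flag these as ``ad hoc constructions or explicit computation'' to be done, and you do not do them. As written this is a credible outline of the Guralnick--Malle strategy, not a self-contained proof; if you do not intend to actually execute the CFSG case analysis, the honest move is the one the paper makes, namely to cite \cite[Corollary~8.3]{GurM} and keep only your reduction to the simple case (which the cited corollary already covers, since Guralnick and Malle state their result for quasi-simple groups).
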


\section*{Acknowledgments}

The author would like to thank A.~Adem for information on finite simple groups, R.~Guralnick and G.~Malle for including a proof of Theorem \ref{thm-good} in \cite{GurM} and R.~Guralnick for correcting mistakes in \S \ref{gen-gal}. H.~Hahn,
R.~Langlands, S.~Morel, Ng\^o B.~C., P.~Sarnak, and N.~Templier deserve thanks for many useful conversations.
The author is also grateful for the encouragement of R.~Langlands, Ng\^o B.~C., D.~Ramakrishnan, and P.~Sarnak.
R.~Langlands deserves additional thanks in particular for encouraging the author to record the results in this paper.

\quash{ He also thanks J.~C.~for everything.  }


\end{document}